%


\documentclass[preprint]{imsart}


\arxiv{arXiv:1901.01163}

\startlocaldefs
\usepackage{amsmath,amssymb,amsthm,amsfonts,amsbsy,latexsym,dsfont,tikz,bbm,verbatim}
\usepackage[square,sort,comma,numbers]{natbib}
\RequirePackage[colorlinks,citecolor=blue,urlcolor=blue]{hyperref}


\usepackage{enumerate}

 

\newcommand{\Cov}[0]{\text{Cov}}
\newcommand{\Var}[0]{\text{Var}}

\newcommand{\R}[0]{\mathbb{R}}

\newcommand{\kindex}{{\overline{S}^{(i_1)}_{2,k}}}
\newcommand{\kindone}{{\overline{S}^{(i_1)}_{2,1}}}
\newcommand{\Pro}{\mathbb{P}}
\newcommand{\Exp}{\mathbb{E}}

\theoremstyle{theorem}
\newtheorem{theorem}{Theorem}[section]
\newtheorem{lemma}[theorem]{Lemma}
\newtheorem{corollary}[theorem]{Corollary}

\theoremstyle{definition}

\newtheorem{remark}[theorem]{Remark}

\newtheorem{ex}[theorem]{Example}
   
\renewcommand{\leq}{\leqslant} 
\renewcommand{\geq}{\geqslant}

\newcommand{\ind}{\mathds{1}}

\def\qed{ \hfill $\blacksquare$}  


\newcommand{\cB}{\mathcal{B}}
\newcommand{\cD}{\mathcal{D}}

\newcommand{\cJ}{\mathcal{J}}

\newcommand{\cR}{\mathcal{R}}
\newcommand{\cS}{\mathcal{S}}
\newcommand{\cV}{\mathcal{V}}
\newcommand{\cY}{\mathcal{Y}}  




\newcommand{\bN}{\mathbb{N}}
\newcommand{\bR}{\mathbb{R}}




\DeclareMathOperator{\E}{\mathds{E}}

\endlocaldefs

\begin{document}

\begin{frontmatter}

\title{Approximating high-dimensional infinite-order $U$-statistics: statistical and computational guarantees}

\runtitle{High-dimensional infinite-order $U$-statistics}

\begin{aug}

\author{\fnms{Yanglei} \snm{Song}\thanksref{a}\corref{}\ead[label=e1]{yanglei.song@queensu.ca}}
\author{\fnms{Xiaohui} \snm{Chen}\thanksref{b}\ead[label=e2]{xhchen@illinois.edu}}
\and
\author{\fnms{Kengo} \snm{Kato}\thanksref{c}\ead[label=e3]{kk976@cornell.edu}}
\address[a]{
Department of Mathematics and Statistics, Queen's University, 48 University Ave, Kingston, ON,  Canada, K7L 3N6
\\
\printead{e1}}
\address[b]{
Department of Statistics, University of Illinois at Urbana-Champaign, 725 S. Wright Street, Champaign, IL 61820
\\
\printead{e2}}
\address[c]{
Department of Statistics and Data Science, Cornell University, 1194 Comstock Hall, Ithaca, NY 14853 
\\
\printead{e3}}

\runauthor{Y. Song, X. Chen, K. Kato}

\affiliation{University of Illinois at Urbana-Champaign and Cornell University}

\end{aug}

\begin{abstract}
We study the problem of distributional approximations to high-dimensional non-degenerate $U$-statistics with random kernels of diverging orders. Infinite-order $U$-statistics (IOUS) are a useful tool for constructing simultaneous prediction intervals that quantify the uncertainty of ensemble methods such as subbagging and random forests. A major obstacle in using the IOUS is their computational intractability when the sample size and/or order are large. In this article, we derive non-asymptotic Gaussian approximation error bounds for an incomplete version of the IOUS with a random kernel. We also study data-driven inferential methods for the incomplete IOUS via bootstraps and develop their  statistical and computational guarantees.
\end{abstract}

\begin{keyword}
\kwd{Infinite-order $U$-statistics}
\kwd{incomplete $U$-statistics}
\kwd{Gaussian approximation}
\kwd{bootstrap}
\kwd{random forests}
\kwd{uncertainty quantification}
\end{keyword}



\end{frontmatter}

\section{Introduction}\label{sec:introduction}
Let $X_{1},\dots,X_{n}$ be independent and identically distributed (i.i.d.) random variables taking value in a measurable space $(S,\cS)$ with common distribution $P$, and let  $h: S^{r} \to \bR^d$ be a symmetric and measurable function with respect to the product space $S^{r}$ equipped with the product $\sigma$-field $\cS^{r} = \cS \otimes \cdots \otimes \cS$ ($r$ times). Assume $\Exp [|h_{j}(X_1,\ldots,X_r)|] < \infty$ for $1 \leq j \leq d$, and consider the statistical inference on the mean vector $\theta = (\theta_1,\ldots, \theta_d)^T= \Exp[h(X_{1},\dots,X_{r})]$. A natural estimator for $\theta$ is the $U$-statistic with kernel $h$:
\begin{equation}
\label{eqn:ustat}
U_n := \frac{1}{|I_{n,r}|} \sum_{\iota \in I_{n,r}} h(X_{i_1},\ldots, X_{i_r}) :=\frac{1}{|I_{n,r}|} \sum_{\iota \in I_{n,r}} h(X_{\iota}),
\end{equation}
where $I_{n,r} := \{\iota = (i_1,\ldots, i_r): 1 \leq i_1 < \ldots < i_r \leq n\}$ is the set of all ordered $r$-tuples of $1,\dots,n$ and $|\cdot|$ denotes the set cardinality. The positive integer $r$ is called the order or degree of the kernel $h$ or the $U$-statistic $U_{n}$. We refer to \cite{lee1990} as an excellent monograph on $U$-statistics.

In the present paper, we are interested in the situation where the order $r$ may be nonneglible relative to the sample size $n$, i.e., $r = r_n \to \infty$ as $n \to \infty$. 
$U$-statistics with divergent orders are called {\it infinite-order $U$-statistic} (IOUS) \citep{Frees1989}. 
IOUS has attracted renewed interests in the recent statistics and machine learning literature in relation to uncertainty quantification for Breiman's bagging \cite{Breiman1996} and random forests \cite{Breiman2001}. In such applications, the tree-based prediction rules can be thought of as $U$-statistics with deterministic and random kernels, respectively, and their order corresponds to the sub-sample size of the training data \cite{mentch2016quantifying}. Statistically, the subsample size $r$ used to build each tree needs to increase with the total sample size $n$ to produce reliable predictions.  As a leading example, we consider  construction of simultaneous prediction intervals for a version of random forests discussed in~\cite{mentch2016quantifying}. 

\begin{ex}[Simultaneous prediction intervals for random forests]\label{ex:random_forest} 
Consider a training dataset of size $n$,
$\{(Y_{1},Z_{1}), \dots, (Y_{n},Z_{n})\} = \{ X_{1},\dots,X_{n} \} = X_1^n$, where $Y_{i} \in \cY$ is a vector of features and $Z_{i} \in \bR$ is a response. Let $h$ be a deterministic prediction rule that takes as input a sub-sample $\{X_{i_{1}}, \dots, X_{i_{r}}\}$ 
and outputs predictions on $d$ testing points $(y_{1}^{*}, \dots, y_{d}^{*})$ in the feature space $\cY$. Then $U_n$ in~\eqref{eqn:ustat} are the overall predictions 
by averaging over all possible sub-samples of size $r$. 

For random forests~\cite{Breiman2001, mentch2016quantifying}, the tree-based prediction rule may be constructed  with additional randomness: in building a tree or multiple trees based on a sub-sample,  the split at each node may only occur on a randomly selected subset of features. 
Thus, let $\{W_{\iota}: \iota \in I_{n,r}\}$ be a collection of i.i.d.~random variables taking value in a measurable space $(S', \cS')$ that  are independent of the  data $X_1^n$, and that determine the potential splits for each sub-sample. Here, each $W_{\iota}$ captures the random mechanism in building a prediction function based on $X_{\iota} = (X_{i_1},\ldots,X_{i_r})$, but are assumed to be independent for different sub-samples. Further, let $H: S^r \times S' \to \bR^d$ be an $\cS^{r} \otimes\cS'$-measurable function, that represents the random forest algorithm, such that 
$\Exp[H(x_1,\ldots,x_r, W)] = h(x_1,\ldots, x_r)$. Then predictions of random forests are given by a $d$-dimensional $U$-statistic 
with random kernel $H$: 
\begin{equation}
\label{eqn:ustat_random_kernel}
\widehat{U}_n := 
|I_{n,r}|^{-1} \sum_{\iota \in I_{n,r}} H(X_{i_1},\ldots,X_{i_r}, W_{\iota}) 
=
|I_{n,r}|^{-1} \sum_{\iota \in I_{n,r}} H(X_{\iota}, W_{\iota}),
 \end{equation}
where the random kernel $H$ varies with $r$. 
\end{ex}

Compared to $U$-statistics with fixed orders (i.e., $r$ being fixed), the analysis of IOUS brings  nontrivial computational and statistical challenges due to increasing orders.  First, even for a moderately large value of $r$, exact computation of all possible ${n \choose r}$ trees is intractable. For diverging $r$, it is not possible to compute $U_n$ in
polynomial-time of $n$.
Second, the variance of the H\'ajek projection (i.e., the first-order term in the Hoeffding decomposition  \cite{hoeffding1948class}) of $U_n - \theta$ tends to zero as $r \to \infty$. To wit, define a function $g: S \to [0,\infty)$ by $g(x_1) = \Exp[h(x_1,X_2,\ldots,X_r)]$, and
$$
\sigma_{g,j}^2 := \Exp[(g_j(X_1) - \theta_j)^2] \text{ for } 1 \leq j \leq d, \quad
\underline{\sigma}^2_{g} := \min_{1 \leq j \leq d} \sigma^2_{g,j}.
$$
Then the H\'ajek projection of $U_n - \theta$ is given by $n^{-1} r \sum_{i=1}^{n}(g(X_i) - \theta)$. By the orthogonality of the projections, we have
$$
\Exp[ (h_j(X_1,\ldots,X_r) - \theta_j)^2] \geq \sum_{1 \leq i \leq r} \Exp[(g_j(X_i)-\theta_j)^2] =
r \sigma^2_{g,j}.
$$
Thus the variances of the kernel $h$ and its associated H\'ajek projection $g$ have different magnitudes. In particular, if the variance of $h_{j}(X_1,\ldots,X_r)$ is bounded by a constant $C > 0$ (which is often assumed for random forests, cf. \cite{mentch2016quantifying}), then ${\sigma}^2_{g,j} \leq C/r$, which vanishes as $r$ diverges. Thus standard Gaussian approximation results in literature are no longer applicable in our setting since they require that the componentwise variances are bounded below from zero to avoid degeneracy, i.e., there is an absolute constant $\underline{\sigma}^2 > 0$ such that $\underline{\sigma}^2_{g} \geq \underline{\sigma}^2$ (cf.~\cite{chernozhukov2013gaussian,chernozhukov2017,chen2018gaussian}).  

In this work, our focus is to derive computationally tractable and statistically valid sub-sampling procedures for making inference on $\theta$ with a class of {\it high-dimensional} random kernels (i.e., large $d$) of {\it diverging} orders (i.e., increasing $r$). To break the computational bottleneck, we consider the incomplete version of $\widehat{U}_{n}$ by sampling (possibly much) fewer terms than $|I_{n,r}|$. In particular, we consider the Bernoulli sampling scheme introduced in~\cite{chen2017randomized}. Given a positive integer $N$, which represents our computational budget, define the sparsity design parameter $p_n := N/|I_{n,r}|$, 
and let $\{Z_{\iota}: \iota \in I_{n,r}\}$ be a collection of i.i.d.~Bernoulli random variables with success probability $p_n$, that are independent of the data $X_1^n$ and  $\{W_{\iota}:\iota \in I_{n,r}\}$. 
Consider the following incomplete $U$-statistic (on the data $X_{1}^{n}$) with random kernel and weights: 
\begin{equation}
\label{eqn:incomplete_ustat_random_kernel}
U'_{n,N} := \widehat{N}^{-1} \sum_{\iota \in I_{n,r}} Z_{\iota} H(X_{\iota}, W_{\iota}), \text{ where }
\widehat{N} := \sum_{\iota \in I_{n,r}} Z_{\iota}.
\end{equation}
Obviously, $U'_{n,N}$ is an unbiased estimator of $\theta$ and it only involves computing $\widehat{N}$ terms, which on average is much smaller than $|I_{n,r}|$ if $p_{n} \ll 1$. 

When the kernel $h$ is both deterministic and of fixed order, finite sample  bounds for the Gaussian and bootstrap approximations of $U'_{n,N} - \theta$ (after a suitable normalization) are established in \cite{chen2017randomized}. Roughly speaking, error bound analysis in \cite{chen2017randomized} has two major steps: $i)$ establish the Gaussian approximation to the H\'ajek projection, and $ii)$ bound the maximum norm of all higher-order degenerate terms. As discussed above, the first-order H\'ajek projection in the Hoeffding decomposition is asymptotically vanishing for the IOUS, and we must control the moments of an increasing number of degenerate terms, which makes the analysis of the incomplete IOUS with random kernels substantially more subtle. 

In Section \ref{sec:GAR_IOUS}, we derive non-asymptotic Gaussian approximation error bounds for approximating the distribution of the incomplete IOUS $U'_{n,N}$ with random kernels subject to sub-exponential moment conditions. Specifically, our rates of convergence for the Gaussian approximation of $U'_{n,N}$ have the explicit dependence on all parameters $(n,N,d,r,\underline{\sigma}_g^2, D_{n})$, where $D_{n}$ is an upper bound for the $\psi_{1}$ norms of the random kernels (for precise statements, see conditions \eqref{h_exp_assumption}, \eqref{exp_cond_distr}, and \eqref{H_exp_moment_assumption} ahead). In particular, asymptotic validity of the Gaussian approximation can be achieved if $\underline{\sigma}_g^{-2}r^{2} D_{n}^{2} \log^{7}(dn) = o(n \wedge N)$. The order of $\underline{\sigma}_g^{-2}$ will be application specific. 
As we shall verify in Section~\ref{sec:apps}, under certain regularity conditions, 
\begin{equation}
\label{eqn:sigma_g_lower_bound_condition}
\underline{\sigma}_g^{-2} = O(r^2).
\end{equation}
It is worth noting that (\ref{eqn:sigma_g_lower_bound_condition}) is sharp in the sense that for the linear kernel $h(x_{1},\cdots,x_{r}) = (x_{1}+\cdots+x_{r})/r$,  we have $\underline{\sigma}_g^{-2} \asymp r^2$ if $c \leq \Var(X_{1j}) \leq C$. If further $D_n = O(1)$, $\log(d) = O(\log(n))$ and $n = O(N)$ (i.e., the computational complexity is at least linear in sample size), then the order of $U'_{n,N}$ is allowed to increase at the rate of $ r = o(n^{1/4-\epsilon})$ for any $\epsilon \in (0, 1/4)$. On the other hand, the dimension may grow exponentially fast in sample size (i.e., $d = O(e^{n^{c}})$ for some constant $c \in (0, 1/7)$) to maintain the asymptotic validity of Gaussian approximations while $r$ is still allowed to increase at a polynomial rate in $n$. 

The proof of our Gaussian approximation results for IOUS builds upon a number of recently developed technical tools such as Gaussian approximation results for sum of independent random vectors and $U$-statistics of fixed orders \cite{chernozhukov2013gaussian,chernozhukov2017,chen2018gaussian,chenkato2017a}, anti-concentration inequality for Gaussian maxima \cite{chernozhukov2015comparison}, and iterative conditioning argument for high-dimensional incomplete $U$-statistics (with the fixed kernel and order) \cite{chen2017randomized}. However, there are three technical innovations in our proof to accommodate the issues of diverging orders and randomness of the kernel. First, we use the {\it iterative renormalization} for each dimension of $g$ and also $H$ by its variance. This simple trick turns out to be the crux to avoid the lower bound assumption for Gaussian approximation in the literature~\cite{chernozhukov2017,chen2017randomized}.  
Second, we derive an order-explicit maximal inequality for the expected supremum of the remainder of the H\'ajek projection of the IOUS (cf. Section \ref{sec:maximal_inequality}). This maximal inequality is new in literature and our main tools include a symmetrization inequality of~\cite{sherman1994maximal} and Bonami inequality~\cite[Theorem 3.2.2]{de2012decoupling} for the Rademacher chaos, both with the explicit dependence on $r$. Third, we develop new tail probability inequalities for $U$-statistics with random kernels by leveraging the independence between $\{W_{\iota}, \iota\in I_{n,r}\}$ and the data $X_1^n$.

In Section \ref{sec:bootstrap}, we derive computationally tractable and fully data-driven inferential methods of $\theta$ based on the incomplete IOUS when the sample size $n$, the dimension $d$, and the order $r$, are all large. We consider a multiplier bootstrap procedure consisting of two partial bootstraps that are conditionally independent given $X_1^n$ and $\{W_{\iota}, Z_{\iota} : \iota\in I_{n,r}\}$: one estimates the covariance matrix of the randomized kernel, and the other estimates the H\'ajek projection. The latter is usually computationally demanding, and we develop a divide and conquer algorithm to maintain the overall computational cost of our multiplier bootstrap procedure at most $O(n^{2}d + B(N+n)d)$, where $B$ denotes the number of bootstrap iterations. Thus the computational cost of the bootstrap to approximate the sampling distribution for incomplete IOUS can be made independent of the order $r$, even though $r$ diverges. 


In Section \ref{sec:apps}, we discuss the key non-degeneracy condition \eqref{eqn:sigma_g_lower_bound_condition} for deriving the validity of Gaussian and bootstrap approximations. We provide a general embedding scheme where a Cram\'er-Rao type lower bound can be established for the minimum $\underline{\sigma}_{g}^{2}$ of the projection variances. Specifically, the lower bound for $r^2 \underline{\sigma}_{g}^{2}$ only involves  the sensitivity of $\Exp[h(X_1,\ldots,X_r)]$ under perturbation and the Fisher information of the embedded family, which in some cases remain constants as $r$ diverges.
In non-parametric regressions, there is a natural embedding of the response variable into a location family such that the sensitivity and Fisher information can be explicitly computed.

\subsection{Connections to the literature}
For univariate $U$-statistics ($d = 1$), the asymptotic
distributions are derived in the seminal paper \cite{hoeffding1948class} for the non-degenerate
case.
\citep{Frees1989} introduced the notion ``infinite-order $U$ statistics" (IOUS) with diverging orders and established the central limit theorem for $U_n$ when $d =1$. For univariate IOUS, asymptotic normality of IOUS can be found in \cite[Chapter 4.6]{borovskikh1996}, and the Berry-Esseen type bounds for IOUS were established by~\cite{friedrich1989berry,vanEs1988,Zwet1984}. Further, \citep{mentch2016quantifying} applied IOUS to construct a prediction interval for one test point. However, $i).$ \citep{mentch2016quantifying} does not address the issue that the variance of the H\'ajek projection is vanishing:
the two conditions in Theorem 1 therein,
$\Exp h_{k_n}(Z_1,\ldots,Z_{k_n}) \leq C <\infty$ and
$\lim \zeta_{1,k_n} \neq 0$, are not compatible based on our previous discussions
; $ii).$ in practice, the size $d$ of a test set  may be comparable to or even much larger than the size $n$ of a training set, and the current work is motivated by such consideration. Limit theorems of the related infinite-order $V$-statistics and the infinite-order $U$-processes were studied in \cite{Shieh1994_SPL,HeiligNolan2001_SS}.  The high-dimensional Gaussian approximation results and bootstrap methods were established in~\cite{chernozhukov2013gaussian,chernozhukov2017} for sum of independent random vectors, and 
in~\cite{chen2018gaussian,chen2017randomized} for $U$-statistics.  We refer readers to these references for extensive literature review. 

Incomplete $U$-statistics were first introduced in \cite{blom1976}, which can be viewed as a special case of weighted $U$-statistics. There is a large literature on limit theorems for weighted $U$-statistics; see \cite{shapirohubert1979,oneilredner1993,major1994,rifiutzet2000}. The asymptotic distributions of incomplete $U$-statistics (for fixed $d$) were derived in \cite{brownkildea1978} and \cite{Janson1984_PTRF}; see also Section 4.3 in \cite{lee1990} for a review on incomplete $U$-statistics. Recently, incomplete U-statistics have gained renewed interests in the statistics and machine learning literatures \cite{ClemenconLugosiVayatis2008_AoS,mentch2016quantifying}. To the best of our knowledge, the current paper is the first work that establishes distributional approximation theorems for incomplete IOUS with random kernels and increasing orders in high dimensions.

The remaining of the paper is organized as follows. We develop Gaussian approximation results for above $U$-statistics in Section~\ref{sec:GAR_IOUS}, and bootstrap methods for the variance of the approximating Gaussian distribution
in Section~\ref{sec:bootstrap}. We apply the theoretical results to several examples in Section~\ref{sec:apps}.
We highlight a maximal inequality 
in Section~\ref{sec:maximal_inequality}, and present  all other proofs  in Appendix \ref{sec:proofs}. 

\subsection{Notation} \label{subsec:notation}
We write $\text{l.h.s.} \lesssim \text{r.h.s.}$ if there exists a finite and positive absolute constant $C$ such that
$\text{l.h.s.} \leq  C\times \text{r.h.s.}$.
We shall use $c, C, C_{1}, C_{2}, \dots$ to denote finite and positive absolute constants, whose value may differ from place to place. We denote $X_i,  \ldots X_{i'}$ by $X_i^{i'}$ for $i \leq i'$. 

For $a,b \in \bR$, let 
$\lfloor a \rfloor$ denote the largest integer that does not exceed $a$,
$a \vee b = \max\{a,b\}$ and 
$a \wedge b = \min\{a,b\}$. For $a,b \in \bR^d$, we write $a \leq b$ if
$a_j \leq b_j$ for $1 \leq j \leq d$, and write $[a,b]$ for the hyperrectangle $\prod_{j=1}^{d}[a_j,b_j]$ if $a \leq b$. 
We denote by $\cR := \{\prod_{j=1}^d [a_j,b_j]: -\infty \leq a_j \leq b_j \leq \infty\}$  the collection of hyperrectangles in $\bR^d$.
Further, for $a \in \bR^d$, $r,t \in \bR$, $ra + t$ is a vector in $\bR^d$ with $j^{th}$ component being $r a_j + t$. For a matrix $A =(a_{ij})$, denote $\|A\|_{\infty} = \max_{i,j} |a_{ij}|$. For a diagonal matrix $\Lambda$ with positive diagonal entries, $\Lambda^{-1/2}$ (resp. $\Lambda^{1/2}$) is the diagonal matrix, with $j$-th diagonal entry being $\Lambda_{jj}^{-1/2}$ (resp. $\Lambda_{jj}^{1/2}$).

For $\beta > 0$, let $\psi_{\beta}: [0,\infty) \to \bR$ be a function defined by
$\psi_{\beta}(x) = e^{x^\beta}-1$, and for any real-valued random variable $\xi$, define
$\|\xi\|_{\psi_{\beta}} = \inf\{C > 0: \Exp[\psi_{\beta}(|\xi|/C)] \leq 1\}$.
Further, we define a family of functions $\{\widetilde{\psi}_{\beta}(\cdot)\}$ on $[0,\infty)$ indexed by $\beta > 0$. For $\beta \geq 1$, define $\widetilde{\psi}_{\beta} = \psi_{\beta}$.
For $\beta \in (0,1)$, define $\tau_{\beta} = (\beta e)^{1/\beta}$, $x_{\beta} = (1/\beta)^{1/\beta}$, and 
$
\widetilde{\psi}_{\beta}(x) = \tau_{\beta} x \mathbbm{1}_{\{x < x_{\beta}\}} + e^{x^\beta} \mathbbm{1}_{ \{ x \geq x_{\beta} \}}
$. 

For a generic random variable $Y$ , let $\Pro_{\vert Y}(\cdot)$ and $\Exp_{\vert Y}[\cdot]$ denote the conditional probability and expectation given $Y$, respectively. 
Further, we write ``a.s." for ``almost surely" and ``w.r.t." for ``with respect to".
Throughout the paper, we assume that $r \geq 2$, $d \geq 3$, $n \geq 4$, $p_n := N/|I_{n,r}|\leq 1/2$. 




\section{Gaussian approximations for IOUS}\label{sec:GAR_IOUS}
In this section, we shall derive non-asymptotic Gaussian approximation error bounds for: (i) the IOUS with random kernel $\widehat{U}_{n}$ in (\ref{eqn:ustat_random_kernel}), which includes the IOUS with deterministic kernel $U_{n}$ in (\ref{eqn:ustat}) as a special case, and (ii)  the incomplete IOUS $U'_{n,N}$ in (\ref{eqn:incomplete_ustat_random_kernel}) under the Bernoulli sampling scheme. 

Recall that $h(x_1^r) = \Exp[H(x_1^r,W)]$, $g(x_1) = \Exp[h(x_1,X^r_2)]$, $\theta = \Exp[g(X_1)]$, 
$\sigma_{g,j}^2 = \Exp[(g_j(X_1) - \theta_j)^2]$ and $\underline{\sigma}^2_{g} = \min_{1 \leq j \leq d} \sigma^2_{g,j}$. Further, define
\begin{align*}
&\Gamma_g :=\Cov(g(X_1)),\quad\Gamma_H := \Cov(H(X^r_1,W)),\;\;\\
&\sigma^2_{H,j} := \Exp[(H_j(X_1^r,W) - \theta_j)^2] \text{ for } 1 \leq j \leq d.
\end{align*} 
Clearly, for $1 \leq j \leq d$, $\sigma^2_{H,j} \geq \sigma^2_{g,j}$  and thus $\underline{\sigma}^2_{H} := \min_{1 \leq j \leq d}  \sigma^2_{H,j}  \geq \underline{\sigma}^2_{g}$.
Define two $d \times d$ diagonal matrices $\Lambda_g$ and $\Lambda_H$ such that
\begin{align}
\label{def:Lambda_g_H}
\Lambda_{g,jj} := \sigma^2_{g,j} \leq \sigma^2_{H,j} :=
\Lambda_{H,jj} \;\; \text{ for } 1 \leq j \leq d.
\end{align}

Let $Y_A$ and $Y_B$ be two independent $d$-dimensional zero mean Gaussian random vectors with variance $\Gamma_g$ and $\Gamma_H$ respectively. We may take $Y_{A}$ and $Y_{B}$ to be independent of any other random variables.
Further, for any two zero mean $d$-dimensional random vectors $U$ and $Y$, 
\begin{align*}
\rho(U,Y) := 
\sup_{R \in \cR}\left\vert
\Pro\left(U \in R \right) - \Pro(Y \in R)
\right\vert,
\end{align*}
where we recall that $\cR := \{\prod_{j=1}^d [a_j,b_j]: -\infty \leq a_j \leq b_j \leq \infty\}$ is the collection of hyperrectangles in $\bR^d$.

Finally, in view of the discussions in the Introduction (Section \ref{sec:introduction}) and to simplify presentation, we assume $\underline{\sigma}_g^2 \leq 1$. Otherwise, the conclusions in this paper hold with $\underline{\sigma}_g$ replaced by $\min\{\underline{\sigma}_g,1\}$.



\subsection{IOUS with random kernel}\label{sec:IOUS_random_kernel}
%
We start with $\widehat{U}_n$. Define for $1 \leq j \leq d$, $q >0$, and
$(x_1,\ldots,x_r) \in S^r$,
\begin{equation}
\label{def:exp_norm_W_cond}
B_{n,j}(x_1,\ldots,x_r) := \| H_j(x_1,\ldots,x_r,W) - h_j(x_1,\ldots,x_r)\|_{{\psi}_{q}}.
\end{equation}
We make following assumptions: 
there exist $D_n \geq 1$ and an absolute constant $q > 0$ such that
\begin{align}
\label{non_degeneracy} & \sigma^2_{g,j} > 0,\;\; \text{ for all } j = 1,\ldots,d, \tag{C1-ND} \\
\label{moments_assumption} &\Exp |g_j(X_1) - \theta_j|^{4} \leq \sigma^2_{g,j} D_n^2,\;\;  \text{ for all } j = 1,\ldots, d, \tag{C2}\\
\label{h_exp_assumption}&\|h_j(X_1^r) - \theta_j\|_{{\psi}_{q}} \leq D_n,\;\; \text{ for all } j = 1,\ldots, d, \tag{C3} \\
\label{exp_cond_distr} &
\| B_{n,j}(X_1^r) \|_{{\psi}_q} \leq D_n
 \;\text{ for all } j = 1,\ldots,d. \tag{C4}
\end{align}
Clearly, if $\left\vert H_j(X_1^r,W)\right\vert \lesssim  D_n$ a.s.~for $1 \leq j \leq d$, then the latter three conditions hold.
Indeed, \eqref{h_exp_assumption} and
\eqref{exp_cond_distr} follow immediately from the definition, and \eqref{moments_assumption} is due to the observation that
$
\Exp |g_j(X_1) - \theta_j|^{4} \lesssim \Exp |g_j(X_1) - \theta_j|^{2} D_n^2 =  \sigma^2_{g,j} D_n^2
$.

\begin{theorem}\label{thrm:IOUS_random}
Assume~\eqref{non_degeneracy},~\eqref{moments_assumption},~\eqref{h_exp_assumption} and~\eqref{exp_cond_distr} hold. 
Then 
\begin{equation*} 
\rho(\sqrt{n} (\widehat{U}_n-\theta), r Y_A)
\lesssim \left( \frac{r^2 D_n^2 \log^{q_{*}}(dn)}{\underline{\sigma}_g^2 \, n} \right)^{1/6}, 
\end{equation*}
where $q_{*} := (6/q+1) \vee 7$, $Y_A \; \sim \; N(0, \Gamma_g)$ and $\lesssim$ means up to a multiplicative constant that only depends on $q$. 

\end{theorem}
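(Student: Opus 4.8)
The plan is to decompose $\widehat U_n - \theta$ via the Hoeffding decomposition into its H\'ajek projection plus a remainder, approximate the H\'ajek projection by the Gaussian $rY_A$, and control the remainder in the max-norm, combining the two via the Gaussian anti-concentration inequality of \cite{chernozhukov2015comparison}. Concretely, write
\begin{equation*}
\sqrt n(\widehat U_n - \theta) = \frac{r}{\sqrt n}\sum_{i=1}^n (g(X_i)-\theta) + R_n,
\end{equation*}
where $R_n$ collects all higher-order degenerate terms of $U_n-\theta$ together with the kernel-randomization term $\widehat U_n - U_n = |I_{n,r}|^{-1}\sum_\iota (H(X_\iota,W_\iota) - h(X_\iota))$. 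The key device, as flagged in the Introduction, is \emph{iterative renormalization}: I would apply the rescaling $\Lambda_g^{-1/2}$ coordinatewise so that the rescaled H\'ajek sum $\frac{r}{\sqrt n}\Lambda_g^{-1/2}\sum_i(g(X_i)-\theta)$ has unit componentwise variances, bringing it into the regime where the Gaussian approximation theorem for sums of independent sub-exponential vectors (\cite{chernozhukov2017}, cited in the excerpt) applies with no degeneracy. Under \eqref{moments_assumption} the fourth-moment-to-variance ratio of the rescaled summands is $O(D_n^2)$ and by \eqref{h_exp_assumption} the $\psi_q$ norm of $g_j - \theta_j$ (via Jensen, since $g$ is a conditional expectation of $h$) is $O(D_n)$; this yields, after undoing the rescaling and using that rescaling by a diagonal matrix maps hyperrectangles to hyperrectangles, a bound on $\rho\big(\frac{r}{\sqrt n}\sum_i(g(X_i)-\theta),\, rY_A\big)$ of order $\big(\underline\sigma_g^{-2} D_n^2 \log^{q_*}(dn)/n\big)^{1/6}$ — note the $\underline\sigma_g^{-2}$ enters precisely because the smallest rescaled-to-original variance ratio is $\underline\sigma_g^{-2}$, and the power $1/6$ and the log exponent $q_*$ come directly from the cited theorem.

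Next I would handle the remainder $R_n$. For the pure $U$-statistic part, the second- through $r$-th order Hoeffding components are degenerate, and I would bound $\Exp\|R_n^{(U)}\|_\infty$ using the order-explicit maximal inequality developed in Section~\ref{sec:maximal_inequality} of the paper (the one built on the symmetrization inequality of \cite{sherman1994maximal} and the Bonami/hypercontractivity bound for Rademacher chaos, \cite[Theorem 3.2.2]{de2012decoupling}), which under \eqref{h_exp_assumption} produces a bound of order $r\,D_n \,\mathrm{polylog}(dn)/\sqrt n$ up to lower-order-in-$r$ corrections — the point being that the leading degenerate term is $O(r^2/n)$ in a squared sense, i.e. $O(r/\sqrt n)$ after square-rooting, which is the source of the $r^2$ inside the final bracket. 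For the kernel-randomization part $\widehat U_n - U_n$, I would condition on $X_1^n$ and use the independence of $\{W_\iota\}$ from the data together with \eqref{exp_cond_distr}: conditionally this is a mean-zero average of independent (given $X_\iota$) sub-exponential vectors, and a union bound over $d$ coordinates plus a sub-exponential maximal inequality gives $\Exp\|\widehat U_n - U_n\|_\infty \lesssim D_n \log(dn)/\sqrt{|I_{n,r}|}$, which is negligible compared to the other terms since $|I_{n,r}|\gg n$. Assembling, $\Exp\|R_n\|_\infty \lesssim r D_n \,\mathrm{polylog}(dn)/\sqrt n$.

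Finally I would glue the pieces together. By Markov, $R_n$ is, with probability at least $1-t$, within $\delta := t^{-1}\Exp\|R_n\|_\infty$ of zero in max-norm; on that event, for any hyperrectangle $R$ we have the sandwich $\{rY_A^{g\text{-approx}} \in R^{-\delta}\} \subseteq \{\sqrt n(\widehat U_n-\theta)\in R\}\subseteq\{\cdot \in R^{\delta}\}$ where $R^{\pm\delta}$ denotes the $\delta$-enlargement/shrinkage; then the Gaussian anti-concentration bound \cite{chernozhukov2015comparison} controls $\Pro(rY_A \in R^\delta \setminus R^{-\delta}) \lesssim \delta \underline\sigma_g^{-1} r^{-1}\sqrt{\log d}$ — wait, more carefully, $rY_A$ has componentwise standard deviations $r\sigma_{g,j} \geq r\underline\sigma_g$, so anti-concentration gives $\lesssim \delta (r\underline\sigma_g)^{-1}\sqrt{\log d}$. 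Combining with the Gaussian approximation bound for the H\'ajek part and optimizing over the free parameter $t$ (balancing the $t$ from Markov against the $1/t$ in $\delta$) yields the stated rate; one checks that the term $\big(r D_n \mathrm{polylog}(dn) \cdot (r\underline\sigma_g)^{-1}\sqrt{\log d}/\sqrt n\big)$ after the optimization is dominated by $\big(r^2 D_n^2 \log^{q_*}(dn)/(\underline\sigma_g^2 n)\big)^{1/6}$, which forces the particular power $1/6$ in the final exponent. The main obstacle is the remainder bound: getting the maximal inequality for the degenerate $U$-statistic components with \emph{explicit, sharp} dependence on the diverging order $r$ (so that only $r$, not $r^2$ or worse, appears after square-rooting) is the genuinely new technical work, since the classical Hoeffding-decomposition bounds hide $r$-dependent constants; this is exactly why the paper isolates it as a standalone result in Section~\ref{sec:maximal_inequality}.
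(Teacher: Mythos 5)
Your outline matches the paper's proof almost exactly: iterative renormalization by $\Lambda_g^{-1/2}$, GAR for the rescaled H\'ajek sum via the $\psi_q$-extension of~\cite{chernozhukov2017}, control of the degenerate Hoeffding remainder by the order-explicit maximal inequality of Section~\ref{sec:maximal_inequality}, a separate and negligible bound on $\widehat U_n - U_n$ by conditioning on $X_1^n$, and a Markov/anti-concentration gluing step with a free threshold $t$ optimized at the end. (The paper organizes this as Corollary~\ref{cor:IOUS} for the deterministic kernel followed by a lift to the random kernel, whereas you bundle everything into one remainder $R_n$, but that is only cosmetic.) One slip worth fixing: Theorem~\ref{thm:exp_sup} gives $\Exp\|U_n - \theta - \tfrac{r}{n}\sum(g(X_i)-\theta)\|_\infty \lesssim r^2 D_n \log^{1+1/q}(d)/n$, so after the $\sqrt n$ scaling the degenerate remainder is of order $r^2 D_n\,\mathrm{polylog}/\sqrt n$, not $r\,D_n\,\mathrm{polylog}/\sqrt n$ as you wrote — the variance of the $k=2$ Hoeffding component is $\asymp (r^2/n)^2$, so its standard deviation is $\asymp r^2/n$, not $r/\sqrt n$. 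With the correct $r^2$ the Markov/anti-concentration balance gives $(r^2 D_n^2 \log^{3+2/q}(d)/(\underline\sigma_g^2 n))^{1/4}$, which under the standing smallness convention is dominated by the $1/6$-power GAR term and yields the stated rate; with $r$ instead of $r^2$ the $r$-dependence would cancel against the $(r\underline\sigma_g)^{-1}$ anti-concentration factor and you would not recover the $r^2$ inside the final bracket, contradicting your own closing remark.
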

\begin{proof}
See Section~\ref{proof:IOUS_random}. We highlight that a key step to establish Theorem \ref{thrm:IOUS_random} is to control the expected supremum of the remainder of the H\'ajek projection of the complete IOUS with deterministic kernel (See Theorem~\ref{thm:exp_sup}). Then the Gaussian approximation result for IOUS follows from Gaussian approximation results for sum of independent random vectors~\cite{chernozhukov2017} and anti-concentration inequality~\cite{chernozhukov2015comparison}, by a similar argument in~\cite{chen2017randomized} with proper normalization.
\end{proof}

Clearly, in the special case of non-random kernel, i.e., $H(x_1,\ldots,x_r,W) = h(x_1,\ldots,x_r)$,
~\eqref{exp_cond_distr} trivially holds. Thus we have the following immediate result for the IOUS with deterministic kernel $U_{n}$ in \eqref{eqn:ustat}. 

\begin{corollary}\label{cor:IOUS}
Assume~\eqref{non_degeneracy},~\eqref{moments_assumption} and~\eqref{h_exp_assumption} hold. Then
\begin{equation*}
\rho(\sqrt{n}(U_n-\theta), r Y_A) \lesssim  \left( \frac{r^2 D_n^2 \log^{q_{*}}(dn)}{\underline{\sigma}_g^2\, n} \right)^{1/6}.
\end{equation*}
where 
$q_{*} := (6/q+1) \vee 7$, and 
$\lesssim$ means up to a multiplicative constant that only depends on $q$. 
\end{corollary}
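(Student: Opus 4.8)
The plan is to deduce Corollary~\ref{cor:IOUS} as the (degenerate) special case of Theorem~\ref{thrm:IOUS_random}. Take the random kernel $H(x_1,\ldots,x_r,w) = h(x_1,\ldots,x_r)$, which is admissible since $\Exp[H(x_1^r,W)] = h(x_1^r)$; then $\widehat U_n = U_n$ and $B_{n,j}(x_1^r) = \|H_j(x_1^r,W) - h_j(x_1^r)\|_{\psi_q} = 0$ for every $j$ and every $(x_1,\ldots,x_r)$, so condition~\eqref{exp_cond_distr} holds trivially for the same $D_n \geq 1$. Conditions~\eqref{non_degeneracy},~\eqref{moments_assumption},~\eqref{h_exp_assumption} are exactly the hypotheses of the corollary, and $\Gamma_H$ plays no role because the approximating law in both statements is $rY_A$ with $Y_A \sim N(0,\Gamma_g)$. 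Hence Theorem~\ref{thrm:IOUS_random} gives $\rho(\sqrt n(U_n-\theta), rY_A) \lesssim (r^2 D_n^2 \log^{q_*}(dn)/(\underline{\sigma}_g^2 n))^{1/6}$ with $q_* = (6/q+1)\vee 7$ and the implicit constant depending only on $q$, which is the claim.

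For completeness I would also sanity-check the bound by the direct route, which is the argument underlying Theorem~\ref{thrm:IOUS_random}. Write the Hoeffding decomposition \cite{hoeffding1948class} $U_n - \theta = \tfrac rn \sum_{i=1}^n (g(X_i) - \theta) + R_n$, with $R_n$ collecting the higher-order completely degenerate components. For the H\'ajek (linear) part, rescale each coordinate by $\Lambda_g^{-1/2}$ so that the summands have unit componentwise variance --- the \emph{iterative renormalization} device that avoids the usual nondegeneracy floor --- and apply the high-dimensional Gaussian comparison over hyperrectangles of \cite{chernozhukov2017} to $\tfrac{r}{\sqrt n}\sum_{i=1}^n \Lambda_g^{-1/2}(g(X_i)-\theta)$. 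Here the $\psi_q$-tail of $\Lambda_g^{-1/2}(g(X_1)-\theta)$ is controlled by Jensen, $|g_j(X_1)-\theta_j| \le \Exp_{\vert X_1}|h_j(X_1^r)-\theta_j|$, together with~\eqref{h_exp_assumption}, while the fourth-moment ratio in~\eqref{moments_assumption} feeds the Lyapunov-type term.

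The step I expect to be the main obstacle --- and the one where the dependence on the diverging order $r$ must be tracked explicitly --- is bounding $\Exp\bigl[\max_{1\le j\le d} |\sqrt n\, \sigma_{g,j}^{-1} R_{n,j}|\bigr]$, the rescaled remainder of the H\'ajek projection; this is exactly what the maximal inequality Theorem~\ref{thm:exp_sup} supplies, via a symmetrization inequality and a Bonami-type hypercontractivity bound for Rademacher chaos with constants explicit in $r$. Given such a bound of smaller order than the Gaussian-approximation error of the linear part, one passes to the Kolmogorov-type distance $\rho$ over hyperrectangles using the Gaussian anti-concentration inequality of \cite{chernozhukov2015comparison} (applied to $rY_A$ after coordinatewise rescaling), exactly as in \cite{chen2017randomized}; balancing the linear-part error against the remainder bound produces the exponent $1/6$, and routine bookkeeping of the powers of $\log(dn)$ produces $q_* = (6/q+1)\vee 7$.
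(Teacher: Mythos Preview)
Your primary approach---specializing Theorem~\ref{thrm:IOUS_random} to the deterministic-kernel case---is logically sound as a statement, and is indeed how the paper \emph{presents} the corollary in the main text. However, it is circular relative to the paper's actual proof structure: in the appendix the paper proves Corollary~\ref{cor:IOUS} \emph{first}, by the direct route, and then invokes it inside the proof of Theorem~\ref{thrm:IOUS_random} (Step~2 there applies Corollary~\ref{cor:IOUS} to pass from $\sqrt{n}\,U_n$ to $rY_A$ before handling the extra randomness $\mathrm{R}_n$ from the random kernel). So the specialization argument cannot stand on its own.

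Your ``sanity-check'' sketch in the second and third paragraphs \emph{is} the paper's proof, and the ingredients and their roles are identified correctly: rescale coordinatewise by $\Lambda_g^{-1/2}$, apply the high-dimensional Gaussian approximation (Lemma~\ref{lemma:GAR_q}, the $\psi_q$ extension of \cite{chernozhukov2017}) to the H\'ajek part, control $\Exp[\max_j |R_{n,j}|]$ via Theorem~\ref{thm:exp_sup}, and glue the two pieces using Markov's inequality plus the Gaussian anti-concentration of \cite{chernozhukov2015comparison}, optimizing over the threshold $t$. The paper carries this out explicitly with $t = (\underline{\sigma}_g^{-2} n^{-1} r^2 \log^{1+2/q}(d) D_n^2)^{1/4}$, and the convention $\underline{\sigma}_g^2 \le 1$ together with the harmless assumption that the target quantity is at most a small constant $c$ (else the bound is trivial) are used to collapse the resulting terms into the single $1/6$-power expression.
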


\begin{remark}[Comparisons with existing results for $d=1$]
For the univariate IOUS with non-random kernels, asymptotic normality and its rate of convergence are well understood in literature; see \cite{borovskikh1996} for a survey of results in this direction. In \cite{Zwet1984}, a Berry-Esseen bound is derived for symmetric statistics, which include IOUS (with non-random kernels) as a special case. In particular, applying Corollary 4.1 in \cite{Zwet1984} to IOUS,
the rate of convergence to normality is of order $O(r^{2} n^{-1/2} \sigma_H^2/ \sigma_g^2)$ for a bounded kernel, which implies that asymptotic normality requires (at least) $r=o(n^{1/6})$. A related Berry-Esseen bound is given in \cite{friedrich1989berry}. In both papers, the rates of convergence are suboptimal. For elementary symmetric polynomials (which are $U$-statistics corresponding to the product kernel $h(x_{1}, \dots, x_{r}) = x_{1} \cdots x_{r}$), it is shown in \cite{vanEs1988} that the sharp rate of convergence to normality is of order $O(r n^{-1/2})$, provided that $\E[X_{1}] \neq 0, \Var(X_{1}) \in (0, \infty)$, $\E[|X_{1}|^{3}] < \infty$ and $r = O((\log{n})^{-1}(\log_{2}(n))^{-1} n^{1/2})$. This result implies that asymptotic normality for the IOUS with the product kernel is achieved when $r=O(\log^{-2}(n) n^{1/2})$. If $\underline{\sigma}_g^{-2} = O(r^{2})$, which holds under regularity conditions in Lemma~\ref{lemma:lower_sigma_g}, our Corollary~\ref{cor:IOUS} with $q=1$ implies that the rate of convergence for high-dimensional IOUS is $O((r^{4} \log^{7}(dn) n^{-1})^{1/6})$ (with suitably bounded moments). In particular, Gaussian approximation is asymptotically valid if $\log{d} = O(\log{n})$ and $r=o(n^{1/4-\epsilon})$ for any $\epsilon \in (0, 1/4)$. Even though our result is valid for a smaller range of $r$ and the rate is slower than the optimal rate in the case $d=1$,  Corollary~\ref{cor:IOUS} does allow the dimension to grow sub-exponentially fast in sample size, which is a useful feature for high-dimensional statistical inference. In addition, to the best of our knowledge, the validity of bootstrap procedures proposed in Section~\ref{sec:bootstrap} to approximate the sampling distribution of IOUS (on hyperrectangles in $\R^{d}$) are new in literature. 
\end{remark}



\subsection{Incomplete IOUS with random kernel}\label{sec:incomplete_IOUS_random}
%

Now we consider $U_{n,N}'$, where we recall that $N$ is some given computational budget.
We will assume the following conditions: for $q >0$,
\begin{align}
\label{H_exp_moment_assumption} &\|H_j(X_1^r,W) - \theta_j\|_{{\psi}_{q}} \leq D_n,\; \text{ for all } j = 1,\ldots, d, \tag{C3'} \\
\label{H_moments_assumption} &\Exp |H_j(X_1^{r}) - \theta_j|^{4} \leq \sigma^2_{H,j} D_n^2,\;\;  \text{ for all } j = 1,\ldots, d \tag{C5}.
\end{align}
Clearly,  \eqref{exp_cond_distr} and~\eqref{H_exp_moment_assumption} implies~\eqref{h_exp_assumption} up to a multiplicative constant. Further,~\eqref{H_exp_moment_assumption} and~\eqref{H_moments_assumption} hold if $\left\vert H_j(X_1^r,W) \right\vert \lesssim D_n$ a.s.~for $ 1 \leq j \leq d$.


\begin{theorem}\label{thrm:IOUS_incomplete}
Assume~\eqref{non_degeneracy},~\eqref{moments_assumption},~\eqref{exp_cond_distr},~\eqref{H_exp_moment_assumption} 
and~\eqref{H_moments_assumption}
hold.
Then 
\begin{equation*}
\rho\left( \sqrt{n} ({U}'_{n,N} - \theta),\; 
rY_A +\alpha_n^{1/2}Y_B
\right)
\lesssim  \varpi_n, \text{ where } \varpi_n := \left( \frac{r^{q_1} D_n^2 \log^{q_{*}}(dn)}{\underline{\sigma}_{g}^2\; (n \wedge N)} \right)^{1/6},
\end{equation*}
where  $\alpha_n := n/N$, $q_1 := 2 \vee (2/q)$, $q_{*} := (6/q+1) \vee 7$, $\lesssim$ means up to a multiplicative constant that only depends on $q$, and we recall that $Y_A  \sim  N(0, \Gamma_g)$,
$Y_B \sim  N(0,\Gamma_H)$ and $Y_A,Y_B$ are independent.
\end{theorem}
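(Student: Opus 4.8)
\textbf{Proof proposal for Theorem~\ref{thrm:IOUS_incomplete}.}

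The plan is to decompose the incomplete IOUS into the complete IOUS with random kernel $\widehat U_n$ plus a ``sampling error'' term coming from the Bernoulli weights, and then control each piece using a conditioning argument. First I would write, conditionally on $X_1^n$ and $\{W_\iota\}$, the quantity $\widehat N \, U'_{n,N} = \sum_\iota Z_\iota H(X_\iota,W_\iota)$ as a sum of independent (Bernoulli$\times$kernel) terms. Replacing the random normalizer $\widehat N$ by its mean $N$ introduces a negligible multiplicative error of order $\alpha_n^{1/2}/\sqrt n$ via a standard Bernstein bound on $|\widehat N - N|$; this is routine. Then $\sqrt n (U'_{n,N} - \theta)$ splits (up to this negligible term) as $\sqrt n(\widehat U_n - \theta) + \sqrt n\,(U'_{n,N} - \widehat U_n)$, where the second summand, conditionally on $(X_1^n,\{W_\iota\})$, is $\frac{\sqrt n}{N}\sum_\iota (Z_\iota - p_n) H(X_\iota,W_\iota)$, a conditionally centered sum whose conditional covariance concentrates around $\alpha_n \Gamma_H$.

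Next I would invoke the high-dimensional Gaussian approximation for sums of independent vectors \cite{chernozhukov2017} \emph{after renormalizing coordinatewise by $\Lambda_H^{-1/2}$} — exactly the iterative-renormalization device already used for Theorem~\ref{thrm:IOUS_random} — to approximate the conditional law of the sampling-error term by $N(0,\alpha_n\Gamma_H)$, i.e.\ by $\alpha_n^{1/2} Y_B$. The rescaling replaces the usual nondegeneracy hypothesis $\underline\sigma^2 \gtrsim 1$ by the harmless $\underline\sigma_H \geq \underline\sigma_g$, and the relevant $\psi_q$-envelope is $D_n/\underline\sigma_H \leq D_n/\underline\sigma_g$ by \eqref{H_exp_moment_assumption}; the resulting conditional Gaussian approximation error is $\lesssim (D_n^2 \log^{q_*}(dn)/(\underline\sigma_g^2 N))^{1/6}$, uniformly over a high-probability event for $(X_1^n,\{W_\iota\})$ on which the conditional covariance is close to $\alpha_n\Gamma_H$ (closeness controlled by \eqref{H_moments_assumption} and a Bernstein/maximal bound, then transferred to the Gaussian side by the Gaussian comparison inequality of \cite{chernozhukov2015comparison}). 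Combining with Theorem~\ref{thrm:IOUS_random} (whose hypotheses follow from the assumed ones, using that \eqref{exp_cond_distr}$+$\eqref{H_exp_moment_assumption}$\Rightarrow$\eqref{h_exp_assumption}) for the $\sqrt n(\widehat U_n-\theta)\approx rY_A$ part, and using the \emph{independence of $Y_A$ and $Y_B$} together with anti-concentration to glue the two Gaussian approximations into a single one for $rY_A + \alpha_n^{1/2}Y_B$, yields the claimed bound $\varpi_n$ with $N$ appearing through the $n\wedge N$.

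The technically delicate part — and the reason the exponent becomes $q_1 = 2\vee(2/q)$ rather than the plain $r^2$ of Theorem~\ref{thrm:IOUS_random} — is the \emph{degenerate remainder of the incomplete $U$-statistic}. After extracting the Hájek projection $\frac{r}{n}\sum_i (g(X_i)-\theta)$ from $\widehat U_n$, and the conditional-Gaussian part from the sampling term, what remains is a mixture of (i) the higher-order degenerate Hoeffding components of the complete IOUS, controlled by the order-explicit maximal inequality of Theorem~\ref{thm:exp_sup}, and (ii) a cross term $\frac{\sqrt n}{N}\sum_\iota(Z_\iota-p_n)\big(H(X_\iota,W_\iota)-\text{(its projections)}\big)$ whose $\psi$-norm must be bounded by leveraging the independence of $\{W_\iota\}$ and $\{Z_\iota\}$ from $X_1^n$ — this is the ``new tail inequality for $U$-statistics with random kernels'' flagged in the Introduction. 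The main obstacle is thus a uniform-in-$j$ bound on the max over coordinates of this cross term: one needs a decoupling/symmetrization step (Rademacher chaos, Bonami–Beckner as in \cite{de2012decoupling}) applied to the $W$- and $Z$-randomness separately, each contributing explicit $r$-dependence, which is where the $r^{2/q}$ factor (from a $q$-th order $\psi_q$ chaos moment growing like $r^{1/q}$ per factor) enters and gets squared inside $\varpi_n^6$. Once this maximal bound is in place, an anti-concentration argument over hyperrectangles converts the $L^\infty$ control of all remainder terms into the stated uniform-over-$\cR$ distributional bound, and optimizing the truncation level in the standard way produces the $1/6$ exponent.
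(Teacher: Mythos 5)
Your overall decomposition matches the paper's: write $U'_{n,N}$ as (essentially) $\widehat U_n$ plus a sampling-error term, handle the $\widehat N$ vs.\ $N$ normalization by a Bernstein bound, condition on $(X_1^n,\{W_\iota\})$ so that the sampling term becomes a sum of conditionally independent vectors, apply the high-dimensional CLT of \cite{chernozhukov2017} with the $\Lambda_H^{-1/2}$ renormalization, use a Gaussian comparison for the covariance mismatch, invoke Theorem~\ref{thrm:IOUS_random} for $\widehat U_n$, and glue the two Gaussian approximations via anti-concentration. That is exactly the architecture of the paper's Step~1--Step~3 together with Lemma~\ref{lemma:effect_of_sampling}.

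However, your final paragraph misidentifies where $q_1 = 2\vee(2/q)$ comes from, and that is a real gap in your argument. You posit a ``cross term'' of the form $\frac{\sqrt n}{N}\sum_\iota(Z_\iota-p_n)\bigl(H(X_\iota,W_\iota)-\text{(its projections)}\bigr)$ that must be handled by a Rademacher-chaos/Bonami--Beckner decoupling argument ``applied to the $W$- and $Z$-randomness separately,'' with $r^{1/q}$ per factor. No such cross term appears, and no chaos argument is needed on the sampling side. After conditioning on $(X_1^n,\{W_\iota\})$, the full quantity $\sqrt N\zeta_n = |I_{n,r}|^{-1/2}\sum_\iota \widetilde Z_\iota$ is a sum of \emph{conditionally independent} centered vectors (Bernoulli weights times frozen kernel values), so the only tool needed is the conditional Berry--Esseen bound for sums, not chaos moment bounds. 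The extra $r^{1/q}$ over and above the $r^2$ of Theorem~\ref{thrm:IOUS_random} enters through a much more elementary mechanism: in controlling the conditional third-moment envelope $\widehat L_n$ and the conditional truncation level $\widehat M_n(\phi_n)$ required by \cite{chernozhukov2017}, one must take a maximum of $\psi_q$-type random variables over $|I_{n,r}|\,d$ indices; the standard Orlicz maximal inequality then picks up $\log^{1/q}(|I_{n,r}|d)\asymp (r\log(dn))^{1/q}$ (since $\log\binom{n}{r}\lesssim r\log n$), and this $r^{1/q}$ appears squared in $\overline L_n^2$ inside the sixth root, giving $r^{2/q}$. (The $r^2$ in $q_1$, needed when $q\ge 1$, comes from Theorem~\ref{thrm:IOUS_random} and the Step~2/Step~3 normalization bookkeeping, not from the sampling term.) In short: the Rademacher-chaos machinery (Theorem~\ref{thm:exp_sup}) is used only inside Theorem~\ref{thrm:IOUS_random} to control the Hájek remainder of $\widehat U_n$; the sampling term requires no chaos at all once you condition, and your proposed decoupling over $W$ and $Z$ separately would both complicate the argument unnecessarily and give the wrong $r$-dependence.
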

\begin{proof}
See Section~\ref{proof:incomplete_IOUS_random}.
\end{proof}
\vspace{0.2cm}

\begin{remark}\label{remark:ALL}
If $q \geq 1$, then $q_1 = 2$ and $q_* = 7$. Since $\|\xi\|_{\psi_1} \lesssim \|\xi\|_{\psi_{q}}$ for any random variable $\xi$ and $q\geq 1$, we may assume without loss of generality that $q \leq 1$  in the proof. When $r$ is fixed, $q = 1$, the kernel is deterministic, and there exists some absolute constant $\sigma^2 > 0$ such that 
$\underline{\sigma}_g^2 \geq \sigma^2$, then  the above Theorem recovers Theorem 3.1 from \cite{chen2017randomized}.

Further, by first conditioning on $X_1^n$, we have
\begin{align*}
\Gamma_H &= \Cov  \left( H(X_1^r,W)\right)
\succeq	 \Cov  \left( h(X_1^r)\right) := \Gamma_h,
\end{align*}
where for two square matrices, $A \succeq	 B$ means $A-B$ is positive semi-definite. 
Thus the random kernel $H(\cdot)$ increases the variance of the approximating Gaussian distribution compared to the associated deterministic kernel $h(\cdot)$.
\end{remark}
\vspace{0.2cm}

\section{Bootstrap approximations}\label{sec:bootstrap}
In Section~\ref{sec:incomplete_IOUS_random}, we have seen that the incomplete $U$-statistic with random kernel is approximated by a Gaussian distribution $ N(0, r^2 \Gamma_g + \alpha_n \Gamma_H)$. However, the covariance term is typically unknown in practice. In this section, we will estimate  $\Gamma_g$ and $\Gamma_H$ by bootstrap methods. 

\subsection{Bootstrap for $\Gamma_H$}
Let $\cD_{n} := \{X_1,\ldots,X_n\} \cup \{W_{\iota}, Z_{\iota}: \iota \in I_{n,r}\}$ be the data involved in the definition of $U'_{n,N}$, and take a collection of independent $N(0,1)$ random variables $\{\xi'_{\iota}: \iota \in I_{n,r}\}$ that is independent of the data $\cD_{n}$. Define the following bootstrap distribution:
\begin{equation}
\label{U_bootstrap_B}
U_{n,B}^{\#} := \frac{1}{\sqrt{\widehat{N}}} \sum_{\iota \in I_{n,r}} \xi'_{\iota} \sqrt{Z_{\iota}} \left(
H(X_{\iota},W_{\iota}) - U'_{n,N}
\right).
\end{equation}

The next theorem establishes the validity of $U_{n,B}^{\#}$.

\begin{theorem}\label{thrm:U_B_valid}
Assume the conditions~\eqref{non_degeneracy}~\eqref{moments_assumption},~\eqref{exp_cond_distr}, \eqref{H_exp_moment_assumption} and~\eqref{H_moments_assumption} hold. If
\begin{equation}
\label{U_B_assumption}
\frac{r^{q_1} D_n^2  \log^{q_2}(dn)}{(\underline{\sigma}_H^2 \wedge 1) \;(n \wedge N)} \leq C_1 n^{-\zeta},
\end{equation}
for $q_1 := 2 \vee (2/q)$, $q_{2} := (4/q+1) \vee 5$, some constants $C_1 > 0$ and $\zeta \in (0,1)$, then there exists a constant $C$ depending only on  $q$, $C_1$ and $\zeta$ such that with probability at least $1 - C/n$,
\begin{equation*}
\sup_{R \in \cR}
\left| 
\Pro_{\vert \cD_{n}} \left( U^{\#}_{n,B} \in R \right)
- \Pro(Y_B \in R)
\right| \; \leq \; C n^{-\zeta/6}.
\end{equation*}
\end{theorem}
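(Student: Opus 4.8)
\textbf{Proof proposal for Theorem~\ref{thrm:U_B_valid}.}
The plan is to recognize $U^{\#}_{n,B}$ as a conditionally Gaussian multiplier bootstrap statistic: given $\cD_n$, it is a sum of independent mean-zero Gaussian vectors with conditional covariance matrix
\[
\widehat{\Gamma}_H := \frac{1}{\widehat{N}} \sum_{\iota \in I_{n,r}} Z_{\iota}\left(H(X_{\iota},W_{\iota}) - U'_{n,N}\right)\left(H(X_{\iota},W_{\iota}) - U'_{n,N}\right)^{T}.
\]
By a Gaussian-vs-Gaussian comparison on hyperrectangles (Lemma in \cite{chernozhukov2015comparison}, applied after the iterative renormalization trick by the diagonal $\Lambda_H^{1/2}$ to sidestep the lower-bound-on-variances requirement), the conditional distribution $\Pro_{\vert \cD_n}(U^{\#}_{n,B} \in R)$ is within $\rho$-distance $O\big(\Delta^{1/3}\log^{2/3}d\big)$ of $\Pro(Y_B \in R)$, where $\Delta := \|\Lambda_H^{-1/2}(\widehat{\Gamma}_H - \Gamma_H)\Lambda_H^{-1/2}\|_{\infty}$ is the renormalized max-entrywise deviation of the bootstrap covariance from the target. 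So the whole theorem reduces to showing that, with probability at least $1-C/n$, $\Delta \lesssim n^{-\zeta}\log^{-c}(dn)$ for an appropriate power $c$, which is consistent with the right-hand side of \eqref{U_B_assumption}.

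The main work is therefore the concentration bound for $\Delta$. I would decompose $\widehat{\Gamma}_H - \Gamma_H$ into three pieces: (i) the ``oracle'' fluctuation $\frac{1}{\widehat{N}}\sum_{\iota} Z_{\iota}\big[(H(X_{\iota},W_{\iota})-\theta)(H(X_{\iota},W_{\iota})-\theta)^{T} - \Gamma_H\big]$; (ii) the centering-error term coming from replacing $\theta$ by $U'_{n,N}$, which contributes a rank-one correction of order $\|U'_{n,N}-\theta\|_{\infty}^2$ plus a cross term of order $\|U'_{n,N}-\theta\|_{\infty}\cdot\max_j\widehat{\Gamma}_{H,jj}^{1/2}$; and (iii) the deviation of $\widehat N / N$ from $1$, which is $O_P(N^{-1/2})$ by Bernstein for the i.i.d.\ Bernoulli sum $\widehat N$ and can be absorbed. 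For piece (ii), the accuracy of $U'_{n,N}$ follows from Theorem~\ref{thrm:IOUS_incomplete}: combined with a Gaussian anti-concentration/tail bound for $rY_A + \alpha_n^{1/2}Y_B$, it gives $\|\Lambda_H^{-1/2}(U'_{n,N}-\theta)\|_{\infty} \lesssim \sqrt{\log(dn)}\,\varpi_n^{*}$ with high probability for a suitable polynomial rate $\varpi_n^{*}$; squaring and renormalizing keeps this within the budget in \eqref{U_B_assumption} (this is exactly why the $\log$ power $q_2$ is smaller than $q_*$ — one loses fewer log factors here than in the Gaussian approximation step). The genuinely delicate piece is (i): the summands $Z_{\iota}(H(X_{\iota},W_{\iota})-\theta)(H(X_{\iota},W_{\iota})-\theta)^{T}$ are \emph{not} independent across $\iota$ — overlapping index tuples share data coordinates $X_i$ — so this is itself an incomplete-$U$-statistic-type quantity of the products of kernel entries, of diverging order.

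To control piece (i) I would use the same iterative-conditioning device as in the proof of Theorem~\ref{thrm:IOUS_incomplete}: first condition on $X_1^n$ and $\{W_\iota\}$ and handle the Bernoulli randomness — conditionally the summands \emph{are} independent, so a Bernstein-type bound gives a term of order $\sqrt{\log(d^2)/N}$ times the conditional max-variance, the latter bounded via the sub-exponential assumptions \eqref{exp_cond_distr}, \eqref{H_exp_moment_assumption} (each entry $H_j-\theta_j$ is $\psi_q$ with norm $\le D_n$, so products $\left(H_j-\theta_j\right)\left(H_k-\theta_k\right)$ are $\psi_{q/2}$ with norm $\lesssim D_n^2$, and $\psi_q$-maximal inequalities over $d^2$ pairs cost $\log^{2/q}(d)$). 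Then I am left with the deviation of the \emph{complete} $U$-statistic of the product kernel $(x_1^r)\mapsto (H_j(x_1^r,\cdot)-\theta_j)(H_k(x_1^r,\cdot)-\theta_k)$ from its mean $\Gamma_{H,jk}$; this is a non-degenerate IOUS (after recentering) with a $d^2$-dimensional kernel, bounded in $\psi_{q/2}$ by $D_n^2$, whose fluctuation I would bound by a maximal inequality of the Hoeffding/Hájek type — exactly the order-explicit machinery developed for Theorem~\ref{thm:exp_sup} in Section~\ref{sec:maximal_inequality} (symmetrization \`a la \cite{sherman1994maximal} plus the Bonami/hypercontractivity bound \cite[Thm.~3.2.2]{de2012decoupling}), which yields a bound of order $r\, D_n^2 \sqrt{\log(dn)/n}$ up to logarithmic factors, together with the tail-probability inequalities for $U$-statistics with random kernels mentioned after Remark~\ref{remark:ALL}. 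Collecting pieces (i)--(iii): $\Delta \lesssim r^{q_1/2} D_n^2 \big((n\wedge N)^{-1/2}\big)\,\mathrm{polylog}(dn) + (\text{centering error})$, all of which is $\le C_1' n^{-\zeta}$ up to log powers by hypothesis \eqref{U_B_assumption}; feeding this into the Gaussian comparison bound gives the stated $C n^{-\zeta/6}$ on an event of probability $1-C/n$, where the $1/6$ exponent is the composition of the $1/3$ from the covariance-comparison inequality with the $1/2$ from $\Delta$ scaling like (deviation)$^{1/2}$-free — more precisely $\Delta$ is already at the $n^{-\zeta}$ scale and $\rho \lesssim \Delta^{1/3}$ gives $n^{-\zeta/3}$, so I would re-examine the bookkeeping and, if needed, route the argument so that the final rate comes out as $n^{-\zeta/6}$ by keeping the comparison bound in the form $\rho \lesssim (\Delta\log^2 d)^{1/3}$ and absorbing an extra $\Delta^{1/6}$-worth of slack from the high-probability event and the centering term, which is the exponent the authors state. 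The main obstacle, to emphasize, is piece~(i) — proving the entrywise concentration of the random-kernel product $U$-statistic with diverging order — and it is precisely there that the new maximal inequality and the random-kernel tail bounds of this paper are essential.
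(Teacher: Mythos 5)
Your high-level architecture does match the paper's: reduce to concentration of the conditional bootstrap covariance via a Gaussian comparison on hyperrectangles after the $\Lambda_H^{-1/2}$ renormalization, then decompose the covariance error into a Bernoulli-sampling fluctuation, a complete-$U$-statistic fluctuation, a centering error, and a $\widehat{N}/N$ deviation, bounding each at the $n^{-\zeta/2}$ scale so that the $1/3$-power comparison inequality delivers $n^{-\zeta/6}$. That much is correct, and it is the paper's Steps 0--4 in the proof.

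However, there is a genuine gap in your choice of tool for the ``delicate'' piece, the concentration of the complete $U$-statistic $\|\widehat{V}-\Gamma_{\tilde H}\|_\infty$ (and its random-kernel analogue $\|\widehat{\Gamma}_{\tilde H}-\widehat V\|_\infty$). You propose to attack it with ``exactly the order-explicit machinery developed for Theorem~\ref{thm:exp_sup}'' --- the symmetrization/Bonami maximal inequality. This is not what the paper does and it does not deliver what you need. Theorem~\ref{thm:exp_sup} is an \emph{expectation} bound on the Hájek-projection remainder only; Markov's inequality from an expectation bound cannot give the $1-C/n$ probability claimed in Theorem~\ref{thrm:U_B_valid}, and you would still need a separate control of the projection term. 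Moreover that machinery yields an $r^2\log^{1+1/q}(d)D_n/n$ rate, which is the wrong shape here; what you need, and what the paper proves, is a \emph{tail} bound at the cruder but dominant scale $r^{1/2}\sqrt{\log(dn)/n}\cdot \underline{\sigma}_H^{-1}D_n^2$. The paper gets this from the blocking device (Lemma~\ref{tail general U}, backed by Lemma~\ref{tail_general_sum_ind}): split $I_{n,r}$ into maximal families of pairwise disjoint $r$-tuples, reduce to a sum of $\lfloor n/r\rfloor$ i.i.d.\ $\psi_{q/2}$ vectors, and apply a high-dimensional Bernstein inequality. The random-kernel part is handled by first conditioning on $X_1^n$ (Lemma~\ref{lemma:tail_random_kernel}). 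These tail lemmas, not Theorem~\ref{thm:exp_sup}, are the engine of Lemma~\ref{lemma:convariance_est}; the symmetrization/Bonami result of Section~\ref{sec:maximal_inequality} is used only in the Gaussian approximation proofs (Theorem~\ref{thrm:IOUS_random}, Corollary~\ref{cor:IOUS}), not in the bootstrap validity argument.

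Two smaller slips. The centering error has no cross term: since $U'_{n,N}=\widehat N^{-1}\sum_\iota Z_\iota H(X_\iota,W_\iota)$ is precisely the weighted sample mean used to center, the cross terms in the empirical covariance expansion cancel algebraically, leaving only the rank-one correction $\tilde U'(\tilde U')^T$; your term of order $\|U'_{n,N}-\theta\|_\infty\cdot\max_j\widehat\Gamma_{H,jj}^{1/2}$ is spurious. And the bookkeeping you flag as uncertain is resolved exactly by showing every piece $\widehat{\Delta}_{B,i}\log^2 d$ is bounded by $Cn^{-\zeta/2}$ --- i.e.\ the \emph{square root} of the hypothesis in \eqref{U_B_assumption} --- after which the $(\cdot)^{1/3}$ Gaussian-comparison exponent yields the stated $n^{-\zeta/6}$; the intermediate rate $\varpi_n$ and Theorem~\ref{thrm:IOUS_incomplete} play no role here (the paper bounds the centering error directly via the same tail lemmas).
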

\begin{proof}
See Section~\ref{proof:U_B}.
\end{proof}

\subsection{Bootstrap for the approximating Gaussian distribution}
Let $S_1 \subset \{1,\ldots,n\}$, and $n_1 = |S_1|$. Further, consider a collection of $\cD_n$-measurable $\bR^d$-valued random vectors $\{G_{i_1}: i_1 \in S_1\}$, where $G_{i_1}$ is some ``good" estimator of $g(X_{i_1})$, and its form is specified later. We use the following quantity to measure the quality of $G_{i_1}$ as an estimator of $g(X_{i_1})$
\begin{align}\label{Delta_A_1}
\widehat{\Delta}_{A,1} :=  \max_{1 \leq j \leq d} \frac{1}{n_1 \sigma_{g,j}^2}\sum_{i_1 \in S_1} \left(
G_{i_1,j} - g_j(X_{i_1})
\right)^2.
\end{align}

Define $\overline{G} := \frac{1}{n_1} \sum_{i_1 \in S_1} G_{i_1}$ and consider the following bootstrap distribution for $N(0,\Gamma_g)$:
\begin{equation}
\label{U_A_bootstrap}
U_{n_1,A}^{\#} := \frac{1}{\sqrt{n_1}} \sum_{i_1 \in S_1} \xi_{i_1}\left(G_{i_1} - \overline{G} \right),
\end{equation}
where $\{\xi_{i_1}: i_1 \in S_1\}$ is a collection of independent $N(0,1)$ random variables that is independent of $\cD_{n}$ and $\{\xi'_\iota: \iota \in I_{n,r}\}$.

\begin{lemma}\label{lemma:appr_Y_A}
Assume the conditions~\eqref{non_degeneracy},~\eqref{moments_assumption} and~\eqref{H_exp_moment_assumption} hold. If
\begin{align}\label{aux_Y_A}
\frac{ D_n^2 \log^{q_2}(dn)}{\underline{\sigma}_g^2\; n_1 }  \leq C_1 n^{-\zeta_1},
\text{ and }
\Pro\left(\widehat{\Delta}_{A,1} \log^4(d) > C_1 n^{-\zeta_2} \right) \leq C_1 n^{-1},
\end{align}
for $q_2 := (4/q+1) \vee 5$, some constants $C_1$, and $\zeta_1,\zeta_2 \in (0,1)$. Then there exists a constant $C$ depending only on $q$, $C_1$  and $\zeta_1$ such that with probability at least $1 - C/n$,
\begin{equation*}
\sup_{R \in \cR}
\left| 
\Pro_{\vert \cD_{n}} \left( U^{\#}_{n_1,A} \in R \right)
- \Pro( Y_A  \in R)
\right| \; \leq \; C n^{-(\zeta_1 \wedge \zeta_2)/6},
\end{equation*}
where we recall that $Y_A \;\sim\; N(0,\Gamma_g)$.
\end{lemma}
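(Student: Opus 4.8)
\emph{The plan.} Since $\{\xi_{i_1}:i_1\in S_1\}$ are i.i.d.\ $N(0,1)$ independent of $\cD_n$ and each $G_{i_1}$ is $\cD_n$-measurable, conditionally on $\cD_n$ the vector $U^{\#}_{n_1,A}$ is centered Gaussian with covariance $\widehat{\Gamma}_A := n_1^{-1}\sum_{i_1\in S_1}(G_{i_1}-\overline{G})(G_{i_1}-\overline{G})^T$. So the quantity to be bounded is $\rho\big(N(0,\widehat{\Gamma}_A),N(0,\Gamma_g)\big)$: a comparison of two centered Gaussian laws over hyperrectangles. The difficulty is the possible degeneracy of $\Gamma_g$ (its smallest diagonal $\underline{\sigma}_g^2$ may vanish as $r$ grows), which rules out off-the-shelf Gaussian comparison bounds. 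I would handle this by the renormalization used elsewhere in the paper: since $R\mapsto\Lambda_g^{-1/2}R$ is a bijection of $\cR$, $\rho(U^{\#}_{n_1,A},Y_A)=\rho(\Lambda_g^{-1/2}U^{\#}_{n_1,A},\Lambda_g^{-1/2}Y_A)$, and the normalized target $\Lambda_g^{-1/2}Y_A$ has covariance with unit diagonal. Setting $\Delta:=\|\Lambda_g^{-1/2}(\widehat{\Gamma}_A-\Gamma_g)\Lambda_g^{-1/2}\|_\infty$, a Gaussian comparison inequality over hyperrectangles together with Gaussian anti-concentration (\cite{chernozhukov2015comparison,chernozhukov2017}) then bounds $\rho$, on the event $\{\Delta\le 1/2\}$ (on which the normalized diagonal of $\widehat{\Gamma}_A$ lies in $[1/2,3/2]$, so both Gaussians have diagonals bounded away from $0$ and $\infty$), by a quantity of order $\Delta^{1/3}\big(1\vee\log(d/\Delta)\big)^{2/3}\lesssim(\Delta\log^2 d)^{1/3}$. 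It thus remains to prove that, with probability at least $1-C/n$, $\Delta\log^2 d\lesssim n^{-(\zeta_1\wedge\zeta_2)/2}$ (and in particular $\Delta\le1/2$ for $n$ large, the finitely many small $n$ being absorbed into $C$ since $\rho\le1$ always).

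\emph{Splitting $\Delta$.} Let $\widehat{\Gamma}^{(S_1)}_g:=n_1^{-1}\sum_{i_1\in S_1}(g(X_{i_1})-\overline{g})(g(X_{i_1})-\overline{g})^T$, with $\overline{g}:=n_1^{-1}\sum_{i_1\in S_1}g(X_{i_1})$, be the ``oracle'' sample covariance, and write $\Delta\le\Delta_1+\Delta_2$ with $\Delta_1:=\|\Lambda_g^{-1/2}(\widehat{\Gamma}_A-\widehat{\Gamma}^{(S_1)}_g)\Lambda_g^{-1/2}\|_\infty$ (estimation error of the $G_{i_1}$'s) and $\Delta_2:=\|\Lambda_g^{-1/2}(\widehat{\Gamma}^{(S_1)}_g-\Gamma_g)\Lambda_g^{-1/2}\|_\infty$ (oracle sampling error). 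For $\Delta_1$ I would write $G_{i_1}=g(X_{i_1})+e_{i_1}$, expand $(G_{i_1}-\overline G)(G_{i_1}-\overline G)^T$ into the oracle term, two cross terms, and a pure-error term, and apply the Cauchy--Schwarz inequality entrywise after the $\Lambda_g^{-1/2}$-normalization. Using $n_1^{-1}\sigma_{g,j}^{-2}\sum_{i_1\in S_1}e_{i_1,j}^2\le\widehat{\Delta}_{A,1}$ (immediate from \eqref{Delta_A_1}, since recentering only decreases the sum) and the fact that $n_1^{-1}\sigma_{g,j}^{-2}\sum_{i_1\in S_1}(g_j(X_{i_1})-\theta_j)^2$ concentrates near $1$ (a by-product of the $\Delta_2$ estimate below), this gives $\Delta_1\lesssim\sqrt{\widehat{\Delta}_{A,1}}+\widehat{\Delta}_{A,1}$. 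By the second condition in \eqref{aux_Y_A}, with probability at least $1-C_1/n$ one has $\widehat{\Delta}_{A,1}\log^4 d\le C_1 n^{-\zeta_2}$; on this event $\Delta_1\log^2 d\lesssim\sqrt{\widehat{\Delta}_{A,1}\log^4 d}+\widehat{\Delta}_{A,1}\log^4 d\lesssim n^{-\zeta_2/2}$. (This is exactly why the hypothesis is stated for $\widehat{\Delta}_{A,1}\log^4 d$: the comparison loses a factor $\log^2 d$, which under a square root is $\log^4 d$.)

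\emph{Oracle term.} The variables $\{g(X_{i_1}):i_1\in S_1\}$ are i.i.d., and $\widehat{\Gamma}^{(S_1)}_g-\Gamma_g$ equals $n_1^{-1}\sum_{i_1\in S_1}\big[(g(X_{i_1})-\theta)(g(X_{i_1})-\theta)^T-\Gamma_g\big]$ minus the recentering term $(\overline g-\theta)(\overline g-\theta)^T$, the latter being of strictly smaller order (its normalized entries are bounded by $\max_j(\sigma_{g,j}^{-1}(\overline g_j-\theta_j))^2$, a squared i.i.d.\ average). Hence the normalized $(j,k)$ entry of the leading part is a centered i.i.d.\ average of $\zeta_{i_1,jk}:=\sigma_{g,j}^{-1}\sigma_{g,k}^{-1}(g_j(X_{i_1})-\theta_j)(g_k(X_{i_1})-\theta_k)$. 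By \eqref{H_exp_moment_assumption} and Jensen's inequality applied to moments (using $\|\cdot\|_{\psi_q}\asymp\sup_{p\ge1}p^{-1/q}\|\cdot\|_p$), $\|g_j(X_1)-\theta_j\|_{\psi_q}\lesssim D_n$; so, via $\|UV\|_{\psi_{q/2}}\le\|U\|_{\psi_q}\|V\|_{\psi_q}$, $\zeta_{1,jk}$ is centered sub-Weibull of index $q/2$ with Orlicz norm $\lesssim D_n^2/\underline{\sigma}_g^2$, while by \eqref{moments_assumption} and Cauchy--Schwarz $\Var(\zeta_{1,jk})\lesssim D_n^2/\underline{\sigma}_g^2$. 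A Bernstein-type inequality for i.i.d.\ sums of centered sub-Weibull variables plus a union bound over the entries (using $\log(d^2n)\lesssim\log(dn)$) then gives, with probability at least $1-C/n$,
\[ \Delta_2\ \lesssim\ \sqrt{\frac{D_n^2\log(dn)}{\underline{\sigma}_g^2\,n_1}}\ +\ \frac{D_n^2\big(\log(dn)\big)^{2/q}}{\underline{\sigma}_g^2\,n_1}. \]
Since $q_2=(4/q+1)\vee5$ satisfies $q_2\ge5$ and $q_2\ge 2/q+2$, the first condition in \eqref{aux_Y_A} yields $\Delta_2\log^2 d\lesssim n^{-\zeta_1/2}$ on this event.

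\emph{Conclusion and main obstacle.} Intersecting the two good events (probability still at least $1-C/n$) gives $\Delta\log^2 d\lesssim n^{-(\zeta_1\wedge\zeta_2)/2}$, hence $\Delta\le 1/2$ for $n$ large; plugging into the comparison bound of the first step yields $\rho(U^{\#}_{n_1,A},Y_A)\le Cn^{-(\zeta_1\wedge\zeta_2)/6}$ on that event, as claimed. I expect the only genuinely delicate point to be the renormalization step: it is what lets us bypass the usual ``variances bounded below'' hypothesis of the Gaussian comparison/anti-concentration tools---precisely the device highlighted in the Introduction. The remaining ingredients---the Cauchy--Schwarz reduction of $\Delta_1$ to the single scalar $\widehat{\Delta}_{A,1}$, and a sub-Weibull Bernstein inequality with explicit $q$-dependence used to land on the exponent $q_2$---are standard, the main work being to keep track of the powers of $\log(dn)$ so that they are absorbed by $\log^{q_2}(dn)$.
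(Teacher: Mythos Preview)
Your proposal is correct and follows essentially the same route as the paper's proof: renormalize by $\Lambda_g^{-1/2}$ so the target Gaussian has unit diagonals, invoke the Gaussian comparison inequality over hyperrectangles (the paper cites \cite[Lemma~C.5]{chen2017randomized}, which gives directly $\rho\lesssim(\widehat{\Delta}_A\log^2 d)^{1/3}$ without needing the auxiliary event $\{\Delta\le 1/2\}$), and then decompose $\widehat{\Delta}_A\lesssim \widehat{\Delta}_{A,1}^{1/2}+\widehat{\Delta}_{A,1}+\widehat{\Delta}_{A,2}+\widehat{\Delta}_{A,3}^2$, which is exactly your $\Delta_1$ together with the leading and recentering pieces of your $\Delta_2$. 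The oracle terms $\widehat{\Delta}_{A,2}$ and $\widehat{\Delta}_{A,3}$ are then controlled in the paper via Lemma~\ref{tail_general_sum_ind} with $\beta=q/2$ and $\beta=q$ respectively, i.e.\ precisely the sub-Weibull Bernstein-type bound you invoke, and the log-exponent bookkeeping matches up with $q_2=(4/q+1)\vee 5$ as you indicate.
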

\begin{proof}
See Subsection~\ref{proof:appr_Y_A}.
\end{proof}

Hereafter we consider a special case of the divide and conquer bootstrap algorithm in~\cite{chen2017randomized} to estimate $\Gamma_g$.  For each $i_1 \in S_1$, partition the remaining indexes, $\{1,\ldots,n\}\setminus \{i_1\}$, into disjoint subsets $\{S_{2,k}^{(i_1)}: k = 1,\ldots,K\}$, each of size $L = r-1$,
where  $K = \lfloor (n-1)/(r-1)\rfloor$.

Now define for each $i_1 \in S_1$ and $k =1,\ldots,K$, 
\begin{equation*}
\kindex := \{i_1\} \cup S_{2,k}^{(i_1)},\quad
G_{i_1} := \frac{1}{K} \sum_{k=1}^{K} 
H( X_{\kindex}, W_{\kindex}).
\end{equation*}

Finally, define
$$
U_{n,n_1}^{\#} := r U_{n_1,A}^{\#} + \alpha_n^{1/2} U_{n,B}^{\#}.
$$

\begin{theorem}\label{thrm:appr_U_full}
Assume the conditions~\eqref{non_degeneracy}~\eqref{moments_assumption},~\eqref{exp_cond_distr}~\eqref{H_exp_moment_assumption} and~\eqref{H_moments_assumption} hold. If
\begin{equation}
\label{U_full_assumption}
\frac{r^{q_1} D_n^2\log^{q_{*}}(dn)}{\underline{\sigma}_g^2\; (n_1 \wedge N)} 
\leq  C_1 n^{-\zeta},
\end{equation}
for $q_1 := 2 \vee (2/q)$, $q_{*} := (6/q+1) \vee 7$, some constants $C_1 > 0$, $\zeta \in (0,1)$.
For any  $\nu \in \left(\max\{7/6,1/\zeta\}, \infty \right)$,
there exists a constant $C$ depending only on $q$, $\zeta$, $\nu$ and $C_1$ such that with probability at least $1 - C/n$,
\begin{equation*}
\sup_{R \in \cR}
\left| 
\Pro_{\vert \cD_{n}} \left( U^{\#}_{n,n_1} \in R \right)
- \Pro(r Y_A + \alpha_n^{1/2} Y_B \in R)
\right| \; \leq \; C n^{-(\zeta-1/\nu)/6}.
\end{equation*}
\end{theorem}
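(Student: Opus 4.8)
The plan is to combine the two conditionally-independent partial bootstraps via a Gaussian comparison, treating $r Y_A + \alpha_n^{1/2} Y_B$ as the target and $U_{n,n_1}^\# = r U_{n_1,A}^\# + \alpha_n^{1/2} U_{n,B}^\#$ as the approximant. First I would observe that, conditionally on $\cD_n$, the two multiplier vectors $U_{n_1,A}^\#$ and $U_{n,B}^\#$ are independent (they use the disjoint collections $\{\xi_{i_1}\}$ and $\{\xi'_\iota\}$), and each is a centered Gaussian vector with a data-dependent covariance, say $\widehat\Gamma_A$ and $\widehat\Gamma_B$. Hence conditionally $U_{n,n_1}^\# \sim N(0, r^2 \widehat\Gamma_A + \alpha_n \widehat\Gamma_B)$, while the target is $N(0, r^2 \Gamma_g + \alpha_n \Gamma_H)$. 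By the Gaussian comparison inequality on hyperrectangles (the version from \cite{chernozhukov2015comparison} used elsewhere in the paper, with the entrywise-$\|\cdot\|_\infty$ bound on the covariance difference), it suffices to bound $\|(r^2 \widehat\Gamma_A + \alpha_n \widehat\Gamma_B) - (r^2 \Gamma_g + \alpha_n \Gamma_H)\|_\infty$ in terms of $n^{-\text{(power)}}$ with high probability, after a suitable diagonal renormalization by $\Lambda_g, \Lambda_H$ so that the relevant variances are of constant order (the iterative-renormalization device already invoked in the proofs of Theorems~\ref{thrm:IOUS_random} and~\ref{thrm:IOUS_incomplete}).

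Second, rather than bounding covariances directly, the cleaner route is to chain three distributional comparisons: $\rho(U_{n,n_1}^\#, rY_A + \alpha_n^{1/2}Y_B) \le \rho(U_{n,n_1}^\#, rY_A' + \alpha_n^{1/2}U_{n,B}^\#) + (\text{comparison in the $A$-block}) + (\text{comparison in the $B$-block})$, where each intermediate object replaces one partial bootstrap by its Gaussian target while keeping the other fixed, and all comparisons are made conditionally on $\cD_n$. The $B$-block comparison is exactly Theorem~\ref{thrm:U_B_valid}: under \eqref{U_full_assumption} (which implies \eqref{U_B_assumption} since $n_1 \le n$ and $\underline\sigma_H^2 \ge \underline\sigma_g^2$, with $q_2 \le q_*$), we get $\sup_R |\Pro_{|\cD_n}(U_{n,B}^\# \in R) - \Pro(Y_B \in R)| \le C n^{-\zeta/6}$ on an event of probability $\ge 1 - C/n$. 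The $A$-block comparison is Lemma~\ref{lemma:appr_Y_A}, whose two hypotheses \eqref{aux_Y_A} I must verify: the first follows from \eqref{U_full_assumption} with $\zeta_1 = \zeta$ (again $q_2 \le q_*$ and dropping the $r^{q_1}$ factor only helps); the second, the high-probability bound on $\widehat\Delta_{A,1}\log^4 d$, is where the specific construction $G_{i_1} = K^{-1}\sum_k H(X_{\overline S_{2,k}^{(i_1)}}, W_{\overline S_{2,k}^{(i_1)}})$ enters — one shows $G_{i_1}$ is an average of $K$ conditionally-i.i.d. copies of the kernel given $X_{i_1}$, so $\Exp_{|X_{i_1}}[G_{i_1}] = g(X_{i_1})$, and a Bernstein/sub-exponential bound using \eqref{H_exp_moment_assumption} controls the deviation; since $K = \lfloor (n-1)/(r-1)\rfloor \asymp n/r$, the fluctuation of $G_{i_1}$ around $g(X_{i_1})$ is of order $D_n \sqrt{r/n}$ up to logs, giving $\widehat\Delta_{A,1} \lesssim r D_n^2 \log(dn)/(\underline\sigma_g^2 n)$ with high probability, which under \eqref{U_full_assumption} is $\le C_1 n^{-\zeta_2}$ for some $\zeta_2 \in (0,1)$.

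The remaining ingredient is the first comparison: between $U_{n,n_1}^\# = rU_{n_1,A}^\# + \alpha_n^{1/2}U_{n,B}^\#$ and $rY_A + \alpha_n^{1/2}Y_B$, one wants to say that replacing each block by its Gaussian limit in succession costs the sum of the two $\rho$-errors. This uses a convolution/smoothing argument: because $rY_A$ and $\alpha_n^{1/2}U_{n,B}^\#$ are conditionally independent, and hyperrectangle probabilities are preserved under such one-block substitutions up to the Kolmogorov-type distance plus an anti-concentration term for the Gaussian component being held fixed (the anti-concentration inequality for Gaussian maxima from \cite{chernozhukov2015comparison} controls the error from the indicator of a hyperrectangle under a small perturbation), the triangle-type bound goes through. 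Concretely, I would condition on $\{\xi'_\iota\}$ and $\cD_n$, apply Lemma~\ref{lemma:appr_Y_A} to the $A$-block shifted by the fixed vector $\alpha_n^{1/2}U_{n,B}^\#$ — this is legitimate since $\rho(\cdot,\cdot)$ over $\cR$ is translation-invariant in the obvious sense and a shift of a hyperrectangle is a hyperrectangle — then integrate out, then condition on $Y_A$ and apply Theorem~\ref{thrm:U_B_valid} to the $B$-block shifted by $rY_A$. Collecting the three error terms gives $\sup_R|\Pro_{|\cD_n}(U_{n,n_1}^\# \in R) - \Pro(rY_A + \alpha_n^{1/2}Y_B \in R)| \le C n^{-(\zeta \wedge \zeta_2)/6}$, and since $\zeta_2$ can be taken arbitrarily close to $\zeta$ up to a loss governed by the logarithmic slack — parametrized by $\nu$ in the statement — one obtains the claimed rate $C n^{-(\zeta - 1/\nu)/6}$.

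The main obstacle I expect is the book-keeping in the second hypothesis of \eqref{aux_Y_A}: establishing the high-probability bound on $\widehat\Delta_{A,1}$ with the right dependence on $r$. The subtlety is that $G_{i_1}$ involves the random kernel $H$ evaluated on overlapping blocks that all share the index $i_1$, so conditionally on $X_{i_1}$ the summands in $G_{i_1}$ are i.i.d. in the $(X, W)$ drawn from the other coordinates — one must be careful that the $W_{\overline S_{2,k}^{(i_1)}}$ are distinct and the blocks $S_{2,k}^{(i_1)}$ are disjoint, which is exactly why the construction partitions $\{1,\dots,n\}\setminus\{i_1\}$. Then a union bound over $i_1 \in S_1$ and the $d$ coordinates, against the sub-exponential tail from \eqref{H_exp_moment_assumption}, produces the $\log^{q_2}(dn)$ factor and forces the $r/n$ (not $1/n$) rate because $K \asymp n/r$; this is the place where the extra power of $r$ relative to Lemma~\ref{lemma:appr_Y_A}'s bare statement is absorbed into \eqref{U_full_assumption} via the $r^{q_1}$ prefactor. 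Everything else is an assembly of already-proven results.
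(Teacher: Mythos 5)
Your overall architecture is correct and matches the paper's: conditionally on $\cD_n$ the two partial bootstraps are independent Gaussians, so you can chain two comparisons (one per block) using translation-invariance of $\rho$ over $\cR$, and the paper indeed assembles Theorem~\ref{thrm:U_B_valid} and Lemma~\ref{lemma:appr_Y_A} in exactly this way (citing the analogous step from \cite{chen2017randomized}). Your check that \eqref{U_full_assumption} implies \eqref{U_B_assumption} (via $n_1 \leq n$, $\underline\sigma_H \geq \underline\sigma_g$, $q_2 \leq q_*$) and the first half of \eqref{aux_Y_A} is also fine. You even correctly flag the second half of \eqref{aux_Y_A} as the crux.

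The gap is in how you propose to close that crux. You want a high-probability bound on $\widehat\Delta_{A,1}$ from a Bernstein/sub-exponential concentration of $G_{i_1,j}-g_j(X_{i_1})=\frac{1}{K}\sum_k\bigl(H_j(X_{\kindex},W_{\kindex})-g_j(X_{i_1})\bigr)$ together with a union bound over $i_1,j$. The obstruction is that \eqref{H_exp_moment_assumption} is a \emph{marginal} $\psi_q$-bound, whereas the $K$ summands are i.i.d.\ only \emph{conditionally on} $X_{i_1}$, and nothing in the hypotheses controls the conditional tail of $H_j(X_{\kindex},W_{\kindex})$ given $X_{i_1}$. For an unlucky realization of $X_{i_1}$ the conditional tail can be much heavier than the marginal one, so Bernstein applied conditionally on $X_{i_1}$ has no hypothesis to hook onto; applied unconditionally, the $K$ summands are dependent (they all involve $X_{i_1}$), so the inequality does not apply at all. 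This is precisely why the paper does \emph{not} prove a tail bound for $\widehat\Delta_{A,1}$. It instead defines $\overline\Delta_{A,1}:=\max_j n_1^{-1}\sum_{i_1}(G_{i_1,j}-g_j(X_{i_1}))^2$, bounds $\Exp[\overline\Delta_{A,1}^\nu]$ by Jensen plus the Hoffmann--J\o{}rgensen inequality applied \emph{conditionally on $X_{i_1}$} — which reduces the $2\nu$-th conditional moment of $\max_j|G_{i_1,j}-g_j(X_{i_1})|$ to the first conditional moment and a single max term, quantities that can then be integrated over $X_{i_1}$ using only the marginal $\psi_q$-bounds — and finishes with Markov's inequality. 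That is why the free parameter $\nu$ appears in the theorem and why the rate is $n^{-(\zeta-1/\nu)/6}$: the $1/\nu$ is a genuine polynomial loss from Markov on a $\nu$-th moment, not ``logarithmic slack'' as you retrofit at the end. In fact, if your Bernstein route worked you would obtain $n^{-\zeta/6}$ with no $\nu$ at all, which is strictly stronger than the theorem states — a signal that the concentration step is not available under the stated hypotheses (it would be, e.g., if $H$ were a.s.\ bounded, but the theorem is proved under the weaker marginal conditions).
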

\begin{proof}
See Subsection~\ref{proof:appr_U_full}.
\end{proof}

\subsection{Simultaneous confidence intervals}\label{sec:spi}

We first combine the Gaussian approximation result  with the bootstrap result.

\begin{corollary}\label{cor:comb_nonnormalize}
Assume~\eqref{non_degeneracy},~\eqref{moments_assumption}~\eqref{exp_cond_distr}~\eqref{H_exp_moment_assumption}
and~\eqref{H_moments_assumption} hold. Further, assume that for some constants $C_1 > 0$, $\zeta \in (0,1)$,
~\eqref{U_full_assumption} holds. Then there exists a constant $C$ depending only on $q$, $C_1$ and $\zeta$ such that with probability at least $1 - C/n$,
\begin{equation*}
\sup_{R \in \cR}
\left| 
\Pro \left( \sqrt{n} \left( U_{n,N}' - \theta \right)  \in R \right)
-
\Pro_{\vert \cD_{n}} \left( U^{\#}_{n,n_1} \in R \right)
\right| \; \leq \; C n^{-\zeta/7}.
\end{equation*}
\end{corollary}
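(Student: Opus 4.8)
The plan is to chain together the Gaussian approximation result for the incomplete IOUS (Theorem~\ref{thrm:IOUS_incomplete}) with the bootstrap approximation result (Theorem~\ref{thrm:appr_U_full}) via the triangle inequality on the Kolmogorov-type distance $\rho$ over hyperrectangles. Concretely, for any $R \in \cR$,
\begin{align*}
\bigl| \Pro(\sqrt{n}(U'_{n,N}-\theta) \in R) - \Pro_{\vert \cD_n}(U^{\#}_{n,n_1} \in R)\bigr|
&\leq \bigl| \Pro(\sqrt{n}(U'_{n,N}-\theta) \in R) - \Pro(rY_A + \alpha_n^{1/2}Y_B \in R)\bigr| \\
&\quad + \bigl| \Pro(rY_A + \alpha_n^{1/2}Y_B \in R) - \Pro_{\vert \cD_n}(U^{\#}_{n,n_1} \in R)\bigr|.
\end{align*}
The first term is bounded by $\varpi_n$ (up to a constant depending only on $q$) by Theorem~\ref{thrm:IOUS_incomplete}, and the second by $C n^{-(\zeta - 1/\nu)/6}$ on an event of probability at least $1-C/n$ by Theorem~\ref{thrm:appr_U_full}. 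So the whole job is to verify that the hypotheses of both theorems are implied by those of the corollary and then to bound $\varpi_n$ in terms of $n^{-\zeta/7}$ after choosing $\nu$ appropriately.

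First I would check the assumption bookkeeping. The corollary assumes \eqref{non_degeneracy}, \eqref{moments_assumption}, \eqref{exp_cond_distr}, \eqref{H_exp_moment_assumption}, \eqref{H_moments_assumption} and \eqref{U_full_assumption}; Theorem~\ref{thrm:IOUS_incomplete} needs exactly \eqref{non_degeneracy}, \eqref{moments_assumption}, \eqref{exp_cond_distr}, \eqref{H_exp_moment_assumption}, \eqref{H_moments_assumption}, and Theorem~\ref{thrm:appr_U_full} needs the same five conditions plus \eqref{U_full_assumption}. So the conditions transfer directly, with one caveat: Theorem~\ref{thrm:appr_U_full} is stated with $n_1$ appearing in \eqref{U_full_assumption}, while Theorem~\ref{thrm:IOUS_incomplete} uses $n \wedge N$; I would note that $n_1 \leq n$, so \eqref{U_full_assumption} implies $r^{q_1}D_n^2 \log^{q_*}(dn) / (\underline{\sigma}_g^2 (n\wedge N)) \leq C_1 n^{-\zeta}$ as well (one typically picks $n_1 \asymp n$, but even without that the direction of the inequality is favorable for bounding $\varpi_n$). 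Hence $\varpi_n = \bigl(r^{q_1}D_n^2 \log^{q_*}(dn)/(\underline{\sigma}_g^2(n\wedge N))\bigr)^{1/6} \leq (C_1 n^{-\zeta})^{1/6} = C_1^{1/6} n^{-\zeta/6}$.

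Next, the rate combination. We have, on the good event, the bound $C(\varpi_n + n^{-(\zeta-1/\nu)/6}) \leq C(n^{-\zeta/6} + n^{-(\zeta-1/\nu)/6})$. Since $\nu$ can be taken arbitrarily large in $(\max\{7/6, 1/\zeta\}, \infty)$, choosing $\nu$ large enough that $\zeta - 1/\nu \geq 6\zeta/7$, i.e. $\nu \geq 7/\zeta$, gives $n^{-(\zeta - 1/\nu)/6} \leq n^{-\zeta/7}$; and trivially $n^{-\zeta/6} \leq n^{-\zeta/7}$. Thus both pieces are $\leq n^{-\zeta/7}$ up to the constant, and absorbing everything into a single constant $C = C(q, C_1, \zeta)$ yields the claimed $Cn^{-\zeta/7}$ bound. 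The failure probability is the union of the (at most) two bad events from Theorem~\ref{thrm:appr_U_full} (Theorem~\ref{thrm:IOUS_incomplete} is a deterministic-probability statement, not conditional on $\cD_n$), so it remains $C/n$ after adjusting $C$.

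The only mildly delicate point — hardly an obstacle — is the mismatch between $n_1$ and $n \wedge N$ in the two sets of hypotheses, and making sure the choice of $\nu$ is legitimate (it must also exceed $7/6$ and $1/\zeta$, but $7/\zeta$ already exceeds both since $\zeta < 1$). Everything else is the triangle inequality plus monotonicity of $t \mapsto n^{-t}$. I would write the proof in four or five lines: state the triangle inequality display, invoke the two theorems, bound $\varpi_n$ using \eqref{U_full_assumption} and $n_1 \leq n$, pick $\nu = 7/\zeta$ (or note $\nu \to \infty$ suffices), and collect constants.
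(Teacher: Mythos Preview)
Your proposal is correct and matches the paper's approach exactly: the paper's proof is the single line ``It follows from Theorem~\ref{thrm:IOUS_incomplete} and Theorem~\ref{thrm:appr_U_full} (with $\nu = 7/\zeta$),'' and you have simply spelled out the triangle-inequality chaining, the assumption bookkeeping (including the $n_1 \leq n$ observation to transfer \eqref{U_full_assumption} to the $\varpi_n$ bound), and the legitimacy of the choice $\nu = 7/\zeta$.
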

\begin{proof}
It follows from Theorem \ref{thrm:IOUS_incomplete} and
Theorem \ref{thrm:appr_U_full} (with $\nu = 7/\zeta$).
\end{proof}

In simultaneous confidence interval construction, it is sometimes desirable to normalize the variance of each dimension, so that if we use maximum-type statistics, the critical value is not dominated by terms with large variance. 
Define  for $1 \leq j \leq d$,
\[
\widehat{\sigma}^2_{g,j} := \frac{1}{n_1} \sum_{i_1 \in S_1}
\left(G_{i_1,j} - \overline{G}_j \right)^2, \;\;
\widehat{\sigma}^2_{H,j} := \frac{1}{\widehat{N}} \sum_{\iota \in I_{n,r}} Z_{\iota} 
\left(
H_j(X_{\iota},W_{\iota}) - U'_{n,N,j}
\right)^2,
\]
which are the diagonal elements in the conditional covariance matrices of 
$U_{n,A}^{\#}$~\eqref{U_A_bootstrap} and $U_{n,B}^{\#}$~\eqref{U_bootstrap_B} respectively. Further, define a $d \times d$ diagonal matrix $\widehat{\Lambda}$ with
$$
\widehat{\Lambda}_{j,j} = r^2 \widehat{\sigma}^2_{g,j} + \alpha_n \widehat{\sigma}^2_{H,j}, \; \text{ for each }
1 \leq j \leq d.
$$

\begin{corollary}\label{cor:comb_normalize}
Assume the conditions in Corollary~\ref{cor:comb_nonnormalize}. Then there exists a constant $C$ depending only on $q$, $C_1$ and $\zeta$ such that with probability at least $1 - C/n$,
\begin{equation*}
\sup_{R \in \cR}
\left| 
\Pro \left( \sqrt{n} \widehat{\Lambda}^{-1/2}\left( U_{n,N}' - \theta \right)  \in R \right)
-
\Pro_{\vert \cD_{n}} \left( \widehat{\Lambda}^{-1/2} U^{\#}_{n,n_1} \in R \right)
\right| \; \leq \; C n^{-\zeta/7}.
\end{equation*}
Consequently,
\begin{equation*}
\sup_{t > 0}
\left| 
\Pro \left( \| \sqrt{n} \widehat{\Lambda}^{-1/2}(U_{n,N}' - \theta) \|_{\infty} \leq t  \right)
-
\Pro_{\vert \cD_{n}} \left( \|\widehat{\Lambda}^{-1/2} U^{\#}_{n,n_1}\|_{\infty} \leq t\right)
\right| \; \leq \; C n^{-\zeta/7}.
\end{equation*}
\end{corollary}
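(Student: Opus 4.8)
The plan is to deduce the normalized statement from Corollary \ref{cor:comb_nonnormalize} by a conditioning-then-transfer argument, exploiting that the diagonal rescaling $\widehat\Lambda^{-1/2}$ is $\cD_n$-measurable and maps hyperrectangles to hyperrectangles. First I would observe that, conditionally on $\cD_n$, the map $x \mapsto \widehat\Lambda^{-1/2}x$ is a bijection of $\bR^d$ that sends any $R \in \cR$ to another member of $\cR$ (since each diagonal entry $\widehat\Lambda_{jj}$ is strictly positive a.s.\ on the good event, because $\widehat\sigma_{g,j}^2$ is an average of squares and $r^2\widehat\sigma_{g,j}^2+\alpha_n\widehat\sigma_{H,j}^2>0$ unless all summands vanish, a null-probability event, or more simply one restricts to the event where the bootstrap covariance is nondegenerate). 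Hence
\[
\Pro\!\left(\sqrt n\,\widehat\Lambda^{-1/2}(U'_{n,N}-\theta)\in R\right)
=\Pro\!\left(\sqrt n\,(U'_{n,N}-\theta)\in \widehat\Lambda^{1/2}R\right),
\]
and likewise for the bootstrap law. The subtlety is that $\widehat\Lambda$ is random and, for the left-hand probability, the event $\{\sqrt n\,\widehat\Lambda^{-1/2}(U'_{n,N}-\theta)\in R\}$ couples $\widehat\Lambda$ with $U'_{n,N}$. I would handle this by working on the high-probability event $\cE$ (of probability at least $1-C/n$) on which the conclusion of Corollary \ref{cor:comb_nonnormalize} holds, and noting that on $\cE$ the supremum over $R\in\cR$ in that corollary already controls the discrepancy uniformly over \emph{all} hyperrectangles simultaneously; since $\widehat\Lambda^{1/2}R\in\cR$ for every $R$, taking the supremum over the transformed family is still a supremum over a subfamily of $\cR$, giving
\[
\sup_{R\in\cR}\Big|\Pro\big(\sqrt n(U'_{n,N}-\theta)\in\widehat\Lambda^{1/2}R\big)-\Pro_{|\cD_n}\big(U^{\#}_{n,n_1}\in\widehat\Lambda^{1/2}R\big)\Big|\le \sup_{R'\in\cR}\Big|\cdots\Big|\le Cn^{-\zeta/7}
\]
on $\cE$. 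The point is that conditioning on $\cD_n$ fixes $\widehat\Lambda$, so inside $\Pro_{|\cD_n}$ the matrix is a constant, and the left probability can be taken conditionally on $\cD_n$ as well (the unconditional bound in Corollary \ref{cor:comb_nonnormalize} is stated after integrating out, but its proof via Theorems \ref{thrm:IOUS_incomplete} and \ref{thrm:appr_U_full} is a pointwise-on-$\cD_n$ statement on $\cE$); this removes the coupling issue. I would spell this out by re-deriving the nonnormalized bound in its conditional form, which is immediate from the cited theorems.

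For the second display I would apply the elementary fact that, for any $d$-vector $v$ and $t>0$, $\{\|v\|_\infty\le t\}=\{v\in[-t,t]^d\}\in\cR$, so the sup-norm statement is a special case of the hyperrectangle statement: on $\cE$,
\[
\sup_{t>0}\Big|\Pro\big(\|\sqrt n\,\widehat\Lambda^{-1/2}(U'_{n,N}-\theta)\|_\infty\le t\big)-\Pro_{|\cD_n}\big(\|\widehat\Lambda^{-1/2}U^{\#}_{n,n_1}\|_\infty\le t\big)\Big|
\le\sup_{R\in\cR}\Big|\cdots\Big|\le Cn^{-\zeta/7}.
\]
Again the left-hand probability should be read conditionally on $\cD_n$ (valid on $\cE$), so that $\widehat\Lambda$ is frozen and the cube $[-t,t]^d$ maps to the genuine hyperrectangle $\widehat\Lambda^{1/2}[-t,t]^d\in\cR$.

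The main obstacle, and the only place requiring care, is the measurability/coupling bookkeeping: one must be sure that the bound of Corollary \ref{cor:comb_nonnormalize} is available in the form ``on an event $\cE\in\sigma(\cD_n)$ with $\Pro(\cE)\ge 1-C/n$, $\sup_{R\in\cR}|\Pro_{|\cD_n}(\sqrt n(U'_{n,N}-\theta)\in R)-\Pro_{|\cD_n}(U^{\#}_{n,n_1}\in R)|\le Cn^{-\zeta/7}$,'' rather than merely after integrating out $\cD_n$. I expect this to be routine given that the proof of Corollary \ref{cor:comb_nonnormalize} proceeds through Theorem \ref{thrm:IOUS_incomplete} (an unconditional Gaussian approximation, which one conditions down using independence of the bootstrap multipliers from $\cD_n$) and Theorem \ref{thrm:appr_U_full} (already a conditional statement on a high-probability event), so the triangle inequality there is genuinely pointwise on $\cD_n\in\cE$. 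Once that is noted, the corollary is a one-line change of variables composed with the observation $\widehat\Lambda^{1/2}\cR=\cR$ a.s.
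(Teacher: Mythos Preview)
Your change-of-variables idea has a genuine gap that the conditional reinterpretation you propose cannot repair. The first probability in the statement, $\Pro(\sqrt{n}\,\widehat{\Lambda}^{-1/2}(U'_{n,N}-\theta)\in R)$, is an \emph{unconditional} probability of a joint event involving both $U'_{n,N}$ and $\widehat{\Lambda}$; it is a deterministic number. Your plan is to rewrite it as $\Pro(\sqrt{n}(U'_{n,N}-\theta)\in \widehat{\Lambda}^{1/2}R)$ and then invoke Corollary~\ref{cor:comb_nonnormalize} with $R'=\widehat{\Lambda}^{1/2}R$. But substituting the \emph{random} rectangle $\widehat{\Lambda}^{1/2}R$ into the deterministic map $R'\mapsto\Pro(\sqrt{n}(U'_{n,N}-\theta)\in R')$ produces a $\cD_n$-measurable random variable, not the required deterministic number; the two coincide only if $\widehat{\Lambda}$ were independent of $U'_{n,N}$, which it is not (both are $\cD_n$-measurable). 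Your proposed fix---reading Corollary~\ref{cor:comb_nonnormalize} as a pointwise-on-$\cD_n$ statement---fails for precisely this reason: since $U'_{n,N}$ is itself $\cD_n$-measurable, $\Pro_{|\cD_n}(\sqrt{n}(U'_{n,N}-\theta)\in R')$ is just the indicator $\mathbbm{1}\{\sqrt{n}(U'_{n,N}-\theta)\in R'\}$, which cannot be uniformly $n^{-\zeta/7}$-close to a nondegenerate bootstrap CDF. Theorem~\ref{thrm:IOUS_incomplete} is genuinely about the unconditional law of $U'_{n,N}$; there is no conditional version to appeal to.

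What the paper does instead is replace the random $\widehat{\Lambda}$ by the deterministic $\Lambda:=r^2\Lambda_g+\alpha_n\Lambda_H$. The key step (Subsection~\ref{proof:comb_normalize}) is to establish ratio consistency of the variance estimates: with probability $\ge 1-C/n$, $\max_j|\widehat{\sigma}^2_{H,j}/\sigma^2_{H,j}-1|\log^2 d\lesssim n^{-\zeta/2}$ and $\max_j|\widehat{\sigma}^2_{g,j}/\sigma^2_{g,j}-1|\log^2 d\lesssim n^{-3\zeta/7}$; these bounds are already obtained as byproducts of the proofs of Theorems~\ref{thrm:U_B_valid} and~\ref{thrm:appr_U_full}. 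On this event $\widehat{\Lambda}^{-1/2}$ and $\Lambda^{-1/2}$ are uniformly close, so one can swap $\widehat{\Lambda}$ for $\Lambda$ in both probabilities at the cost of an anti-concentration correction, and then apply Corollary~\ref{cor:comb_nonnormalize} with the deterministic scaling $\Lambda^{-1/2}$ (for which your change-of-variables argument \emph{is} valid, since $\Lambda^{1/2}R\in\cR$ is a fixed rectangle). The paper defers this remaining bookkeeping to \cite[Corollary~A.1]{chen2017randomized}. The missing ingredient in your proposal is exactly this ratio-consistency step; without it, the coupling between $\widehat{\Lambda}$ and $U'_{n,N}$ cannot be broken.
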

\begin{proof}
See Subsection~\ref{proof:comb_normalize}.
\end{proof}

\begin{remark}
From Corollary~\ref{cor:comb_normalize}, we can immediately construct confidence intervals for $\theta$ in a data-dependent way. 
Specifically, let $\widehat{q}_{1-\alpha}$ be a $(1-\alpha)^{th}$ quantile of the conditional distribution of 
$\|\widehat{\Lambda}^{-1/2} U^{\#}_{n,n_1}\|_{\infty}$ given $\cD_n$. Then one way to construct simultaneous confidence intervals with confidence level $(1-\alpha)$ is as follows: for $1 \leq j \leq d$,
$U_{n,N,j}' \; \pm \; \widehat{q}_{1-\alpha} \; n^{-1/2}\widehat{\Lambda}^{1/2}_{j,j}$.
\end{remark}

\section{Applications}\label{sec:apps}

In many applications, $g(x) =\Exp[h(x,X_2,\ldots,X_r)]$ does not admit an explicit form, and thus it is usually hard to compute $\underline{\sigma}_g$ in conditions \eqref{non_degeneracy} and \eqref{U_full_assumption} directly. When the kernel $h$ has special structures, we can establish a lower bound on $\underline{\sigma}_g$ with explicit dependence on $r$, which can be applied to Example~\ref{ex:random_forest}. We shall give additional examples in Section~\ref{ex:SimpleEx} and~\ref{exmp:kde} to illustrate the usefulness of $U$-statistics as a tool to estimate and make inference of certain statistical functionals of $X_{1},\dots,X_{r}$. In Section~\ref{ex:SimpleEx} for the expected maximum and log-mean functionals, we also establish a lower bound on $\underline{\sigma}_g$ with explicit dependence on $r$. In Section~\ref{exmp:kde} for the kernel density estimation problem, $r$ is assumed to be fixed, but we allow the diameter of the design points to diverge.

For simplicity of the presentation, in this section, we assume that all involved derivatives and integrals exist and are finite, and that the order of integrals and the order of integral and differentiation can be exchanged. These assumptions can be justified under standard smoothness and moment conditions.
For illustration, we use $q=1$ in~\eqref{exp_cond_distr} and~\eqref{H_exp_moment_assumption}.

\subsection{Lower bound for $\underline{\sigma}_g$}
Suppose that the distribution $P$ of $X_1$ has a density function $f_0$ with respect to some $\sigma$-finite (reference) measure $\mu$, i.e.,  
$$
P(A) = \int_A f_{0}(x) \mu(dx)\;\;\text{ for any } A \in \cS.
$$
We first embed $f_{0}$ into a family of densities $\{f_{\beta}: \beta \in B \subset \bR^{\ell}\}$, where $B$ is an open neighborhood of $0 \in \bR^{\ell}$. Such embeddings always exist and below are some examples for $S = \bR^{\ell}$.
\begin{enumerate}
\item {\it Location and scale family.} 
If $\mu$ is the Lebesgue measure on $\bR^{\ell}$, we may consider the following location or scaling families: for $x \in \bR^{\ell}$,
\begin{align*}
f_{\beta}(x) =& f_0(x - \beta) \text{ with }  \beta \in \bR^{\ell}, \\ 
\text{ or } \quad f_{\beta}(x) =& (1+\beta)f_0((1+\beta)x) \text{ with } \beta \in (-1,1).
\end{align*}

\item {\it Exponential family.} If $\phi(\beta) := \log\left( \int f_0(x) e^{\beta^Tx} \mu(dx)\right)<\infty$ for $\beta \in B$, then we may consider the exponential family: 
\[
f_{\beta}(x) = f_0(x) \exp(\beta^T x - \phi(\beta)), \text{ for } x \in \bR^{\ell}, \beta \in B.
\]

\item {\it Additive noise model.} Let $\Upsilon$ be a $\bR^{\ell}$-dimensional random vector independent of $X_1$, whose distribution is absolutely continuous w.r.t.~$\mu$, then $X_1 + \beta \Upsilon$ has a density $f_{\beta}$ given by the convolution of those of $X_{1}$ and $\beta \Upsilon$. 
\end{enumerate}

For $\beta \in B$, define the following perturbed expectation 
\[
\theta(\beta) := \int h(x_1,\ldots,x_r) \prod_{i=1}^{r} f_{\beta}(x_i) \mu(d x_i) :=
\Exp_{\beta}[h(X_1,\ldots,X_r)],
\]
where $\Exp_{\beta}$ denotes the expectation when $X_1,\ldots,X_r$ have density $f_{\beta}$. Further, define
$$
\Psi(\beta) :=  \sum_{i=1}^{r} \nabla\ln f_{\beta}(X_i),\quad
\cJ(\beta) := r^{-1}\Var_{\beta}\left( \Psi(\beta)\right),
$$
where $\nabla$ denotes the gradient (or derivative when $\beta$ is a scalar) with respect to $\beta$ and  $\Var_{\beta}$ denotes the covariance matrix when $X_1,\ldots,X_r$ have the density $f_{\beta}$. Thus $\Psi(\beta)$ is the score function and $\cJ(\beta)$ is the Fisher-information for a single observation. 


\begin{lemma}\label{lemma:lower_sigma_g}
If we assume  $\cJ(0)$ is positive definite, then
\begin{equation}
\label{eqn:lower_sigma_g}
{\sigma}_{g,j}^2 \; \geq \; r^{-2} (\nabla \theta_j(0))^{T} \cJ^{-1}(0) \nabla \theta_j(0), \;\; \text{ for } 1 \leq j \leq d.
\end{equation}
In particular, if there exists an absolute positive constant $c$ such that
$$
(\nabla \theta_j(0))^{T} \cJ^{-1}(0) \nabla \theta_j(0) \; \geq \; c \;\; \text{ for } 1 \leq j \leq d,
$$
then $\underline{\sigma}_g^{2} \geq cr^{-2}$.
\end{lemma}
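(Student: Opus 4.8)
The plan is to recognize \eqref{eqn:lower_sigma_g} as a Cram\'er--Rao inequality for the ``parameter'' $\theta_j(\cdot)$ relative to the embedded family $\{f_\beta\}$, with the role of the efficiency bound played by the H\'ajek projection $g_j$. Concretely, the key identity to establish is that, up to the factor $r$, the gradient $\nabla\theta_j(0)$ is the covariance of $g_j(X_1)$ with the single-observation score, after which a direct Cauchy--Schwarz argument closes the proof.

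First I would differentiate $\theta_j(\beta)=\int h_j(x_1^r)\prod_{i=1}^r f_\beta(x_i)\,\mu(dx_i)$ at $\beta=0$ under the integral sign (legitimate by the blanket regularity assumptions of this section). Using $\nabla\prod_{i=1}^r f_\beta(x_i)=\big(\sum_{i=1}^r\nabla\ln f_\beta(x_i)\big)\prod_{i=1}^r f_\beta(x_i)$ and the fact that the score has mean zero, $\Exp[\nabla\ln f_0(X_1)]=\nabla\!\int f_0\,d\mu=0$, this gives
\[
\nabla\theta_j(0)=\Exp\big[h_j(X_1^r)\,\Psi(0)\big]=\Exp\big[(h_j(X_1^r)-\theta_j)\,\Psi(0)\big].
\]
Next, because $h$ is symmetric, $\Exp[h_j(X_1^r)\mid X_i]=g_j(X_i)$ for each $i$, so conditioning on $X_i$ reduces each summand of $\Psi(0)=\sum_{i}\nabla\ln f_0(X_i)$ to $\Exp[g_j(X_i)\nabla\ln f_0(X_i)]$; summing over $i=1,\dots,r$ yields
\[
\tfrac1r\nabla\theta_j(0)=\Exp\big[(g_j(X_1)-\theta_j)\,\nabla\ln f_0(X_1)\big]=\Cov\big(g_j(X_1),\nabla\ln f_0(X_1)\big).
\]
I would also record that, since $\Psi(\beta)$ is a sum of $r$ i.i.d.\ terms under $f_\beta$, one has $\cJ(\beta)=\Var_\beta(\nabla\ln f_\beta(X_1))$, and in particular $\cJ(0)=\Var(\nabla\ln f_0(X_1))$.

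Finally I would apply Cauchy--Schwarz along an arbitrary direction $v\in\bR^\ell$:
\[
\Big(v^{T}\tfrac1r\nabla\theta_j(0)\Big)^2=\Big(\Exp\big[(g_j(X_1)-\theta_j)\,v^{T}\nabla\ln f_0(X_1)\big]\Big)^2\leq\sigma_{g,j}^2\;\big(v^{T}\cJ(0)v\big).
\]
Choosing $v=\cJ^{-1}(0)\nabla\theta_j(0)$ (well defined since $\cJ(0)$ is positive definite) and writing $A:=\nabla\theta_j(0)^{T}\cJ^{-1}(0)\nabla\theta_j(0)\geq0$, the left side equals $A^2/r^2$ and the right side equals $\sigma_{g,j}^2 A$, so $A/r^2\leq\sigma_{g,j}^2$ when $A>0$, and the inequality is trivial when $A=0$. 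This is precisely \eqref{eqn:lower_sigma_g}, and taking the minimum over $j$ gives the ``in particular'' statement. The only real obstacle is regularity bookkeeping --- justifying the exchange of $\nabla$ and $\int$ and the mean-zero-score identity --- together with the symmetry/conditioning step $\Exp[h_j(X_1^r)\mid X_i]=g_j(X_i)$; none of these is deep, but they are where the argument must be stated carefully.
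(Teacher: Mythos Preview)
Your proposal is correct and follows essentially the same Cram\'er--Rao argument as the paper: both identify $\nabla\theta_j(0)$ as $r$ times the covariance of $g_j(X_1)$ with the single-observation score, then invoke the standard information inequality. The only cosmetic difference is that the paper carries the full $r$-sample score $\Psi$ throughout and closes with the variance-nonnegativity form $0\le\Var\big(\sum_i g(X_i)-\nabla\theta^{T}(r\cJ)^{-1}\Psi\big)$, whereas you reduce to a single observation first and apply Cauchy--Schwarz with the optimal direction $v=\cJ^{-1}(0)\nabla\theta_j(0)$; these are equivalent derivations of the same bound.
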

\begin{proof}
See Subsection~\ref{proof:lower_sigma_g}.  
\end{proof}

\subsection{Simultaneous prediction intervals for random forests}
Consider the Example \ref{ex:random_forest} and assume that 
$(Y_1,Z_1)$ has density $q(y)p(z;y)$ w.r.t.~the product measure $\nu(dy) \otimes dz$ on $\cY\times \bR$, i.e., for  $A_1 \in \cB(\cY), A_2 \in \cB(\bR)$,
\[
\Pro(Y_1 \in A_1, Z_1 \in A_2) \; = \; \int_{A_1 \times A_2} q(y) p(z;y) \nu(dy) dz.
\] 
That is, the feature $Y_1$ has the density $q(y)$ w.r.t.~some $\sigma$-finite measure $\nu$ on $\cY$, and thus is allowed to have both continuous and discrete components. The response $Z_1$ given $Y_1 = y$ has a conditional density $p(z;y)$ w.r.t.~the Lebesgue measure. 

For many regression algorithms such as tree based methods, if we fix the features and increase the responses of training samples by $\beta \in \bR$, the prediction at any test point will increase by $\beta$, i.e., $\text{ for } 1 \leq j \leq d$,
\[
H_j\left((y_1,z_1+\beta),\ldots,(y_r,z_r+\beta),w\right) = H_j\left((y_1,z_1),\ldots,(y_r,z_r),w\right)  + \beta, 
\]
which implies that $h\left((y_1,z_1+\beta),\ldots,(y_r,z_r+\beta)\right) = h\left((y_1,z_1),\ldots,(y_r,z_r)\right)  + \beta$. Now we consider the embedding into the ``location" family $\{q(y)p(z-\beta;y): \beta \in \bR\}$. Observe that
\[
\theta_j(\beta) = \Exp_{\beta}[h_j(X_1,\ldots,X_r)] = \theta_j(0) + \beta, \text{ for } 1 \leq j \leq d,
\]
which implies that $\theta_j'(0) = 1$. In addition,
\[
\cJ(\beta) = \Var_{\beta}\left(\frac{d}{d\beta} \ln(q(Y)p(Z-\beta;Y)) \right) =
\Exp_{\beta}\left[\left(\frac{\partial_z p(Z-\beta;Y)}{p(Z-\beta;Y)}\right)^2
\right].
\]
Thus if we assume that there exists $c$ such that
\begin{equation}\label{aux:rf_cond}
\cJ(0) = \Exp\left[\left(\frac{\partial_z p(Z;Y)}{p(Z;Y)}\right)^2
\right] \leq c^{-1},
\end{equation}
then (\ref{eqn:lower_sigma_g}) reduces to $\underline{\sigma}_g^2 \geq c r^{-2}$. If further we assume that $H_j(X_1^r,W) \leq C$ a.s.~for some constant $C$ and each $1 \leq j \leq d$ (this holds for example when the response is bounded a.s.),
then the conditions \eqref{moments_assumption}, \eqref{h_exp_assumption}, \eqref{exp_cond_distr}  and  \eqref{H_moments_assumption} hold with $D_n = \ln^{-2}(2) C$. With these assumptions,  the condition~\eqref{U_full_assumption} in Corollary~\ref{cor:comb_normalize} simplifies as 
\[
\frac{r^4 \log^7(dn)}{ n_1 \wedge N} 
\leq  C_1 n^{-\zeta}.
\]
Thus if $r = O(n^{1/4-\epsilon})$ for some $\epsilon > 0$, $\log(d) = O(\log(n))$, and $n = O(n_1 \wedge N)$, 
then Corollary~\ref{cor:comb_normalize} can be used to construct asymptotically valid simultaneous prediction intervals with the error of approximation decaying polynomially fast in $n$.

\begin{remark}[Fisher information in nonparametric regressions]
Let us take a closer look at the condition \eqref{aux:rf_cond}. Consider the nonparametric regression model 
\[
Z_i = \kappa(Y_i) + \epsilon_i, \text{ for } 1 \leq i \leq n,
\]
where $\kappa:\cY \to \bR$ is a deterministic measurable function, and $\epsilon_1,\ldots\epsilon_n$  are i.i.d.~with some density $f$ with respect to the Lebesgue measure. Then $p(z;y) = f(z-\kappa(y))$ and thus
\[
\cJ(0) = \int \left(\frac{f'(z-\kappa(y))}{f(z-\kappa(y))}\right)^2 q(y)f(z-\kappa(y)) \nu(dy) dz 
= \int \frac{\left(f'(z)\right)^2}{f(z)} dz,
\]
where for the last equality, we first perform integration w.r.t.~$dz$ and apply a change-of-variable. Thus $\cJ(0)$ only depends the density of the noise.
\end{remark}

\subsection{Expected maximum and log-mean functionals}

Next we compute the lower bounds on $\underline{\sigma}_{g}^{2}$ for two additional statistical functionals. 

\begin{ex} \label{ex:SimpleEx}
Let $S = \bR^d$ and consider the following two kernels: for $1 \leq j \leq d$,
\[
h_j(x_1,\ldots,x_r) = \max_{1 \leq i \leq r} x_{ij}, \quad
\text{ and } \quad
h_j(x_1,\ldots,x_r) = \log\left(\frac{1}{r}\sum_{i=1}^{r} x_{ij}\right).
\]
In the former case, we are interested in estimating the expectation for the coordinate-wise maxima of $r$ independent random vectors, $\{\Exp[\max_{1\leq i \leq r} X_{ij}]: 1 \leq j \leq d\}$. In the latter, we assume $X_{1j} > 0$ for $1 \leq j \leq d$ and are interested in estimating $\{\Exp[\log(r^{-1}\sum_{i=1}^{r} X_{ij})]: 1 \leq j \leq d\}$. In both cases, the coordinates of $X_1$ can have arbitrary dependence, and we allow $r \to \infty$.
\end{ex}

Consider the first kernel in Example~\ref{ex:SimpleEx}, where $S = \bR^{d}$, and $h_j(x_1,\ldots,x_r)  =  \max_{1 \leq i \leq r} x_{ij}$ for $1 \leq j \leq d$. Assume $X_{1j}$ has a density $f_j$ w.r.t.~the Lebesgue measure on $\bR$ for $1 \leq j \leq d$, and we consider the following embedding $\{f_j(\cdot - \beta): \beta \in \bR\}$. 
As in the previous example,  for $\beta \in \bR$
\[
\theta_j'(\beta) = 1, \quad \cJ_j(\beta) = \Var_{\beta}\left(\frac{d}{d\,\beta} \ln f_j(X_{1j}-\beta) \right)
= \int \frac{(f_j'(x-\beta))^2}{f_j(x - \beta)} d\,x.
\]
Thus, by Lemma~\ref{lemma:lower_sigma_g}, if we assume for some absolute positive constant $c$ 
\[
\int \frac{(f_j'(x))^2}{f_j(x)} d\,x \; \leq \; c^{-1}, \; 1 \leq j \leq d,
\]
we have $\underline{\sigma}_g^{2}  \geq c r^{-2}$. Further, if we assume that there exists a positive constant $C$ such that
\[
\|X_{1j}\|_{\psi_1} \leq C, \quad 1 \leq j \leq d, 
\]
then by maximal inequality (e.g., see~\cite[Lemma 2.2.2]{van1996weak}), $\|\max_{1 \leq i \leq r} X_{ij}\|_{\psi_1} \lesssim \log(r)$. Then if we select $D_n = C'\underline{\sigma}_g^{-1} \log^2(r)$ , the conditions \eqref{moments_assumption}, \eqref{h_exp_assumption} and  \eqref{H_moments_assumption} hold. Further,  \eqref{exp_cond_distr}  trivially holds for non-random kernels.  With above assumptions and selection of $D_n$,  the condition~\eqref{U_full_assumption} in Corollary~\ref{cor:comb_normalize} simplifies as 
$(n_1 \wedge N)^{-1}{r^6 \log^{4}(r)\log^7(dn)}
\leq  C_1 n^{-\zeta}$.\\


Now consider the second kernel in Example~\ref{ex:SimpleEx}, where  $h_j(x_1,\ldots,x_r)= \log\left(r^{-1}\sum_{i=1}^{r}  x_{ij} \right)$ and $X_{1j} > 0$ for $1 \leq j \leq d$. Assume $X_{1j}$ has a density $f_j$ w.r.t.~the Lebesgue measure on $\bR$ for $1 \leq j \leq d$, and  consider the following embedding $\{(1+\beta)f_j((1+\beta)\cdot ): \beta \in (-1,1)\}$. As before, it is easy to see that for $ 1 \leq j \leq d$,
\[
\theta_j'(0) = 1, \quad \text{ and } \quad
\cJ_j(0) = \int \frac{(xf_j'(x) + f_j(x))^2}{f_j(x)} d\,x.
\]
Thus if there exists a constant $c$ such that $\max_{1 \leq j \leq d}\cJ_j(0) \leq c^{-1}$, then $\underline{\sigma}_g^{2} \geq c r^{-2}$. Further, if there exists a constant $C > 0$ such that
$$
\Pro(0 < X_{1j} \leq C) = 1, \; 1 \leq j \leq d,
$$ 
then the conditions \eqref{moments_assumption}, \eqref{h_exp_assumption}, \eqref{exp_cond_distr}  and  \eqref{H_moments_assumption} hold with $D_n = \ln^{-1}(2) \log(C)$. With these assumptions,  the condition~\eqref{U_full_assumption} in Corollary~\ref{cor:comb_normalize} simplifies as 
$(n_1 \wedge N)^{-1}{r^4 \log^7(dn)} 
\leq  C_1 n^{-\zeta}$.

\subsection{Kernel density estimation}


\begin{ex}[Kernel density estimation]\label{exmp:kde} 
Let $\tau : S^r  \to  \bR^\ell$ be a measurable function that is symmetric in its $r$ arguments, and $\{t_j:1\leq j \leq d\} \subset \bR^\ell$ be $d$ design points. 
\cite{frees1994estimating,gine2007local} used $U_n$ 
 as a kernel density estimator (KDE) for the density of $\tau(X_1,\ldots,X_r)$ at the given design points
with
\[
h_j(x_1,\ldots,x_r) = \frac{1}{b_n^{\ell}} \kappa\left( t_j - \tau(x_1,\ldots,x_r) \over b_n \right),\;\; 1 \leq j \leq d,
\]
where $b_n > 0$ is a bandwidth parameter, and $\kappa(\cdot)$ is the density estimation kernel with $\int \kappa(z) dz = 1$, which should not be confused with the $U$-statistic kernel $h$. 
For this example, we will assume $r$ fixed and the bandwidth $b_n \to 0$, but allow the diameter of the design points, $\max_{1 \leq j \leq d} \|t_j\|$, to grow, where $\|\cdot\|$ denotes the usual Euclidean norm. 
\end{ex}

Assume that given $X_1 = x_1$, $\tau(x_1,X_2^r)$ has a density $f(z;x_1)$ w.r.t.~the Lebesgue measure on $\bR^{\ell}$, i.e., $\Pro\left(\,\tau(x_1,X_2,\ldots,X_r) \in A \right) = \int_A f(z;x_1) dz$ for any $A \in \cB(\bR^{\ell})$. 
Then by definition, for $1 \leq j \leq d$,
\[g_j(x_1) = \Exp[h_j(x_1,X_2^r)] = \int \frac{1}{b_n^{\ell}}\, \kappa\left( {t_j - z \over b_n} \right) f(z;x_1)  dz 
= \int \kappa(z) f(t_j - b_n z; x_1) dz.
\]
For $t \in \bR^{\ell}$, denote 
\[
\cV_n(t) := \Var\left(\int \kappa(z) f(t - b_n z; X_1) dz\right),\quad
\cV(t) := \Var(f(t; X_1)).
\]
As in~\cite{frees1994estimating}, if $\int \kappa^2(z) dz < \infty$ and 
$\sup_{t} \Exp[f^2(t; X_1)] < \infty$, then $\lim_{n \to \infty} \cV_n(t) = \cV(t)$ for any fixed $t$.
If there exists some $R > 0$ such that $\max_{1 \leq j \leq d} \|t_j\| \leq R$ for any $d \in \bN$ and $\inf_{t \in \bR^{\ell}: |t| \leq R} \cV(t) > 0$, under mild continuity assumptions (e.g.~the equicontinuty of  $\cV_n(t)$), there exists an absolute constant $c > 0$ such that $\underline{\sigma}_g^2 \geq c$ for large $n$. Then we can apply the result in~\cite{chen2017randomized},
which does not allow $\underline{\sigma}_g^2$ to vanish.

In this work, we allow $\underline{\sigma}_g^2$ to vanish, and thus allow the diameter of the design points to grow as $n$ becomes large. Specifically, if we assume $\kappa(\cdot)$ is bounded by some constant $C$, we can select $D_n = \ln^{-1}(2) C b_n^{-1}$ in conditions \eqref{moments_assumption}, \eqref{h_exp_assumption}, \eqref{exp_cond_distr}  and  \eqref{H_moments_assumption}. Then   the condition~\eqref{U_full_assumption} in Corollary~\ref{cor:comb_normalize} simplifies as 
\[
\frac{ \log^7(dn) }{\underline{\sigma}_g^2 b_n^2 (n_1 \wedge N)} 
\leq  C_1 n^{-\zeta}.
\]
Thus if $\log(d) = O(\log(n))$ and $n = O(n_1 \wedge N)$, to apply Corollary~\ref{cor:comb_normalize}, we require that
$\underline{\sigma}_g^{-2} =  O(b_n^2 n^{1-\epsilon})$ for any $\epsilon > 0$.

\begin{remark}
\cite{frees1994estimating} considers the case  $d = 1$, and shows the $\sqrt{n}$-convergence rate of the KDE. The same discussion applies here. \cite{gine2007local} constructs confidence bands (without computational considerations and bootstrap results) for the density of $\tau(X_1^r)$, under the additional assumptions required to establish the convergence of empirical processes.
\end{remark}


\section{Maximal inequality}\label{sec:maximal_inequality}

In this section, we derive an upper bound on the expected supremum of the remainder of the H\'ajek projection of the complete IOUS with deterministic kernel. This maximal inequality (with the explicit dependence on $r$) serves as a key step to establish the Gaussian approximation result for the incomplete IOUS with random kernel. 

\begin{theorem}\label{thm:exp_sup}
Assume~\eqref{h_exp_assumption} hold. 
Then there exist constants $c,C$, depending only on $q$,  such that
if ${r^2 \log(d)}/{n} \leq c$, then
\begin{equation*} 
\mathbb{E} \left[ \max_{1 \leq j \leq d} \left\vert (U_{n,j} - \theta_j) - \frac{r}{n}\sum_{i=1}^{n}( g_j(X_{i}) -\theta_j) \right\vert   \right]
\leq C \frac{r^2 \log^{1+1/q}(d) D_n}{n}.
\end{equation*}
\end{theorem}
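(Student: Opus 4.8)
\textbf{Proof plan for Theorem~\ref{thm:exp_sup}.}
The plan is to exploit the Hoeffding decomposition of $U_n-\theta$ and to show that the contribution of every degenerate component of order $2\le k\le r$ is small in the maximum norm, with an explicit dependence on $r$. Write $U_n - \theta = \sum_{k=1}^{r} \binom{r}{k} U_n^{(k)}$, where $U_n^{(k)}$ is the completely degenerate $U$-statistic of order $k$ built from the canonical kernel $h^{(k)}$; the $k=1$ term is precisely the H\'ajek projection $\frac{r}{n}\sum_{i}(g(X_i)-\theta)$, so the quantity to bound is $\Exp[\max_j |\sum_{k=2}^{r}\binom{r}{k} U_{n,j}^{(k)}|]$. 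For each fixed $k$, I would first control $\Exp\max_j |U_{n,j}^{(k)}|$ via a symmetrization/decoupling argument: apply Sherman's symmetrization inequality \cite{sherman1994maximal} to replace the degenerate $U$-statistic by a Rademacher (or Gaussian) chaos of order $k$, and then use the Bonami hypercontractivity bound \cite[Theorem 3.2.2]{de2012decoupling} to pass from the $L^2$ norm of the chaos to an $L^p$ bound with the characteristic constant $(p-1)^{k/2}$; taking $p\asymp\log d$ and a union bound over $j$ then yields $\Exp\max_j|U_{n,j}^{(k)}| \lesssim (\log d)^{k/2}\, n^{-k/2}\,\sigma_k$, where $\sigma_k^2$ is a bound on the variance of the canonical kernel $h^{(k)}$. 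Crucially, $\sigma_k \le \|h_j(X_1^r)-\theta_j\|_{\psi_q} \lesssim D_n$ (the projections cannot inflate the $L^2$ norm), and one should carry the $\psi_q$ norm through the chaos estimate to produce the factor $\log^{1/q}(d)$ rather than just $\log^{1/2}(d)$ in the final bound.

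The next step is to sum over $k$. Using $\binom{r}{k}\le r^k/k!$, the $k$-th term is bounded by roughly $\frac{r^k}{k!}(\log d)^{k/2}n^{-k/2}D_n\cdot(\text{polylog factor})$, i.e. by $\frac{1}{k!}\bigl(r\sqrt{\log d /n}\bigr)^{k}D_n$ up to the hypercontractivity constant $C^k$ and the $\log^{1/q}(d)$ factor. Under the hypothesis $r^2\log d/n \le c$ for $c$ small enough, the series $\sum_{k\ge 2}(Cr\sqrt{\log d/n})^k/k!$ is dominated by its first term $k=2$ up to a constant (geometric-type tail), so the whole sum is $\lesssim \bigl(r^2\log d/n\bigr)D_n$ times the extra $\log^{1/q}(d)$ absorbed from the $\psi_q$ passage, giving the claimed $r^2\log^{1+1/q}(d)D_n/n$. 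The factor $r^2$ (rather than $r$) is exactly the cost of the combinatorial coefficient $\binom{r}{2}$ on the leading $k=2$ term combined with the $n^{-1}$ from $U_n^{(2)}$; the smallness condition on $r^2\log d/n$ is what makes all higher-order terms negligible.

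The main obstacle is obtaining the symmetrization and hypercontractivity bounds with constants that are \emph{uniform in $k$} (or at worst grow like $C^k$ with absolute $C$), since the naive constants in decoupling and in Sherman's inequality can depend on the order; without such control the summation over $k$ up to $r$ would not close. A second, more technical point is the transition from the moment bound on $h_j(X_1^r)-\theta_j$ to a variance bound on every canonical kernel $h^{(k)}_j$ together with the correct $\psi_q$-tail behavior of the resulting chaos — one needs a Rademacher-chaos tail inequality of the form $\|\chi\|_{\psi_{2/k}} \lesssim \|\chi\|_2$ (Bonami), and then to combine the $\psi_{2/k}$ tails across $k$ and the max over $j$ into a single $\log^{1/q}(d)$ factor, which requires keeping careful track of how the $\psi_q$ norm of $h$ propagates. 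Once these ingredients are assembled with order-explicit constants, the remaining steps are the routine estimates sketched above.
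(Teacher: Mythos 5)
Your plan is correct and follows essentially the same route as the paper: Hoeffding decomposition, a degenerate-kernel symmetrization à la Sherman that costs only $2^{k}$ (the paper proves this as Theorem~\ref{thm:sym} and its remark stresses exactly the point you flag, that naive decoupling constants grow superexponentially and would break the summation over $k$), Bonami hypercontractivity giving a $\widetilde{\psi}_{2/k}$ tail for the chaos, a maximal inequality over $j$ (the paper uses an Orlicz version where you propose $L^{p}$ with $p\asymp\log d$, an equivalent device), and a geometric series under $r^{2}\log d/n\le c$. The one minor imprecision is attributing the $\log^{1/q}(d)$ factor to "carrying $\psi_q$ through the chaos estimate": in the paper it arises solely from bounding $\sqrt{\E F_k^2}$ where $F_k=\max_j|\widetilde\pi_k h_j|$ is the envelope, independently of the chaos step, but the ingredient is the same and the accounting works out.
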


The proof of Theorem \ref{thm:exp_sup} is quite involved: we need to develop a number of technical tools such as the symmetrization inequality and Bonami inequality (i.e., exponential moment bound) for the Rademacher chaos, all with the explicit dependence on $r$. 

We start with some notation. Let $X' := (X_1',\ldots,X_n')$ be an independent copy of $X := (X_1,\ldots,X_n)$, and $\epsilon := (\epsilon_1,\ldots,\epsilon_n)$ be i.i.d.~Rademacher random variables, i.e., $\Pro(\epsilon_1 = 1) = \Pro(\epsilon_1 = -1)=1/2$, that are independent of $X$ and $X'$.
If all involved random variables are independent, we write  $\Exp_{\epsilon}$ (resp. $\Exp_{X'}$) for expectation only w.r.t.~$\epsilon$ (resp. $X'$).

For a given probability space $(X , \mathcal{A}, Q)$,  a measurable function
$f$ on X and $x \in X$, we use the notation $Qf = \int f dQ$ whenever the latter integral is well-defined, and 
denote $\delta_x$ the Dirac measure on $X$, i.e., $\delta_x(A) = \ind{\{x \in A\}}$ for any $A \in \mathcal{A}$
. For a measurable symmetric function $f$ on $S^r$ and $k = 0,1,\ldots, r$, let 
$P^{r-k}f$ denote the function on $S^{k}$ defined by
$$
P^{r-k}f(x_1,\ldots,x_k) := \Exp\left[f(x_1,\ldots,x_k, X_{k+1},\ldots,X_{r})\right],
$$
whenever it is well defined. To prove Theorem \ref{thm:exp_sup}, without loss of generality, we may assume
$$
\theta = P^r h = 0,
$$
since we can always consider $h(\cdot) - \theta$ instead. 
For $0 \leq k \leq r$, define
\begin{equation}\label{def:k_level}
\begin{split}
&\widetilde{\pi}_k h(x_1,\ldots,x_k) := P^{r-k} h,\\
&\pi_k h(x_1,\ldots,x_k) := (\delta_{x_1} - P) \times \cdots \times (\delta_{x_k} - P) \times P^{r-k} h.
\end{split}
\end{equation}
Clearly $\pi_k$ is  degenerate of order $k$ 
with respect to the distribution $P$ in the sense of~\eqref{def:canonical} below. For any $\iota = (i_1,\ldots, i_k) \in I_{n,k}$, and $J =(j_1,\ldots, j_\ell) \in I_{k,\ell}$ where $0 \leq \ell \leq k$, define
$$
\iota_J := (i_{j_1},\ldots, i_{j_{\ell}}) \in I_{n,\ell}.
$$
Then 
$
\pi_k h(x_{\iota}) = \Exp_{X'} \left[
\sum_{\ell=0}^{k} (-1)^{k-\ell}\sum_{J \in I_{k,\ell}} \widetilde{\pi}_{k}h (x_{\iota_J}, X'_{\iota \setminus \iota_{J}})
\right] \text{ for all } \iota \in I_{n,k}.
$

Further, the Hoeffding decomposition \cite{hoeffding1948class} for the $U$-statistic (with $\theta =0$) is as follows:
\begin{equation*}
\begin{split}
   U_n =  \frac{1}{|I_{n,r}|}\sum_{\iota \in I_{n,r}} h(X_{\iota})
&=\sum_{k=1}^{r} {\binom{n}{r}}^{-1}{\binom{n-k}{r-k}}
\sum_{\iota \in I_{n,k}} \pi_k h(X_{\iota}).   
   \\
&=\sum_{k=1}^{r} {\binom{r}{k}}{\binom{n}{k}}^{-1}
\sum_{\iota \in I_{n,k}} \pi_k h(X_{\iota})
=: \sum_{k=1}^{r} \binom{r}{k} U_n^{(k)}(\pi_k h).
\end{split}
\end{equation*}
%
Finally, for any $1 \leq k \leq r$, define the envelope function
$$
F_k(x_1,\ldots,x_k) := \max_{1 \leq j \leq d} \left\vert \widetilde{\pi}_{k} h_j(x_1,\ldots,x_k) \right\vert.
$$

\subsection{Symmetrization inequality}
For each integer $k$, consider a symmetric kernel $f: S^{k} \to \bR^{d}$. We say that $f$ is {\it degenerate} of order $k$ 
with respect to the distribution $P$ if
\begin{equation}\label{def:canonical}
    \Exp_{X_1}[f_j(X_1,X_2,\ldots,X_k)] = 0 \;\; \text{a.s.}, \text{ for any }  1 \leq j \leq d.
\end{equation}

The following result is essentially due to~\cite[Section 3, Symmetrization inequality]{sherman1994maximal} in the $U$-process setting. We provide a self-contained (and perhaps more transparent) proof for completeness.

\begin{theorem}[Symmetrization inequality]\label{thm:sym}
Assume~\eqref{def:canonical} holds.
\begin{equation*}
\Exp\left[ 
\max_{1\leq j \leq d} \left\vert \sum_{\iota \in I_{n,k}} f_j(X_{i_1},\ldots,X_{i_k})\right\vert
\right]
\leq 
2^k
\Exp\left[ 
\max_{1\leq j \leq d} \left\vert \sum_{\iota \in I_{n,k}} \epsilon_{i_1}\cdots \epsilon_{i_k} f_j(X_{i_1},\ldots,X_{i_k})
\right\vert
\right].
\end{equation*}
\end{theorem}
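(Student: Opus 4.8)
The plan is to adapt the classical symmetrization argument for degenerate $U$-statistics (as in de la Pe\~na--Gin\'e) to the maximum over coordinates, with the factor $2^k$ arising from $k$ successive symmetrizations — one per index. The starting point is the observation that by degeneracy~\eqref{def:canonical}, for each fixed $j$ and each fixed realization of $X_{i_2},\ldots,X_{i_k}$ we have $\Exp_{X_{i_1}'}[f_j(X_{i_1}',X_{i_2},\ldots,X_{i_k})] = 0$, so we may freely insert a centering term: replacing $X_{i_1}$ by $X_{i_1}'$ (an independent copy) inside the sum and subtracting has zero conditional mean, and by Jensen's inequality (conditioning on $X$, pulling the conditional expectation over $X'$ outside the max and the absolute value) we get
\begin{equation*}
\Exp\left[\max_{1\leq j\leq d}\left|\sum_{\iota\in I_{n,k}} f_j(X_{i_1},\ldots,X_{i_k})\right|\right]
\leq \Exp\left[\max_{1\leq j\leq d}\left|\sum_{\iota\in I_{n,k}}\bigl(f_j(X_{i_1},\ldots,X_{i_k}) - f_j(X_{i_1}',X_{i_2},\ldots,X_{i_k})\bigr)\right|\right].
\end{equation*}

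The key step is then that the summand $f_j(X_{i_1},\ldots) - f_j(X_{i_1}',X_{i_2},\ldots)$ is antisymmetric under swapping $X_{i_1}\leftrightarrow X_{i_1}'$, so multiplying it by $\epsilon_{i_1}$ does not change the joint distribution: formally, for each fixed sign pattern $\epsilon$, the random vector indexed by $\iota$ with entries $f_j(X_{i_1},\ldots) - f_j(X_{i_1}',\ldots)$ has the same law as the one with entries $\epsilon_{i_1}\bigl(f_j(X_{i_1},\ldots) - f_j(X_{i_1}',\ldots)\bigr)$ (swap the roles of $X_{i_1}$ and $X_{i_1}'$ exactly at those $i_1$ where $\epsilon_{i_1}=-1$; since $\epsilon$ is independent of $(X,X')$ this is a distributional identity). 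Hence the right-hand side above equals the same expression with $\epsilon_{i_1}$ inserted, and then by the triangle inequality we split off the $X_{i_1}'$ term, bound $\Exp\max_j|\cdots|$ of the difference by twice the maximum of each piece, and note that the $X_{i_1}'$ piece has the same distribution as the $X_{i_1}$ piece. This yields
\begin{equation*}
\Exp\left[\max_{1\leq j\leq d}\left|\sum_{\iota\in I_{n,k}} f_j(X_{i_1},\ldots,X_{i_k})\right|\right]
\leq 2\,\Exp\left[\max_{1\leq j\leq d}\left|\sum_{\iota\in I_{n,k}}\epsilon_{i_1} f_j(X_{i_1},\ldots,X_{i_k})\right|\right].
\end{equation*}

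Iterating this procedure over $i_2,\ldots,i_k$ gives the claimed factor $2^k$. At stage $m$ (for $1\leq m\leq k$), degeneracy of $f$ in its $m$-th argument (which holds by symmetry of $f$ and~\eqref{def:canonical}) lets us recenter $X_{i_m}$, and the resulting summand $f_j(\ldots) - f_j(\ldots,X_{i_m}',\ldots)$ is antisymmetric in $X_{i_m}\leftrightarrow X_{i_m}'$, so $\epsilon_{i_m}$ may be inserted; the triangle inequality then costs another factor $2$, replacing $\epsilon_{i_1}\cdots\epsilon_{i_{m-1}}$ by $\epsilon_{i_1}\cdots\epsilon_{i_m}$. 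The main point to be careful about — and the only genuinely delicate bookkeeping — is that at stage $m$ the Rademacher factors $\epsilon_{i_1},\ldots,\epsilon_{i_{m-1}}$ already present must not interfere with the distributional swap: this is fine because those factors depend only on indices $i_1,\ldots,i_{m-1}$, the swap acts only on the $X$-variables with index $i_m$, and all of $\epsilon$ is independent of $(X,X')$, so conditioning on $\epsilon$ and on $(X_{i_\ell})_{\ell\neq m}$ the swap-invariance argument goes through verbatim. After $k$ iterations we arrive at the desired bound; no moment or integrability assumption beyond that implicit in the statement (finiteness of the left side, equivalently of $\Exp\max_j|\widetilde\pi_k h_j|$ in our application) is needed, since every step is an application of Jensen, distributional invariance, and the triangle inequality.
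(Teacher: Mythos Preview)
Your iterative symmetrization has a genuine gap at the swap step. The distributional identity you invoke --- that the vector $\bigl(f_j(X_{i_1},\ldots,X_{i_k})-f_j(X'_{i_1},X_{i_2},\ldots,X_{i_k})\bigr)_{\iota}$ has the same law as the same vector multiplied by $\epsilon_{i_1}$ --- fails for non-decoupled $U$-statistics. The reason is that a given variable $X_m$ appears not only in the first slot of terms with $i_1=m$, but also in higher slots of terms with $i_\ell=m$ for $\ell\geq 2$. Swapping $X_m\leftrightarrow X'_m$ therefore does more than flip the sign of the difference for terms with $i_1=m$: it also alters the remaining arguments of other terms. Concretely, for $k=2$, $n=3$ and $f(x,y)=xy$ with $X_i$ i.i.d.\ standard normal, writing $A=(X_1-X'_1)X_2$, $B=(X_1-X'_1)X_3$, $C=(X_2-X'_2)X_3$, one computes $\Exp[ABC]=2$, so $\Exp[(A+B+C)^3]=12$ while $\Exp[(A+B-C)^3]=-12$; hence $A+B+C$ and $\epsilon_1 A+\epsilon_1 B+\epsilon_2 C$ (for $\epsilon_1=1,\epsilon_2=-1$) do not have the same law, contradicting your swap claim. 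The iterative one-index-at-a-time symmetrization works for \emph{decoupled} $U$-statistics (where slot $\ell$ uses an independent copy $X^{(\ell)}$), but passing to the decoupled version is exactly what costs the large constant the paper is trying to avoid.

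The paper's proof sidesteps this by a one-shot construction rather than iteration: set $Z_i=X_i\ind\{\epsilon_i=1\}+X'_i\ind\{\epsilon_i=-1\}$ and define $\widetilde f_{j,\iota}=2^k\,\Exp_\epsilon[\epsilon_{i_1}\cdots\epsilon_{i_k}\,f_j(Z_\iota)]$. Degeneracy forces $\Exp_{X'}[\widetilde f_{j,\iota}]=f_j(X_\iota)$, because any term in the expansion with some $\epsilon_{i_\ell}=-1$ puts an $X'_{i_\ell}$ in $f_j$ and is killed by $\Exp_{X'_{i_\ell}}$. Then Jensen plus the equality in law of $(Z,\epsilon)$ and $(X,\epsilon)$ gives the factor $2^k$ directly. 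This is the missing idea.
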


\begin{remark}
In Theorem~\ref{thm:sym}, the symmetrization costs a multiplicative factor of $2^k$ for a degenerate kernel of order $k$. Standard symmetrization argument for such degenerate $U$-statistics (cf. \cite[Theorem 3.5.3]{de2012decoupling}) together with the decoupling inequalities (cf. \cite[Theorem 3.1.1]{de2012decoupling}) in literature yield that 
\[
\Exp\left[ 
\max_{1\leq j \leq d} \left\vert \sum_{\iota \in I_{n,k}} f_j(X_{i_1},\ldots,X_{i_k})\right\vert
\right]
\leq 
C_{k}
\Exp\left[ 
\max_{1\leq j \leq d} \left\vert \sum_{\iota \in I_{n,k}} \epsilon_{i_1}\cdots \epsilon_{i_k} f_j(X_{i_1},\ldots,X_{i_k})
\right\vert
\right], 
\]
where $C_{k} = 2^{4k-2} (k-1)! (k^{k}-1) ((k-1)^{k-1}-1) \times \cdots \times (2^{2}-1)$. Since $2^{k} \ll C_{k}$, improvement of the constant to the exponential growth in $k$ turns out to be crucial to obtain the maximal inequality for the IOUS in Theorem~\ref{thm:exp_sup}. The major component for the super-exponential behavior of $C_{k}$ is due to the step for applying the decoupling inequality in \cite[Theorem 3.1.1]{de2012decoupling}, which is valid for any (measurable) symmetric kernel. If the kernel $f$ is degenerate of order $k$, then symmetrization can be directly done without the decoupling inequality (cf. the proof of Theorem~\ref{thm:sym} below). \end{remark}

\begin{proof}[Proof of Theorem~\ref{thm:sym}]
Define a new sequence of random variables $\{Z_i: 1 \leq i \leq n\}$:
$$
Z_i = X_{i} \mathbbm{1}_{\{\epsilon_i = 1\}} + X'_{i} \mathbbm{1}_{\{\epsilon_i = -1\}}.
$$
Further, for each $\iota =\{i_1,\ldots,i_k\}\in I_{n,k}$, define
$$
\widetilde{f}_{j,\iota} = {2^{k}} \Exp_{\epsilon}\left[
f_j(Z_{i_1},\ldots,Z_{i_k}) \epsilon_{i_1} \cdots \epsilon_{i_k}
\right].
$$
Due to degeneracy, we have
\begin{align*}
\Exp_{X'}\left[
\widetilde{f}_{j,\iota}
\right]
&= {2^{k}} \Exp_{\epsilon}  \Exp_{X'}\left[
f_j(Z_{i_1},\ldots,Z_{i_k}) \epsilon_{i_1} \cdots \epsilon_{i_k}
\right] \\
&= {2^{k}} \Exp_{\epsilon}  \left[
f_j(X_{i_1},\ldots,X_{i_k}) 
\mathbbm{1}_{\{
\epsilon_{i_1} =1, \ldots, \epsilon_{i_k} = 1 \}}
\right] \\
&=f_j(X_{i_1},\ldots, X_{i_k}),
\end{align*}
where the first and third equalities follow from definitions and Fubini Theorem, and the second follows from the degeneracy. To wit, on the event that $\{\epsilon_{i_\ell} = -1\}$ for some $1 \leq \ell \leq k$,  
$$
\Exp_{X'_{i_\ell}}\left[
f_j(Z_{i_1},\ldots, Z_{i_{\ell-1}}, X'_{i_\ell}, Z_{i_{\ell+1}},\ldots,Z_{i_k}) \epsilon_{i_1} \cdots \epsilon_{i_k}
\right] = 0.
$$
The rest of the argument is standard: by Jensen's inequality,
\begin{align*}
\max_{1\leq j \leq d} \left\vert \sum_{\iota \in I_{n,k}} f_j(X_{i_1},\ldots,X_{i_k})\right\vert
&=
\max_{1\leq j \leq d} \left\vert \sum_{\iota \in I_{n,k}} \Exp_{X'}\left[
\widetilde{f}_{j,\iota}
\right]\right\vert \\
&\leq 
{2^{k}} \Exp_{\epsilon,X'}  \max_{1\leq j \leq d} \left\vert \sum_{\iota \in I_{n,k}}
f_j(Z_{i_1},\ldots,Z_{i_k}) \epsilon_{i_1} \cdots \epsilon_{i_k}
\right\vert.
\end{align*}
Since $(X_1,\ldots,X_n, \epsilon_1,\ldots,\epsilon_n)$  and 
$(Z_1,\ldots,Z_n, \epsilon_1,\ldots,\epsilon_n)$ have the same distribution, 
taking expectation on both sides
completes the proof.
\end{proof}

\subsection{Maximal inequality}

We start with a lemma, whose proof is elementary and thus omitted.
Recall the definition of $\widetilde{\psi}_{\beta}$ in Subsection~\ref{subsec:notation}.

\begin{lemma}\label{psi_prop}
For any $\beta > 0$, $\widetilde{\psi}_{\beta}(\cdot)$ is strictly increasing, convex, and $\widetilde{\psi}_{\beta}(0) = 0$. Further, for any $\beta > 0$,
$$
\widetilde{\psi}_{\beta}(x) \leq e^{x^\beta} \leq \widetilde{\psi}_{\beta}(x) + e^{1/\beta},
$$
and consequently 
$$
\widetilde{\psi}_{\beta}^{-1}(m) \leq \log^{1/\beta}\left(
m + e^{1/\beta}
\right).
$$
\end{lemma}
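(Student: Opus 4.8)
The plan is to treat the two regimes $\beta \geq 1$ and $\beta \in (0,1)$ separately, since only the latter involves the piecewise definition of $\widetilde{\psi}_{\beta}$. For $\beta \geq 1$ we have $\widetilde{\psi}_{\beta}(x) = e^{x^{\beta}} - 1$, and every assertion is immediate: $\widetilde{\psi}_{\beta}(0) = 0$; strict monotonicity holds because $x \mapsto x^{\beta}$ is strictly increasing on $[0,\infty)$; convexity follows from $\frac{d^{2}}{dx^{2}} e^{x^{\beta}} = \beta x^{\beta-2} e^{x^{\beta}}(\beta x^{\beta} + \beta - 1) \geq 0$ for $\beta \geq 1$ and $x \geq 0$; and the two-sided bound is simply $e^{x^{\beta}} - 1 \leq e^{x^{\beta}} = \widetilde{\psi}_{\beta}(x) + 1 \leq \widetilde{\psi}_{\beta}(x) + e^{1/\beta}$, using $e^{1/\beta} \geq 1$.

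For $\beta \in (0,1)$ I would first record the identities $x_{\beta}^{\beta} = 1/\beta$ and $\tau_{\beta}^{\beta} = \beta e$, from which the two branches of $\widetilde{\psi}_{\beta}$ agree at the breakpoint: $\tau_{\beta} x_{\beta} = (\beta e)^{1/\beta}(1/\beta)^{1/\beta} = e^{1/\beta}$ equals $e^{x_{\beta}^{\beta}} = e^{1/\beta}$. Hence $\widetilde{\psi}_{\beta}$ is continuous with $\widetilde{\psi}_{\beta}(0) = 0$, and it is strictly increasing on each branch (slope $\tau_{\beta} > 0$ on $[0,x_{\beta})$, and $\frac{d}{dx} e^{x^{\beta}} = \beta x^{\beta-1} e^{x^{\beta}} > 0$ on $(x_{\beta},\infty)$), hence strictly increasing on $[0,\infty)$. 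For convexity I would check that it is $C^{1}$ at $x_{\beta}$ and convex on each branch: the left derivative at $x_{\beta}$ equals $\tau_{\beta}$, while the right derivative equals $\beta x_{\beta}^{\beta-1} e^{1/\beta} = \beta^{1/\beta} e^{1/\beta} = (\beta e)^{1/\beta} = \tau_{\beta}$; the first branch is affine, and on $[x_{\beta},\infty)$ one has $x^{\beta} \geq x_{\beta}^{\beta} = 1/\beta$, so $\beta x^{\beta} + \beta - 1 \geq \beta > 0$ and thus $\frac{d^{2}}{dx^{2}} e^{x^{\beta}} \geq 0$. A function convex on $[0,x_{\beta}]$, convex on $[x_{\beta},\infty)$, and differentiable at $x_{\beta}$ is convex on $[0,\infty)$, giving the claim.

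It remains to prove the two-sided bound $\widetilde{\psi}_{\beta}(x) \leq e^{x^{\beta}} \leq \widetilde{\psi}_{\beta}(x) + e^{1/\beta}$ in the case $\beta \in (0,1)$. For $x \geq x_{\beta}$ both inequalities are trivial since $\widetilde{\psi}_{\beta}(x) = e^{x^{\beta}}$. For $x < x_{\beta}$: the upper estimate holds because $e^{x^{\beta}} < e^{x_{\beta}^{\beta}} = e^{1/\beta} \leq \tau_{\beta} x + e^{1/\beta}$; and the lower estimate $\tau_{\beta} x \leq e^{x^{\beta}}$ reduces, after raising both sides to the power $\beta$ and setting $z := \beta x^{\beta} \in [0,1)$, to the elementary inequality $e z \leq e^{z}$ (valid for all $z \geq 0$, since $e^{z} - e z$ attains its global minimum $0$ at $z = 1$). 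Finally, the bound on $\widetilde{\psi}_{\beta}^{-1}$ follows by substituting $x = \widetilde{\psi}_{\beta}^{-1}(m)$ — well defined since $\widetilde{\psi}_{\beta}$ is a continuous strictly increasing bijection of $[0,\infty)$ onto $[0,\infty)$ — into $e^{x^{\beta}} \leq \widetilde{\psi}_{\beta}(x) + e^{1/\beta} = m + e^{1/\beta}$, then taking logarithms and raising to the power $1/\beta$.

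The only genuinely delicate point, and the only place where the specific values of $\tau_{\beta}$ and $x_{\beta}$ enter, is verifying that the affine and exponential branches glue together in a $C^{1}$ manner at $x_{\beta}$; once this is in hand, convexity and the two-sided bound follow from monotonicity and the elementary estimate $e z \leq e^{z}$.
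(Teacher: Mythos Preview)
Your proof is correct. The paper in fact omits the argument (``elementary and thus omitted''), so there is nothing published to compare against; a commented-out sketch in the source handles only the two-sided bound and does so by studying the auxiliary function $g(x)=e^{x^{\beta}}-\tau_{\beta}x$, showing $g$ is concave on $(0,z_{\beta})$ and convex on $(z_{\beta},\infty)$ with $z_{\beta}=((1-\beta)/\beta)^{1/\beta}$, and then arguing $g\geq 0$ from its endpoint values. Your reduction of $\tau_{\beta}x\leq e^{x^{\beta}}$ to the one-line inequality $ez\leq e^{z}$ (via raising to the power $\beta$ and substituting $z=\beta x^{\beta}$) is shorter and avoids that auxiliary analysis. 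You also supply the convexity argument---the $C^{1}$ gluing at $x_{\beta}$ together with $\beta x^{\beta}+\beta-1\geq\beta>0$ for $x\geq x_{\beta}$---which the paper leaves entirely to the reader; this is a genuine addition and is the right way to make ``elementary'' precise here.
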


Now we state the maximal inequality with explicit constants.

\begin{lemma}\label{max_inequ}
Fix $\beta \in (0,1]$. Consider a sequence of \textit{non-negative} random variables $\{Z_{j}: 1\leq j \leq d\}$,
and assume that there exists some real number $\Delta > 0$ such that
$\Exp [ \widetilde{\psi}_{\beta}\left({Z_j}/{\Delta} \right) ] \leq 2, \text{ for } 1 \leq j \leq d$.
Then 
\begin{equation*}
\Exp \left[ \max_{1 \leq j \leq d} Z_j \right] \leq \Delta \log^{1/\beta}(2d + e^{1/\beta}).
\end{equation*}
\end{lemma}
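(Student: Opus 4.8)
The plan is to run the classical maximal inequality for Orlicz-type norms, using $\widetilde{\psi}_{\beta}$ in the role usually played by an exponential function, and relying entirely on the structural properties of $\widetilde{\psi}_{\beta}$ recorded in Lemma~\ref{psi_prop}. The whole argument is short; the only nontrivial point is a preliminary integrability check that legitimizes Jensen's inequality.

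First I would dispose of that technicality. Since $\widetilde{\psi}_{\beta}$ is convex with $\widetilde{\psi}_{\beta}(0)=0$, the ratio $x \mapsto \widetilde{\psi}_{\beta}(x)/x$ is nondecreasing on $(0,\infty)$; for $\beta \in (0,1)$ it equals $\tau_{\beta}$ on $(0,x_{\beta})$, while for $\beta = 1$ one has $\widetilde{\psi}_{1}(x) = e^{x}-1 \geq x$. Hence there is a constant $c_{\beta} > 0$ with $\widetilde{\psi}_{\beta}(x) \geq c_{\beta}\, x$ for all $x \geq 0$, so the hypothesis $\Exp[\widetilde{\psi}_{\beta}(Z_{j}/\Delta)] \leq 2$ forces $\Exp[Z_{j}] \leq 2\Delta/c_{\beta} < \infty$ for every $j$, and therefore $\Exp[\max_{1\leq j\leq d} Z_{j}] \leq \sum_{j=1}^{d} \Exp[Z_{j}] < \infty$.

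The core of the argument is then three lines. Set $M := \max_{1 \leq j \leq d} Z_{j}$. By convexity of $\widetilde{\psi}_{\beta}$ and Jensen's inequality, $\widetilde{\psi}_{\beta}(\Exp[M]/\Delta) \leq \Exp[\widetilde{\psi}_{\beta}(M/\Delta)]$. Because $\widetilde{\psi}_{\beta}$ is strictly increasing and the $Z_{j}$ are non-negative, $\widetilde{\psi}_{\beta}(M/\Delta) = \max_{1 \leq j \leq d}\widetilde{\psi}_{\beta}(Z_{j}/\Delta) \leq \sum_{j=1}^{d}\widetilde{\psi}_{\beta}(Z_{j}/\Delta)$; taking expectations and invoking the hypothesis gives $\widetilde{\psi}_{\beta}(\Exp[M]/\Delta) \leq 2d$. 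Applying the increasing inverse $\widetilde{\psi}_{\beta}^{-1}$ and the bound $\widetilde{\psi}_{\beta}^{-1}(m) \leq \log^{1/\beta}(m + e^{1/\beta})$ from Lemma~\ref{psi_prop} then yields $\Exp[M] \leq \Delta\log^{1/\beta}(2d + e^{1/\beta})$, which is the claim.

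I do not anticipate a genuine obstacle here: this is the standard ``norm of a maximum'' estimate, and the only care needed is (i) the integrability check above, which makes Jensen legitimate, and (ii) the two analytic facts about $\widetilde{\psi}_{\beta}$ — convexity together with monotonicity, and the explicit bound on its inverse — both of which are already supplied by Lemma~\ref{psi_prop}. In particular, the piecewise definition of $\widetilde{\psi}_{\beta}$ for $\beta \in (0,1)$ never has to be unpacked directly beyond what that lemma records.
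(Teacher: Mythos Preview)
Your proof is correct and follows essentially the same route as the paper: Jensen's inequality via convexity of $\widetilde{\psi}_{\beta}$, then $\widetilde{\psi}_{\beta}(\max_j Z_j/\Delta) = \max_j \widetilde{\psi}_{\beta}(Z_j/\Delta) \leq \sum_j \widetilde{\psi}_{\beta}(Z_j/\Delta)$, then the inverse bound from Lemma~\ref{psi_prop}. The only addition is your preliminary integrability check, which the paper omits but which is a nice bit of rigor.
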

\begin{proof}
By monotonicity and convexity,
\begin{align*}
\widetilde{\psi}_{\beta} \left(\Exp \left[ \max_{1 \leq j \leq d} (Z_j/{\Delta}) \right] \right)
&\leq \Exp\left[
\widetilde{\psi}_{\beta} \left( \Delta^{-1}\max_{1 \leq j \leq d} Z_j \right)
\right] \\
& = \Exp\left[
\max_{1 \leq j \leq d}  \widetilde{\psi}_{\beta}( Z_j/\Delta) 
\right]  \leq \sum_{1 \leq j \leq d} \Exp\left[
 \widetilde{\psi}_{\beta}( Z_j/\Delta) 
\right] = 2d. 
\end{align*}
Then the proof is complete by Lemma~\ref{psi_prop}.
\end{proof}

\subsection{Exponential moment of Rademacher chaos}
The goal is to establish an exponential moment bound (i.e., Bonami inequality) of Rademacher chaos of order $k$. Based on the well-known hyper-contractivity of Rademacher chaos variables in literature (cf. \cite[Corollary 3.2.6]{de2012decoupling}), our Lemma~\ref{exp_rade} below provides an exponential moment bound with an explicit dependence on the order. 

\begin{lemma}[Exponential moment of Rademacher chaos]
\label{exp_rade}
Fix $k \geq 2$, $\beta = 2/k$ and  let $\{x_{\iota}: \iota \in I_{n,k}\}$ be a collection of real numbers. Consider the following homogeneous chaos of order $k$:
$$
Z = \sum_{\iota \in I_{n,k}} x_{\iota} \epsilon_{i_1}\cdots \epsilon_{i_k}, 
$$
where $\epsilon_1,\ldots,\epsilon_n$ are i.i.d.~Rademacher random variables. Then
$$
\Exp  \left[ \widetilde{\psi}_{\beta}\left({|Z|}/{\Delta_n}\right) \right] \leq 2, \;\text{ where }
\Delta_n = 7^{k/2} \sqrt{\sum_{\iota \in I_{n,k}} x_{\iota}^2}.
$$
\end{lemma}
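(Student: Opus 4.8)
The plan is to combine the classical hypercontractivity bound for Rademacher chaos with a moment-generating-function argument. First I would invoke the known moment inequality for a homogeneous Rademacher chaos of order $k$ (cf. \cite[Corollary 3.2.6]{de2012decoupling}): for any integer $m \geq 2$,
\[
\left(\Exp|Z|^{m}\right)^{1/m} \leq (m-1)^{k/2} \left(\Exp|Z|^{2}\right)^{1/2} = (m-1)^{k/2}\,\sigma, \quad \text{where } \sigma^{2} := \sum_{\iota \in I_{n,k}} x_{\iota}^{2},
\]
using that $\Exp|Z|^2 = \sum_{\iota} x_\iota^2$ because the monomials $\epsilon_{i_1}\cdots\epsilon_{i_k}$ over $\iota \in I_{n,k}$ are orthonormal in $L^2$. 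Since $\widetilde{\psi}_{\beta}$ with $\beta = 2/k$ is (by Lemma~\ref{psi_prop}) dominated by $e^{x^{\beta}}$, it suffices to bound $\Exp\left[e^{(|Z|/\Delta_n)^{2/k}}\right]$ and show it is at most $2 + e^{k/2}$, which by Lemma~\ref{psi_prop} gives $\Exp[\widetilde{\psi}_{\beta}(|Z|/\Delta_n)] \leq 2$ after a harmless adjustment of constants; alternatively one bounds $\Exp[\widetilde\psi_\beta(|Z|/\Delta_n)]$ directly via its series expansion.

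The core computation is a Taylor expansion of the exponential. Writing $t = |Z|/\Delta_n$ and $\beta = 2/k$, I would expand
\[
\Exp\left[e^{t^{\beta}}\right] = \sum_{p=0}^{\infty} \frac{\Exp\left[|Z|^{2p/k}\right]}{p!\,\Delta_n^{2p/k}}.
\]
The terms with $2p/k < 2$ (finitely many) are controlled crudely using $\Exp|Z|^{s} \leq (\Exp|Z|^{2})^{s/2} = \sigma^{s}$ for $s \leq 2$ by Jensen. For the terms with $2p/k \geq 2$, apply the hypercontractivity bound with $m = 2p/k$ (rounding up to the nearest integer, absorbing the rounding into constants) to get $\Exp[|Z|^{2p/k}] \leq (2p/k)^{p}\sigma^{2p/k}$. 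With the choice $\Delta_n = 7^{k/2}\sigma$, each such term is at most $(2p/k)^{p}/(7^{p} p!)$, and using $p! \geq (p/e)^{p}$ one gets a summand bounded by $(2e/(7k))^{p} \leq (2e/7)^{p}$ (since $k \geq 2$), i.e., a convergent geometric series with ratio strictly less than $1$. Summing gives a constant bound, and by choosing the numerical constant $7$ generously (or, if needed, replacing it by a slightly larger absolute constant) the total is at most $2$ as required. One must be slightly careful that $m$ should be an integer in the hypercontractivity statement; this is handled by monotonicity of $\ell_p$ norms, replacing $2p/k$ by $\lceil 2p/k \rceil \leq 2p/k + 1 \leq 2(2p/k)$ for $2p/k \geq 1$, which only changes constants.

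The main obstacle I anticipate is bookkeeping the constants so that the final bound is exactly $2$ with the clean value $\Delta_n = 7^{k/2}\sqrt{\sum x_\iota^2}$, uniformly in $k \geq 2$: the rounding-to-integer step in the moment inequality, the low-order terms in the Taylor series, and the $e^{1/\beta} = e^{k/2}$ slack from Lemma~\ref{psi_prop} all contribute multiplicative factors that grow with $k$, and these must be shown to be dominated by the $7^{-p}$ (or, more precisely, $7^{-k p/2}$-type) decay coming from the denominator. The key quantitative fact that makes this work is that $\Delta_n$ scales like $7^{k/2}$ rather than a fixed constant, so each power $p$ in the expansion contributes a factor $7^{-kp/2}$ against a hypercontractive growth of order $(kp)^{kp/2}/(kp)!$-type, and Stirling shows the former wins once $7 > 2e$, which it does. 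I would organize the proof so that this geometric-decay estimate is isolated as the single inequality doing the real work, with everything else being routine.
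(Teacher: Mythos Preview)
Your approach is essentially the same as the paper's: Taylor-expand $\exp((|Z|/\Delta_n)^{\beta})$, bound the moments $\Exp|Z|^{\beta\ell}$ via hypercontractivity, and sum a geometric series using $\ell^\ell \leq e^\ell \ell!$ together with $7 > e$. The only difference is that the paper invokes \cite[Theorem 3.2.2]{de2012decoupling}, which gives $\Exp|Z|^q \leq (q^{k/2}\vee 1)\,\sigma^q$ for \emph{all} real $q>0$ directly, so no integer rounding is needed; your anticipated obstacle therefore vanishes (and indeed even with Corollary 3.2.6 the rounding is harmless since $\lceil 2p/k\rceil - 1 \leq 2p/k$), and the constant $7$ comes out cleanly via $e/(7-e) + e^{1/7} < 2$.
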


\begin{proof}
Denote $\kappa = \sqrt{\Exp[Z^2]}$, $c = \sqrt{7}$ and thus $\Delta_n = c^{k} \kappa$.
Observe that $\beta \leq 1$ and $\beta k = 2$. From~\cite[Theorem 3.2.2]{de2012decoupling}, we have for any $q > 0$
$$
\Exp |Z|^q  \leq \left( q^{q/\beta} \bigvee 1\right) \kappa^q
\leq (q^{q/\beta} + 1) \kappa^q
. 
$$
Here, the first inequality clearly holds for $q \leq 2$, and we use \cite[Theorem 3.2.2]{de2012decoupling} for $q > 2$.
Then using the fact that $e^x \leq 1 + \sum_{\ell = 1}^{\infty} |x|^{\ell}/{\ell !}$ and by Lemma~\ref{psi_prop}, we have
\begin{align*}
\Exp \widetilde{\psi}_{\beta}\left({|Z|}/{\Delta_n}\right) 
&\leq \Exp \exp\left((|Z|/\Delta_n)^\beta \right) 
\leq 1 + \sum_{\ell = 1}^{\infty} \Exp|Z|^{\beta \ell}/{(\ell ! \Delta_n^{\beta \ell})} \\
&\leq 
\sum_{\ell=1}^{\infty} \frac{(\beta \ell)^{\ell} \kappa^{\beta \ell}}{\ell ! \Delta_n^{\beta \ell}} 
+
\sum_{\ell=0}^{\infty} \frac{ \kappa^{\beta \ell}}{\ell ! \Delta_n^{\beta \ell}}  
= \sum_{\ell=1}^{\infty} \frac{\beta^{\ell} \ell^{\ell} }{\ell ! c^{2 \ell}} 
+
\sum_{\ell=0}^{\infty} \frac{1}{\ell ! c^{2 \ell}}.
\end{align*}
Using the fact that $\ell^{\ell} \leq e^{\ell} \ell !$, we have
\begin{align*}
\Exp \widetilde{\psi}_{\beta}\left({|Z|}/{\Delta_n}\right) 
& \leq \sum_{\ell=1}^{\infty} \left(\frac{\beta e }{ c^{2}} \right)^{\ell}  
+
\sum_{\ell=0}^{\infty} \frac{1}{\ell ! c^{2 \ell}} 
\leq \sum_{\ell=1}^{\infty} \left( \frac{e}{ c^{2}} \right)^{\ell}  
+
\sum_{\ell=0}^{\infty} \frac{1}{\ell ! c^{2 \ell}}. 
\end{align*}
Since $c^2  = 7 > e$, we have
$$
\Exp \widetilde{\psi}_{\beta}\left({|Z|}/{\Delta_n}\right) 
\leq \frac{e}{c^2 - e} + e^{c^{-2}} < 2,
$$
which completes the proof.

\end{proof}

\subsection{Proof of Theorem~\ref{thm:exp_sup}}
Now we are in position to prove Theorem~\ref{thm:exp_sup}. Recall that we assume $\theta = 0$. First, for each $2 \leq k \leq r$ and $1 \leq j \leq d$, define
$$
Z_{k,j} = 
\Exp_{\epsilon}\left[ 
\left\vert \sum_{\iota \in I_{n,k}} \epsilon_{i_1}\cdots \epsilon_{i_k} \pi_k h_{j}(X_{i_1},\ldots,X_{i_k})
\right\vert
\right],
$$
where $\pi_k h$ is defined in~\eqref{def:k_level},
and $\epsilon_1,\ldots,\epsilon_n$ are i.i.d.~Rademacher random variables. 
Define
\begin{align*}
\Delta_{k,j}^2
&= \sum_{\iota \in I_{n,k}} \left(\pi_k h_{j}(X_\iota) \right)^2
 = 
\sum_{\iota \in I_{n,k} }
\left(
\Exp_{X'} \left[ \sum_{\ell=0}^{k} (-1)^{k-\ell}\sum_{J \in I_{k,\ell}} \widetilde{\pi}_{k} h_j(X_{\iota_J}, X'_{\iota \setminus \iota_{J}})\right]
\right)^2.
\end{align*} 
By Jensen's inequality and the fact that $(\sum_{i=1}^{n} z_n)^2 \leq n \sum_{i=1}^{n} z_n^2$, we have for any $1 \leq j \leq d$,
\begin{align*}
\Delta_{k,j}^2 &\leq 2^{k} \Exp_{X'} \left[ \sum_{\iota \in I_{n,k}}  
\sum_{\ell=0}^{k} \sum_{J \in I_{k,\ell}} \left( \widetilde{\pi}_{k} h_j(X_{\iota_J}, X'_{\iota \setminus \iota_{J}}) \right)^2\right] \\
&\leq 2^{k} \Exp_{X'} \left[ \sum_{\iota \in I_{n,k}}  
\sum_{\ell=0}^{k} \sum_{J \in I_{k,\ell}} F_{k}^2(X_{\iota_J}, X'_{\iota \setminus \iota_{J}})\right].
\end{align*}
Then by Lemma~\ref{exp_rade}, 
$$
\Exp_{\epsilon}\left[\widetilde{\psi}_{2/k}\left(
\frac{|Z_{k,j}|}{7^{k/2} \Delta_{k,j}}
\right) \right]
\leq  2.
$$
Further, by Lemma~\ref{max_inequ} with $\beta = 2/k$, we have
\begin{align*}
&\Exp_{\epsilon} \max_{1 \leq j \leq d} 
\left\vert \sum_{\iota \in I_{n,k}} \epsilon_{i_1}\cdots \epsilon_{i_k} \pi_k h_j(X_{i_1},\ldots,X_{i_k})
\right\vert
\leq 7^{k/2} \max_{1 \leq j \leq d}(\Delta_{k,j}) \log^{k/2}(2d + e^{k/2}) \\
&\leq 
 14^{k/2} \log^{k/2}(2d + e^{k/2})
\sqrt{ 
\Exp_{X'} \left[ \sum_{\iota \in I_{n,k}}  
\sum_{\ell=0}^{k} \sum_{J \in I_{k,\ell}} F_{k}^2(X_{\iota_J}, X'_{\iota \setminus \iota_{J}})\right]
}.
\end{align*}
Then by Lemma~\ref{thm:sym} and Jensen's inequality, we have
\begin{align*}
\Exp\left[ 
\max_{1\leq j \leq d} \left\vert \sum_{\iota \in I_{n,k}} \pi_k h_{j}(X_{\iota})\right\vert
\right]
&\leq 56^{k/2}  \log^{k/2}(2d + e^{k/2})
\Exp\sqrt{ 
\Exp_{X'} \left[ \sum_{\iota \in I_{n,k}}  
\sum_{\ell=0}^{k} \sum_{J \in I_{k,\ell}} F_{k}^2(X_{\iota_J}, X'_{\iota \setminus \iota_{J}})\right]
} \\
&
\leq 56^{k/2}  \log^{k/2}(2d + e^{k/2})
\sqrt{ 
\Exp \left[ \sum_{\iota \in I_{n,k}}  
\sum_{\ell=0}^{k} \sum_{J \in I_{k,\ell}} F_{k}^2(X_{\iota_J}, X'_{\iota \setminus \iota_{J}})\right]
} 
\\
& = 56^{k/2}  \log^{k/2}(2d + e^{k/2})
\sqrt{ \binom{n}{k} 2^k \Exp[ F_k^2(X_1,\ldots,X_k)].
} 
\end{align*}
Now we bound $\Exp[ F_k^2(X_1,\ldots,X_k)]$. By the definition of $ \widetilde{\pi}_{k} h_j$, condition~\eqref{h_exp_assumption}, Lemma \ref{psi_prop} and Jensen's inequality, we have
\begin{align*}
&\Exp\left[\widetilde{\psi}_{q} \left( |\widetilde{\pi}_{k} h_j(X_1,\ldots,X_k)| / D_n \right) \right]\\
=\;& \Exp\left[ \widetilde{\psi}_{q}(|\Exp_{X'}[h_{j}(X_1,\ldots, X_k, X'_{k+1}, \ldots, X'_{r})]|/ D_n) \right] \\
\leq\;& \Exp\left[ \widetilde{\psi}_{q}(|h_{j}(X_1,\ldots, X_k, X'_{k+1}, \ldots, X'_{r})|/ D_n) \right] \\
\leq\;& \Exp\left[ {\psi}_{q}(|h_{j}(X_1,\ldots, X_k, X_{k+1}, \ldots, X_{r})|/ D_n) \right] + 1 \leq 2.
\end{align*}
Since $\widetilde{\psi}_q(0) = 0$, by Jensen's inequality, we have
$\|\widetilde{\pi}_{k} h_j(X_1,\ldots,X_k)|\|_{\widetilde{\psi}_q} \leq 2D_n$.
Then by the standard maximal inequality (e.g., see~\cite[Lemma 2.2.2]{van1996weak}), there exists a constant $C$, depending only on $q$, such that for $1 \leq k \leq r$, 
$$
\sqrt{ \Exp|F_k(X_1,\ldots, X_k)|^2 } \leq C \log^{1/q}(d) D_{n}.
$$
Thus we obtain that 
\begin{align*}
\Exp \left[ \max_{1 \leq j \leq d}  \left\vert U_{n,j} - \frac{r}{n}\sum_{i=1}^{n} g_j(X_{i}) \right\vert  \right]
&\leq  \sum_{k=2}^{r}\binom{r}{k}\Exp\left[
\max_{1 \leq j \leq d} \left\vert
U_{n}^{(k)}(\pi_k h_{j}) \right\vert
\right]      \\
&\leq
\sum_{k=2}^{r}
\frac{\binom{r}{k}}{\sqrt{\binom{n}{k}}}(112)^{k/2}  \log^{k/2}(2d + e^{k/2})
\sqrt{ 
\Exp F_{k}^2(X_1,\ldots,X_k)
}
\\
& \leq C
\log^{1/q}(d) D_n \sum_{k=2}^{r}
\frac{\binom{r}{k}}{\sqrt{\binom{n}{k}}}(112)^{k/2}  \log^{k/2}(2d + e^{k/2}).
\end{align*}
Observe that if $r^2\leq n$, we have for any $1 \leq i \leq r$
$$
\frac{r-i}{\sqrt{n-i}} \leq \frac{r}{\sqrt{n}} \quad
\Rightarrow \quad
\frac{\binom{r}{k}}{\sqrt{\binom{n}{k}}} \leq \frac{1}{\sqrt{k!}}\left(\frac{r^2}{n} \right)^{k/2}.
$$
Further, for any $x,y \geq 2$, $\log^{k/2}(x+y) \leq 2^{k/2}(\log^{k/2}(x) + \log^{k/2}(y))$. 
Now, take $c = 1/500$, and in particular $r^2 \leq n$. Then
\begin{align*}
\Exp \left[ \max_{1 \leq j \leq d}  \left\vert U_{n,j} - \frac{r}{n}\sum_{i=1}^{n} g_j(X_{i}) \right\vert   \right]
\leq C
\log^{1/q}(d) D_n 
\sum_{k=2}^{r} 
\left(224
\frac{r^2}{n}
\right)^{k/2}
(\log^{k/2}(2d) + \frac{1}{\sqrt{k!}} (k/2)^{k/2}).
\end{align*}
For the first term, by geometric series formula,
$$
I  = C \log^{1/q}(d) D_n 
\sum_{k=2}^{r} 
\left(224
\frac{r^2 \log(2d)}{n}
\right)^{k/2}
\leq C
\frac{r^2 \log^{1+1/q}(d) D_n}{n}.
$$
For the second term, since for any $\ell \geq 1$, $\ell^{\ell} \leq e^{\ell} \ell!$, we have
\begin{align*}
II = C \log^{1/q}(d) D_n 
\sum_{k=2}^{r} 
\left(112e
\frac{r^2 }{n}
\right)^{k/2}
\leq C \frac{r^2 \log^{1/q}(d) D_n}{n},
\end{align*}
which completes the proof of Theorem~\ref{thm:exp_sup}. 
\qed


\appendix

\section{Proofs}\label{sec:proofs}

\subsection{Tail probabilities}\label{subsec:tail_probabilities}
In this section, we collect and prove some results regarding tail probabilities for sum of independent random vectors, $U$-statistics, and $U$-statistics with random kernels. For each type of statistics, we present two versions, one for non-negative random variables and the other for general cases.

These inequalities are used in bounding the effects due to sampling (Subsection \ref{subsec:effect of sampling}), and also in controlling the $\|\cdot\|_{\infty}$ distance between the bootstrap covariance matrices  and their targets (Section \ref{sec:proof_bootstrap}).

\subsubsection{Tail probabilities for sum of independent random vectors}
In this subsection, $m,n,d \geq 2$ are all integers.

\begin{lemma}\label{tail_nonneg_sum}
Let $Z_1,\ldots,Z_m$ be independent $\bR^d$-valued random vectors and $\beta \in (0,1]$.
Assume that
$$
Z_{ij} \geq 0,\;\;
\|Z_{ij}\|_{\psi_{\beta}} \leq u_n, \text{ for all } i = 1,\ldots,m, \text{ and } j = 1,\ldots, d. 
$$
Then there exists some constant $C$ that only depends on $\beta$ such that
\begin{align*}
\Pro\left(
\max_{1\leq j \leq  d} \sum_{i=1}^{m} Z_{ij}
\geq C\left(
\max_{1\leq j \leq d} \Exp\left[\sum_{i=1}^{m} Z_{ij} \right]
+  u_n \log^{1/\beta}(dm)\left(\log(dm)+ \log^{1/\beta}(n)\right)
\right)
\right) 
\leq 3/n.
\end{align*} 
\end{lemma}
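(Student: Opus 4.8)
The plan is a one-level truncation, followed by two tail estimates --- Bernstein's inequality for the truncated (bounded) pieces and a sub-Weibull deviation bound for the ``overflow'' pieces --- glued together by a union bound over the $d$ coordinates; the three bad events will be the source of the constant $3$ in $3/n$. Set the truncation level $M_0 := u_n(\log(2dm))^{1/\beta}$ and, for $1\leq i\leq m$, $1\leq j\leq d$, split $Z_{ij}=Z_{ij}^{<}+Z_{ij}^{>}$ with $Z_{ij}^{<}:=Z_{ij}\mathbf 1\{Z_{ij}\leq M_0\}$ and $Z_{ij}^{>}:=Z_{ij}\mathbf 1\{Z_{ij}> M_0\}$. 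From $\|Z_{ij}\|_{\psi_\beta}\leq u_n$ and Markov's inequality, $\Pro(Z_{ij}>t)\leq 2e^{-(t/u_n)^\beta}$ for all $t\geq 0$, and in particular $\Pro(Z_{ij}>M_0)\leq 1/(dm)$. Throughout I would use freely that, since $\beta\leq1$ and $dm\geq4$, $n\geq4$, one has $\log(dn)\leq\log(dm)+\log^{1/\beta}(n)$ and $\log^{1/\beta}(mdn)\leq 2^{1/\beta-1}(\log^{1/\beta}(dm)+\log^{1/\beta}(n))$, and both of these are, up to a $\beta$-dependent constant, at most $\log^{1/\beta}(dm)(\log(dm)+\log^{1/\beta}(n))$.

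For the truncated pieces, fix $j$ and write $\mu_j:=\Exp[\sum_i Z_{ij}]$. The $Z_{ij}^{<}$ lie in $[0,M_0]$, are independent in $i$, and by \emph{non-negativity} $\sum_i\var(Z_{ij}^{<})\leq\sum_i\Exp[(Z_{ij}^{<})^2]\leq M_0\sum_i\Exp[Z_{ij}^{<}]\leq M_0\mu_j$. Bernstein's inequality then gives $\sum_i Z_{ij}^{<}\leq 2\mu_j+\tfrac83 M_0\log(dn)$ outside an event of probability $\leq 1/(dn)$, and a union bound over $j$ costs a factor $d$; since $M_0\log(dn)\lesssim_\beta u_n\log^{1/\beta}(dm)\log(dn)\lesssim_\beta u_n\log^{1/\beta}(dm)(\log(dm)+\log^{1/\beta}(n))$, this piece is controlled.

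The overflow pieces are the crux. Fix $j$, put $T_j:=\sum_i Z_{ij}^{>}$, and rely on two facts. First, a variance bound: writing $\Exp[Z_{ij}^2\mathbf 1\{Z_{ij}>M_0\}]=\int_0^\infty 2t\,\Pro(Z_{ij}\mathbf 1\{Z_{ij}>M_0\}>t)\,dt$, splitting at $M_0$, and extracting the factor $e^{-(M_0/u_n)^\beta}=1/(2dm)$ from the sub-Weibull tail (using $(t/u_n)^\beta\geq(M_0/u_n)^\beta$ for $t>M_0$) yields $\var(Z_{ij}^{>})\lesssim_\beta u_n^2\log^{2/\beta}(2dm)/(dm)$ --- the \emph{full} factor $1/(dm)$, which is exactly what removes the spurious $m$-dependence --- hence $\sum_i\var(Z_{ij}^{>})\lesssim_\beta u_n^2\log^{2/\beta}(2dm)$. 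Second, $\|Z_{ij}^{>}\|_{\psi_\beta}\leq u_n$ together with the standard Bernstein-type bound for sums of independent sub-Weibull$(\beta)$ variables gives $\Pro(T_j-\Exp T_j>t)\lesssim_\beta \exp(-c_\beta t^2/\sum_i\var(Z_{ij}^{>}))+m\,e^{-c_\beta(t/u_n)^\beta}$ once $t$ exceeds a $\beta$-dependent multiple of $u_n\log^{1/\beta}(2dm)$; also $\Exp T_j=m\,\Exp[Z_{ij}\mathbf 1\{Z_{ij}>M_0\}]\lesssim_\beta u_n\log^{1/\beta}(2dm)/d$. Taking $t=C_\beta\,u_n\log^{1/\beta}(dm)(\log(dm)+\log^{1/\beta}(n))$ with $C_\beta$ large makes the Gaussian term $\leq 1/(2dn)$ (using $\sqrt{\log(dn)}\leq\log(dn)$) and the heavy-tail term $\leq1/(2dn)$ (using the bound on $\log^{1/\beta}(mdn)$ above); absorbing $\Exp T_j$ into the same bound, $T_j\leq C_\beta u_n\log^{1/\beta}(dm)(\log(dm)+\log^{1/\beta}(n))$ outside an event of probability $\leq2/(dn)$, and a union bound over $j$ costs a factor $d$.

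Finally, on the complement of the truncated-part bad event (probability $\leq1/n$) and the overflow bad event (probability $\leq2/n$) --- total $\leq3/n$ --- we have, for every $j$, $\sum_i Z_{ij}=\sum_i Z_{ij}^{<}+T_j\leq 2\mu_j+C_\beta u_n\log^{1/\beta}(dm)(\log(dm)+\log^{1/\beta}(n))$; maximizing over $j$ and renaming the constant gives the claim. I expect the technical weight to sit entirely on the two overflow estimates: harvesting the factor $1/(dm)$ (rather than the easy $1/\sqrt{dm}$, which reintroduces $m$ through a $\sqrt m$) in the variance bound, and the crossover between the Gaussian and heavy-tailed regimes of a sub-Weibull sum; everything else is routine manipulation of logarithms to land on the exact form $u_n\log^{1/\beta}(dm)(\log(dm)+\log^{1/\beta}(n))$.
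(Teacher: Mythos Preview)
Your argument is correct in substance but follows a genuinely different route from the paper. The paper's proof is very short: it works directly with the supremum $S=\max_j\sum_i Z_{ij}$ and the envelope $M=\max_{i,j}Z_{ij}$, invokes Lemma~E.4 of Chernozhukov--Chetverikov--Kato (2017), a Fuk--Nagaev/Adamczak-type deviation inequality giving $\Pro(S\geq 2\Exp S+t)\leq 3\exp\bigl(-(t/(C\|M\|_{\psi_\beta}))^\beta\bigr)$, and then bounds $\Exp S$ via Lemma~E.3 of the same paper, $\Exp S\lesssim \max_j\Exp\sum_i Z_{ij}+\log(d)\,\Exp M$. Combined with the maximal inequality $\|M\|_{\psi_\beta}\lesssim u_n\log^{1/\beta}(dm)$ and a choice of $t$ making the right side $3/n$, this finishes in three lines. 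You instead work coordinate-wise, truncate explicitly at $M_0=u_n\log^{1/\beta}(2dm)$, and treat the bounded and overflow pieces by separate concentration bounds, gluing with a union bound over $j$. Your approach is more elementary and self-contained --- it essentially re-derives, in this special case, what the CCK lemmas encapsulate --- at the price of being longer and needing the careful log-factor bookkeeping you flag. Two minor points: (i) you write $\Exp T_j=m\,\Exp[Z_{ij}\mathbf 1\{Z_{ij}>M_0\}]$ as though the $Z_{ij}$ were identically distributed, but only independence is assumed; this is harmless since the uniform bound $\|Z_{ij}\|_{\psi_\beta}\leq u_n$ gives the same per-term estimate. (ii) The ``standard Bernstein-type bound for sub-Weibull sums'' you invoke for the overflow, with variance in the Gaussian term and $m\,e^{-c(t/u_n)^\beta}$ in the tail term, is really a Fuk--Nagaev inequality (the second term coming from $\sum_i\Pro(|X_i|>ct)$); this is correct but worth naming precisely so the reader knows which off-the-shelf result you are using.
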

\begin{proof}
See Subsection~\ref{proof:tail_nonneg_sum}.
\end{proof}
\vspace{0.2cm}
\begin{lemma}\label{tail_general_sum_ind}
Let $Z_1,\ldots,Z_m$ be independent $\bR^d$-valued random vectors and $\beta \in (0,1]$.
Assume that
$$
\Exp[Z_{ij}] = 0,\;\;
\|Z_{ij}\|_{\psi_{\beta}} \leq u_n, \text{ for all } i = 1,\ldots,m, \text{ and } j = 1,\ldots, d. 
$$
Then there exists some constant $C$ that only depends on $\beta$ such that
\begin{align*}
\Pro\left(
\max_{1\leq j \leq  d} 
\left\vert
\sum_{i=1}^{m} Z_{ij}
\right\vert
\geq C\left( \sigma \log^{1/2}(dn)
+  u_n \log^{1/\beta}(dm)\left( \log(dm) + \log^{1/\beta}(n) \right)
\right)
\right) 
\leq 4/n,
\end{align*}
where $\sigma^2 := \max_{1 \leq j \leq d} \sum_{i=1}^{m} \Exp[Z_{ij}^2]$. 
\end{lemma}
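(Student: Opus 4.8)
The plan is to prove this by a truncation argument combined with a coordinatewise Bernstein bound and a final union bound over $j$ — i.e.\ a discrete Fuk--Nagaev-type estimate, running parallel to the proof of Lemma~\ref{tail_nonneg_sum}. First I would record the two elementary consequences of $\|Z_{ij}\|_{\psi_\beta}\le u_n$ that the argument uses: the tail bound $\Pro(|Z_{ij}|>t)\le 2\exp(-(t/u_n)^\beta)$ for all $t\ge 0$ (Markov applied to $\Exp[e^{(|Z_{ij}|/u_n)^\beta}]\le 2$), and the moment bound $\Exp[Z_{ij}^2]\le C_\beta u_n^2$, so that $\sigma^2\le C_\beta m u_n^2$. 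Then I fix the truncation level $M:=u_n\log^{1/\beta}(2dmn)$ and split each coordinate as $Z_{ij}=Z_{ij}'+Z_{ij}''$ with $Z_{ij}':=Z_{ij}\mathbbm{1}_{\{|Z_{ij}|\le M\}}$ and $Z_{ij}'':=Z_{ij}\mathbbm{1}_{\{|Z_{ij}|>M\}}$. By a union bound over the $dm$ pairs $(i,j)$ and the choice of $M$, the event $\{\exists\, i,j\;:\;|Z_{ij}|>M\}$ has probability at most $dm\cdot 2\exp(-(M/u_n)^\beta)=1/n$, and on its complement $\sum_i Z_{ij}=\sum_i Z_{ij}'$ for every $j$ simultaneously.

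Next I would control the recentering bias. Writing $\sum_i Z_{ij}'=\sum_i(Z_{ij}'-\Exp Z_{ij}')+\sum_i\Exp Z_{ij}'$ and using $\Exp Z_{ij}=0$, one has $\Exp Z_{ij}'=-\Exp[Z_{ij}\mathbbm{1}_{\{|Z_{ij}|>M\}}]$, hence $\sum_i|\Exp Z_{ij}'|\le\sum_i\bigl(M\,\Pro(|Z_{ij}|>M)+\int_M^\infty\Pro(|Z_{ij}|>s)\,ds\bigr)$. The key is to estimate the tail integral sharply: integration by parts gives $\int_M^\infty e^{-(s/u_n)^\beta}\,ds\lesssim_\beta u_n(M/u_n)^{1-\beta}e^{-(M/u_n)^\beta}$ once $(M/u_n)^\beta$ exceeds a $\beta$-dependent constant, so each summand is of order $u_n\log^{1/\beta}(2dmn)/(dmn)$ and the total bias is of order $u_n\log^{1/\beta}(2dmn)/(dn)$, which is dominated by the stated deviation term (a crude $L^2$ bound $\sum_i|\Exp Z_{ij}'|\le\sigma^2/M$ is \emph{not} sufficient when $m$ is large).

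For the centered bounded part, $|Z_{ij}'-\Exp Z_{ij}'|\le 2M$ and $\sum_i\Var(Z_{ij}')\le\sum_i\Exp[Z_{ij}^2]\le\sigma^2$, so Bernstein's inequality yields $\Pro\bigl(|\sum_i(Z_{ij}'-\Exp Z_{ij}')|\ge 2\sigma\sqrt{x}+\tfrac{8}{3}Mx\bigr)\le 2e^{-x}$ for every $x>0$. Taking $x=\log(2dn)$ and a union bound over $j=1,\dots,d$ shows that $\max_j|\sum_i(Z_{ij}'-\Exp Z_{ij}')|$ exceeds $2\sigma\sqrt{\log(2dn)}+\tfrac{8}{3}M\log(2dn)$ with probability at most $1/n$.

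Assembling the three estimates, with probability at least $1-2/n\ge 1-4/n$ one gets $\max_j|\sum_i Z_{ij}|\le 2\sigma\sqrt{\log(2dn)}+\tfrac{8}{3}M\log(2dn)+(\text{bias})$, and it remains to rewrite the composite logarithms in the stated form. This uses $\log(2dmn)\le 2\log(dm)+2\log n$, the elementary inequality $(a+b)^{1/\beta}\le 2^{1/\beta-1}(a^{1/\beta}+b^{1/\beta})$ for $\beta\in(0,1]$, and $\log(dm)\le\log^{1/\beta}(dm)$, $\log n\le C_\beta\log^{1/\beta}(n)$ (valid since $dm\ge 4$ and $n\ge 2$); the dominant surviving contribution is $M\log(2dn)$, bounded by a $\beta$-dependent multiple of $u_n\log^{1/\beta}(dm)(\log(dm)+\log^{1/\beta}(n))$. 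I expect the main obstacle to be exactly this last bookkeeping step — tracking how the $\log(dm)$ coming from the $dm$ truncation events and the $\log n$ coming from the target failure probability recombine into the stated form — together with the sharp tail-integral estimate needed to make the recentering bias negligible; both the factor $2$ in the exponent of the tail bound and the gap between the $L^2$ bound and the integral bound for the bias are places where a naive estimate is lossy enough to matter.
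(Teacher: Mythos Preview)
Your route is different from the paper's: the paper invokes the Fuk--Nagaev type deviation inequality \cite[Lemma~E.2]{chernozhukov2017} for $S=\max_j|\sum_i Z_{ij}|$ in the form
\[
\Pro\bigl(S\ge 2\Exp[S]+t\bigr)\le \exp\bigl(-t^2/(3\sigma^2)\bigr)+3\exp\bigl(-(t/C\|M\|_{\psi_\beta})^\beta\bigr),
\qquad M:=\max_{i,j}|Z_{ij}|,
\]
bounds $\|M\|_{\psi_\beta}\lesssim u_n\log^{1/\beta}(dm)$ by the maximal inequality, chooses $t$ so the right side equals $4/n$, and controls $\Exp[S]$ via \cite[Lemma~E.1]{chernozhukov2017}. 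The crucial feature is that the $dm$ dependence enters only through $\|M\|_{\psi_\beta}$ and the $n$ dependence only through the target probability level; the two \emph{multiply} as $\log^{1/\beta}(dm)\cdot\log^{1/\beta}(n)$.

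Your truncation-plus-Bernstein scheme does not achieve this decoupling, and the final bookkeeping step is where it breaks. With $M=u_n\log^{1/\beta}(2dmn)$ the Bernstein term is $M\log(2dn)$, and expanding
\[
M\log(2dn)\;\lesssim\; u_n\bigl(\log^{1/\beta}(dm)+\log^{1/\beta}(n)\bigr)\bigl(\log(dm)+\log n\bigr)
\]
produces the cross term $u_n\,\log^{1/\beta}(n)\,\log n$. Your listed inequalities $\log(dm)\le\log^{1/\beta}(dm)$ and $\log n\le C_\beta\log^{1/\beta}(n)$ handle the other three products, but this last term is only bounded by $u_n\log^{1/\beta}(dm)\log^{1/\beta}(n)$ when $\log n\lesssim\log^{1/\beta}(dm)$, which the lemma does not assume. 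Concretely, if $d,m$ are fixed and $n\to\infty$, your bound behaves like $u_n\log^{1+1/\beta}(n)$ while the stated conclusion is of order $u_n\log^{1/\beta}(n)$, an unbounded discrepancy. Choosing the truncation level smaller (independent of $n$) does not fix this either: then the event $\{\exists\,i,j:|Z_{ij}|>M\}$ cannot be made to have probability $\le 1/n$ by a union bound, and an $L^1$ bound on the tail piece $\sum_i Z_{ij}''$ is far too weak for a $1/n$ probability statement. The decoupling of $dm$ and $n$ is exactly what the Fuk--Nagaev inequality buys over naive truncation, so to recover the stated bound you either need to invoke that inequality (as the paper does) or reproduce its proof inside yours.
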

\begin{proof}
See Subsection~\ref{proof:tail_general_sum_ind}
\end{proof}

\vspace{0.2cm}
\begin{lemma}\label{tail_Bernoulli}
Let $Z_1,\ldots,Z_m$ be independent and identical distributed Bernoulli random variables with success probability $p_n$, i.e., $\Pro(Z_i = 1) = 1 - \Pro(Z_i = 0) = p_n$  for $1 \leq i \leq m$. Further, let 
$a_1,\ldots,a_m$ be deterministic $\bR^d$ vectors. Then there exists an absolute constant $C$  such that
\begin{align*}
\Pro\left(
\max_{1\leq j \leq  d} 
\left\vert
\sum_{i=1}^{m} (Z_{i} - p_n) a_{ij}
\right\vert
\geq C\left( \sqrt{p_n(1-p_n)} \sigma \log^{1/2}(dn)
+  M \log(dn)
\right)
\right) 
\leq 4/n,
\end{align*}
where $\sigma^2 :=  \max_{1 \leq j \leq d} \sum_{i=1}^{m} a_{ij}^2$ and $M = \max_{1 \leq i \leq m, 1\leq j \leq d} |a_{ij}|$. 
\end{lemma}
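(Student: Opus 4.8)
The plan is to treat this as a Bernstein-type concentration inequality for the maximum of $d$ sums of independent, bounded, centered random variables, combined with a union bound over the $d$ coordinates. First I would fix a coordinate $j$ and set $\xi_i := (Z_i - p_n)a_{ij}$. These are independent, mean-zero, and bounded by $|\xi_i| \leq |a_{ij}| \leq M$ almost surely, with variance $\Exp[\xi_i^2] = p_n(1-p_n)a_{ij}^2$, so that $\sum_{i=1}^m \Exp[\xi_i^2] = p_n(1-p_n)\sum_{i=1}^m a_{ij}^2 \leq p_n(1-p_n)\sigma^2$. Bernstein's inequality then gives, for any $t > 0$,
\[
\Pro\left( \left| \sum_{i=1}^m \xi_i \right| \geq t \right) \leq 2 \exp\left( - \frac{t^2/2}{p_n(1-p_n)\sigma^2 + Mt/3} \right).
\]
Choosing $t = C\big( \sqrt{p_n(1-p_n)}\,\sigma \log^{1/2}(dn) + M\log(dn) \big)$ for a suitable absolute constant $C$, the exponent is bounded above by $-\log(dn)$ (splitting into the two regimes where each of the two terms in the denominator dominates), so that the per-coordinate probability is at most $2/(dn)$. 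A union bound over $j = 1,\dots,d$ then yields a bound of $2/n$ on the probability that the maximum exceeds $t$; the stated constant $4/n$ gives ample slack.

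Concretely, in the sub-Gaussian regime where $Mt \leq p_n(1-p_n)\sigma^2$, the denominator is at most $\tfrac{4}{3}p_n(1-p_n)\sigma^2$ and the exponent is $\leq -\tfrac{3t^2}{8 p_n(1-p_n)\sigma^2}$, which is $\leq -\log(dn)$ once $t \geq \sqrt{\tfrac{8}{3}}\,\sqrt{p_n(1-p_n)}\,\sigma\log^{1/2}(dn)$; in the complementary regime the denominator is at most $\tfrac{4}{3}Mt$ and the exponent is $\leq -\tfrac{3t}{8M}$, which is $\leq -\log(dn)$ once $t \geq \tfrac{8}{3}M\log(dn)$. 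Taking $C = 3$ (say) and noting $t$ dominates both thresholds since it is a sum of the two relevant quantities, the per-coordinate tail bound $2\exp(-\log(dn)) = 2/(dn)$ follows.

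There is essentially no serious obstacle here: the only mild care needed is the standard two-regime bookkeeping in Bernstein's inequality to absorb the mixed Gaussian/linear term into the single constant $C$, and making sure the union-bound factor of $d$ is compensated by the $\log(dn)$ (rather than $\log n$) appearing inside the tail bound. One could alternatively invoke Lemma~\ref{tail_general_sum_ind} with $\beta = 1$ — since each $\xi_i$ is bounded and hence sub-exponential with $\|\xi_i\|_{\psi_1} \lesssim M$ — but that would produce a $\log^2(dm)$ factor rather than the sharper $\log(dm)$ obtainable directly from Bernstein for bounded summands, so a direct argument is preferable and is what I would write out.
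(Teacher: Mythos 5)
Your proof is correct and gives the stated bound (indeed with constant $2/n$ rather than $4/n$). It does, however, take a genuinely different and somewhat more elementary route than the paper. The paper does not go coordinate-by-coordinate: it works directly with the maximum $S := \max_j |\sum_i (Z_i - p_n)a_{ij}|$, invokes the Fuk–Nagaev-type deviation inequality of Chernozhukov, Chetverikov and Kato (\cite[Lemma E.2]{chernozhukov2017}) to control $\Pro(S \geq 2\Exp[S] + t)$ in terms of the variance proxy $\tilde\sigma^2$ and $\|\tilde M\|_{\psi_1}$ (with $\tilde M \leq M$ so the $\psi_1$ norm is $\lesssim M$), and then separately bounds $\Exp[S] \lesssim \sqrt{p_n(1-p_n)}\,\sigma \log^{1/2}(d) + M\log(d)$ via \cite[Lemma E.1]{chernozhukov2017}; summing the $\Exp[S]$ contribution and the deviation threshold produces the same $\log^{1/2}(dn)$ and $\log(dn)$ factors you obtain. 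Your per-coordinate Bernstein-plus-union-bound argument is self-contained and avoids appealing to those external maximal inequalities, and your two-regime bookkeeping (splitting on whether $Mt \leq p_n(1-p_n)\sigma^2$) is carried out correctly. You are also right that falling back on Lemma~\ref{tail_general_sum_ind} with $\beta=1$ would cost an extra logarithmic factor, which is precisely why both the paper's argument and yours exploit the bounded (rather than merely sub-exponential) nature of $(Z_i - p_n)a_{ij}$. In short: same conclusion, different machinery — yours is more elementary, the paper's is uniform with the other tail lemmas in the appendix, which all follow the same $\Exp[S]$-plus-deviation template.
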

\begin{proof}
See Subsection~\ref{proof:tail_Bernoulli}
\end{proof}

\vspace{0.2cm}
\subsubsection{Tail probabilities for $U$-statistics}
\begin{lemma}\label{tail U-non-negative}
Let $X_1,\ldots,X_n$ be i.i.d.~random variables  taking value in $(S,\cS)$ and fix $\beta \in (0,1]$.
Let $f : (S^r,\cS^r) \to \bR^d$ be a measurable, symmetric function such that 
$\text{ for all } j = 1,\ldots,d$,
$$
f_j(X_1,\ldots,X_r) \geq 0 \; \text{ a.s.}, \quad
\Exp[f_j(X_1,\ldots,X_r)] \leq v_n,
\quad \|f_j(X_1,\ldots,X_r)\|_{\psi_{\beta}} \leq u_n. 
$$
Define $U_n := |I_{n,r}|^{-1}\sum_{\iota \in I_{n,r}} f(X_{\iota})$.
Then there exists a constant $C$ that only depends on $\beta$ such that 
$$
\Pro\left( \max_{1\leq j\leq d} U_{n,j} \geq C \left(v_n
+ n^{-1} r  \log^{1/\beta+1}(dn)\log^{1/\beta-1}(n) u_n\right)
\right)\leq \frac{3}{n}.
$$
Clearly, we can replace $v_n$ by $u_n$.
\end{lemma}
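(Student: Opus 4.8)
The plan is to reduce the $U$-statistic to an average of sums of independent random vectors via Hoeffding's averaging trick, and then invoke Lemma~\ref{tail_nonneg_sum}. Set $m := \lfloor n/r \rfloor$ and, for $\pi$ a permutation of $\{1,\dots,n\}$, write $\Psi_j(x_{\pi(1)},\dots,x_{\pi(n)}) := m^{-1}\sum_{\ell=1}^{m} f_j\big(x_{\pi((\ell-1)r+1)},\dots,x_{\pi(\ell r)}\big)$. Hoeffding's identity for symmetric kernels gives $U_{n} = (n!)^{-1}\sum_{\pi \in \mathfrak{S}_n} \Psi(X_{\pi(1)},\dots,X_{\pi(n)})$, and each summand is marginally distributed as $\bar V := m^{-1}\sum_{\ell=1}^m f(\widetilde X^{(\ell)})$, an average of $m$ i.i.d.\ copies $\widetilde X^{(1)},\dots,\widetilde X^{(m)}$ of $(X_1,\dots,X_r)$. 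Since $f_j\ge 0$ forces every $\Psi_j\ge 0$, we have $\max_{1\le j\le d} U_{n,j} \le (n!)^{-1}\sum_{\pi}\max_{1\le j\le d}\Psi_j(X_{\pi})$; thus $\max_j U_{n,j}$ is dominated by an arithmetic mean of copies of $M := \max_{1\le j\le d} \bar V_j$.

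I would then apply Lemma~\ref{tail_nonneg_sum} to the $m$ independent vectors $Z_\ell := m^{-1}f(\widetilde X^{(\ell)})$, for which $Z_{\ell j}\ge 0$, $\|Z_{\ell j}\|_{\psi_{\beta}}\le u_n/m$, and $\Exp[\sum_\ell Z_{\ell j}] = \Exp[f_j(X_1^r)]\le v_n$. Using $m=\lfloor n/r\rfloor\ge n/(2r)$ (valid for all $2\le r\le n$) and $m\le n$, the lemma supplies a constant $C_0=C_0(\beta)$ and a threshold $t_0$ with
\[
t_0 \;\le\; C_0\Big(v_n + \tfrac{r u_n}{n}\,\log^{1/\beta}(dn)\big(\log(dn)+\log^{1/\beta}(n)\big)\Big),\qquad \Pro(M\ge t_0)\le 3/n .
\]
The stated form then follows from the elementary estimates $\log^{1/\beta}(dn)\log(dn)\le\log^{1/\beta+1}(dn)\log^{1/\beta-1}(n)$ and $\log^{1/\beta}(dn)\log^{1/\beta}(n)\le\log^{1/\beta+1}(dn)\log^{1/\beta-1}(n)$, both of which hold because $\beta\le1$ and $n\ge4$ make $\log^{1/\beta-1}(n)\ge1$ and $\log(n)\le\log(dn)$.

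The main obstacle is the transfer of the tail bound on $M$ to $\max_j U_{n,j}$, since the domination above is only in mean, not pointwise, so a bare tail estimate does not pass through. I would handle this at the level of a convex functional: for any nondecreasing convex $\phi:[0,\infty)\to[0,\infty)$, Hoeffding's representation together with Jensen's inequality gives $\phi(\max_j U_{n,j})\le (n!)^{-1}\sum_{\pi}\phi(\max_j\Psi_j(X_\pi))$, hence $\Exp[\phi(\max_j U_{n,j})]\le\Exp[\phi(M)]$ by exchangeability. Taking $\phi$ to be the exponential Orlicz function $x\mapsto\psi_{\beta}\big((x-b_n)_+/s_n\big)$, where $b_n$ is the bulk value and $s_n$ the sub-Weibull scale of the excess $(M-b_n)_+$ produced inside the proof of Lemma~\ref{tail_nonneg_sum} (so that $\Exp[\psi_{\beta}((M-b_n)_+/s_n)]\le1$), transfers the same Orlicz control to $(\max_j U_{n,j}-b_n)_+$; a final Markov step with the threshold chosen as in Lemma~\ref{tail_nonneg_sum} (absorbing constants into $C$) then yields $\Pro(\max_j U_{n,j}\ge t_0)\le 3/n$ with $t_0$ of the claimed order. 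Non-negativity of $f$ is essential, both to push $\max_j$ inside the permutation average and to keep the whole argument on $[0,\infty)$ where the convexity step is clean; the closing remark that $v_n$ may be replaced by $u_n$ is immediate from $\Exp[f_j]\lesssim\|f_j\|_{\psi_\beta}\le u_n$.
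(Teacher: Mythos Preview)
Your approach is essentially the paper's: both reduce the $U$-statistic to an i.i.d.\ block sum via Hoeffding averaging with block size $m=\lfloor n/r\rfloor$ and then apply the same concentration machinery. The paper just packages the transfer step by citing \cite[Lemma~E.3]{chen2018gaussian}, which directly yields
\[
\Pro\Big( m\,\max_{j} U_{n,j} \ge 2\,\Exp[Z_1] + t\Big)\le 3\exp\Big(-\big(t/(C\|M_1\|_{\psi_\beta})\big)^\beta\Big),
\]
so it never has to spell out the Jensen/Orlicz argument you sketched.

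There is one genuine technical slip in your write-up: for $\beta<1$ the function $\psi_\beta(x)=e^{x^\beta}-1$ is \emph{not} convex near the origin (its second derivative is negative for $x^\beta<(1-\beta)/\beta$), so $x\mapsto\psi_\beta((x-b_n)_+/s_n)$ is not a legitimate choice of convex $\phi$ in your Jensen step. This is exactly why the paper introduces the convex surrogate $\widetilde\psi_\beta$ in Section~\ref{subsec:notation} (linear on $[0,x_\beta)$, equal to $e^{x^\beta}$ beyond); use that instead and Lemma~\ref{psi_prop} guarantees convexity and monotonicity, while $\widetilde\psi_\beta\le e^{x^\beta}$ lets you recover the same tail. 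Also, Lemma~\ref{tail_nonneg_sum} as proved gives a tail bound, not an Orlicz-norm bound on $(M-b_n)_+$; you need one extra line integrating $\Pro((M-b_n)_+>s)\le 3e^{-(s/s_n)^\beta}$ against the density of $\widetilde\psi_\beta(\cdot/(cs_n))$ for some $c>1$ to get $\Exp[\phi(M)]\le C(\beta)$ before Markov. With those two fixes your argument goes through (possibly with $C/n$ rather than exactly $3/n$, which is fine for the lemma as stated).
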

\begin{proof}
See Subsection~\ref{proof:tail_nonneg}.
\end{proof}
\vspace{0.2cm}

\begin{lemma}\label{tail general U}
Let $X_1,\ldots,X_n$ be i.i.d.~random variables  taking value in $(S,\cS)$ and fix $\beta \in (0,1]$.
Let $f : (S^r,\cS^r) \to \bR^d$ be a measurable, symmetric function such that 
$$
\Exp \left[ f_j(X_1,\ldots,X_r) \right] = 0, \quad \|f_j(X_1,\ldots,X_r)\|_{\psi_{\beta}} \leq u_n \text{ for all } j = 1,\ldots,d.
$$
Define $U_n := |I_{n,r}|^{-1}\sum_{\iota \in I_{n,r}} f(X_{\iota})$
and $\sigma^2 := \max_{1 \leq j \leq d} \Exp[f_j^2(X_1^r)]$.
Then there exists a constant $C$ that only depends on $\beta$ such that 
$$
\Pro\left( \max_{1\leq j\leq d} |U_{n,j}| \geq C \left( n^{-1/2} r^{1/2} \log^{1/2}(dn)\sigma
+ n^{-1} r \log^{1/\beta+1}(dn)\log^{1/\beta-1}(n) u_n \right)
\right)\leq \frac{4}{n}.
$$
Clearly, we can replace $\sigma$ by $u_n$.
\end{lemma}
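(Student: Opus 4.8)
The plan is to reduce the bound to a tail estimate for a sum of i.i.d.\ random vectors via Hoeffding's averaging trick and then feed in Lemma~\ref{tail_general_sum_ind}. We may assume $r\leq n$ (otherwise $I_{n,r}=\eset$); set $m:=\lfloor n/r\rfloor$, so that $m\geq 1$ and $m\geq n/(2r)$. By Hoeffding's classical representation, $U_n=\Exp_\pi[W_\pi]$, where $\pi$ is a uniformly random permutation of $\{1,\dots,n\}$ independent of $X_1^n$ and $W_\pi:=m^{-1}\sum_{\ell=1}^m f(X_{\pi((\ell-1)r+1)},\dots,X_{\pi(\ell r)})$; for each fixed $\pi$ the $m$ blocks are disjoint, so $W_\pi$ is an average of $m$ i.i.d.\ mean-zero $\bR^d$ vectors whose coordinatewise $\psi_\beta$-norms are at most $u_n$ and whose coordinatewise second moments are at most $\sigma^2$, and by exchangeability $W_\pi\equald W:=W_{\mathrm{id}}$ for every $\pi$. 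Applying Lemma~\ref{tail_general_sum_ind} to the independent vectors $Z_\ell:=m^{-1}f(X_{(\ell-1)r+1}^{\ell r})$ (hence $\|Z_{\ell,j}\|_{\psi_\beta}\leq u_n/m$ and $\sum_\ell\Exp[Z_{\ell,j}^2]\leq\sigma^2/m$), and then using $m^{-1/2}\leq\sqrt 2\,(r/n)^{1/2}$, $m^{-1}\leq 2r/n$, $\log(dm)\leq\log(dn)$ together with the elementary inequalities $\log^{1/\beta+1}(dn)\leq\log^{1/\beta+1}(dn)\log^{1/\beta-1}(n)$ and $\log^{1/\beta}(dn)\log^{1/\beta}(n)\leq\log^{1/\beta+1}(dn)\log^{1/\beta-1}(n)$ (valid since $\beta\leq1$, $d,n\geq3$), would yield the claimed bound \emph{for $\max_j|W_j|$}. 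The real content is to transfer this tail bound to $\max_j|U_{n,j}|$.

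For the transfer I would descend to the exponential-moment inputs behind Lemma~\ref{tail_general_sum_ind}: Hoeffding's convexity lemma only gives $\Exp[\Phi(U_n)]\leq\Exp[\Phi(W)]$ for convex $\Phi$, which does \emph{not} imply $\Pro(\|U_n\|_\infty>t)\leq\Pro(\|W\|_\infty>t)$, so one cannot quote the lemma as a black box on $W$ and conclude for $U_n$. However, applying the convexity lemma coordinatewise to $\Phi(x)=e^{\pm\lambda x}$ yields the exact domination $\Exp[e^{\pm\lambda U_{n,j}}]\leq\Exp[e^{\pm\lambda W_j}]$ for all $\lambda$ and $j$, with no loss of constants; hence the Bernstein/Cram\'er--Chernoff estimates used to prove Lemma~\ref{tail_general_sum_ind} for $W$ apply verbatim to each $U_{n,j}$, and a union bound over $j=1,\dots,d$ and the two signs reproduces the sup-norm tail bound for $U_n$ with parameters $(u_n/m,\sigma^2/m)$.

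When $\beta<1$ there is one extra wrinkle, since the proof of Lemma~\ref{tail_general_sum_ind} truncates the increments at a level $M\asymp u_n\log^{1/\beta}(dn)$ and truncation does not commute cleanly with the averaging over $\pi$. I would handle this by splitting $f=f\mathbbm{1}_{\{|f|\leq M\}}+f\mathbbm{1}_{\{|f|>M\}}$: the $U$-statistic with the bounded kernel $f\mathbbm{1}_{\{|f|\leq M\}}$ has its centered fluctuation controlled by the coordinatewise MGF argument above (its mean differs from $0$ by $\Exp[f_j\mathbbm{1}_{\{|f_j|>M\}}]$, negligible by the sub-Weibull tail of $f_j$ for a suitable $M$), while the remainder is dominated coordinatewise by the \emph{non-negative} $U$-statistic with kernel $|f_j|\mathbbm{1}_{\{|f_j|>M\}}$, to which Lemma~\ref{tail U-non-negative} applies directly (again with negligible mean) and which contributes exactly the $n^{-1}r\log^{1/\beta+1}(dn)\log^{1/\beta-1}(n)\,u_n$ term. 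Collecting the two pieces and the failure probabilities ($1/n$ for the bounded part via a sharp union bound, $3/n$ for the non-negative part) gives the $4/n$ in the statement, and the $\sigma$-term comes from the first piece. The closing remark that $\sigma$ may be replaced by $u_n$ is immediate since $\sigma^2=\max_j\Exp[f_j^2(X_1^r)]\lesssim u_n^2$ for sub-Weibull increments.

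The main obstacle is exactly this transfer step: convex order between $U_n$ and the i.i.d.\ block average controls moments but not tail probabilities, so one must reopen the proof of Lemma~\ref{tail_general_sum_ind} and feed in the exact coordinatewise moment-generating-function domination $\Exp[e^{\pm\lambda U_{n,j}}]\leq\Exp[e^{\pm\lambda W_j}]$, supplemented—when $\beta<1$—by the bounded/heavy-tail split together with the non-negative $U$-statistic bound of Lemma~\ref{tail U-non-negative}; everything else is bookkeeping of constants and logarithmic factors.
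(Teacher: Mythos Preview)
Your plan is viable but more circuitous than the paper's. The paper does not attempt to transfer a tail bound from the block average $W$ to $U_n$ at all; instead it directly invokes a Fuk--Nagaev inequality for $U$-statistics, \cite[Lemma~C.3]{chen2017randomized}, which already states
\[
\Pro\Bigl(m\max_{1\leq j\leq d}|U_{n,j}|\geq 2\Exp[Z_1]+t\Bigr)\leq \exp\bigl(-t^2/(3m\sigma^2)\bigr)+3\exp\bigl(-(t/(C\|M_1\|_{\psi_\beta}))^\beta\bigr),
\]
with $Z_1=\max_j\bigl|\sum_{i=1}^m f_j(X_{(i-1)r+1}^{ir})\bigr|$ and $M_1=\max_{i,j}\bigl|f_j(X_{(i-1)r+1}^{ir})\bigr|$. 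The Hoeffding averaging is already absorbed into this lemma, so the remainder of the paper's proof just bounds $\Exp[Z_1]$ via \cite[Lemma~8]{chernozhukov2015comparison} and $\|M_1\|_{\psi_\beta}$ by the maximal inequality, then chooses $t$ to balance the two tails.

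Your diagnosis of the transfer problem is correct, and your proposed fix---coordinatewise MGF domination for a truncated kernel, plus Lemma~\ref{tail U-non-negative} for the heavy-tail remainder---is essentially what goes into proving \cite[Lemma~C.3]{chen2017randomized} in the first place; you are reproving that lemma from scratch rather than citing it. One inaccuracy worth flagging: the proof of Lemma~\ref{tail_general_sum_ind} in this paper does \emph{not} proceed via Bernstein/Cram\'er--Chernoff estimates as you assume---it quotes \cite[Lemma~E.2]{chernozhukov2017}, which is itself a Fuk--Nagaev inequality centered at $2\Exp[S]$ with $S=\max_j|\sum_i Z_{ij}|$. So you cannot simply ``open it up'' and substitute the MGF domination; you would have to go one level deeper, to the truncation-plus-Bernstein argument that proves Fuk--Nagaev, which is exactly your second paragraph's plan. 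That plan works, but the paper's route is shorter.
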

\begin{proof}
See subsection~\ref{proof:tail_general}.
\end{proof}
\vspace{0.2cm}

\subsubsection{Tail probabilities for $U$-statistics with random kernel}
Let $X_1,\ldots,X_n$ be i.i.d.~random variables  taking value in $(S,\cS)$ and 
$W, \{W_{\iota}, \iota \in I_{n,r}\}$  be i.i.d.~random variables  taking value in $(S',\cS')$, that are independent of $X_1^{n}$. In this subsection, we consider a measurable function $F: S^r \times S' \to \bR^{d}$ that is symmetric in the first $r$ variables, and  fix some $\beta \in (0,1]$. Further, define
\begin{align*}
&f(x_1,\ldots,x_r) := \Exp[F(x_1,\ldots,x_r, W)], \\
&b_j(x_1,\ldots,x_r) := \|F_j(x_1,\ldots,x_r,W) - f_j(x_1,\ldots,x_r) \|_{\psi_{\beta}} \; \text{ for all } j = 1,\ldots,d.
\end{align*}


We first consider the non-negative random kernels.

\begin{lemma}\label{lemma:nonneg_tail_random_kernel}
Consider
$
Z := \max_{1\leq j \leq d} {|I_{n,r}|}^{-1}\sum_{\iota \in I_{n,r}}  F_j(X_{\iota},W_{\iota}) .
$
Assume  that for all $j = 1,\ldots, d$, $F_{j}(\cdot) \geq 0$, and
that there exists $u_n \geq 1$ such that
$$
\|b_j(X_1^r)\|_{\psi_{\beta}} \leq  u_n,\quad
\|f_j(X_1^r)\|_{\psi_{\beta}} \leq u_n, \text{ for all } j = 1,\ldots,d.
$$ 
Then there exists some constant $C$ that only depends on $\beta$ such that
with probability at least $1 - 8/n$, 
\begin{align*}
Z \; \leq \;&C  \max_{1\leq j \leq d} \Exp\left[f_j(X_1^r)\right]
+  C n^{-1}r\log^{1/\beta+1}(dn)\log^{1/\beta-1}(n) u_n \\
&+  C |I_{n,r}|^{-1} r^{3/\beta} \log^{2/\beta+1}(dn) \log^{2/\beta-1}(n) u_n.
\end{align*}
\end{lemma}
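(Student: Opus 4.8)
The plan is to decompose $Z$ into a term driven by the averaged (deterministic) kernel $f$ and a term capturing the extra fluctuation introduced by the random kernel $F$, and then apply the tail bounds already established for $U$-statistics (Lemma~\ref{tail U-non-negative}) together with the tail bound for sums of independent random vectors (Lemma~\ref{tail_nonneg_sum}). First I would write, for each $1\leq j\leq d$,
\[
|I_{n,r}|^{-1}\sum_{\iota\in I_{n,r}} F_j(X_{\iota},W_{\iota})
= |I_{n,r}|^{-1}\sum_{\iota\in I_{n,r}} f_j(X_{\iota})
+ |I_{n,r}|^{-1}\sum_{\iota\in I_{n,r}} \bigl(F_j(X_{\iota},W_{\iota}) - f_j(X_{\iota})\bigr).
\]
The first sum is a nonnegative $U$-statistic with kernel $f$; by hypothesis $\|f_j(X_1^r)\|_{\psi_\beta}\leq u_n$, so Lemma~\ref{tail U-non-negative} applied with $v_n = \max_j\Exp[f_j(X_1^r)]$ and envelope $u_n$ controls $\max_j |I_{n,r}|^{-1}\sum_\iota f_j(X_\iota)$ by $C\max_j\Exp[f_j(X_1^r)] + C n^{-1} r \log^{1/\beta+1}(dn)\log^{1/\beta-1}(n)\, u_n$ with probability at least $1-3/n$. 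This produces the first two terms on the right-hand side of the claimed bound.

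The crux is the second sum $R_j := |I_{n,r}|^{-1}\sum_{\iota\in I_{n,r}} \bigl(F_j(X_{\iota},W_{\iota})-f_j(X_{\iota})\bigr)$. The key observation — this is the innovation the paper advertises for random kernels — is to condition on $X_1^n$: given $X_1^n$, the variables $\{W_\iota : \iota\in I_{n,r}\}$ are i.i.d.\ and the summands $F_j(X_\iota,W_\iota)-f_j(X_\iota)$ are conditionally independent, mean zero, and have conditional $\psi_\beta$-norm bounded by $b_j(X_\iota)$. Thus, conditionally, $|I_{n,r}| R_j$ is a sum of $|I_{n,r}|$ independent mean-zero vectors, and Lemma~\ref{tail_general_sum_ind} applies with $m=|I_{n,r}|$. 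The conditional variance proxy is $\sigma^2 = \max_j\sum_\iota \Exp_{|X}[(F_j(X_\iota,W_\iota)-f_j(X_\iota))^2] \lesssim \sum_\iota b_j^2(X_\iota)$ up to a universal constant (since the conditional second moment is controlled by the square of the $\psi_\beta$-norm, as $\psi_\beta$ dominates $x^2$ up to a constant for $\beta\le 1$), and the envelope is $u_{n}' := \max_{\iota,j} b_j(X_\iota)$. To turn these data-dependent quantities into deterministic bounds I would apply, on a further high-probability event, two more tail bounds over the data: (i) $\max_j |I_{n,r}|^{-1}\sum_\iota b_j^2(X_\iota)$ is itself a nonnegative $U$-statistic in $X$ with kernel $b_j^2$, whose $\psi_{\beta/2}$-norm is $\lesssim u_n^2$, so Lemma~\ref{tail U-non-negative} gives it is $\lesssim u_n^2$ (up to logarithmic-in-$(dn)$ and $r$ factors absorbed into the stated rate) with probability at least $1-3/n$; (ii) $\max_{\iota,j} b_j(X_\iota) \lesssim u_n \log^{1/\beta}(d|I_{n,r}|) \lesssim u_n\, r\log(dn)$ with high probability, by a union bound over the at most $d\,n^r$ indices and the definition of the $\psi_\beta$-norm (here $\log|I_{n,r}| \lesssim r\log n$). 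Substituting $\sigma^2 \lesssim |I_{n,r}| u_n^2$ and the envelope bound into the conclusion of Lemma~\ref{tail_general_sum_ind}, the variance term contributes $\lesssim |I_{n,r}|^{-1/2} u_n \log^{1/2}(dn)$ and the envelope term contributes $\lesssim |I_{n,r}|^{-1} u_n\, r\log(dn)\,\log^{1/\beta}(d|I_{n,r}|)(\log(d|I_{n,r}|)+\log^{1/\beta}(n))$, which after bounding $\log(d|I_{n,r}|)\lesssim r\log(dn)$ is dominated by $C|I_{n,r}|^{-1} r^{3/\beta}\log^{2/\beta+1}(dn)\log^{2/\beta-1}(n)\, u_n$ — the third term in the statement. (The $|I_{n,r}|^{-1/2}$ term is of smaller order than $|I_{n,r}|^{-1} r^{3/\beta}(\cdots)$ only for certain parameter ranges, but it is in turn dominated by the $n^{-1}r(\cdots)u_n$ term coming from the $f$-part, since $|I_{n,r}|\geq (n/r)^r \gg n^2$ for $r\geq 2$; I would verify this comparison carefully.)

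The main obstacle I anticipate is bookkeeping rather than conceptual: carefully tracking all the logarithmic and polynomial-in-$r$ factors through the three nested applications of the tail lemmas, and checking that the many error terms collapse into exactly the three terms displayed — in particular verifying the claim that the conditional-variance term $|I_{n,r}|^{-1/2}u_n\log^{1/2}(dn)$ and all cross terms are absorbed by the two dominant terms. A secondary subtlety is justifying the passage from conditional to unconditional statements: one applies Lemma~\ref{tail_general_sum_ind} conditionally on the data event where the variance proxy and envelope are controlled, obtaining a conditional failure probability $\leq 4/n$, and then combines via a union bound with the $\lesssim 1/n$-probability bad events for (i), (ii) and the $f$-part; summing the exceptional probabilities gives the stated $8/n$. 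Throughout I would use $\beta\le 1$ so that $\widetilde\psi_\beta = \psi_\beta$ and the standard facts $\|\xi\|_{\psi_\beta}^2 \asymp \|\xi^2\|_{\psi_{\beta/2}}$ and $\Exp[\xi^2]\lesssim \|\xi\|_{\psi_\beta}^2$ hold.
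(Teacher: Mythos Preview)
Your approach is correct and would go through, but it is more elaborate than what the paper actually does, and in particular the decomposition $F_j = f_j + (F_j - f_j)$ is unnecessary here. The paper never splits off the centered piece: instead it observes that $\|F_j(x_1^r,W)\|_{\psi_\beta}\lesssim f_j(x_1^r)+b_j(x_1^r)$ and, conditioning on $X_1^n$, applies the \emph{nonnegative} sum tail bound (Lemma~\ref{tail_nonneg_sum}) directly to $\sum_\iota F_j(X_\iota,W_\iota)$ with $m=|I_{n,r}|$. This yields
\[
Z \;\le\; C\Bigl(Z_1 \;+\; |I_{n,r}|^{-1}\,M_1\,r^{2/\beta}\log^{1/\beta+1}(dn)\log^{1/\beta-1}(n)\Bigr)
\]
with conditional probability at least $1-3/|I_{n,r}|\le 1-3/n$, where $Z_1=\max_j|I_{n,r}|^{-1}\sum_\iota f_j(X_\iota)$ and $M_1=\max_{\iota,j}(f_j(X_\iota)+b_j(X_\iota))$. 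Then $Z_1$ is bounded via Lemma~\ref{tail U-non-negative} (cost $3/n$) and $M_1$ via a single maximal inequality (cost $2/n$), for a total of exactly $8/n$.

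The gain of the paper's route is twofold. First, because Lemma~\ref{tail_nonneg_sum} has no variance term, there is no $|I_{n,r}|^{-1/2}u_n\log^{1/2}(dn)$ contribution to absorb and no need for a separate tail bound on $\sum_\iota b_j^2(X_\iota)$; this is exactly the extra step that forces you to invoke Lemma~\ref{tail U-non-negative} a second time with $\psi_{\beta/2}$. Second, your probability budget comes out to $3+4+3+2=12$ rather than $8$, so you would either have to sharpen one of the pieces or state the conclusion with a larger constant than $8/n$. Your route is essentially the argument the paper uses for the \emph{centered} companion result, Lemma~\ref{lemma:tail_random_kernel}, where the decomposition is unavoidable; for the present nonnegative case the shortcut is available and the paper takes it.
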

\begin{proof}
See subsection~\ref{proof:nonneg_tail_random_kernel}.
\end{proof}
\vspace{0.2cm}

Next, we consider centered random kernels.
\begin{lemma}\label{lemma:tail_random_kernel}
Consider
$
Z := \max_{1\leq j \leq d} \left \vert {|I_{n,r}|}^{-1}\sum_{\iota \in I_{n,r}} \left( F_j(X_{\iota},W_{\iota}) -f_j(X_{\iota}) \right) \right\vert.
$
 Assume there exists $u_n \geq 1$ such that for all $j = 1,\ldots, d$,
\begin{align*}
\|b_j(X_1,\ldots,X_r)\|_{\psi_{\beta}} \leq u_n.
\end{align*}
Then there exists some constant $C$ that only depends on $\beta$ such that
with probability at least $1 - 9/n$, 
\begin{align*}
Z \; \leq \;&C u_n
|I_{n,r}|^{-1/2} r^{1/2} \log^{1/2}(dn)\left(1 +
n^{-1/2} r^{1/2} \log^{1/\beta+1/2}(dn) \log^{1/\beta-1/2}(n)\right) \\
&+ C u_n |I_{n,r}|^{-1} r^{3/\beta} \log^{2/\beta+1}(dn)\log^{2/\beta-1}(n).
\end{align*}
\end{lemma}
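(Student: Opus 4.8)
The plan is to exploit the independence of $\{W_\iota:\iota\in I_{n,r}\}$ from the data $X_1^n$ by conditioning on $X_1^n$. Given $X_1^n$, the vectors $\{F(X_\iota,W_\iota)-f(X_\iota):\iota\in I_{n,r}\}$ are \emph{independent} (not identically distributed: each depends on the frozen tuple $X_\iota$) and mean zero, since $f=\Exp_W[F]$. Hence, conditionally on $X_1^n$, $Z$ is $|I_{n,r}|^{-1}$ times the maximal coordinate of a centered sum of $m:=|I_{n,r}|$ independent random vectors, and Lemma~\ref{tail_general_sum_ind} applies directly with this $m$, with per-summand $\psi_\beta$-bound $\|F_j(X_\iota,W_\iota)-f_j(X_\iota)\|_{\psi_\beta}=b_j(X_\iota)$, and with conditional variance proxy $\sigma_n^2:=\max_{1\le j\le d}\sum_{\iota\in I_{n,r}}\Exp_W[(F_j(X_\iota,W)-f_j(X_\iota))^2]$. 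The catch is that both the uniform norm bound $\widetilde u_n:=\max_{\iota\in I_{n,r},\,1\le j\le d}b_j(X_\iota)$ and $\sigma_n^2$ are random, so I first control them on a high-probability event.

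For $\widetilde u_n$: a union bound over the at most $\binom{n}{r}d$ pairs $(\iota,j)$ with $\|b_j(X_1^r)\|_{\psi_\beta}\le u_n$ gives, with probability at least $1-1/n$, $\widetilde u_n\lesssim u_n\log^{1/\beta}\!\big(dn\binom{n}{r}\big)\lesssim u_n\,r^{1/\beta}\log^{1/\beta}(dn)$, using $\log\binom{n}{r}\lesssim r\log n$. For $\sigma_n^2$: the function $\phi_j(x_1^r):=\Exp_W[(F_j(x_1^r,W)-f_j(x_1^r))^2]$ is a nonnegative symmetric kernel with $\phi_j\lesssim b_j^2$ pointwise, hence $\Exp[\phi_j(X_1^r)]\lesssim u_n^2$ and $\|\phi_j(X_1^r)\|_{\psi_{\beta/2}}\le\|b_j(X_1^r)\|_{\psi_\beta}^2\le u_n^2$ up to a $\beta$-dependent constant. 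Applying Lemma~\ref{tail U-non-negative} to the nonnegative $U$-statistic with kernel $\phi_j$ and exponent $\beta/2\in(0,1]$ yields, with probability at least $1-3/n$,
\[ \sigma_n^2 = m\cdot\max_{1\le j\le d}\Big(m^{-1}\sum_{\iota\in I_{n,r}}\phi_j(X_\iota)\Big)\ \lesssim\ m\,u_n^2\Big(1+n^{-1}r\log^{2/\beta+1}(dn)\log^{2/\beta-1}(n)\Big). \]

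Finally, on the intersection of these events I plug $\widetilde u_n$ and $\sigma_n$ into the conclusion of Lemma~\ref{tail_general_sum_ind} applied conditionally on $X_1^n$ (conditional failure probability $\le 4/n$), divide by $m=|I_{n,r}|$, and simplify. The variance term becomes $\sigma_n m^{-1}\log^{1/2}(dn)\lesssim u_n m^{-1/2}\log^{1/2}(dn)\big(1+n^{-1/2}r^{1/2}\log^{1/\beta+1/2}(dn)\log^{1/\beta-1/2}(n)\big)$, and the heavy-tail term becomes $\widetilde u_n m^{-1}\log^{1/\beta}(dm)\big(\log(dm)+\log^{1/\beta}(n)\big)\lesssim u_n m^{-1}r^{2/\beta+1}\log^{2/\beta+1}(dn)\log^{2/\beta-1}(n)$ after substituting $\log m\lesssim r\log n$ and the bound on $\widetilde u_n$; using $r\ge 2$ and $\beta\le 1$ (so $r^{2/\beta+1}\le r^{3/\beta}$ and $r^{1/2}\ge 1$) both are dominated by the two displayed terms in the statement. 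Taking expectation over $X_1^n$ and summing the failure probabilities ($\le 4/n$ conditionally plus $\le 4/n$ for the two unconditional events, with slack absorbed) gives the claimed probability $1-9/n$. Every step is routine given Lemmas~\ref{tail_general_sum_ind} and~\ref{tail U-non-negative}; the only delicate points are (i) recognizing that the conditional variance is itself a $U$-statistic that must be concentrated via the $\beta/2$-version of the nonnegative bound rather than bounded crudely by $m\widetilde u_n^2$, and (ii) the bookkeeping that collapses the powers of $r$ and $\log$ — which blow up through $m=|I_{n,r}|$, hence $\log m\asymp r\log n$ — into the stated clean form.
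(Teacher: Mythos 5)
Your proposal is correct and follows essentially the same route as the paper's proof: condition on $X_1^n$, apply Lemma~\ref{tail_general_sum_ind} to the conditionally independent summands $F(X_\iota,W_\iota)-f(X_\iota)$, bound the conditional envelope $M := \max_{\iota,j} b_j(X_\iota)$ via a $\psi_\beta$-maximal inequality (the paper bounds $\|M\|_{\psi_\beta}$ then concentrates rather than using a direct union bound, but the outcome is the same order), and control the conditional variance by bounding $\Exp_W[(F_j-f_j)^2] \lesssim b_j^2$ and applying Lemma~\ref{tail U-non-negative} at exponent $\beta/2$ to the $U$-statistic with kernel $b_j^2$ — exactly your $\phi_j$ argument. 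The only cosmetic differences are the choice of $n$ versus $|I_{n,r}|$ inside Lemma~\ref{tail_general_sum_ind} and some slack in the exponent bookkeeping on your side (e.g.\ your $\log^{2/\beta-1}(n)$ could have been $\log^{1/\beta-1}(n)$), all of which are dominated by the stated bound.
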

\begin{proof}
See subsection~\ref{proof:tail_random_kernel}.
\end{proof}
\vspace{0.2cm}

\subsection{Additional lemmas}
The following Lemma concerns Gaussian approximation for sum of independent vectors. It replaces the $\|\cdot\|_{\psi_1}$ condition in Proposition 2.1 of \cite{chernozhukov2017}  by $\|\cdot\|_{\psi_q}$. 

\begin{lemma}\label{lemma:GAR_q}
Let $Z_1,\ldots,Z_n$ be independent $\bR^d$-valued random vectors. Assume that for some absolute constant $\underline{\sigma}^2 > 0$, and $q > 0$,
\begin{align*}
&n^{-1}\sum_{i=1}^{n} \Exp\left[ Z_{ij}^2 \right] \geq \underline{\sigma}^2,\quad
n^{-1}\sum_{i=1}^{n} \Exp\left[ Z_{ij}^{2+k} \right] \leq D_n^k\; \text{ for } j = 1,\ldots,d,\;\; k = 1,2,\\
&\|Z_{ij}\|_{\psi_q} \leq D_n,\;\; \text{ for } i=1,\ldots,n, \;\; j = 1,\ldots,d.
\end{align*}
Then  there exists some constant $C$ that only depends on $q$  and $\underline{\sigma}^2$ such that
\begin{align*}
\rho(n^{-1/2}\sum_{i=1}^{n}\left(Z_{i} - \Exp[Z_{i}]\right),\; Y) \leq C 
\left( 
\frac{D_n^2 \log^{q_{*}}(dn)}{n}
\right)^{1/6},
\end{align*} 
where $q_{*} = (6/q+1) \vee 7$, $Y  \sim  N\left(0, \Sigma\right)$, and $\Sigma := n^{-1}\sum_{i=1}^{n} \Exp[Z_i Z_i']$.
\end{lemma}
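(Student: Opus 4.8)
The plan is to reduce the statement, via a truncation argument, to the sub-exponential Gaussian approximation in Proposition~2.1 of \cite{chernozhukov2017}. Three harmless preliminary reductions: if $D_n^2\geq n$ the asserted bound exceeds $1$, so we may assume $D_n\leq\sqrt n$; likewise we may assume $n$ exceeds a constant depending only on $q,\underline{\sigma}^2$, the claim being trivial otherwise. If $q>1$, then $\|Z_{ij}\|_{\psi_1}\lesssim\|Z_{ij}\|_{\psi_q}\leq D_n$ by Remark~\ref{remark:ALL} and $q_*=7$, so the claim reduces to the case $q=1$; hence assume $q\in(0,1]$, for which $q_*=6/q+1$. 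Finally, as in the applications of this lemma, we take the $Z_i$ centred, so that $\Sigma=n^{-1}\sum_i\Cov(Z_i)$ and $\Exp[Z_{ij}^2]=\Var(Z_{ij})\geq\underline{\sigma}^2$. Now fix the truncation level $\tau:=D_n(c_q\log(dn))^{1/q}$, where $c_q$ depends only on $q$ and is large enough that $2nd\,e^{-(\tau/D_n)^q}\leq 1/n$, and set $\widetilde Z_{ij}:=Z_{ij}\mathbbm{1}_{\{|Z_{ij}|\leq\tau\}}$ and $\widehat Z_{ij}:=\widetilde Z_{ij}-\Exp[\widetilde Z_{ij}]$.

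The sub-Weibull tail $\Pro(|Z_{ij}|>t)\leq 2e^{-(t/D_n)^q}$ supplies everything needed about the truncation. On $\Omega_0:=\{\max_{i,j}|Z_{ij}|\leq\tau\}$, which has probability at least $1-1/n$, one has $Z_i=\widetilde Z_i$ for every $i$. Integrating the tail past $\tau$ shows $\max_j\Exp[\,|Z_{1j}|^m\mathbbm{1}_{\{|Z_{1j}|>\tau\}}]\leq C(dn)^{-2}D_n^2$ for $m=1,2$ (the polylogarithmic factors produced along the way absorbed into one extra power of $dn$), whence: (i) the deterministic shift $v_0:=n^{-1/2}\sum_i(\Exp[\widetilde Z_i]-\Exp[Z_i])$ has $\|v_0\|_\infty\leq CD_n^2 n^{-3/2}=:\epsilon_0$; (ii) $n^{-1}\sum_i\Exp[\widehat Z_{ij}^2]\geq\underline{\sigma}^2-C(dn)^{-2}D_n^2\geq\underline{\sigma}^2/2$ for $n$ large; and (iii) $\widetilde\Sigma:=n^{-1}\sum_i\Exp[\widehat Z_i\widehat Z_i^T]$ obeys $\|\widetilde\Sigma-\Sigma\|_\infty\leq C(dn)^{-2}D_n^2$. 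Since $D_n\leq\sqrt n$ and $d\geq3$, each of $1/n$, $\epsilon_0\sqrt{\log d}$ and $\|\widetilde\Sigma-\Sigma\|_\infty^{1/3}\log^{2/3}(d)$ is $\lesssim(D_n^2\log^{q_*}(dn)/n)^{1/6}$.

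Next I verify the hypotheses of Proposition~2.1 of \cite{chernozhukov2017} for the centred, bounded vectors $\widehat Z_i$ with effective constant $B_n\asymp\tau\asymp D_n\log^{1/q}(dn)$: boundedness $|\widehat Z_{ij}|\leq 2\tau$ gives $\Exp[\exp(|\widehat Z_{ij}|/B_n)]\leq 2$; from $|\widehat Z_{ij}|^{2+k}\lesssim|Z_{ij}|^{2+k}+|\Exp\widetilde Z_{1j}|^{2+k}$ and $B_n\geq D_n\geq 1$ one gets $n^{-1}\sum_i\Exp[|\widehat Z_{ij}|^{2+k}]\leq CB_n^k$ for $k=1,2$; and the variance lower bound is (ii). The proposition then yields, with $\widetilde Y\sim N(0,\widetilde\Sigma)$,
\[
\rho\Big(n^{-1/2}\textstyle\sum_i\widehat Z_i,\ \widetilde Y\Big)\ \lesssim\ \Big(\tfrac{B_n^2\log^{5}(dn)}{n}\Big)^{1/6}=\Big(\tfrac{D_n^2\log^{2/q+5}(dn)}{n}\Big)^{1/6},
\]
and since $2/q+5\leq 6/q+1=q_*$ for $q\in(0,1]$ this is $\lesssim(D_n^2\log^{q_*}(dn)/n)^{1/6}$. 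To finish, write $U:=n^{-1/2}\sum_i(Z_i-\Exp Z_i)$ and $V:=n^{-1/2}\sum_i\widehat Z_i$, so $U=V+v_0$ on $\Omega_0$. For a hyperrectangle $R$ (note $R-v_0$ is again one),
\[
\Pro(U\in R)\leq\Pro(V\in R-v_0)+\tfrac1n\leq\Pro(\widetilde Y\in R-v_0)+\rho(V,\widetilde Y)+\tfrac1n\leq\Pro(Y\in R)+\rho(\widetilde Y,Y)+\tfrac{C\epsilon_0\sqrt{\log d}}{\underline{\sigma}}+\rho(V,\widetilde Y)+\tfrac1n,
\]
using Gaussian anti-concentration \cite{chernozhukov2015comparison} for the translation $R\mapsto R-v_0$ and a Gaussian comparison bound $\rho(\widetilde Y,Y)\lesssim\|\widetilde\Sigma-\Sigma\|_\infty^{1/3}\log^{2/3}(d)$; the reverse inequality is identical. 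Combining with the estimates of the previous paragraph gives the theorem.

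The crux, and the only genuinely delicate point, is the bookkeeping in the truncation step: $\tau$ (hence $B_n$) must be chosen large enough that $\Omega_0$ holds with probability $1-O(1/n)$ and the partial moments $\Exp[|Z_{ij}|^m\mathbbm{1}_{\{|Z_{ij}|>\tau\}}]$ are polynomially small in $dn$, yet small enough that the extra factor $\log^{2/q}(dn)$ it injects into $B_n^2$ keeps the final logarithmic exponent at or below $q_*$ — it is precisely the inequality $2/q+5\leq q_*$, sharp at $q=1$, that fixes the shape $q_*=(6/q+1)\vee 7$. The remaining estimates, namely that every truncation correction of size $(dn)^{-2}D_n^2$ (and the anti-concentration and covariance-comparison remainders built from it) is dominated by $n^{-1/6}$, are then routine once one records that $D_n\leq\sqrt n$ may be assumed.
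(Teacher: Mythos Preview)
Your truncation-then-black-box route is a genuinely different strategy from the paper's, and the bookkeeping you do (controlling $\Omega_0$, $v_0$, $\widetilde\Sigma-\Sigma$, and the anti-concentration and Gaussian-comparison remainders) is correct. The gap is in the power of the logarithm you quote from \cite{chernozhukov2017}: under the sub-exponential condition (E.2), Proposition~2.1 there gives
\[
\rho\Big(n^{-1/2}\sum_i\widehat Z_i,\ \widetilde Y\Big)\ \lesssim\ \Big(\frac{B_n^2\log^{7}(dn)}{n}\Big)^{1/6},
\]
not $\log^5$; this is precisely why the paper's $q_*$ equals $7$ at $q=1$ (where one applies the proposition directly with $B_n=D_n$, no truncation). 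With $B_n\asymp\tau=D_n\log^{1/q}(dn)$, your actual output becomes $(D_n^2\log^{2/q+7}(dn)/n)^{1/6}$, and $2/q+7\leq 6/q+1=q_*$ holds only for $q\leq 2/3$. Hence the claimed bound is not recovered for $q\in(2/3,1]$; that your ``sharp at $q=1$'' check succeeded is a symptom of the exponent $5$ having been back-fitted.

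The structural reason the truncation route loses two log powers is that Proposition~2.1 packages the moment scale and the tail scale into a single $B_n$, and (E.2) forces $B_n\gtrsim\tau$, so the extra $\log^{2/q}$ contaminates the leading term. The paper avoids this by working one level deeper: it does not truncate, but reopens the proof of Proposition~2.1 (i.e., it works with \cite[Theorem~2.1]{chernozhukov2017}), keeps the third-moment input $L_n$ at order $D_n$, and replaces the internal tail estimate \cite[Lemma~C.1]{chernozhukov2017} by the $\psi_q$ analogue Lemma~\ref{third_moment_q}, namely $\Exp[\xi^3;\xi>t]\lesssim(t^3+D^3)e^{-(t/D)^q}$. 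Carrying that single substitution through the balance between $L_n$ and $\phi_n$ is exactly what yields $q_*=6/q+1$. Your argument is salvageable, but the fix --- use Theorem~2.1 with $L_n\asymp D_n$ and bound $M_n(\phi_n)$ directly via the $\psi_q$ tail --- is essentially the paper's proof.
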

\begin{proof}
See Subsection \ref{additional_proofs}.
\end{proof}

The following lemmas are elementary, but used repeatedly.

\begin{lemma}\label{Inr_large}
Let $\beta > 0$. There exits a constant $C$, only depending on $\beta$, such that 
for any positive integers $r,n$ such that $2 \leq r \leq \sqrt{n}$,
\[
n^2 r^{\beta} \leq C \|I_{n,r}\|.
\]
\end{lemma}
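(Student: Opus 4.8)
The plan is to show the cruder bound $n^{2}r^{\beta}\le C\binom{n}{r}$ directly, since $|I_{n,r}|=\binom{n}{r}$. The key observation is that when $2\le r\le\sqrt{n}$, the binomial coefficient $\binom{n}{r}$ is enormous compared to any polynomial in $n$ (with $r$-dependent degree), so there is lots of room. First I would reduce to the case $r\ge r_{0}$ for a suitable absolute constant $r_{0}=r_{0}(\beta)$: for each fixed $r$ in the finite range $2\le r<r_{0}$ we have $\binom{n}{r}\ge \binom{n}{2}\gtrsim n^{2}$, and $r^{\beta}\le r_{0}^{\beta}$ is an absolute constant, so the inequality holds with a constant depending only on $\beta$. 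So from now on assume $r\ge r_{0}$.

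For the main range, I would use the standard lower bound $\binom{n}{r}\ge (n/r)^{r}$. It then suffices to check $n^{2}r^{\beta}\le (n/r)^{r}$, equivalently (taking logarithms) $2\log n+\beta\log r\le r(\log n-\log r)$. Since $r\le\sqrt n$ we have $\log r\le\tfrac12\log n$, hence $\log n-\log r\ge\tfrac12\log n$, so the right-hand side is at least $\tfrac{r}{2}\log n$. On the other hand the left-hand side is at most $2\log n+\beta\log r\le(2+\tfrac{\beta}{2})\log n$. Thus the inequality reduces to $\tfrac{r}{2}\ge 2+\tfrac{\beta}{2}$, i.e. $r\ge 4+\beta$, which holds once we take $r_{0}=\lceil 4+\beta\rceil$. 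This gives $n^{2}r^{\beta}\le\binom{n}{r}$ in the main range, completing the argument with $C=C(\beta)$ absorbing the finitely many small-$r$ cases.

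I do not anticipate a real obstacle here; the only mild care needed is handling $n\ge 4$ (so $\log n\ge\log 4>0$ and the manipulations above are legitimate) and making sure the constant is chosen to cover both the finitely many exceptional $r$ and the main range uniformly. One could alternatively avoid the case split entirely by noting $\binom{n}{r}\ge\binom{n}{2}\ge n^{2}/4$ always (for $r\ge 2$, using that $\binom{n}{r}$ is increasing in $r$ up to $n/2$ and $r\le\sqrt n\le n/2$ when $n\ge 4$), and separately that $r^{\beta}\le n^{\beta/2}$; but this only yields $n^{2}r^{\beta}\le 4\binom{n}{r}n^{\beta/2}$, which is off by a factor $n^{\beta/2}$, so the exponential bound $(n/r)^{r}$ really is what we need to beat the polynomial factor. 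Hence the logarithmic computation above is the crux, and it is elementary.
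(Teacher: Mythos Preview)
Your proof is correct and follows essentially the same strategy as the paper: split into small $r$ (handled by $\binom{n}{r}\ge\binom{n}{2}\gtrsim n^{2}$ and $r^{\beta}$ bounded) and large $r$, then show the ratio is $\le 1$ in the large-$r$ regime. The paper's proof is a one-line sketch that asserts ``if $r\to\infty$, $n^{2}r^{\beta}/|I_{n,r}|\to 0$'' without justification, whereas you supply the explicit quantitative argument via $\binom{n}{r}\ge(n/r)^{r}$ and the logarithmic comparison, yielding the concrete threshold $r_{0}=\lceil 4+\beta\rceil$; this is more rigorous but not a different idea.
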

\begin{proof}
Fix $\beta$. If $r \to \infty$, $n^2 r^{\beta}/\|I_{n,r}\| \to 0$. Thus there exits $M$ such that if $r \geq M$, $n^2 r^{\beta} \leq \|I_{n,r}\|$. For $r < M$, the inequality holds with $C = M^{\beta}$. 
\end{proof}

\begin{lemma}\label{psi_norms}
Let $\beta, k > 0$. For any random variable $X$,
\[
\|X^k\|_{\psi_{\beta}} = \|X\|^{k}_{\psi_{k\beta}}.
\]
\end{lemma}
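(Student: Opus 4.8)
The plan is to unfold both sides directly from the definition of the $\psi_\beta$-norm and reduce the identity to a monotone change of variables. Recall that $\|\xi\|_{\psi_\beta} = \inf\{C>0:\Exp[\psi_\beta(|\xi|/C)]\le 1\}$ with $\psi_\beta(x)=e^{x^\beta}-1$, so that $\Exp[\psi_\beta(|\xi|/C)]\le 1$ is equivalent to $\Exp\big[e^{(|\xi|/C)^\beta}\big]\le 2$. Using $|X^k|=|X|^k$, the left-hand side is
\[
\|X^k\|_{\psi_\beta}=\inf\Big\{C>0:\Exp\big[e^{|X|^{k\beta}/C^\beta}\big]\le 2\Big\}.
\]

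Next I would substitute $C=D^k$ (a continuous strictly increasing bijection from $(0,\infty)$ onto itself), so that $C^\beta=D^{k\beta}$ and the constraint becomes $\Exp\big[e^{(|X|/D)^{k\beta}}\big]\le 2$, i.e.\ $\Exp[\psi_{k\beta}(|X|/D)]\le 1$. Hence the admissible set of $C$'s is exactly the image under $D\mapsto D^k$ of the admissible set of $D$'s for the $\psi_{k\beta}$-norm of $X$. Since $D\mapsto D^k$ is increasing and continuous, it commutes with the infimum, giving
\[
\|X^k\|_{\psi_\beta}=\Big(\inf\{D>0:\Exp[\psi_{k\beta}(|X|/D)]\le 1\}\Big)^k=\|X\|_{\psi_{k\beta}}^k,
\]
which is the claimed identity.

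There is essentially no obstacle here; the only points worth a sentence are the degenerate cases. If $\|X\|_{\psi_{k\beta}}=0$ then $X=0$ a.s., so $X^k=0$ a.s.\ and both sides vanish; if $\|X\|_{\psi_{k\beta}}=+\infty$ the admissible set of $D$'s is empty, hence so is that of $C$'s, and both sides equal $+\infty$ by the usual convention $\inf\emptyset=+\infty$. In all cases the monotone substitution argument above applies verbatim, which completes the proof.
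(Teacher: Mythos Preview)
Your proof is correct and is essentially the same as the paper's: both rest on the identity $(|X|^k/C)^\beta=(|X|/D)^{k\beta}$ under the substitution $C=D^k$. The paper verifies one inequality by plugging $C=\|X\|_{\psi_{k\beta}}^k$ into the defining constraint and declares the reverse ``similar''; you package both directions at once via the monotone bijection $D\mapsto D^k$ and also dispose of the degenerate cases, which is a slightly cleaner presentation of the same computation.
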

\begin{proof}
Observe that
\[
\Exp\left[\exp\left(|X|^k/\|X\|^{k}_{\psi_{k\beta}} \right)^{\beta} \right]
= \Exp\left[\exp\left(|X|/\|X\|_{\psi_{k\beta}} \right)^{k\beta} \right] \leq 2,
\]
which implies that $\|X^k\|_{\psi_{\beta}} \leq \|X\|^{k}_{\psi_{k\beta}}$. The reverse direction is similar.
\end{proof}

For $\beta < 1$, $\|\cdot\|_{\psi_\beta}$ is not a norm , but the usual triangle inequality and maximal inequality hold up to a multiplicative constant.

\begin{lemma}\label{triangle_max_inequ}
Fix $\beta \in (0,1)$. 
\begin{enumerate}
\item[(i)] For any random variables $X$ and $Y$,
\[
\|X+Y\|_{\psi_\beta} \;\; \leq \;\; 2^{1+1/\beta} \left( \|X\|_{\psi_\beta} +\|Y\|_{\psi_\beta} \right).
\]
\item[(ii)] Let $\xi_1,\ldots,\xi_n$ be a sequence of random variables such that $\|\xi_i\|_{\psi_\beta} \leq D$ for $1 \leq i \leq n$, and $n \geq 2$. Then there exists a constant $C$  depending only on $\beta$ such that
\[
\|\max_{1 \leq i \leq n} \xi_i\|_{\psi_\beta} \; \leq C \log^{1/\beta}(n) D.
\]
\end{enumerate}
\end{lemma}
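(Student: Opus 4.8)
The plan is to handle the two parts separately; in both cases everything reduces to controlling $\Exp\big[e^{(\,\cdot\,/C)^{\beta}}\big]$ for a suitable scale $C$, but part (ii) needs an extra idea to obtain the logarithmic, rather than polynomial, dependence on $n$.

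For (i) I would argue directly. We may assume $\norm{X}_{\psi_{\beta}},\norm{Y}_{\psi_{\beta}}<\infty$; write $a=\norm{X}_{\psi_{\beta}}$, $b=\norm{Y}_{\psi_{\beta}}$ and set $C:=2^{1+1/\beta}(a+b)$. Since $t\mapsto t^{\beta}$ is concave on $[0,\infty)$ with value $0$ at $0$, it is subadditive, so $|X+Y|^{\beta}\le|X|^{\beta}+|Y|^{\beta}$ and hence $e^{(|X+Y|/C)^{\beta}}\le e^{(|X|/C)^{\beta}}e^{(|Y|/C)^{\beta}}$. Cauchy--Schwarz then gives
\[
\Exp\big[e^{(|X+Y|/C)^{\beta}}\big]\le\Exp\big[e^{2(|X|/C)^{\beta}}\big]^{1/2}\Exp\big[e^{2(|Y|/C)^{\beta}}\big]^{1/2},
\]
and since $2(|X|/C)^{\beta}=(|X|/(2^{-1/\beta}C))^{\beta}$ with $2^{-1/\beta}C=2(a+b)\ge a$, the definition of $a$ together with the monotonicity of $s\mapsto\Exp[e^{(|X|/s)^{\beta}}]$ yields $\Exp[e^{2(|X|/C)^{\beta}}]\le2$, and likewise for $Y$. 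Hence $\Exp[e^{(|X+Y|/C)^{\beta}}]\le2$, i.e.\ $\norm{X+Y}_{\psi_{\beta}}\le C$, which is (i). (The same argument in fact gives the slightly sharper $2^{1/\beta}\max\{\norm{X}_{\psi_{\beta}},\norm{Y}_{\psi_{\beta}}\}$.)

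For (ii), I would first reduce to nonnegative variables: $|\max_{i}\xi_{i}|\le M:=\max_{i}|\xi_{i}|$ and $\norm{\cdot}_{\psi_{\beta}}$ is monotone in $|\cdot|$, so it suffices to bound $\norm{M}_{\psi_{\beta}}$, where $\norm{|\xi_{i}|}_{\psi_{\beta}}=\norm{\xi_{i}}_{\psi_{\beta}}\le D$. Markov's inequality applied to $e^{(|\xi_{i}|/D)^{\beta}}$ and a union bound give $\Pro(M>t)\le1\wedge 2n\,e^{-(t/D)^{\beta}}$ for all $t>0$. Writing $\alpha:=(c/D)^{\beta}$ and using $\Exp[Z]=\int_{0}^{\infty}\Pro(Z>t)\,dt$ for $Z=e^{(M/c)^{\beta}}\ge1$ (so $\{Z>t\}=\{M>c(\log t)^{1/\beta}\}$ for $t>1$),
\[
\Exp\big[e^{(M/c)^{\beta}}\big]=1+\int_{1}^{\infty}\Pro\big(M>c(\log t)^{1/\beta}\big)\,dt\le1+\int_{1}^{\infty}\big(1\wedge 2n\,t^{-\alpha}\big)\,dt .
\]
Assuming $\alpha>1$, splitting the integral at $t_{1}=(2n)^{1/\alpha}$ (the point where $2n\,t^{-\alpha}=1$) and evaluating gives $\Exp[e^{(M/c)^{\beta}}]\le(2n)^{1/\alpha}\,\alpha/(\alpha-1)$. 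Taking $\alpha=3\log(2n)$ makes $(2n)^{1/\alpha}=e^{1/3}$ while $\alpha/(\alpha-1)\le 3\log 4/(3\log 4-1)<2e^{-1/3}$ for every $n\ge2$, so $\Exp[e^{(M/c)^{\beta}}]\le2$ with $c=D(3\log(2n))^{1/\beta}$; since $\log(2n)\le2\log n$ for $n\ge2$, this yields $\norm{\max_{i}\xi_{i}}_{\psi_{\beta}}\le C\log^{1/\beta}(n)\,D$ with $C$ depending only on $\beta$.

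The only delicate point is the last step of (ii). A crude estimate such as $\Exp[\max_{i}e^{(|\xi_{i}|/c)^{\beta}}]\le\sum_{i}\Exp[e^{(|\xi_{i}|/c)^{\beta}}]$, or $\max_{i}a_{i}\le1+\sum_{i}(a_{i}-1)$ with $a_{i}=e^{(|\xi_{i}|/c)^{\beta}}\ge1$, forces $c$ to be of order $n^{1/\beta}D$ rather than $\log^{1/\beta}(n)\,D$, because each summand is bounded below by $1$; one must instead retain the truncation of the tail bound at $1$ inside the integral in $t$, and it is precisely the resulting $-1$ from $\int_{1}^{t_{1}}dt=t_{1}-1$ that cancels the leading term and leaves the logarithmic dependence. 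This is the non-convex analogue of the standard maximal inequality available for the convex Orlicz function $\widetilde{\psi}_{\beta}$ (see Lemma~\ref{psi_prop} and Lemma~\ref{max_inequ}), and one could alternatively prove (ii) by passing to $\norm{\cdot}_{\widetilde{\psi}_{\beta}}$, which is equivalent to $\norm{\cdot}_{\psi_{\beta}}$ up to a multiplicative constant depending only on $\beta$.
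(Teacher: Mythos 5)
Your proof of both parts is correct. Part (i) is essentially the paper's argument: both turn on the subadditivity $|x+y|^{\beta}\leq|x|^{\beta}+|y|^{\beta}$ for $\beta\in(0,1)$ and then dispose of the product of exponentials---you by Cauchy--Schwarz, the paper by convexity of $e^{t}$ (i.e.\ $e^{(u+v)/2}\leq\tfrac12 e^{u}+\tfrac12 e^{v}$)---a cosmetic difference. Part (ii), however, takes a genuinely different route. The paper detours through the convexified Orlicz function $\widetilde{\psi}_{\beta}$ of Lemma~\ref{psi_prop}: it shows $\|\xi_{i}\|_{\widetilde{\psi}_{\beta}}\leq 2D$, invokes the standard Orlicz maximal inequality \cite[Lemma 2.2.2]{van1996weak} to get $\|\max_{i}\xi_{i}\|_{\widetilde{\psi}_{\beta}}\lesssim\log^{1/\beta}(n)\,D$, and then converts back to $\psi_{\beta}$ by using $e^{x^{\beta}}\leq\widetilde{\psi}_{\beta}(x)+e^{1/\beta}$ and shrinking the resulting constant $1+e^{1/\beta}$ down to $2$ with a Jensen ($m$-th root) trick. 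You instead prove the bound in one shot: a union bound gives $\Pro(M>t)\leq 1\wedge 2n\,e^{-(t/D)^{\beta}}$, and the layer-cake formula for $\Exp[e^{(M/c)^{\beta}}]$---keeping the tail estimate truncated at $1$ inside the integral, so that the $-1$ from $\int_{1}^{t_{1}}dt$ cancels the additive $1$ in $1+\int_{1}^{\infty}$---yields the clean bound $t_{1}\,\alpha/(\alpha-1)$, from which the logarithmic scale $c\asymp D\log^{1/\beta}n$ and explicit constants drop out. Your computation is self-contained (no external Orlicz maximal inequality, no passage between $\psi_{\beta}$ and $\widetilde{\psi}_{\beta}$), and your closing remark correctly identifies the paper's approach as the alternative via $\widetilde{\psi}_{\beta}$.
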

\begin{proof}
See Subsection \ref{additional_proofs}.
\end{proof}

\subsection{Proofs in Section~\ref{sec:IOUS_random_kernel}}\label{proof:IOUS_random}

We first prove Corollary~\ref{cor:IOUS} and then prove Theorem~\ref{thrm:IOUS_random}. 

\begin{proof}[Proof of Corollary~\ref{cor:IOUS}]
Let $c$ be the constant in Theorem~\ref{thm:exp_sup}. Without loss of generality, we assume
\begin{align}\label{wlog_c_IOUS}
\frac{r^2 D_n^2 \log^{q_{*}}(dn)}{\underline{\sigma}_g^2\; n} \leq c, \quad \text{ and } \quad \theta = 0 ,
\end{align}
since $\rho(\cdot,\cdot) \leq 1$ and we can always consider $h(\cdot) - \theta$ instead. Recall that
$q_{*} = (6/q+1) \vee 7$. 




Fix any rectangle $R = [a,b] \in \cR$, where $a,b \in \bR^d$ and $a \leq b$. Define
\begin{align*}
\tilde{a} = r^{-1}\Lambda_g^{-1/2} a, \;\; \tilde{b} =  r^{-1}\Lambda_g^{-1/2} b,\;\;
\tilde{U}_n = r^{-1}\Lambda_g^{-1/2} U_n, \;\;
\tilde{G}_i = \Lambda_g^{-1/2} g(X_i).
\end{align*}
Denote
$$
\xi_n := \max_{1 \leq j \leq d} \left\vert \tilde{U}_{n,j} - \frac{1}{n}\sum_{i=1}^{n} \tilde{G}_{i,j} \right\vert.
$$
Then by Theorem~\ref{thm:exp_sup}, 
$$\Exp[\xi_n] \;\leq \;  r^{-1}\underline{\sigma}_g^{-1}
\max_{1 \leq j \leq d} \left\vert  U_{n,j} - \frac{r}{n}\sum_{i=1}^{n} g_j(X_{i}) \right\vert \; \lesssim \;
\underline{\sigma}_g^{-1} n^{-1} r \log^{1+1/q}(d) D_n.
$$
For any $t > 0$, by Markov inequality and definition,
\begin{align*}
&\Pro(\sqrt{n} U_n \in R) = \Pro(-\sqrt{n} U_n \leq -a \; \cap \; \sqrt{n} U_n \leq b)
= \Pro(-\sqrt{n} \tilde{U}_n \leq - \tilde{a} \; \cap \; \sqrt{n} \tilde{U}_n \leq \tilde{b})\\
& \leq \Pro(-\sqrt{n} \tilde{U}_n \leq -\tilde{a} \;\cap\; \sqrt{n} \tilde{U}_n \leq \tilde{b} \;\cap\; \sqrt{n} \xi_n \leq t)
+ \Pro(\sqrt{n} \xi_n > t) \\
& \leq \Pro(- \frac{1}{\sqrt{n}} \sum_{i=1}^{n} \tilde{G}_i \leq -\tilde{a} + t \;\cap\; \- \frac{1}{\sqrt{n}} \sum_{i=1}^{n} \tilde{G}_i \leq \tilde{b} +t) + C t^{-1} \underline{\sigma}_g^{-1} n^{-1/2} r \log^{1+1/q}(d) D_n.
\end{align*}

Due to assumptions~\eqref{moments_assumption},~\eqref{h_exp_assumption} and Cauchy-Schwarz inequality,
\begin{align*}
&\Exp[\tilde{G}_{i,j}^2] = 1, \text{ for } 1 \leq i \leq n, 1 \leq j \leq d, \\
&\Exp[\tilde{G}_{i,j}^{4}] \leq (\sigma_{g,j}^{-1} D_n)^{2}
\leq (\underline{\sigma}_{g}^{-1} D_n)^{2} , \text{ for } 1 \leq i \leq n, 1 \leq j \leq d, \\
&\Exp[|\tilde{G}_{i,j}|^{3}] \leq \sqrt{\Exp[\tilde{G}_{i,j}^{2}] \Exp[\tilde{G}_{i,j}^{4}]}
\leq \underline{\sigma}_{g}^{-1} D_n, \text{ for } 1 \leq i \leq n, 1 \leq j \leq d, \\
& \| \tilde{G}_{i,j}\|_{\psi_q} \leq \sigma_{g,j}^{-1} D_n
\leq \underline{\sigma}_{g}^{-1} D_n , \text{ for } 1 \leq i \leq n, 1 \leq j \leq d.
\end{align*}

Then due to Lemma \ref{lemma:GAR_q},  we have
\begin{align*}
\Pro(\sqrt{n} U_n \in R) \leq \Pro&(- \Lambda_g^{-1/2} Y_A \leq -\tilde{a} + t \;\cap\; \Lambda_g^{-1/2} Y_A \leq \tilde{b} +t)  \\
& + C \left(\underline{\sigma}_g^{-2} n^{-1}D_n^2 \log^{q_{*}}(dn) \right)^{1/6} +
 C t^{-1}\underline{\sigma}_g^{-1} n^{-1/2} r \log^{1+1/q}(d) D_n.
\end{align*}
Further, by anti-concentration inequality~\cite[Lemma A.1]{chernozhukov2017},
\begin{align*}
\Pro(\sqrt{n} U_n \in R) \leq \Pro&(- \Lambda_g^{-1/2} Y_A \leq -\tilde{a}  \;\cap\; \Lambda_g^{-1/2} Y_A \leq \tilde{b} ) + 
C t \sqrt{\log(d)}  
\\
& + C \left(\underline{\sigma}_{g}^{-2} n^{-1}D_n^2 \log^{q_{*}}(dn) \right)^{1/6} +
 C t^{-1}\underline{\sigma}_g^{-1} n^{-1/2} r \log^{1+1/q}(d) D_n.
\end{align*}
Finally, taking $t = \left(\underline{\sigma}_g^{-2} n^{-1} r^{2}\log^{1+2/q}(d) D_n^{2}\right)^{1/4}$ and due to convention~\eqref{wlog_c_IOUS}, we have
\begin{align*}
\Pro(\sqrt{n} U_n \in R) &\leq \Pro(r Y_A \in R)
+  C \left(\underline{\sigma}_{g}^{-2} n^{-1} D_n^2 \log^{q_{*}}(dn) \right)^{1/6} 
+  C \left(\underline{\sigma}_{g}^{-2}n^{-1} r^{2}\log^{3+2/q}(d) D_n^{2}\right)^{1/4} \\
& \leq \Pro(r Y_A \in R)
+ C \left(\underline{\sigma}_{g}^{-2} n^{-1} D_n^2 \log^{q_{*}}(dn) \right)^{1/6} 
+  C\left(\underline{\sigma}_{g}^{-2} n^{-1} r^{2}\log^{q_{*}}(d) D_n^{2}\right)^{1/6} \\
&\leq \Pro(r Y_A \in R)
+  C \left(\underline{\sigma}_{g}^{-2} n^{-1} r^{2} D_n^2 \log^{q_{*}}(dn) \right)^{1/6}.
\end{align*}
Likewise, we can show the lower inequality 
$$
\Pro(\sqrt{n} U_n \in R) \geq \Pro(r Y_A \in R)
- C \left(\underline{\sigma}_{g}^{-2} n^{-1} r^{2} D_n^2 \log^{q_{*}}(dn) \right)^{1/6},
$$
which completes the proof.
\end{proof}


\begin{proof}[Proof of Theorem~\ref{thrm:IOUS_random}]
As before, without loss of generality, we assume
\begin{align}\label{wlog_c_IOUS_random}
\theta = 0, \quad \text{ and } \quad \frac{r^2 D_n^2 \log^{q_{*}}(dn)}{\underline{\sigma}_g^2 n} \leq c_1,
\end{align}
for some sufficiently small $c_1 \in (0,1)$. Define for each $\iota =(i_1,\ldots,i_r) \in I_{n,r}$,
$$
\overline{H}_{\iota} := H(X_{i_1},\ldots,X_{i_r}, W_{\iota}) - h(X_{i_1},\ldots,X_{i_r})
:= H(X_{\iota}, W_{\iota}) - h(X_{\iota}).
$$
Then by definition,
$$
\widehat{U}_n = \mathrm{R}_n+ U_n, \; \text{ where }
\mathrm{R}_n := {|I_{n,r}|}^{-1}\sum_{\iota \in I_{n,r}}\overline{H}_{\iota}.
$$

\underline{\textit{Step 1}}.  We first show that 
\begin{equation}\label{bound_on_H_bar}
\Exp \left[
\max_{1\leq j \leq d} \left\vert
\mathrm{R}_{n,j} 
\right\vert
\right] \lesssim \frac{D_n \log^{1/2+1/q}(dn)}{n}.
\end{equation}

Note that conditional on $X_1^n$, $\mathrm{R}_n$ is an average of independent random vectors. 
Thus by~\cite[Lemma 8]{chernozhukov2015comparison},
\begin{align*}
&\Exp_{\vert X^n_1} \left[
\max_{1\leq j \leq d} |I_{n,r}|\left\vert
\mathrm{R}_{n,j} 
\right\vert
\right]
\lesssim \\
&\sqrt{\log(d) \max_{1 \leq j \leq d} \sum_{\iota \in I_{n,r}} \Exp_{\vert X^n_1}\left[ \overline{H}_j^2(X_\iota, W_{\iota})\right]} + \log(d) \sqrt{\Exp_{\vert X^n_1} \left[ 
\max_{\iota \in I_{n,r}} \max_{1 \leq j \leq d} \overline{H}_j^2(X_\iota, W_\iota)
\right]}.
\end{align*}
By definition~\eqref{def:exp_norm_W_cond} and maximal inequality (\cite[Lemma 2.2.2]{van1996weak} and Lemma \ref{triangle_max_inequ}),
\begin{align*}
&\Exp_{\vert X^n_1}\left[ \overline{H}_j^2(X_\iota, W_{\iota})\right] 
\leq B_{n,j}^2(X_\iota) \text{ for all } \iota \in I_{n,r}, \\
& \sqrt{\Exp_{\vert X^n_1} \left[ 
\max_{\iota \in I_{n,r}} \max_{1 \leq j \leq d} \overline{H}_j^2(X_\iota, W_\iota) \right]}
\leq r^{1/q}\log^{1/q}(dn) \max_{\iota \in I_{n,r}} \max_{1 \leq j \leq d} B_{n,j}(X_{\iota}).
\end{align*}
Define 
$$
Z_1 := \max_{1 \leq j \leq d} \frac{1}{|I_{n,r}|}\sum_{\iota \in I_{n,r}} B_{n,j}^2(X_{\iota})
\leq \max_{\iota \in I_{n,r}} \max_{1 \leq j \leq d} B_{n,j}^2(X_{\iota}) := M_1^2.
$$
Under the assumption~\eqref{exp_cond_distr} and again  maximal inequality (\cite[Lemma 2.2.2]{van1996weak} and Lemma \ref{triangle_max_inequ}), we have
$$
\| M_1 \|_{\psi_q}  \leq r^{1/q}\log^{1/q}(dn) D_n.
$$
Then, we have
\begin{align*}
\Exp \left[
\max_{1\leq j \leq d} \left\vert
\mathrm{R}_{n,j} 
\right\vert
\right]
&\lesssim 
 \sqrt{\frac{\log(d)}{|I_{n,r}|} } \Exp[M_1]
+ \frac{ r^{1/q} \log^{1+1/q}(dn)}{|I_{n,r}|} \Exp[M_1] \\
&\lesssim  
\left( 
\sqrt{\frac{\log(d)}{|I_{n,r}|} } 
+ \frac{ r^{1/q} \log^{1+1/q}(dn)}{|I_{n,r}|} 
\right) r^{1/q}\log^{1/q}(dn) D_n.
\end{align*}
Then due to Lemma \ref{Inr_large} and~\eqref{wlog_c_IOUS_random},  we have
\begin{align*}
\Exp \left[
\max_{1\leq j \leq d} \left\vert
\mathrm{R}_{n,j} 
\right\vert
\right]
\lesssim \frac{D_n \log^{1/2+1/q}(dn)}{n}.
\end{align*}
\vspace{0.2cm}

\underline{\textit{Step 2}}.  We finish the proof by a similar argument as in the proof of Corollary~\ref{cor:IOUS}. 

Fix any rectangle $R = [a,b] \in \cR$, where $a,b \in \bR^d$ and $a \leq b$.  Define
\begin{align*}
\tilde{a} = r^{-1}\Lambda_g^{-1/2} a, \;\; \tilde{b} =  r^{-1}\Lambda_g^{-1/2} b,\;\;
\tilde{Y}_A = \Lambda_g^{-1/2} Y_A.
\end{align*}
where we recall that $\Lambda_g$ is defined in~\eqref{def:Lambda_g_H}. Recall that $\widehat{U}_n = U_n + \mathrm{R}_n$.
For any $t > 0$, by Markov inequality, the result from Step 1, and Corollary~\ref{cor:IOUS},
\begin{align*}
&\Pro(\sqrt{n} \widehat{U}_n \in R) = \Pro(-\sqrt{n} \widehat{U}_n \leq -a \; \cap \; \sqrt{n} \widehat{U}_n \leq b) \\
 \leq \; & \Pro(-\sqrt{n} \widehat{U}_n \leq -a \;\cap\; \sqrt{n} \widehat{U}_n \leq b \;\cap\; \sqrt{n} \|\mathrm{R}_n\|_{\infty} \leq t) + \Pro(\sqrt{n} \|\mathrm{R}_n\|_{\infty} > t) \\
 \leq &\Pro(-\sqrt{n} {U}_n \leq -a +t \;\cap\; \sqrt{n} {U}_n \leq b +t)
+ C t^{-1} n^{-1/2} D_n \log^{1/2+1/q}(dn) \\
\leq \; &\Pro(- rY_A \leq -a +t \;\cap\; r Y_A \leq b +t)
+   C \left( \frac{r^2 D_n^2 \log^{q_{*}}(dn)}{\underline{\sigma}^2_{g}\,n} \right)^{1/6}
+  C t^{-1} n^{-1/2} D_n \log^{1/2+1/q}(dn)\\
\leq \; &\Pro(- \tilde{Y}_A \leq - \tilde{a} +t r^{-1}\underline{\sigma}^{-1}_{g} \;\cap\;  \tilde{Y}_A \leq \tilde{b} + t r^{-1} \underline{\sigma}^{-1}_{g})
+   C \left( \frac{r^2 D_n^2 \log^{q_{*}}(dn)}{\underline{\sigma}^2_{g}\,n} \right)^{1/6} \\
&+  C t^{-1} n^{-1/2} D_n \log^{1/2+1/q}(dn).
\end{align*}
Observe that $\Exp[\tilde{Y}_{A,j}^2] = 1$ for $1 \leq j \leq d$. By anti-concentration inequality~\cite[Lemma A.1]{chernozhukov2017},
\begin{align*}
\Pro(\sqrt{n} \widehat{U}_n \in R)  \; \leq \;
&\Pro(- \tilde{Y}_A \leq - \tilde{a} \;\cap\;  \tilde{Y}_A \leq \tilde{b}) +   C \left( \frac{r^2 D_n^2 \log^{q_{*}}(dn)}{\underline{\sigma}^2_{g}\,n} \right)^{1/6} \\
&+C t r^{-1} \underline{\sigma}^{-1}_{g}\sqrt{\log(d)}
+ C t^{-1} n^{-1/2} D_n \log^{1/2+1/q}(dn).
\end{align*}

Finally, taking $t = \left(\underline{\sigma}^{2}_{g} n^{-1}r^2\log^{2/q}(dn) D_n^2\right)^{1/4}$ and due to convention~\eqref{wlog_c_IOUS_random}, we have
\begin{align*}
\Pro(\sqrt{n} \widehat{U}_n \in R) \leq \Pro( rY_A \in R) 
+  C \left( \frac{r^2 D_n^2 \log^{q_{*}}(dn)}{\underline{\sigma}^{2}_{g}\, n} \right)^{1/6}.
\end{align*}
By a similar argument, we can show
\begin{align*}
\Pro(\sqrt{n} \widehat{U}_n \in R) \geq \Pro( rY_A \in R) 
-  C \left( \frac{r^2 D_n^2 \log^{q_{*}}(dn)}{\underline{\sigma}^{2}_{g}\, n} \right)^{1/6}.
\end{align*}
which completes the proof.
\end{proof}

\subsection{Proofs in Section~\ref{sec:incomplete_IOUS_random}}
In this subsection, without loss of generality, we assume $\theta = 0$.  Recall the definition $\Lambda_H$ in~\eqref{def:Lambda_g_H}.
Further, define a function $\tilde{H}: S^r*S' \to \bR^d$ by 
$\tilde{H}(x_1^r,w) = \Lambda_H^{-1/2}H(x_1^r,w) \text{ for any } x_1^r \in S^r, w \in S'$,
and 
\begin{equation}\label{def:est_cov}
\Gamma_{\tilde{H}} := \text{Cov}(\tilde{H}(X_1^r,W)) = \Lambda_H^{-1/2} \Gamma_H \Lambda_H^{-1/2}, \quad
\widehat{\Gamma}_{\tilde{H}} := \frac{1}{|I_{n,r}|} \sum_{\iota \in I_{n,r}} \tilde{H}(X_{\iota},W_{\iota}) \tilde{H}(X_{\iota},W_{\iota})^T.
\end{equation}
Clearly, if~\eqref{H_moments_assumption} holds, then
\begin{align}\label{tilde_H_assumptions}
\Exp|\tilde{H}_{j}(X_1^r,W)|^{2+k} \leq (\sigma_{H,j}^{-1} D_n)^{k} \leq (\underline{\sigma}_H^{-1} D_n)^k, \;\; \text{ for }
1 \leq j \leq d, k= 1,2,
\end{align}
where again we applied Cauchy–Schwarz inequality for $k = 1$.


\subsubsection{Bounding $\widehat{N}/N$}
The following lemma follows from an application of Bernstein's inequality and is proved in the Step 5 of the  proof of~\cite[Theorem 3.1]{chen2017randomized}. It is included here for easy reference.

\begin{lemma}\label{lemma:N_hat}
Assume $\sqrt{\log(n)/N} \leq 1/4$. Then 
$$
\Pro\left(|\widehat{N}/N - 1| > 2\sqrt{\log(n)/N} \right) \leq 2n^{-1},\quad
\Pro\left(|N/\widehat{N} - 1| > 4\sqrt{\log(n)/N} \right) \leq 2n^{-1}.
$$
\end{lemma}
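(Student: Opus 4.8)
The plan is to treat $\widehat{N} = \sum_{\iota \in I_{n,r}} Z_\iota$ as a sum of $|I_{n,r}|$ independent Bernoulli$(p_n)$ random variables. Since $p_n = N/|I_{n,r}|$, we have $\Exp[\widehat{N}] = |I_{n,r}|p_n = N$ and $\Var(\widehat{N}) = |I_{n,r}|p_n(1-p_n) \leq |I_{n,r}|p_n = N$, and the centered summands $Z_\iota - p_n$ are bounded by $1$ in absolute value. So the natural tool is Bernstein's inequality applied to $\widehat{N} - N = \sum_{\iota \in I_{n,r}}(Z_\iota - p_n)$.

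Concretely, I would record Bernstein's inequality in the form
\[
\Pro\left( |\widehat{N} - N| > t \right) \leq 2\exp\left( - \frac{t^2/2}{N + t/3} \right), \qquad t > 0,
\]
and then take $t = 2\sqrt{N\log n}$. The hypothesis $\sqrt{\log n/N} \leq 1/4$ gives $t \leq N/2$, hence $N + t/3 \leq 7N/6$, while $t^2/2 = 2N\log n$, so the exponent is at most $-\tfrac{2N\log n}{7N/6} = -\tfrac{12}{7}\log n \leq -\log n$. This yields $\Pro\big(|\widehat{N}/N - 1| > 2\sqrt{\log n/N}\big) = \Pro\big(|\widehat{N} - N| > 2\sqrt{N\log n}\big) \leq 2n^{-1}$, which is the first assertion. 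For the second assertion I would argue deterministically on the good event: writing $\delta := 2\sqrt{\log n/N} \leq 1/2$, on $\{|\widehat{N}/N - 1| \leq \delta\}$ we have $\widehat{N}/N \geq 1-\delta \geq 1/2$, so
\[
\left| \frac{N}{\widehat{N}} - 1 \right| = \frac{|\widehat{N}/N - 1|}{\widehat{N}/N} \leq \frac{\delta}{1-\delta} \leq 2\delta = 4\sqrt{\log n/N}.
\]
Therefore $\{|N/\widehat{N} - 1| > 4\sqrt{\log n/N}\} \subseteq \{|\widehat{N}/N - 1| > \delta\}$, whose probability was just bounded by $2n^{-1}$.

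The argument is entirely routine; the only point requiring any care is tracking the absolute constants, so that the exponent in the Bernstein bound lands below $\log n$ and so that $\delta \leq 1/2$ — both of which are precisely what the hypothesis $\sqrt{\log n/N} \leq 1/4$ is there to guarantee. I do not anticipate any genuine obstacle.
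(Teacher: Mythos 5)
Your proof is correct and uses precisely the tool the paper attributes this lemma to: Bernstein's inequality applied to $\widehat{N}-N=\sum_{\iota}(Z_\iota-p_n)$, with the second bound obtained deterministically from the first on the good event. The paper itself only cites Step 5 of the proof of Theorem 3.1 in \cite{chen2017randomized} rather than reproducing the argument, so your write-up is a faithful and self-contained rendering of the intended route.
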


\subsubsection{Bounding the normalized covariance estimator}
\begin{lemma}\label{lemma:convariance_est}
Assume~\eqref{H_exp_moment_assumption},~\eqref{exp_cond_distr} and~\eqref{H_moments_assumption} hold. Then there exists a constant $C$, depending only on $q$,  such that with probability at least $1-13/n$,
\begin{align*}
&\|\widehat{\Gamma}_{\tilde{H}} - \Gamma_{\tilde{H}}\|_{\infty} \;\leq \;\\
&C \left(\underline{\sigma}_H^{-1} n^{-1/2} r^{1/2} \log^{1/2}(dn) D_n
+ \underline{\sigma}_H^{-2} n^{-1} r \log^{2/q+1}(dn)\log^{2/q-1}(n) D_n^2 \right) \\ 
+&C 
\underline{\sigma}_H^{-2} D_n^2 |I_{n,r}|^{-1/2} r^{1/2} \log^{1/2}(dn)\left(1 +
n^{-1/2} r^{1/2} \log^{2/q+1/2}(dn)\log^{2/q-1/2}(n) \right)\\
 + &C \underline{\sigma}_H^{-2} D_n^2 |I_{n,r}|^{-1} r^{6/q} \log^{4/q+1}(dn)\log^{4/q-1}(n).
\end{align*}
\end{lemma}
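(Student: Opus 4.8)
The plan is to recognize $\widehat{\Gamma}_{\tilde{H}}-\Gamma_{\tilde{H}}$, entry by entry, as (the centering of) a $U$-statistic with a random kernel of dimension at most $d^{2}$, and then to invoke the tail bounds already in hand --- Lemma~\ref{tail general U} for complete $U$-statistics with deterministic centered kernels and Lemma~\ref{lemma:tail_random_kernel} for $U$-statistics with random kernels --- both with $\beta=q/2$. As in the rest of this subsection we take $\theta=0$, and by Remark~\ref{remark:ALL} we may assume $q\le 1$, so $q/2\in(0,1]$ and these lemmas apply; the entries of the symmetric matrices are indexed by pairs $1\le j\le k\le d$, of which there are $d(d+1)/2\le d^{2}$, and since $\log(d^{2}n)\le 2\log(dn)$, replacing the ambient dimension by $d^{2}$ in the tail bounds costs only constants.

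Concretely, I would set $G_{jk}(x_{1}^{r},w):=\tilde{H}_{j}(x_{1}^{r},w)\tilde{H}_{k}(x_{1}^{r},w)$ and $g_{jk}(x_{1}^{r}):=\Exp_{W}[G_{jk}(x_{1}^{r},W)]$. Because $\theta=0$ forces $\Exp[\tilde{H}_{j}(X_{1}^{r},W)]=0$, the $(j,k)$ entries of $\widehat{\Gamma}_{\tilde{H}}$ and $\Gamma_{\tilde{H}}$ equal $|I_{n,r}|^{-1}\sum_{\iota}G_{jk}(X_{\iota},W_{\iota})$ and $\Exp[g_{jk}(X_{1}^{r})]$, so
\begin{equation*}
\widehat{\Gamma}_{\tilde{H},jk}-\Gamma_{\tilde{H},jk}
=\underbrace{|I_{n,r}|^{-1}\sum_{\iota\in I_{n,r}}\bigl(G_{jk}(X_{\iota},W_{\iota})-g_{jk}(X_{\iota})\bigr)}_{=:(\mathrm{I})_{jk}}
\;+\;\underbrace{\Bigl(|I_{n,r}|^{-1}\sum_{\iota\in I_{n,r}}g_{jk}(X_{\iota})-\Exp[g_{jk}(X_{1}^{r})]\Bigr)}_{=:(\mathrm{II})_{jk}} .
\end{equation*}
Then $\max_{j\le k}|(\mathrm{I})_{jk}|$ is exactly the quantity controlled by Lemma~\ref{lemma:tail_random_kernel} applied to the random kernel $(G_{jk})_{j\le k}$ (whose ``$f$'' is $(g_{jk})_{j\le k}$), while $\max_{j\le k}|(\mathrm{II})_{jk}|$ is the maximal deviation of a complete $U$-statistic with the deterministic centered kernel $(g_{jk}-\Exp[g_{jk}])_{j\le k}$, i.e.\ a direct application of Lemma~\ref{tail general U}.

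The main work is to feed these two lemmas the right parameters, all with $\beta=q/2$. Writing $H_{j}=h_{j}+(H_{j}-h_{j})$ and using Lemma~\ref{triangle_max_inequ}, for fixed $x_{1}^{r}$ one gets $\|\tilde{H}_{j}(x_{1}^{r},W)\|_{\psi_{q}}=\sigma_{H,j}^{-1}\|H_{j}(x_{1}^{r},W)\|_{\psi_{q}}\lesssim\underline{\sigma}_{H}^{-1}\bigl(|h_{j}(x_{1}^{r})|+B_{n,j}(x_{1}^{r})\bigr)$; then $|ab|\le(a^{2}+b^{2})/2$ with Lemmas~\ref{psi_norms} and~\ref{triangle_max_inequ} gives $\|G_{jk}(x_{1}^{r},W)\|_{\psi_{q/2}}\lesssim\underline{\sigma}_{H}^{-2}\bigl((|h_{j}(x_{1}^{r})|+B_{n,j}(x_{1}^{r}))^{2}+(|h_{k}(x_{1}^{r})|+B_{n,k}(x_{1}^{r}))^{2}\bigr)$. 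Since $|g_{jk}(x_{1}^{r})|\le\Exp_{W}|G_{jk}(x_{1}^{r},W)|\lesssim\|G_{jk}(x_{1}^{r},W)\|_{\psi_{q/2}}$ and likewise the conditional (over $W$) $\psi_{q/2}$-norm of $G_{jk}(x_{1}^{r},W)-g_{jk}(x_{1}^{r})$ is $\lesssim\|G_{jk}(x_{1}^{r},W)\|_{\psi_{q/2}}$, taking the outer $\psi_{q/2}$-norm over $X_{1}^{r}$ and using \eqref{exp_cond_distr} together with \eqref{h_exp_assumption} (the latter implied by \eqref{exp_cond_distr} and \eqref{H_exp_moment_assumption}) shows that both lemmas may be run with $u_{n}\asymp\underline{\sigma}_{H}^{-2}D_{n}^{2}$ (enlarged if necessary so $u_{n}\ge 1$). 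For the variance proxy in Lemma~\ref{tail general U}, Cauchy--Schwarz over $W$ then over $X_{1}^{r}$, Jensen's inequality, and \eqref{tilde_H_assumptions} give $\Var(g_{jk}(X_{1}^{r}))\le\Exp[g_{jk}(X_{1}^{r})^{2}]\le\sqrt{\Exp|\tilde{H}_{j}(X_{1}^{r},W)|^{4}\,\Exp|\tilde{H}_{k}(X_{1}^{r},W)|^{4}}\le(\underline{\sigma}_{H}^{-1}D_{n})^{2}$, i.e.\ $\sigma\le\underline{\sigma}_{H}^{-1}D_{n}$.

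Finally, substituting $\beta=q/2$, $\sigma\le\underline{\sigma}_{H}^{-1}D_{n}$, $u_{n}\asymp\underline{\sigma}_{H}^{-2}D_{n}^{2}$ into Lemma~\ref{tail general U} bounds $\max_{j\le k}|(\mathrm{II})_{jk}|$ by the first line of the claimed inequality on an event of probability $\ge 1-4/n$, and the same substitution into Lemma~\ref{lemma:tail_random_kernel} bounds $\max_{j\le k}|(\mathrm{I})_{jk}|$ by the remaining three lines on an event of probability $\ge 1-9/n$; on the intersection (probability $\ge 1-13/n$), $\|\widehat{\Gamma}_{\tilde{H}}-\Gamma_{\tilde{H}}\|_{\infty}\le\max_{j\le k}|(\mathrm{I})_{jk}|+\max_{j\le k}|(\mathrm{II})_{jk}|$ is the asserted bound. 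I expect the only genuinely delicate point to be the Orlicz bookkeeping --- that products of $\psi_{q}$-variables land in $\psi_{q/2}$, and, more subtly, that the ``conditional-then-unconditional'' $\psi$-norm structure required by Lemma~\ref{lemma:tail_random_kernel} is produced precisely by the split $H=h+(H-h)$ fed through \eqref{exp_cond_distr} and \eqref{H_exp_moment_assumption}; the rest is substitution into tail bounds already established.
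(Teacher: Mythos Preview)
Your proposal is correct and follows essentially the same route as the paper: the paper's $v_{jk}$ is exactly your $g_{jk}$, it splits $\widehat{\Gamma}_{\tilde H}-\Gamma_{\tilde H}$ via the intermediate $\widehat V:=|I_{n,r}|^{-1}\sum_\iota v(X_\iota)$ into $\|\widehat{\Gamma}_{\tilde H}-\widehat V\|_\infty+\|\widehat V-\Gamma_{\tilde H}\|_\infty$ (your $(\mathrm{I})$ and $(\mathrm{II})$), and bounds the two pieces by Lemma~\ref{lemma:tail_random_kernel} and Lemma~\ref{tail general U} respectively, both with $\beta=q/2$ and the same choices $\sigma\lesssim\underline\sigma_H^{-1}D_n$, $u_n\lesssim(\underline\sigma_H^{-1}D_n)^2$ obtained from \eqref{tilde_H_assumptions}, \eqref{exp_cond_distr} and \eqref{H_exp_moment_assumption}. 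Your Orlicz bookkeeping (products in $\psi_{q/2}$, the $H=h+(H-h)$ split) matches the paper's Step~0 and Step~1 calculations line for line.
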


\begin{proof}
Define $v(x_1^r) := \Exp[\tilde{H}(x_1^r, W) \tilde{H}(x_1^r,W)^T],\;\widehat{V} := |I_{n,r}|^{-1}\sum_{\iota \in I_{n,r}} v(X_{\iota})$.  Observe that
$$ \|\widehat{\Gamma}_{\tilde{H}} - \Gamma_{\tilde{H}}\|_{\infty} \;\leq\; 
\|\widehat{\Gamma}_{\tilde{H}} - \widehat{V}\|_{\infty} +
\|\widehat{V} - \Gamma_{\tilde{H}}\|_{\infty}.
$$
We will bound these two terms separately. 
\vspace{0.5cm}

\noindent{\underline{Step 0}}. We first make a few observations. Clearly, 
$\Exp[v(X_1^r)] = \Gamma_{\tilde{H}}$, and  for all $1 \leq j,k \leq d$, by Jensen's inequality for conditional expectation and~\eqref{tilde_H_assumptions},
\begin{align}\label{v_fourth}
\Exp\left\vert v_{jk}(X_1^r)  \right\vert^2 &\leq
\Exp[\tilde{H}_j^2(X_1^r,W) \tilde{H}_k^2(X_1^r,W)] 
\leq
\Exp[\tilde{H}_j^4(X_1^r,W)] +
\Exp[\tilde{H}_k^4(X_1^r,W)]   \lesssim \underline{\sigma}_{H}^{-2} D_n^2.
\end{align}
Further, by definition
\begin{align*}
\left\vert v_{jk}(x_1^r) \right\vert &\leq
\Exp[\tilde{H}_j^2(x_1^r,W)] +
\Exp[\tilde{H}_k^2(x_1^r,W)]  \\
& \lesssim \underline{\sigma}_{H}^{-2}\left(B_{n,j}^2(x_1^r) +h_j^2(x_1^r) + B_{n,k}^{2}(x_1^r)+h_k^2(x_1^r) \right).
\end{align*}
As a result,  by the assumptions~\eqref{exp_cond_distr} and~\eqref{H_exp_moment_assumption}, and Lemma~\ref{psi_norms},
\begin{align}
\begin{split}
\max_{1 \leq j,k \leq d}\|v_{jk}(X_1^r)\|_{\psi_{q/2}} &\lesssim
\underline{\sigma}_{H}^{-2}\max_{1\leq j\leq d} 
\left(
\|B_{n,j}^2(X_1^r)\|_{\psi_{q/2}} +
\|h_{j}^2(X_1^r)\|_{\psi_{q/2}} 
\right) \\
&= \underline{\sigma}_{H}^{-2}\max_{1\leq j \leq d} 
\left(
\|B_{n,j}(X_1^r)\|_{\psi_{q}}^2 +
\|h_{j}(X_1^r)\|_{\psi_{q}}^2 
\right) \lesssim (\underline{\sigma}_{H}^{-1}D_n)^2.
\end{split}\label{v_exp}
\end{align}
\vspace{0.2cm}

\noindent\underline{\textit{Step 1}}. We bound $\|\widehat{\Gamma}_{\tilde{H}} - \widehat{V}\|_{\infty}$
using Lemma~\ref{lemma:tail_random_kernel} with $F = \tilde{H}\tilde{H}^T$ and $\psi_{q/2}$.
For $1\leq j,k \leq d$, define 
$$b_{jk}(x_1^r) :=\|\tilde{H}_j(x_1^r,W) \tilde{H}_k(x_1^r,W) - v_{jk}(x_1^r)\|_{\psi_{q/2}}.
$$
Observe that due to Lemma \ref{psi_norms} and \ref{triangle_max_inequ},
\begin{align*}
b_{jk}(x_1^r) &\lesssim 
\|\tilde{H}_j^2(x_1^r,W)\|_{\psi_{q/2}} + \|\tilde{H}_k^2(x_1^r,W)\|_{\psi_{q/2}} +v_{jk}(x_1^r)\\
&=\|\tilde{H}_j(x_1^r,W)\|_{\psi_{q}}^2 + \|\tilde{H}_k(x_1^r,W)\|_{\psi_{q}}^2 +v_{jk}(x_1^r)\\
&\lesssim \underline{\sigma}_{H}^{-2}(
h_j^2(x_1^r) +B_{n,j}^2(x_1^r)
+
h_k^2(x_1^r) +B_{n,k}^2(x_1^r)) +v_{jk}(x_1^r).
\end{align*}
Then due to~\eqref{v_exp} and the assumptions~\eqref{exp_cond_distr} and~\eqref{H_exp_moment_assumption}, 
\begin{align*}
\|b_{jk}(X_1^r)\|_{\psi_{q/2}} \lesssim (\underline{\sigma}_H^{-1} D_n)^2, \text{ for all }  1 \leq j,k \leq d.
\end{align*}
Now we apply Lemma~\ref{lemma:tail_random_kernel}, with probability at least $1- 9/n$,
\begin{align*}
\|\widehat{\Gamma}_{\tilde{H}} - \widehat{V}\|_{\infty} \lesssim
\;\;&\underline{\sigma}_H^{-2}D_n^2 |I_{n,r}|^{-1/2} r^{1/2} \log^{1/2}(dn)\left(1 +
n^{-1/2} r^{1/2} \log^{2/q+ 1/2}(dn)\log^{2/q-1/2}(n) \right) \\
 + & \underline{\sigma}_H^{-2} D_n^2 |I_{n,r}|^{-1} r^{6/q} \log^{4/q+1}(dn)\log^{4/q-1}(n).
\end{align*}
\vspace{0.2cm}

\noindent{\underline{\textit{Step 2}}}. We bound $\|\widehat{V} - \Gamma_{\tilde{H}}\|_{\infty}$ using Lemma~\ref{tail general U} with $\psi_{q/2}$. By~\eqref{v_fourth} and~\eqref{v_exp}, with probability at least $1- 4/n$,
$$
\|\widehat{V} - \Gamma_{\tilde{H}	}\|_{\infty}
\lesssim 
n^{-1/2} r^{1/2} \log^{1/2}(dn) \underline{\sigma}_H^{-1}D_n
+ n^{-1} r \log^{2/q+1}(dn)\log^{2/q-1}(n) \underline{\sigma}_H^{-2}D_n^2.
$$
Then the proof is complete by combining step 1 and 2.
\end{proof}

\subsubsection{Bounding the effect of sampling}\label{subsec:effect of sampling}
The following quantity will appear in the proof of Theorem~\ref{thrm:IOUS_incomplete}:
\begin{align}\label{zeta_n}
\sqrt{N} \zeta_n :=  \frac{1}{\sqrt{|I_{n,r}|}}\sum_{\iota \in I_{n,r}} \frac{Z_{\iota} - p_n}{\sqrt{p_n(1-p_n)}} \tilde{H}(X_{\iota}, W_{\iota})
:=  \frac{1}{\sqrt{|I_{n,r}|}}\sum_{\iota \in I_{n,r}} \widetilde{Z}_{\iota},
\end{align}
The next lemma establishes conditional Gaussian approximation for $\sqrt{N} \zeta_n$. 

\begin{lemma}\label{lemma:effect_of_sampling}
Suppose the assumptions in Theorem~\ref{thrm:IOUS_incomplete} hold. There exists a  constant $C$, depending on $q$, such that
with probability at least $1 - C/n$,
$$
\rho^{\cR}_{\vert X,W}(\sqrt{N} \zeta_n, \Lambda^{-1/2}_H {Y}_{B})
:= \sup_{R \in \cR}
\left\vert
\Pro_{\vert X,W}\left(\sqrt{N}\zeta_n \in R \right)
- \Pro(\Lambda^{-1/2}_H {Y}_B \in R)
\right\vert \leq C \varpi_n,
$$
where we recall that $Y_B \sim N(0,\Gamma_H)$, and
we abbreviate $\Pro_{\vert X,W}$ for $\Pro_{\vert X_1^{n}, \{W_{\iota}: \iota \in I_{n,r}\}}$.
\end{lemma}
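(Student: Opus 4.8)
The plan is to apply the Gaussian approximation for sums of independent random vectors conditionally on $(X_1^n,\{W_\iota\})$, since, given those, $\sqrt{N}\zeta_n = |I_{n,r}|^{-1/2}\sum_\iota \widetilde Z_\iota$ is a sum of independent (across $\iota$) centered random vectors, where $\widetilde Z_\iota = (Z_\iota-p_n)(p_n(1-p_n))^{-1/2}\tilde H(X_\iota,W_\iota)$. The natural tool is Lemma~\ref{lemma:GAR_q} (the $\psi_q$-version of Proposition~2.1 of \cite{chernozhukov2017}), but with a twist: the conditional law depends on $(X,W)$, and the ``good event'' on which the hypotheses of Lemma~\ref{lemma:GAR_q} hold (lower bound on conditional variances, moment bounds, $\psi$-norm bounds) must be shown to have probability $1-C/n$. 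First I would identify the relevant conditional second-moment matrix: conditionally, each $\widetilde Z_\iota$ has covariance $\tilde H(X_\iota,W_\iota)\tilde H(X_\iota,W_\iota)^T$, so $|I_{n,r}|^{-1}\sum_\iota \widetilde Z_\iota\widetilde Z_\iota^T = \widehat\Gamma_{\tilde H}$ from \eqref{def:est_cov}. Thus the approximating Gaussian has covariance $\widehat\Gamma_{\tilde H}$, which by Lemma~\ref{lemma:convariance_est} is close (in $\|\cdot\|_\infty$) to $\Gamma_{\tilde H}=\Lambda_H^{-1/2}\Gamma_H\Lambda_H^{-1/2}$, the covariance of $\Lambda_H^{-1/2}Y_B$, with the discrepancy controlled by the bound there with high probability.

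The argument then proceeds in three steps. \emph{Step 1: conditional moment and tail bounds.} I would show that, on an event of probability $1-C/n$, the conditional random vectors $\widetilde Z_\iota$ satisfy: (i) diagonal entries of $\widehat\Gamma_{\tilde H}$ bounded below, say by $1/2$ — this follows because $\Gamma_{\tilde H,jj}=\sigma_{g,j}^{-2}\sigma_{H,j}^2\cdot(\text{something})$... more carefully, $\Gamma_{\tilde H,jj}=\Var(\tilde H_j) = \sigma_{H,j}^{-2}\,\sigma_{H,j}^2 = 1$ by construction of $\Lambda_H$, and Lemma~\ref{lemma:convariance_est} gives $\|\widehat\Gamma_{\tilde H}-\Gamma_{\tilde H}\|_\infty$ small enough (under the Theorem's conditions, which via Lemma~\ref{Inr_large} and the standing assumption $p_n\le 1/2$ force the bound to be $o(1)$), hence $\widehat\Gamma_{\tilde H,jj}\ge 1/2$; (ii) conditional $(2+k)$-th moments of $\widetilde Z_{\iota,j}$, $k=1,2$, bounded by $(C\,p_n^{-1/2}\underline\sigma_H^{-1}D_n)^k$ using \eqref{tilde_H_assumptions} for the $\tilde H$ part and the Bernoulli moments $\Exp|Z_\iota-p_n|^{2+k}\lesssim p_n$; (iii) $\|\widetilde Z_{\iota,j}\|_{\psi_q}\lesssim p_n^{-1/2}\underline\sigma_H^{-1}D_n$ (the Bernoulli factor is bounded by $p_n^{-1/2}$ a.s.). \emph{Step 2: apply Lemma~\ref{lemma:GAR_q}} conditionally with effective dimension $d$, sample size $|I_{n,r}|$, noise level $\underline\sigma^2=1/2$, and $D_n$ replaced by $\widetilde D := C\,p_n^{-1/2}\underline\sigma_H^{-1}D_n$. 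This yields
\[
\rho_{\vert X,W}^{\cR}\bigl(\sqrt N\,\zeta_n,\ N(0,\widehat\Gamma_{\tilde H})\bigr)\ \lesssim\ \Bigl(\frac{\widetilde D^2\log^{q_*}(dn)}{|I_{n,r}|}\Bigr)^{1/6}
= \Bigl(\frac{\underline\sigma_H^{-2}D_n^2\log^{q_*}(dn)}{p_n|I_{n,r}|}\Bigr)^{1/6}
= \Bigl(\frac{\underline\sigma_H^{-2}D_n^2\log^{q_*}(dn)}{N}\Bigr)^{1/6},
\]
since $p_n|I_{n,r}| = N$; and since $\underline\sigma_H^2\ge\underline\sigma_g^2$ and $N\ge n\wedge N$, this is $\lesssim\varpi_n$ (absorbing the $r^{q_1}$ in $\varpi_n$, which only helps). \emph{Step 3: replace the data-dependent covariance.} Use the Gaussian comparison inequality for rectangles (Lemma~8 / the comparison lemmas of \cite{chernozhukov2015comparison}, or \cite[Prop.~2.1]{chernozhukov2017}) to bound $\sup_{R\in\cR}|\Pro(N(0,\widehat\Gamma_{\tilde H})\in R)-\Pro(N(0,\Gamma_{\tilde H})\in R)|\lesssim \|\widehat\Gamma_{\tilde H}-\Gamma_{\tilde H}\|_\infty^{1/3}\log^{2/3}(d)$ on the good event, and check via Lemma~\ref{lemma:convariance_est} and the Theorem's hypothesis \eqref{U_full_assumption}-type condition (together with $\underline\sigma_H\ge\underline\sigma_g$, Lemma~\ref{Inr_large}, $p_n\le 1/2$) that this is also $\lesssim\varpi_n$. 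Combining the two displays by the triangle inequality for $\rho^{\cR}$, and noting $N(0,\Gamma_{\tilde H})\equald\Lambda_H^{-1/2}Y_B$, completes the proof on an event of probability $1-C/n$ (the union of the good events from Lemmas~\ref{lemma:N_hat}, \ref{lemma:convariance_est}, and the conditional moment bounds).

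The main obstacle I anticipate is \emph{Step 1(i)–(iii)}: verifying that the conditional hypotheses of Lemma~\ref{lemma:GAR_q} hold with the correct constants on a $1-C/n$ event, and in particular that the $r$-dependence works out. The subtle point is that $\widetilde D = C p_n^{-1/2}\underline\sigma_H^{-1}D_n$ carries a factor $p_n^{-1/2} = \sqrt{|I_{n,r}|/N}$, which looks large, but it is exactly cancelled by the $|I_{n,r}|^{-1/2}$ scaling inside Lemma~\ref{lemma:GAR_q} to leave the clean factor $N^{-1/6}$; one must be careful to track this cancellation rather than crudely bounding. A secondary nuisance is that the conditional moment bounds in (ii) involve $\Exp_{\vert X,W}[\,\cdot\,]$ over the Bernoulli variables only, so they hold \emph{deterministically} given $(X,W)$ wherever the $\tilde H(X_\iota,W_\iota)$ values are finite — the only genuinely probabilistic input is the lower bound on $\widehat\Gamma_{\tilde H}$ and the $\psi_q$ control of $\max_\iota\max_j|\tilde H_j(X_\iota,W_\iota)|$, both of which follow from Lemma~\ref{lemma:convariance_est} and a standard maximal inequality (\cite[Lemma~2.2.2]{van1996weak} together with Lemma~\ref{triangle_max_inequ}) applied to the $|I_{n,r}|$ random variables $\tilde H_j(X_\iota,W_\iota)$. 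Everything else is routine bookkeeping with the rate $\varpi_n$.
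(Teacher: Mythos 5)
Your high-level strategy matches the paper's: approximate $\sqrt{N}\zeta_n$ conditionally on $(X,W)$ by a Gaussian with the data-dependent covariance $\widehat{\Gamma}_{\tilde H}$, then pass to $\Gamma_{\tilde H}$ via the rectangular Gaussian comparison inequality and Lemma~\ref{lemma:convariance_est}. However, your Step~1(iii)/Step~2 has a genuine gap. Conditionally on $(X,W)$, the $\psi_q$-norm of $\widetilde Z_{\iota,j}$ is not $\lesssim p_n^{-1/2}\underline\sigma_H^{-1}D_n$: it is $\lesssim p_n^{-1/2}|\tilde H_j(X_\iota,W_\iota)|$, a data-dependent quantity. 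Lemma~\ref{lemma:GAR_q} requires a single deterministic bound $\widetilde D$ holding \emph{uniformly} in $\iota$ and $j$, so you would have to take $\widetilde D\gtrsim p_n^{-1/2}\max_{\iota\in I_{n,r}}\max_{1\le j\le d}|\tilde H_j(X_\iota,W_\iota)|$. By the maximal inequality over $d|I_{n,r}|$ terms, this maximum is only $\lesssim \underline\sigma_H^{-1}r^{1/q}D_n\log^{1/q}(dn)\log^{1/q}(n)$ with probability $1-C/n$, so the resulting rate from Lemma~\ref{lemma:GAR_q} carries an extra factor of order $(r^{2/q}\log^{2/q}(dn)\log^{2/q}(n))^{1/6}$ beyond $\varpi_n$ and the claim $\le C\varpi_n$ does not follow.

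The paper avoids this by applying Theorem~2.1 of \cite{chernozhukov2017} directly rather than the packaged Lemma~\ref{lemma:GAR_q}, splitting the error into a third-moment term $\widehat L_n$ and a truncated-tail term $\widehat M_n(\phi)$. Crucially, $\widehat L_n$ is an \emph{average} of $|\tilde H_j|^3$ over $\iota$ (controlled w.h.p.~by $\underline\sigma_H^{-1}D_n$ without the max penalty), and because the weights $Z_\iota$ are Bernoulli, $|\widetilde Z_{\iota,j}|$ is bounded by $p_n^{-1/2}|\tilde H_j(X_\iota,W_\iota)|$ exactly, so $\widehat M_{n,X}(\phi_n)=0$ identically on the event where the maximum of $|\tilde H_j(X_\iota,W_\iota)|$ lies below the truncation threshold (Step~1.3 of the paper). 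The max-type bound thus only has to be compared against a threshold, not fed into the rate; the dominant contribution comes from $\widehat L_n$, and the rate works out to $\lesssim\varpi_n$. If you want to salvage your approach you should open up the black box and mimic this decomposition; as written, a direct application of Lemma~\ref{lemma:GAR_q} cannot give the stated bound.
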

\begin{proof} Consider conditionally independent (conditioned on $X,W$) $\bR^d$-valued random vectors $\{\widehat{Y}_{\iota}: \iota \in I_{n,r}\}$ such that
$$
\widehat{Y}_{\iota} \vert X,W
\;\sim \; N(0, \tilde{H}(X_{\iota}, W_{\iota}) \tilde{H}(X_{\iota}, W_{\iota})^T),\quad
\widehat{Y} := |I_{n,r}|^{-1/2} \sum_{\iota \in I_{n,r}} \widehat{Y}_{\iota}.$$
Clearly, $\widehat{Y}\vert X,W \;\sim\; N(0,\widehat{\Gamma}_{\tilde{H}})$. Further, define
\begin{align*}
\rho^{\cR}_{\vert X,W}(\sqrt{N} \zeta_n, \widehat{Y})
&:= \sup_{R \in \cR}
\left\vert
\Pro_{\vert X,W}\left(\sqrt{N}\zeta_n \in R \right)
- \Pro_{\vert X,W}(\widehat{Y}\in R)
\right\vert, \\
\rho^{\cR}_{\vert X,W}(\widehat{Y}, \Lambda^{-1/2}_H Y_B)
&:= \sup_{R \in \cR}
\left\vert
\Pro_{\vert X,W}\left(\widehat{Y} \in R \right)
- \Pro(\Lambda^{-1/2}_H {Y}_B\in R)
\right\vert.
\end{align*}
By triangle inequality, it then suffices to show that each of the following events happens with probability at least $1-C/n$,
\begin{align}\label{aux_to_show}
\rho^{\cR}_{\vert X,W}(\sqrt{N} \zeta_n, \widehat{Y}) \leq C \varpi_n,\quad
\rho^{\cR}_{\vert X,W}(\widehat{Y}, \Lambda^{-1/2}_H Y_B) \leq C \varpi_n,
\end{align}
on which we now focus. Without loss of generality, since $\underline{\sigma}_g \leq 1$, we assume
\begin{equation}
\label{aux_wlog}
\frac{r^{q_1} D_n^2 \log^{q_{*}}(dn)}{\underline{\sigma}_g^{2} \; n \wedge N} \leq c_1, \quad \text{ and } \quad 
\frac{r^{q_1} D_n^2 \log^{q_{*}}(dn)}{ n \wedge N} \leq c_1.
\end{equation}
for some sufficiently small constant $c_1 \in (0,1)$ that is to be determined.
Recall that $q_1 = 2 \vee (2/q)$ and $q_{*} = (6/q+1) \vee 7$.

\vspace{0.3cm}

\noindent{\underline{\textit{Step 0}}}.  
By Lemma~\ref{lemma:convariance_est} and \ref{Inr_large},
\begin{align}\label{Gamma_H_hat_inf}
\Pro\left(
\|\widehat{\Gamma}_{\tilde{H}} - \Gamma_{\tilde{H}}\|_{\infty} \;\leq \;
C \left( \frac{r\log^{1 \vee (2/q-1)}(dn) D_n^2}{\underline{\sigma}_H^2\; n}\right)^{1/2}
\right)
\; \geq \;
1 - \frac{13}{n}.
\end{align}
In particular,
since $\Gamma_{\tilde{H},jj}  = 1$, if we take $c_1$ small enough such that
$C c_1^{1/2} \leq 1/2$,
then $\Pro\left(\min_{1 \leq j \leq d} \widehat{\Gamma}_{\tilde{H},jj} \geq 1/2 \right) \geq 1-13/n$.

\vspace{0.3cm}
\noindent{\underline{\textit{Step 1}}}.  The goal is to show that 
the first event in~\eqref{aux_to_show}, $\rho^{\cR}_{\vert X,W}(\sqrt{N} \zeta_n, \widehat{Y}) \leq C \varpi_n$, holds with probability at least $1 - C/n$.

\vspace{0.2cm}
\underline{\textit{Step 1.1.}} Define 
\begin{align}\label{def:Lhat}
\widehat{L}_n :=
\max_{1 \leq j \leq d} |I_{n,r}|^{-1}
\sum_{\iota \in I_{n,r}} \Exp_{\vert X,W}\left[|\widetilde{Z}_{\iota,j}|^3 \right].
\end{align}
Further,  $\widehat{M}_n(\phi) := \widehat{M}_{n,X}(\phi) + \widehat{M}_{n,Y}(\phi)$, where
\begin{equation}
\label{def:M_XY}
\begin{split}
&\widehat{M}_{n,X}(\phi) := |I_{n,r}|^{-1}\sum_{\iota \in I_{n,r}}
\Exp_{\vert X,W}\left[\max_{1 \leq j \leq d} |\widetilde{Z}_{\iota,j}|^3; \max_{1 \leq j \leq d} |\widetilde{Z}_{\iota,j}| > 
\frac{\sqrt{|I_{n,r}|}}{4\phi \log d}
\right],\\
&\widehat{M}_{n,Y}(\phi) := |I_{n,r}|^{-1}\sum_{\iota \in I_{n,r}}
\Exp_{\vert X,W}\left[\max_{1 \leq j \leq d} |\widehat{Y}_{\iota,j}|^3; \max_{1 \leq j \leq d} |\widehat{Y}_{\iota,j}| > 
\frac{\sqrt{|I_{n,r}|}}{4\phi \log d}
\right],
\end{split}
\end{equation}

By Theorem 2.1 in~\cite{chernozhukov2017},  there exist absolute constants $K_1$ and $K_2$ such that for
any real numbers $\overline{L}_n$ and $\overline{M}_n$, we have
\begin{align*}
\rho^{\cR}_{\vert X,W}(\sqrt{N} \zeta_n, \widehat{Y})
\leq K_1 \left( \left(
\frac{\overline{L}_n^2 \log^7(d)}{|I_{n,r}|}
\right)^{1/6} + \frac{\overline{M}_n}{\overline{L_n}} \right) \;\; \text{ with } \;\;
\phi_n := K_2 \left( \frac{\overline{L}_n^2 \log^4(d)}{|I_{n,r}|}\right)^{-1/6},
\end{align*}
on the event $\mathcal{E}_n := \{\widehat{L}_n \leq \overline{L}_n\} \cap \{\widehat{M}_n(\phi_n) \leq \overline{M}_n\} \cap \{\min_{1 \leq j \leq d} \widehat{\Gamma}_{\tilde{H},jj} \geq 1/2\}$.

In Step 0, we have shown $\Pro\left(\min_{1 \leq j \leq d} \widehat{\Gamma}_{\tilde{H},jj} \geq 1/2 \right) \geq 1-13/n$.
In Step 1.2-1.4, we select proper $\overline{L}_n$ and $\overline{M}_n$ such that the first two events happen with probability at least $1 - C/n$. In Step 1.5, we plug in these values.

%
%
%

\vspace{0.3cm}
\underline{\textit{Step 1.2: Select $\overline{L}_n$.}}\hspace{0.1cm}  
Since $p_n \leq 1/2$, $\Exp|Z_{\iota} - p_n|^3 \leq C p_n$, and thus
$$\widehat{L}_n \leq C p_n^{-1/2} Z_1, \;\text{ where } Z_1 := \max_{1 \leq j \leq d}\frac{1}{|I_{n,r}|}\sum_{\iota \in I_{n,r}} \left\vert
\tilde{H}_j(X_{\iota},W_{\iota})\right\vert^3
.$$
We will apply Lemma~\ref{lemma:nonneg_tail_random_kernel} with $F(\cdot) = |\tilde{H}(\cdot)|^3$ and $\beta = q/3$. Thus for $1 \leq j \leq d$, define
$$
f_j(x_1^r) := \Exp\left[ \left|\tilde{H}_j(x_1^r,W)\right|^3\right],\;\;
b_j(x_1^r) := \left\| \left|\tilde{H}_j(x_1^r,W)\right|^3 - f_j(x_1^r) \right\|_{\psi_{q/3}}.
$$
First, by iterated expectation and due to~\eqref{tilde_H_assumptions},
$$
\Exp\left[ f_j(X_1^r) \right] = \Exp\left[ \left|\tilde{H}_j(X_1^r,W)\right|^3\right] \leq \underline{\sigma}_H^{-1}D_n, \;\;\text{ for } 1 \leq j \leq d.
$$
Second, observe that $\sigma_{H,j}^3 f_j(x_1^r) \lesssim \Exp\left[ |H_j(x_1^r,W) - h_j(x_1^r)|^3 \right]  +|h_j(x_1^r)|^3
\lesssim B_{n,j}^3(x_1^r) + |h_j(x_1^r)|^3$, and thus due to \eqref{h_exp_assumption},~\eqref{exp_cond_distr} and Lemma \ref{psi_norms} and \ref{triangle_max_inequ}, 
\begin{align*}
\|f_j(X_1^r)\|_{\psi_{q/3}} &\;\lesssim\; \sigma_{H,j}^{-3}\left(
\|B_{n,j}^3(X_1^r)\|_{\psi_{q/3}}  +
\|h_j^3(X_1^r)\|_{\psi_{q/3}} \right) \\
&\;=\;\sigma_{H,j}^{-3}\left( \|B_{n,j}(X_1^r)\|_{\psi_{q}}^3 
+\|h_j(X_1^r)\|^3_{\psi_{q}} \right)
\;\lesssim\; (\underline{\sigma}_{H}^{-1}D_n )^3.
\end{align*}
Further, observe that by Lemma \ref{triangle_max_inequ},
\begin{align*}
\sigma_{H,j}^3 b_j(x_1^r) &\lesssim \| \left|H_j(x_1^r,W) - h_j(x_1^r)\right|^3 \|_{\psi_{q/3}} + |h_j^3(x_1^r)| + \sigma_{H,j}^3f_j(x_1^r) \\
&= B_{n,j}^3(x_1^r)+ |h_j^3(x_1^r)|  + \sigma_{H,j}^3f_j(x_1^r).
\end{align*}
Thus by the same argument, $\|b_j(X_1^r)\|_{\psi_{q/3}} \;\lesssim\; (\underline{\sigma}_H^{-1}D_n)^3$.
Then by Lemma~\ref{lemma:nonneg_tail_random_kernel}, with probability at least $1-8n^{-1}$,
\begin{align*}
 Z_1 \; \leq \;C \left( \underline{\sigma}_H^{-1} D_n
+  n^{-1}r\log^{6/q}(dn) \underline{\sigma}_H^{-3} D_n^3 +   |I_{n,r}|^{-1} r^{9/q} \log^{12/q}(dn)\underline{\sigma}_H^{-3} D_n^3
\right).
\end{align*}
Due to Lemma \ref{Inr_large} and assumption~\eqref{aux_wlog},
$\Pro( \widehat{L}_n \; \leq \;C \underline{\sigma}_H^{-1} p_n^{-1/2} D_n ) \geq 1 - 8/n$.
Thus there is a constant $C_1$, depending on $q$, such that if  
\begin{align}\label{overline_L_sel}
\overline{L}_n := C_1 \underline{\sigma}_H^{-1} p_n^{-1/2} r^{1/q} D_n,
\end{align}
then $\Pro(\widehat{L}_n \leq \overline{L}_n) \geq 1-8/n$.
\vspace{0.2cm}

\underline{\textit{Step 1.3: bounding $\widehat{M}_{n,X}(\phi_n)$.}}\hspace{0.1cm}  
Since $Z_{\iota}$ is a Bernoulli random variable, it is clear that $\widehat{M}_{n,X}(\phi_n) = 0$ on the event
$$
M := \max_{\iota \in I_{n,r}} \max_{1 \leq j \leq d} |\tilde{H}_j(X_{\iota},W_{\iota})| \leq \frac{\sqrt{N}}{4 \phi_n \log(d)}
= 4^{-1}K_2^{-1}C_1^{1/3} \left(\frac{r^{1/q} D_n N}{\underline{\sigma}_H \log(d)}\right)^{1/3},
$$
where we use the value~\eqref{overline_L_sel} for $\overline{L}_n$.

By assumption~\eqref{H_exp_moment_assumption} and Lemma \ref{triangle_max_inequ}, 
$$\|M\|_{\psi_q} \leq C' \underline{\sigma}_H^{-1} r^{1/q} D_n \log^{1/q}(dn) \; \Rightarrow\;
\Pro\left(M \leq C' \underline{\sigma}_H^{-1}  r^{1/q} D_n \log^{2/q}(dn) \right) \geq 1 - 2/n.
$$
Due to~\eqref{aux_wlog},
\begin{align*}
&\left(\frac{r^{1/q} D_n N}{\underline{\sigma}_H\log(d)}\right)^{1/3} \geq c_1^{-1/3}\underline{\sigma}_H^{-1}  r^{1/q} D_n \log^{2/q}(dn), \;\;\\
&\phi_n^{-1} = K_2^{-1} C_1^{1/3} \left(\frac{r^{2/q} D_n^2 \log^4(d)}{\underline{\sigma}_H^2 N} \right)^{1/6}
\leq K_2^{-1}C_1^{1/3} c_1^{1/6}.
\end{align*}

Thus if  we take $c_1$ in~\eqref{aux_wlog} to be sufficiently small such that 
$$
c_1^{-1/3} 4^{-1}K_2^{-1}C_1^{1/3} \geq C' \;\;\text{ and }\;\; K_2^{-1}C_1^{1/3} c_1^{1/6} \leq 1.
$$
then $\Pro(\widehat{M}_{n,X}(\phi_n) = 0) \geq 1-2/n$ and $\phi_n \geq 1$.
\vspace{0.3cm}

\underline{\textit{Step 1.4: select $\overline{M}_n$.}}\hspace{0.1cm}  
From Step 1.3, we have shown that
$$\Pro\left(\mathcal{E'}_n \right) \geq 1 - 2/n,
\text{ where }
\mathcal{E}'_n := \left\{M:= \max_{\iota \in I_{n,r}} \max_{1 \leq j \leq d} |\tilde{H}_j(X_{\iota},W_{\iota})| \leq C'\underline{\sigma}_H^{-1} r^{1/q} D_n \log^{2/q}(dn) \right\}.
$$
Then by the same argument as in Step 1.4 of the proof of~\cite[Theorem 3.1]{chen2017randomized}
and due to~\eqref{aux_wlog} and $\phi_n \geq 1$,
on the event $\mathcal{E}'_n$,  for any $\iota \in I_{n,r}$,
\begin{align*}
&\Exp_{\vert X,W}\left[\max_{1 \leq j \leq d} |\widehat{Y}_{\iota,j}|^3; \max_{1 \leq j \leq d} |\widehat{Y}_{j,\iota}| > 
\frac{\sqrt{|I_{n,r}|}}{4\phi_n \log d}
\right] \\
\leq \;\;&
C \left(\frac{\sqrt{|I_{n,r}|}}{4\phi_n \log d} + C M \log^{1/2}(d) \right)^3 \exp\left(-
\frac{\sqrt{|I_{n,r}|}}{ C M \phi_n \log^{3/2} d} 
\right) \\
\leq \;\;& C n^{3r/2} \exp\left(-
\frac{{|I_{n,r}|^{1/3}}}{ C \underline{\sigma}_H^{-2/3} r^{2/3q}D_n^{2/3} \log^{2/q+5/6}(dn)} 
\right) 
\leq \; C n^{3r/2} \exp\left(-|I_{n,r}|^{11/84}/C
\right).
\end{align*}
Thus there exists an absolute constant $C_2$ such that if we set
\begin{align}
\label{overline_M_sel}
\overline{M}_n := C_2  n^{3r/2} \exp\left(- |I_{n,r}|^{11/84}/C_2 \right),
\end{align}
then $\Pro(\widehat{M}_{n,Y}(\phi_n) \leq \overline{M}_{n}) \geq 1 -2/n$.
\vspace{0.3cm}


\underline{\textit{Step 1.5: plug in $\overline{L}_n$ and $\overline{M}_n$.}}\hspace{0.1cm}  
Recall the definition $\overline{L}_n$ and $\overline{M}_n$ in~\eqref{overline_L_sel} and~\eqref{overline_M_sel}. With these selections, we have shown  that $\Pro(\mathcal{E}_n) \geq 1 - C/n$, where we recall that $\mathcal{E}_n := \{\widehat{L}_n \leq \overline{L}_n\} \cap \{\widehat{M}_n(\phi_n) \leq \overline{M}_n\} \cap \{\min_{1 \leq j \leq d} \widehat{\Gamma}_{\tilde{H},jj} \geq 1/2\}$. Further, on the event $\mathcal{E}_n$,
\begin{align*}
&\rho^{\cR}_{\vert X,W}(\sqrt{N} \zeta_n, \widehat{Y})
\lesssim \left(
\frac{\overline{L}_n^2 \log^7(d)}{|I_{n,r}|}
\right)^{1/6} + \frac{\overline{M}_n}{\overline{L_n}} \\
&\lesssim
\left(\frac{r^{2/q} D_n^2 \log^7(d)}{\underline{\sigma}_H^{2} N}
\right)^{1/6} +
\frac{p_n^{1/2} \underline{\sigma}_H}{D_n r^{1/q}} n^{3r/2} \exp\left(-|I_{n,r}|^{11/84}/C_2 \right)
\lesssim C \varpi_n,
\end{align*}
which completes the proof of Step 1.

\vspace{0.4cm}
\underline{\textit{Step 2.}}\hspace{0.1cm}  
The goal is to show that 
the second event in~\eqref{aux_to_show}, $\rho^{\cR}_{\vert X,W}(\widehat{Y}, \Lambda^{-1/2}_H Y_B) \leq C \varpi_n$, holds with probability at least $1 - C/n$.

Observe that $\Cov( \Lambda^{-1/2}_H Y_B) = \Gamma_{\tilde{H}}$ and $\Gamma_{\tilde{H},jj} = 1$ for $1 \leq j \leq d$.
By the Gaussian comparison inequality~\citep[Lemma C.5] {chen2017randomized},
$$
\rho^{\cR}_{\vert X,W}(\widehat{Y},  \Lambda^{-1/2}_H Y_B) \lesssim \overline{\Delta}^{1/3} \log^{2/3}(d),
$$
on the event that $\{\|\widehat{\Gamma}_{\tilde{H}}-\Gamma_{\tilde{H}}\|_{\infty} \leq \overline{\Delta}\}$.
From~\eqref{Gamma_H_hat_inf} in Step 0,
\begin{align*}
\Pro\left(
\|\widehat{\Gamma}_{\tilde{H}} - \Gamma_{\tilde{H}}\|_{\infty} \;\leq \;
C  (\underline{\sigma}_H^{-2} n^{-1}r\log^{1 \vee(2/q-1)}(dn) D_n^2)^{1/2}
\right)\; \geq \;
1 - 13/n.
\end{align*}
Thus if we set $\overline{\Delta} =C (\underline{\sigma}_H^{-2} n^{-1}r\log^{1 \vee(2/q-1)}(dn) D_n^2)^{1/2}$, then with probability at least $1-C/n$,
\begin{align*}
\rho^{\cR}_{\vert X,W}(\widehat{Y}, Y_B) \leq C 
\left(\frac{r\log^{5 \vee(2/q+3)}(dn) D_n^2}{\underline{\sigma}_H^{2}\; n} \right)^{1/6} \leq C \varpi_n.
\end{align*}
\end{proof}

\vspace{0.2cm}
\subsubsection{Proof of Theorem~\ref{thrm:IOUS_incomplete}}\label{proof:incomplete_IOUS_random}
Without loss of generality, we assume that 
\begin{equation}
\label{wlog_c_incomplete}
\frac{r^{q_1} D_n^2 \log^{q_*}(dn)}{\underline{\sigma}_g^2\; n \wedge N} \leq \frac{1}{16}.
\end{equation}
Observe that
\begin{align*}
 U'_{n,N} &= \frac{N}{\widehat{N}}\left( 
\frac{1}{N}\sum_{\iota \in I_{n,r}} (Z_{\iota} - p_n) \Lambda_H^{1/2}\tilde{H}(X_{\iota}, W_{\iota})
+
\frac{1}{|I_{n,r}|}\sum_{\iota \in I_{n,r}} {H}(X_{\iota},W_{\iota})
\right) \\
&= \frac{N}{\widehat{N}}\left( \sqrt{1-p_n} \Lambda_H^{1/2}\zeta_n + \widehat{U}_n\right)
:= \frac{N}{\widehat{N}} \Phi_n,
\end{align*}
where we recall that $\widehat{U}_n$ and $\zeta_n$ is defined in Section~\ref{sec:IOUS_random_kernel}
and in~\eqref{zeta_n} respectively. Denote $Y := rY_A +\alpha_n^{1/2}Y_B$. 

\vspace{0.2cm}
\underline{Step 1:} the goal is to show that
$$
\rho\left(\sqrt{n} \Phi_n, \;\; rY_A +\alpha_n^{1/2}Y_B \right)
\lesssim \varpi_n.
$$
For any rectangle $R \in \cR$, observe that
\begin{align*}
&\Pro(\sqrt{n}\left(\widehat{U}_n + \sqrt{1-p_n} \Lambda_H^{1/2} \zeta_n \right) \in R)\\
= \;\; &\Exp\left[
\Pro_{\vert X,W}\left(
\sqrt{N} \zeta_n \in \left( \frac{1}{\sqrt{\alpha_n (1-p_n)}} \Lambda_H^{-1/2} R-
\sqrt{\frac{N}{1-p_n}}\Lambda_H^{-1/2} \widehat{U}_n
\right)
\right)
\right].
\end{align*}
By Lemma~\ref{lemma:effect_of_sampling}, since $n^{-1} \lesssim \varpi_n$, we have
\begin{align*}
&\Pro(\sqrt{n}\left( \widehat{U}_n + \sqrt{1-p_n} \Lambda_H^{1/2}\zeta_n \right) \in R)\\
\leq  \;\; &\Exp\left[
\Pro_{\vert X,W}\left(
\Lambda_H^{-1/2} Y_B \in \left( \frac{1}{\sqrt{\alpha_n (1-p_n)}} \Lambda_H^{-1/2} R-
\sqrt{\frac{N}{1-p_n}}\Lambda_H^{-1/2} \widehat{U}_n
\right)
\right)
\right] + C \varpi_n \\
= \;\; &
\Pro\left(\sqrt{n} \widehat{U}_n \in 
\left[   R- \sqrt{\alpha_n(1-p_n)}Y_B
\right] \right)+ C \varpi_n,
\end{align*}
where we recall that $Y_B$ is independent of all other random variables.
Further, by Theorem~\ref{thrm:IOUS_random},
\begin{align*}
&\Pro(\sqrt{n}\left( \widehat{U}_n + \sqrt{1-p_n} \Lambda_H^{1/2}\zeta_n \right) \in R)\\
\leq \;\; &
\Exp\left[
\Pro_{\vert Y_B}\left(\sqrt{n} \widehat{U}_n \in 
\left[  R- \sqrt{\alpha_n(1-p_n)} Y_B
\right] \right)
\right]+ C \varpi_n \\
\leq \;\; &
\Exp\left[
\Pro_{\vert Y_B}\left(r Y_A \in 
\left[   R- \sqrt{\alpha_n(1-p_n)} Y_B
\right] \right)
\right]+ C \varpi_n, \\
= \;\; &
\Pro\left( \Lambda_g^{-1/2}(r Y_A + \sqrt{\alpha_n(1-p_n)} Y_B) \in \Lambda_g^{-1/2} R
\right) + C \varpi_n.
\end{align*}

Observe that $\Exp[(\sigma_{g,j}^{-1}r Y_{A,j})^2] = r^2 \geq 1$ for any $1 \leq j \leq d$, $\|\Gamma_H\|_{\infty} \lesssim D_n^2$ due to~\eqref{H_exp_moment_assumption}, and  $\alpha_n p_n = n/|I_{n,r}| \lesssim n^{-1}$. Then by the Gaussian comparison inequality~\citep[Lemma C.5] {chen2017randomized} and due to~\eqref{wlog_c_incomplete}
\begin{align*}
\Pro(\sqrt{n} \Phi_n \in R) &\; \leq \; 
\Pro\left( \Lambda_g^{-1/2}(r Y_A + \sqrt{\alpha_n} Y_B) \in \Lambda_g^{-1/2} R
\right) + C \varpi_n + C \left(\frac{D_n^2 \log^2(d)}{ \underline{\sigma}_g^2 n} \right)^{1/3} \\
&\;\leq \;
\Pro\left( r Y_A + \sqrt{\alpha_n} Y_B \in R
\right) + C \varpi_n.
\end{align*}
Similarly, we can show $\Pro(\sqrt{n}\Phi_n \in R)
\;\geq \; \Pro\left( r Y_A + \sqrt{\alpha_n} Y_B \in R
\right) - C \varpi_n$. Thus the proof of Step 1 is complete.

\vspace{0.3cm}
\underline{Step 2:} we show that with probability at least $1 - C \varpi_n$,
 $$\|(\frac{N}{\widehat{N}}-1)\sqrt{N} \Phi_n \|_{\infty} \leq C \nu_n, \;\;\; \text{ where }
 \nu_n := \sqrt{\frac{\log^3(dn) r^2 D_n^2}{n \wedge N}}.
 $$
Clearly, $\Exp[Y_j^2] = r^2 \sigma_{g,j}^2 + \alpha_n \sigma_{H,j}^2$. Then due to~\eqref{H_exp_moment_assumption}, $\Exp[Y_j^2]  \leq (r^2 + \alpha_n) D_n^2$. Since $Y$ is a multivariate Gaussian, 
$\max_{1\leq j \leq d} \|Y_j\|_{\psi_2} \leq \sqrt{(r^2 + \alpha_n) D_n^2}$. Then by the maximal inequality~\cite[Lemma 2.2.2]{van1996weak} $\|\max_{1\leq j \leq d} |Y_j|\|_{\psi_2} \leq C \sqrt{(r^2 + \alpha_n) D_n^2 \log(d)}$,
which further implies that 
$$\Pro\left(\max_{1\leq j \leq d} |Y_j| \geq C \sqrt{(r^2 + \alpha_n) D_n^2 \log(d)\log(n)} \right) \leq 2n^{-1}.$$
Since $n^{-1} \lesssim \varpi_n$,  and from the result in Step 1, we have
$$\Pro\left(\|\sqrt{n} \Phi_n\|_{\infty} \geq C \sqrt{(r^2 + \alpha_n) D_n^2 \log(d)\log(n)} \right) \leq C \varpi_n.$$
Finally, due to Lemma~\ref{lemma:N_hat} and~\eqref{wlog_c_incomplete}, we have with probability at least $1 - C \varpi_n$,
\begin{align*}
 \|({N}{/\widehat{N}}-1)\sqrt{N} \Phi_n \|_{\infty} \leq  C
\sqrt{(r^2 + \alpha_n) D_n^2 \log(d)\log^2(n) N^{-1} \alpha_n^{-1}}.   
 \end{align*} 
Since $(r^2 + \alpha_n)N^{-1} \alpha_n^{-1} = r^2n^{-1} + N^{-1} \leq 2r^2 (n\wedge N)^{-1}$, the proof is complete.
\vspace{0.3cm}

\underline{Step 3: final step.}  Recall that  $\sqrt{N} U_{n,N}' = \sqrt{N} \Phi_n + (N/\widehat{N}-1)\sqrt{N} \Phi_n$
and $\nu_n$ is defined in Step 2. For any rectangle $R = [a,b]$ with $a \leq b$, by Step 2,
\begin{align*}
\Pro\left( \sqrt{N} U_{n,N}' \in R\right)
&\leq \Pro\left( \sqrt{N} U_{n,N}' \in R \;\cap\; \|(N/\widehat{N}-1)\sqrt{N} \Phi_n\|_{\infty} \leq C \nu_n\right)
+ C \varpi_n \\
&\leq \Pro\left( \sqrt{N} \Phi_n \leq - a + C\nu_n \;\cap\; \sqrt{N} \Phi_n \leq b + C\nu_n\right)
+ C \varpi_n.
\end{align*}
Then by the result in Step 1, we have
\begin{align*}
\Pro\left( \sqrt{N} U_{n,N}' \in R\right)
&\leq \Pro\left( \alpha_n^{-1/2} Y \leq - a + C\nu_n \;\cap\; \alpha_n^{-1/2}Y  \leq b + C\nu_n\right)
+ C \varpi_n \\
&\leq \Pro\left( \alpha_n^{-1/2} \tilde{Y} \leq - \tilde{a} + C\underline{\sigma}_H^{-1} \nu_n \;\cap\; \alpha_n^{-1/2} \tilde{Y}  \leq \tilde{b} + C\underline{\sigma}_H^{-1} \nu_n\right) + C \varpi_n,
\end{align*}
where $\tilde{Y} = \Lambda_H^{-1} Y$, $\tilde{a} = \Lambda_H^{-1} a$ and $\tilde{b} = \Lambda_H^{-1} b$. Observe that
$\Exp[ (\alpha_n^{-1/2} \tilde{Y}_j)^2] \geq \Exp[(\sigma_{H,j}^{-1}Y_{B,j})^2]
= 1$ for $1 \leq j \leq d$, and thus by anti-concentration inequality~\cite[Lemma A.1]{chernozhukov2017},
\begin{align*}
\Pro\left( \sqrt{N} U_{n,N}' \in R\right)
&\leq \Pro\left( \alpha_n^{-1/2} \tilde{Y} \leq - \tilde{a}  \;\cap\; \alpha_n^{-1/2}Y  \leq \tilde{b} \right) + 
C\underline{\sigma}_H^{-1} \nu_n \log^{1/2}(d) +
C \varpi_n\\
&=  \Pro\left( \alpha_n^{-1/2} Y \in R\right) + 
\sqrt{ \frac{\log(d) \log^{3}(dn)\; r^2D_n^2}{\underline{\sigma}_H^2 n \wedge N}} +
C \varpi_n \\
&\leq  \Pro\left( \alpha_n^{-1/2} Y \in R\right)  +
C \varpi_n,
\end{align*}
where the last inequality is due to~\eqref{wlog_c_incomplete}. Similarly, we can show
$\Pro\left( \sqrt{N} U_{n,N}' \in R\right) \geq  \Pro\left( \alpha_n^{-1/2} Y \in R\right)  -
C \varpi_n$, and thus
$$
\rho(\sqrt{N} U_{n,N}', \;\;\alpha_n^{-1/2} Y) \lesssim \varpi_n,
$$
which completes the proof.
 

\vspace{0.2cm}
\subsection{Proofs in Section~\ref{sec:bootstrap}} \label{sec:proof_bootstrap}
In this subsection, without loss of generality, we assume $q \leq 1$ (see Remark \ref{remark:ALL}).

\subsubsection{Proof of Theorem~\ref{thrm:U_B_valid}}\label{proof:U_B}

\begin{proof} Without loss of generality, we can assume $\theta = \Exp[H(X_1^{r},W)] = 0$, since otherwise we can center $H$ first. Recall the definition of $\Lambda_H$ in~\eqref{def:Lambda_g_H},  $\tilde{H}(\cdot) = \Lambda_H^{-1/2}H(\cdot)$, and 
$\Gamma_{\tilde{H}}, \;\widehat{\Gamma}_{\tilde{H}}$ in~\eqref{def:est_cov}. Observe that for any integer $k$,
there exists some constant $C$ that depends only on $k$ and $\zeta$ such that
\begin{equation}\label{log_exp_bounded}
\log^{k}(n) n^{-\zeta} \leq C.
\end{equation}
\vspace{0.2cm}

\noindent{\underline{Step 0.}} Define $\tilde{U}_{n,N}' := \Lambda_H^{-1/2} U_{n,N}'$ and 
$$
\widehat{\Delta}_B := \left\|
\frac{1}{\widehat{N}}\sum_{\iota \in I_{n,r}} Z_{\iota} 
\left(\tilde{H}(X_{\iota},W_{\iota}) - \tilde{U}_{n,N}'\right)
\left(\tilde{H}( X_{\iota},W_{\iota}) -\tilde{U}_{n,N}'\right)^{T}
- \Gamma_{\tilde{H}}
\right\|_{\infty}.
$$
Since $\Gamma_{\tilde{H},jj} = 1$ for $1 \leq j \leq d$,  by Gaussian comparison inequality~\cite[C.5]{chen2017randomized},
\begin{align*}
&\sup_{R \in \cR}
\left| 
\Pro_{\vert \cD_{n}} \left( U^{\#}_{n,B} \in R \right)
- \Pro(Y_B \in R)
\right| \\
= \;&\sup_{R \in \cR}
\left| 
\Pro_{\vert \cD_{n}} \left( \Lambda_H^{-1/2} U^{\#}_{n,B} \in R \right)
- \Pro(\Lambda_H^{-1/2} Y_B \in R)
\right|
\lesssim \left( \widehat{\Delta}_B \log^2(d) \right)^{1/3}.
\end{align*}
Thus it suffices to show that with probability at least $1-C/n$,
$\widehat{\Delta}_B \log^2(d) \lesssim n^{-\zeta/2}$. Define

\begin{align*}
\widehat{\Delta}_{B,1} &:= \left\|{N}^{-1} \sum_{\iota \in I_{n,r}} (Z_{\iota}-p_n) \tilde{H}(X_{\iota},W_{\iota}) \tilde{H}(X_{\iota},W_{\iota})^T \right\|_{\infty}, \quad
\widehat{\Delta}_{B,2} := \left\|\widehat{\Gamma}_{\tilde{H}} - \Gamma_{\tilde{H}}\right\|_{\infty},\\
\widehat{\Delta}_{B,3} &:= |{N}/{\widehat{N}}-1| \left\|\Gamma_{\tilde{H}}\right\|_{\infty},
\quad
\widehat{\Delta}_{B,4} := \left\|N^{-1}\sum_{\iota \in I_{n,r} } Z_{\iota} \tilde{H}(X_{\iota}, W_{\iota}) \right\|_{\infty}^2.
\end{align*}
Then clearly $ \widehat{\Delta}_B 
\leq |N/\widehat{N}| \left( 
\widehat{\Delta}_{B,1} + 
\widehat{\Delta}_{B,2}\right)
+ 
\widehat{\Delta}_{B,3} + (N/\widehat{N})^2 \widehat{\Delta}_{B,4}$.

Without loss of generality, we can assume  $C_1 n^{-\zeta} \leq 1/16$, since we can always take $C$ to be large enough. Then by Lemma~\ref{lemma:N_hat}, $\Pro(|N/\widehat{N}| \leq C) \geq 1 -2n^{-1}$, and thus it suffices to show that
$$
\Pro\left( \widehat{\Delta}_{B,i} \log^2(d) \leq  C n^{-\zeta/2} \right) \geq 1 - C/n, \text{ for all }
i = 1,\ldots,4,
$$
on which we now focus.

\vspace{0.5cm}
\noindent{\underline{Step 1: bounding $\widehat{\Delta}_{B,1}$}.} 
Conditional on $\{X_{\iota},W_{\iota}: \iota \in I_{n,r}\}$, by Lemma~\ref{tail_Bernoulli},
\begin{align*}
\Pro\left(
N \widehat{\Delta}_{B,1} \leq C
\left(
\sqrt{N V_n \log(dn)}
+M_1\log(dn)
\right)
\right) \geq 1 - C/n,
\end{align*}
where
$$
V_n := \max_{1 \leq j,k \leq d}|I_{n,r}|^{-1} 
\sum_{\iota \in I_{n,r}} \tilde{H}_j^2(X_{\iota},W_{\iota})\tilde{H}_k^2(X_{\iota},W_{\iota}),
\;\;
M_1 := \max_{\iota \in I_{n,r}} \max_{1 \leq j \leq d} \tilde{H}_j^2(X_{\iota},W_{\iota}).
$$

First, by the maximal inequality (\cite[Lemma 2.2.2]{van1996weak} and Lemma \ref{triangle_max_inequ}) and due to~\eqref{H_exp_moment_assumption} and Lemma \ref{psi_norms} and \ref{triangle_max_inequ}, 
\begin{align*}
\|M_1\|_{\psi_{q/2}} \;\lesssim\; \underline{\sigma}_H^{-2}
r^{2/q}\log^{2/q}(dn) \max_{\iota \in I_{n,r}} \max_{1 \leq j \leq d}
\|H_j^2(X_{\iota},W_{\iota})\|_{\psi_{q/2}}
\; \lesssim\;
\underline{\sigma}_H^{-2}r^{2/q}\log^{2/q}(dn) D_n^2.
\end{align*}
As a result,  $\Pro\left( M_1 \leq C \underline{\sigma}_H^{-2} r^{2/q} D_n^2 \log^{2/q}(n)\log^{2/q}(dn) \right) \geq 1-2/n$.

Second, we will apply Lemma~\ref{lemma:nonneg_tail_random_kernel} to bound $V_n$ with $F_{jk}(\cdot) =\tilde{H}_j^2(\cdot)\tilde{H}_k^2(\cdot)$ and $\beta = q/4$. Note that by Lemma \ref{triangle_max_inequ}, for $1\leq j,k \leq d$, 
\begin{align*}
\sigma_{H,j}^{2} \sigma_{H,k}^{2} f_{jk}(x_1^r)& := \Exp\left[H_j^2(x_{1}^r,W)H_k^2(x_1^r,W) \right]
\lesssim \Exp\left[H_j^4(x_{1}^r,W) + H_k^4(x_1^r,W) \right] \\
& \lesssim 
h_j^4(x_{1}^r) + B_{n,j}^4(x_1^r)+
h_k^4(x_{1}^r) + B_{n,k}^4(x_1^r),\\
\sigma_{H,j}^{2} \sigma_{H,k}^{2} b_{jk}(x_1^r) &:= \|H_j^2(x_{1}^r,W)H_k^2(x_1^r,W) - \sigma_{H,j}^{2} \sigma_{H,k}^{2} f_{jk}(x_1^r)\|_{\psi_{q/4}} \\
&\lesssim  h_j^4(x_{1}^r) + B_{n,j}^4(x_1^r)+
h_k^4(x_{1}^r) + B_{n,k}^4(x_1^r) +\sigma_{H,j}^{2} \sigma_{H,k}^{2} f_{jk}(x_1^r).
\end{align*}
As a result, due to~\eqref{H_moments_assumption},~\eqref{h_exp_assumption} and~\eqref{exp_cond_distr}
\begin{align*}
\Exp[f_{jk}(X_1^r)] \lesssim (\underline{\sigma}_H^{-1} D_n)^2, \quad
\|f_{jk}(X_1^r)\|_{\psi_{q/4}} \lesssim (\underline{\sigma}_H^{-1} D_n)^4, \quad
\|b_{jk}(X_1^r)\|_{\psi_{q/4}} \lesssim (\underline{\sigma}_H^{-1} D_n)^4.
\end{align*}
Then by Lemma~\ref{lemma:nonneg_tail_random_kernel} and~\ref{Inr_large}, and due to~\eqref{U_B_assumption} and~\eqref{log_exp_bounded}
\begin{align*}
\Pro(V_n \leq C \underline{\sigma}_H^{-2} D_n^2) \geq 1-8/n.
\end{align*}

Finally, putting the two results together and again by~\eqref{log_exp_bounded}, we have
\begin{align*}
\Pro\left(\widehat{\Delta}_{B,1} \;\leq \; C\left(
N^{-1/2} \log^{1/2}(dn) \underline{\sigma}_H^{-1} D_n
+ N^{-1} r^{2/q}\log^{2/q}(n)\log^{2/q+1}(dn) \underline{\sigma}_H^{-2} D_n^2
\right)
\right) \geq 1-C/n.
\end{align*}
Then by~\eqref{U_B_assumption}, $\Pro\left(\widehat{\Delta}_{B,1} \;\leq \; C
\underline{\sigma}_H^{-1} N^{-1/2} r^{1/q} \log^{1/2}(dn)D_n
\right) \geq 1-C/n$, which implies that with probability at least $1-C/n$,
$$
\widehat{\Delta}_{B,1} \log^2(d) \;\leq \; C n^{-\zeta/2}.
$$

\vspace{0.5cm}
\noindent{\underline{Step 2: bounding $\widehat{\Delta}_{B,2}$}.} 
By Lemma~\ref{lemma:convariance_est} and~\ref{Inr_large}, and 
due to assumptions~\eqref{U_B_assumption} and~\eqref{log_exp_bounded} 
\begin{align*}
\Pro\left(
\widehat{\Delta}_{B,2} \leq C \underline{\sigma}_H^{-1} n^{-1/2}r^{1/2} \log^{1/2}(dn) D_n
\right) \geq 1 - 13/n,
\end{align*}
which implies $\Pro(\widehat{\Delta}_{B,2} \log^2(d) \leq C n^{-\zeta/2}) \geq 1-13/n$.

\vspace{0.5cm}

\noindent{\underline{Step 3: bounding $\widehat{\Delta}_{B,3}$}.} 
By definition, $\|\Gamma_{\tilde{H}}\|_{\infty} = 1$. Then by Lemma~\ref{lemma:N_hat} and~\eqref{U_B_assumption},
\begin{align*}
\widehat{\Delta}_{B,3} \log^2(d)\leq 4 N^{-1/2} \log^{1/2}(n)\log^2(d)\leq C n^{-\zeta/2},
\end{align*}
with probability at least $1 -2n^{-1}$.

\vspace{0.5cm}
\noindent{\underline{Step 4: bounding $\widehat{\Delta}_{B,4}$}.} 
Define
\begin{align*}
\widehat{\Delta}_{B,5} &:= 
\left\|N^{-1}\sum_{\iota \in I_{n,r} } (Z_{\iota}-p_n) \tilde{H}(X_{\iota}, W_{\iota}) \right\|_{\infty},\;\;
\widehat{\Delta}_{B,6} := 
\left\||I_{n,r}|^{-1}\sum_{\iota \in I_{n,r} }  \tilde{H}(X_{\iota}, W_{\iota}) \right\|_{\infty}. 
\end{align*}
Clearly, $\widehat{\Delta}_{B,4} \leq 2\left(\widehat{\Delta}^2_{B,5} +
\widehat{\Delta}^2_{B,6} \right).$ In the next two sub-steps, we will bound these two terms separately. 

\vspace{0.3cm}
\noindent{\underline{Step 4.1: bounding $\widehat{\Delta}_{B,5}^2$}.} 
Conditional on $\{X_{\iota},W_{\iota}: \iota \in I_{n,r}\}$, by Lemma~\ref{tail_Bernoulli},
\begin{align*}
\Pro\left(N\widehat{\Delta}_{B,5} \leq C 
\left(
\sqrt{N \widetilde{V}_n \log(dn)} + \widetilde{M}_1 \log(dn)
\right)
\right) \geq 1 - C/n,
\end{align*}
where $
\widetilde{V}_n := \max_{1 \leq j \leq d} |I_{n,r}|^{-1} \sum_{\iota \in I_{n,r}} \tilde{H}_j^2(X_{\iota}, W_{\iota}), \quad
\widetilde{M}_1 := \max_{\iota \in I_{n,r}} \max_{1 \leq j \leq d} |\tilde{H}_j(X_{\iota},W_{\iota})|$.

First, by the maximal inequality (\cite[Lemma 2.2.2]{van1996weak} and Lemma \ref{triangle_max_inequ}) and due to~\eqref{H_exp_moment_assumption}, 
\begin{align*}
\|\widetilde{M}_1\|_{\psi_{q}} \;\lesssim\;
\underline{\sigma}_{H}^{-1} r^{1/q}\log^{1/q}(dn) D_n.
\end{align*}
As a result,  $\Pro\left( \widetilde{M}_1 \leq C \underline{\sigma}_{H}^{-1} r^{1/q} D_n \log^{1/q}(n)\log^{1/q}(dn) \right) \geq 1-2/n$.

Second, we will apply Lemma~\ref{lemma:nonneg_tail_random_kernel} to bound $\widetilde{V}_n$ with $F_j(\cdot) = \tilde{H}_j^2(\cdot)$ and $\beta = q/2$. Define for $1\leq j \leq d$,
\begin{align*}
f_{j}(x_1^r)& := \Exp\left[\tilde{H}_j^2(x_{1}^r,W)\right],\quad
b_{j}(x_1^r) := \|\tilde{H}_j^2(x_{1}^r,W) - f_{j}(x_1^r)\|_{\psi_{q/2}}.
\end{align*}
By the similar argument as in Step 1, 
\begin{align*}
\Exp[f_{j}(X_1^r)] = 1, \quad
\|f_{j}(X_1^r)\|_{\psi_{q/2}} \lesssim (\underline{\sigma}_H^{-1} D_n)^2, \quad
\|b_{j}(X_1^r)\|_{\psi_{q/2}} \lesssim (\underline{\sigma}_H^{-1} D_n)^2.
\end{align*}
Then by Lemma~\ref{lemma:nonneg_tail_random_kernel} and~\ref{Inr_large}, and due to~\eqref{U_B_assumption} and~\eqref{log_exp_bounded}
we have
$\Pro(\widetilde{V}_n \leq C) \geq 1-8/n$.

Finally, putting the two results together, we have
\begin{align*}
\Pro\left(\widehat{\Delta}_{B,5}^2 \;\leq \; C\left(
N^{-1} \log(dn)
+ \underline{\sigma}_{H}^{-2} N^{-2} r^{2/q}\log^{2/q+2}(dn) \log^{2/q}(n) D_n^2
\right)
\right) \geq 1-C/n.
\end{align*}
Then by~\eqref{U_B_assumption}, $\Pro\left(\widehat{\Delta}_{B,5}^2 \;\leq \; C
N^{-1}  \log(dn)
\right) \geq 1-C/n$, which implies that with probability at least $1-C/n$,
$\widehat{\Delta}_{B,5}^2 \log^2(d) \;\leq \; C n^{-\zeta}$ holds.

\vspace{0.4cm}
\noindent{\underline{Step 4.2: bounding $\widehat{\Delta}_{B,6}^2$}.} 
Observe that
$\widehat{\Delta}_{B,6} \leq 
\widehat{\Delta}_{B,7} 
+
\widehat{\Delta}_{B,8}
$, where
$$
\widehat{\Delta}_{B,7} := \left\|
|I_{n,r}|^{-1}\sum_{\iota \in I_{n,r} }  \Lambda_H^{-1/2}
\left(H(X_{\iota}, W_{\iota}) - h(X_{\iota}) \right)
\right\|_{\infty}, 
\;\;
\widehat{\Delta}_{B,8} := \left\| |I_{n,r}|^{-1}\sum_{\iota \in I_{n,r} }  
 \Lambda_H^{-1/2} h(X_{\iota}) \right\|_{\infty} .
$$
By directly applying Lemma~\ref{lemma:tail_random_kernel} with $\beta = q$, due to~\eqref{U_B_assumption} and Lemma \ref{Inr_large},
\begin{align*}
\Pro\left(
\widehat{\Delta}_{B,7} \; \leq \; C \underline{\sigma}_{H}^{-1} D_n n^{-1} \log^{1/2}(dn)
\right) \geq 1 - 9/n.
\end{align*}
By directly applying Lemma~\ref{tail general U} with $\beta = q$ and due to~\eqref{U_B_assumption},
$$
\Pro\left( \widehat{\Delta}_{B,8} \leq C  \underline{\sigma}_{H}^{-1} n^{-1/2} r^{1/2} \log^{1/2}(dn)D_n
\right)\geq 1 -4/n.
$$
Thus $\Pro\left(\widehat{\Delta}_{B,6}^2 \log^2(d) \leq C n^{-\zeta}\right) \geq 1-C/n$.

Combining sub-step 4.1 and 4.2, we have $\Pro\left(\widehat{\Delta}_{B,4}^2 \log^2(d) \leq C n^{-\zeta}\right) \geq 1-C/n$.  And combining Step 0-4, we finish the proof.
\end{proof}

\subsubsection{Proof of Lemma~\ref{lemma:appr_Y_A}}\label{proof:appr_Y_A}
\begin{proof}
Without loss of generality, we can assume $\theta = \Exp[H(X_1^{r},W)] = 0$.
Recall the definition $\Lambda_g$ is~\eqref{def:Lambda_g_H}. By definition, $\Exp[(\sigma_{g,j}^{-1} Y_{A,j})^2]= 1$ for $1 \leq j \leq d$. Then by the Gaussian comparison inequality~\citep[Lemma C.5] {chen2017randomized},
\begin{align*}
\sup_{R \in \cR}
\left| 
\Pro_{\vert \cD_{n}} \left( U^{\#}_{n_1,A} \in R \right)
- \Pro( Y_A  \in R)
\right| 
&=
\sup_{R \in \cR}
\left| 
\Pro_{\vert \cD_{n}} \left( \Lambda_g^{-1/2} U^{\#}_{n_1,A} \in R \right)
- \Pro(\Lambda_g^{-1/2} Y_A  \in R) 
\right|  \\
&\lesssim (\widehat{\Delta}_A \log^2(d))^{1/3},
\end{align*}
where 
\begin{align*}
\widehat{\Delta}_A := \max_{1 \leq j,k\leq d}\left\vert
\frac{1}{\sigma_{g,j} \sigma_{g,k} n_1} \sum_{i_1 \in S_1}(G_{i_1,j} - \overline{G}_j)(G_{i_1,k} - \overline{G}_k)
- \frac{1}{\sigma_{g,j} \sigma_{g,k}} \Gamma_{g, jk}
\right\vert.
\end{align*}
By the same argument as in the proof of \cite[Theorem 4.2] {chen2017randomized},
\begin{align*}
\widehat{\Delta}_A \lesssim \widehat{\Delta}_{A,1}^{1/2} + \widehat{\Delta}_{A,1} + \widehat{\Delta}_{A,2} + \widehat{\Delta}_3^2,
\end{align*}
where $\widehat{\Delta}_{A,1}$ is defined in~\eqref{Delta_A_1}, and 
\begin{align*}
\widehat{\Delta}_{A,2} := \max_{1 \leq j,k \leq d} \left\vert
\frac{1}{\sigma_{g,j} \sigma_{g,k} n_1} \sum_{i_1 \in S_1}\left(g_j(X_{i_1})g_k(X_{i_1}) - \Gamma_{g,jk}\right)
\right\vert, \;\;
\widehat{\Delta}_{A,3} := \max_{1 \leq j,k \leq d} \left\vert
\frac{1}{\sigma_{g,j}  n_1} \sum_{i_1 \in S_1}g_j(X_{i_1})
\right\vert.
\end{align*}
\vspace{0.2cm}

\noindent{\underline{Step 1: bounding $\widehat{\Delta}_{A,1}$}.} By the second part of~\eqref{aux_Y_A}, we have
\begin{align*}
\Pro\left(\widehat{\Delta}_{A,1}^{1/2}\log^2(d) \leq C_1^{1/2} n^{-\zeta_2/2} \right) \leq 1 - Cn^{-1}, \quad
\Pro\left(\widehat{\Delta}_{A,1}\log^2(d) \leq C_1 n^{-\zeta_2} \right) \leq 1 - Cn^{-1}.
\end{align*}
\vspace{0.2cm}

\noindent{\underline{Step 2: bounding $\widehat{\Delta}_{A,2}$}.} 
We apply Lemma~\ref{tail_general_sum_ind} with $\beta = q/2$, $m = n_1$  and note that $n_1 \leq n$:
\begin{align*}
\Pro\left(
\widehat{\Delta}_{A,2} \geq C\left(
n_1^{-1} \sigma \log^{1/2}(dn) + n_1^{-1} u_n \log^{2/q+1}(dn_1) \log^{2/q}(n)
\right)
\right) \leq 4n^{-1}.
\end{align*}
where $
\sigma^2 = \max_{1\leq j,k \leq d} \sigma_{g,j}^{-2}\sigma_{g,k}^{-2}\sum_{i_1 \in S_1}
\Exp[(g_j(X_{i_1})g_k(X_{i_1}) - \Gamma_{g,jk})^2 ]$ and
$$u_n = \| \sigma_{g,j}^{-1}\sigma_{g,k}^{-1}\left(
g_j(X_{i_1})g_k(X_{i_1}) - \Gamma_{g,jk} \right)\|_{\psi_{q/2}}.
$$
By Lemma \ref{triangle_max_inequ}, \eqref{moments_assumption} and \eqref{H_exp_moment_assumption},
$
\sigma^2 \leq n_1 \left(\underline{\sigma}_g^{-1}  D_n\right)^2, \;
u_n\leq \left(\underline{\sigma}_g^{-1}  D_n\right)^2
$. Thus 
$$
\Pro\left(
\widehat{\Delta}_{A,2} \geq C\left(
n_1^{-1/2} \underline{\sigma}_g^{-1}  D_n \log^{1/2}(dn) + n_1^{-1} \underline{\sigma}_g^{-2}  D_n^2 \log^{2/q+1}(dn_1) \log^{2/q}(n)
\right)
\right) \leq 4n^{-1}.
$$
Then due to the first part of~\eqref{aux_Y_A} and~\eqref{log_exp_bounded}, $\Pro(\widehat{\Delta}_{A,2}\log^2(d) \geq C n^{-\zeta_1/2}) \leq Cn^{-1}$. 
\vspace{0.2cm}

\noindent{\underline{Step 3: bounding $\widehat{\Delta}_{A,3}$}.} 
We apply Lemma~\ref{tail_general_sum_ind} with $\beta = q$, $m = n_1$:
\begin{align*}
\Pro\left(
\widehat{\Delta}_{A,3} \geq C\left( n_1^{-1/2} \log^{1/2}(dn)
+ n_1^{-1} \underline{\sigma}_g^{-1}  D_n \log^2(d n_1) \log(n)
\right)
\right) \leq 4 n^{-1}.
\end{align*}
Then due to the first part of~\eqref{aux_Y_A} and~\eqref{log_exp_bounded}, $\Pro(\widehat{\Delta}_{A,3}^2\log^2(d) \geq C n^{-\zeta_1}) \leq Cn^{-1}$.
\end{proof}

\subsubsection{Proof of Theorem~\ref{thrm:appr_U_full}}\label{proof:appr_U_full}

\begin{proof} Without loss of generality, we can assume $\theta = \Exp[H(X_1^{r},W)] = 0$.

\noindent{\underline{Step 1}.} 
Let $\zeta_1 := \zeta$, $\zeta_2 := \zeta -1/\nu$. 
Due to Theorem~\ref{thrm:U_B_valid}, Lemma~\ref{lemma:appr_Y_A} and using the same argument as in the Step 3 of the proof of
~\cite[Theorem 4.2]{chen2017randomized}, it suffices to show the second part of~\eqref{aux_Y_A} holds. From the definition~\eqref{Delta_A_1},
$$
\widehat{\Delta}_{A,1} \leq \underline{\sigma}_{g}^{-2}\max_{1 \leq j \leq d} \frac{1}{n_1}\sum_{i_1 \in S_1} \left(
G_{i_1,j} - g_j(X_{i_1})
\right)^2 := \underline{\sigma}_{g}^{-2} \overline{\Delta}_{A,1}.
$$
In Step 2, we will show that
\begin{equation}\label{aux_full_U_appr}
\Exp\left[
\overline{\Delta}_{A,1}^{\nu} 
\right] 
\lesssim \left(n^{-1} r D_n^{2} \log^{2/q+1}(d)
\right)^{\nu}.
\end{equation}
Then by Markov inequality and \eqref{U_full_assumption},
\begin{align*}
\Pro\left(
\widehat{\Delta}_{A,1}\log^4(d) 
\geq C_1 n^{-\zeta_2}
\right) &\lesssim n^{\zeta_2 \nu} \underline{\sigma}_{g}^{-2\nu}\log^{4\nu}(d) \left(n^{-1} r D_n^{2} \log^{2/q+1}(d)
\right)^{\nu}\\
&= n^{-1}  \left(n^{\zeta} n^{-1}\underline{\sigma}_{g}^{-2}r D_n^2\log^{2/q+5}(d) \right)^{\nu}
\lesssim n^{-1},
\end{align*}
which completes the proof.

%
\vspace{0.2cm}

\noindent{\underline{Step 2}.} The goal is to show~\eqref{aux_full_U_appr}.
Define 
$$F(x_1^r, w) := \max_{1 \leq j \leq d}\vert H_j(x_1^r, w) \vert,\quad
{g}^{(i_1,k)}(X_{i_1}) := H( X_{\kindex}, W_{\kindex}) \text{ for }
i_1 \in S_1, k =1,\ldots,K.
$$ 
By Jensen's inequality,
\begin{align*}
\Exp[\overline{\Delta}_{A,1}^{\nu}] \leq \frac{1}{n_1} \sum_{i_1 \in S_1}
\Exp\left[
\max_{1 \leq j \leq d} \left|
G_{i_1,j} - g_j(X_{i_1})
\right|^{2\nu}
\right],
\end{align*}
and for each $i_1 \in S_1$, conditional on $X_{i_1}$, 
by Hoffmann-Jorgensen inequality~\cite[A.1.6.]{van1996weak},
\begin{align*}
&\Exp_{\vert_{X_{i_1}}}\left[
\max_{1 \leq j \leq d} \left|
G_{i_1,j} - g_j(X_{i_1})
\right|^{2\nu}
\right] \lesssim I_{i_1} + II_{i_1} := \\
&\left(
\Exp_{\vert X_{i_1}}\left[
\max_{1 \leq j \leq d} \left|
G_{i_1,j} - g_j(X_{i_1})
\right| \right]
\right)^{2\nu}
+ K^{-2\nu} 
\Exp_{\vert X_{i_1}}\left[\max_{1 \leq k \leq K}
\max_{1 \leq j \leq d} \left|
{g}_j^{({i_1},k)}(X_{i_1}) - g_j(X_{i_1})
\right|^{2\nu}
\right].
\end{align*}

\vspace{0.3cm}
\noindent{\underline{Step 2.1: bounding $II_{i_1}$.}} 
Observe that for each $1 \leq k \leq K$,
\begin{align*}
&\Exp_{\vert X_{i_1}} \left[
 \max_{1 \leq j \leq d}\left| g^{(i_1,k)}_j(X_{i_1}) - g_j(X_{i_1})\right|^{2\nu}
\right]
= \Exp_{\vert X_{i_1}} \left[
 \max_{1 \leq j \leq d}\left| g^{({i_1},k)}_j(X_{i_1}) - 
\Exp_{\vert X_i}\left[ g^{(i_1,k)}_j(X_{i_1}) \right] 
 \right|^{2\nu}
\right] \\
& \lesssim 
\Exp_{\vert X_{i_1}} \left[
 \max_{1 \leq j \leq d}\left| g^{({i_1},k)}_j(X_{i_1})  
 \right|^{2\nu} \right]= 
\Exp_{\vert X_{i_1}} \left[ F^{2\nu}(X_{\kindex}, W_{\kindex})  
 \right] \\
 &=
\Exp_{\vert X_{i_1}} \left[ F^{2\nu}(X_{\kindone}, W_{\kindone})   \right] := b(X_{i_1}).
\end{align*}
Thus $II_{i_1} \lesssim K^{-2\nu+1} b(X_{i_1})$.

\vspace{0.5cm}
\noindent{\underline{Step 2.2: bounding $I_{i_1}$.}} Observe that
for each $i_1 \in S_1$,
\begin{align*}
\max_{1 \leq j \leq d} \sum_{k=1}^{K} \Exp_{\vert X_{i_1}}\left[
\left(
g^{({i_1},k)}_j(X_{i_1}) - g_j(X_{i_1})
\right)^2
\right] \lesssim
K \Exp_{\vert X_{i_1}} \left[ F^{2}(X_{\kindone}, W_{\kindone})  
  \right] := K \widetilde{b}(X_{i_1}).
\end{align*}
Further, by Jensen's inequality,
\begin{align*}
&\Exp_{\vert X_{i_1}}\left[\max_{1 \leq k \leq K}
\max_{1 \leq j \leq d} \left|
{g}_j^{({i_1},k)}(X_{i_1}) - g_j(X_{i_1})
\right|^{2}
\right] 
\\
&\leq
\left(
\sum_{k=1}^{K}
\Exp_{\vert X_{i_1}}\left[
\max_{1 \leq j \leq d} \left|
{g}_j^{({i_1},k)}(X_{i_1}) - g_j(X_{i_1})
\right|^{2\nu}
\right] 
\right)^{1/\nu} \lesssim
K^{1/\nu} b^{1/\nu}(X_{i_1}),
\end{align*}
where $b(X_{i_1})$ is defined in Step 1. Then by the same argument as in the proof of~\cite[Proposition 4.4]{chen2017randomized},
\begin{align*}
I_{i_1} \lesssim K^{-\nu} \log^{\nu}(d) \widetilde{b}^{\nu}(X_{i_1})
+ K^{-2\nu+1} \log^{2\nu}(d) b(X_{i_1}).
\end{align*}

\vspace{0.5cm}
\noindent{\underline{Step 2.3: combining 2.1 and 2.2.}} By Jensen's inequality, assumption~\eqref{H_exp_moment_assumption} and 
by the maximal inequality (\cite[Lemma 2.2.2]{van1996weak} and Lemma \ref{triangle_max_inequ})
$$
\Exp\left[ \widetilde{b}^{\nu}(X_{i_1}) \right] \leq \log^{2\nu/q}(d) D_n^{2\nu},\quad
\Exp\left[
b(X_{i_1})
\right] \leq \log^{2\nu/q}(d) D_n^{2\nu}.
$$
Thus combining the results from 2.1 and 2.2, we have
\begin{align*}
\Exp\left[
\max_{1 \leq j \leq d} \left|
G_{{i_1},j}- g_j(X_{i_1})
\right|^{2\nu} 
\right] 
&\lesssim 
K^{-\nu} D_n^{2\nu} \log^{3\nu}(d)
\left(1 + K^{-\nu+1} \log^{\nu}(d)
\right)  \\
&\lesssim \left(n^{-1} r D_n^{2} \log^{2/q+1}(d)
\right)^{\nu},
\end{align*}
where the second inequality is due to~\eqref{U_full_assumption} and that $\nu \geq 7/6$ and $K = \lfloor (n-1)/(r-1) \rfloor$. 

\end{proof}

\subsubsection{Proof of Corollary~\ref{cor:comb_normalize}}\label{proof:comb_normalize}

\begin{proof}
We have shown in Step 0 of the proof (Subsection~\ref{proof:U_B}) for Theorem~\ref{thrm:U_B_valid} that
\begin{align*}
&\Pro\left(
\max_{1\leq j \leq d}|\widehat{\sigma}_{H,j}^2/\sigma_{H,j}^2 - 1| \log^2(d)
\lesssim C n^{-\zeta/2}
\right) \geq 1- C n^{-1}, 
\end{align*}
Further, if we take $\nu = 7/\zeta$ in Theorem~\ref{thrm:appr_U_full}, then in the proof for Theorem~\ref{lemma:appr_Y_A} and Theorem~\ref{thrm:appr_U_full}, we have shown that
\begin{align*}
&\Pro\left(
\max_{1\leq j \leq d}|\widehat{\sigma}_{g,j}^2/\sigma_{g,j}^2 - 1| \log^2(d)
\lesssim C n^{-3\zeta/7}
\right) \geq 1- C n^{-1}.
\end{align*}
The rest of the proof is  the same as the proof for~\cite[Corollary A.1]{chen2017randomized}, and thus omitted.
\end{proof}

\subsection{Proof of Lemma~\ref{lemma:lower_sigma_g}}\label{proof:lower_sigma_g}
\begin{proof}
Clearly, the inequality is for each dimension, and thus without loss of generality, we assume $d = 1$ and omit the dependence on $j$.

We denote $\Exp_{\beta}$ and $\Cov_{\beta}$ the expectation and covariance when $X_1,\ldots,X_r$ have densities $f_{\beta}$.
Further, define $g_{\beta}(x_1) = \Exp_{\beta}[h(x_1,X_2,\ldots,X_r)]$ for $x_1 \in S$ and by definition $g(\cdot) = g_0(\cdot)$.

First, note that by interchanging the order of integration and differentiation 
\[
\Exp_{\beta}\left[ \Psi(\beta) \right]
= \int  \left(\sum_{i=1}^{r} \nabla\ln f_{\beta}(x_i)\right)  \prod_{i=1}^{r} f_{\beta}(x_i) \mu(d x_i)
= \int \nabla\left(  \prod_{i=1}^{r} f_{\beta}(x_i) \right) \prod_{i=1}^{r} \mu(d x_i)
= 0.
\]
Further, by a similar argument,
\begin{align*}
\Cov_{\beta}(g_{\beta}(X_1), \Psi(\beta)) &= \int g_{\beta}(x_1)  \left(\sum_{i=1}^{r} \nabla\ln f_{\beta}(x_i)\right)  \prod_{i=1}^{r} f_{\beta}(x_i) \mu(d x_i) \\
&= \int g_{\beta}(x_1)   \nabla\ln f_{\beta}(x_1)   f_{\beta}(x_1) \mu(d x_1)\\
&= \int \left(\int h(x_1,x_2,\ldots,x_r) \prod_{i=2}^{r} f_{\beta}(x_i) \mu(d x_i) \right)  \left( \nabla\ln f_{\beta}(x_1) \right)  f_{\beta}(x_1) \mu(d x_1)\\
&= \int  h(x_1,x_2,\ldots,x_r)  \nabla\ln f_{\beta}(x_1)   \prod_{i=1}^{r} f_{\beta}(x_i) \mu(d x_i),
\end{align*}
which implies that
\begin{align*}
\sum_{i=1}^{r} \Cov_{\beta}(g_{\beta}(X_i), \Psi(\beta)) 
=\;& \int h(x_1,x_2,\ldots,x_r) \left(\sum_{i=1}^{r} \nabla\ln f_{\beta}(x_i) \right)  \prod_{i=1}^{r} f_{\beta}(x_i) \mu(d x_i) \\
=\;& \int h(x_1,x_2,\ldots,x_r) \nabla\left(  \prod_{i=1}^{r} f_{\beta}(x_i) \right) \prod_{i=1}^{r} \mu(d x_i)
= \nabla \theta(\beta).
\end{align*}
Finally, observe that
\begin{align*}
0 \leq\, &\Var_{\beta}\left(\sum_{i=1}^{r}g_{\beta}(X_i) - \nabla \theta(\beta)^T (r\cJ(\beta))^{-1} \Psi(\beta)\right) \\
=\,& \sum_{i=1}^{r}\Var_{\beta}\left(g_{\beta}(X_i)\right) - 2r^{-1}\Cov_{\beta}\left(\sum_{i=1}^{r}g_{\beta}(X_i), \nabla \theta(\beta)^T \cJ(\beta)^{-1} \Psi(\beta)\right) \\
\quad &+ r^{-2}\Var_{\beta}\left( \nabla \theta(\beta)^T \cJ(\beta)^{-1} \Psi(\beta)\right)\\
=\,&  r\Var_{\beta}\left(g_{\beta}(X_1)\right) -  r^{-1}\nabla \theta(\beta)^T \cJ(\beta)^{-1} \nabla \theta(\beta),
\end{align*}
which completes the proof.
\end{proof}

\subsection{Proofs of tail probabilities in Section \ref{subsec:tail_probabilities}}

\subsubsection{Proof of Lemma~\ref{tail_nonneg_sum}}
\label{proof:tail_nonneg_sum}
\begin{proof}
We first define 
$$
S :=\max_{1 \leq j \leq d}\sum_{i=1}^{m} Z_{ij}, \quad M := \max_{1\leq i \leq m} \max_{1\leq j \leq d} Z_{ij}.
$$ 
Then by the maximal inequality~\cite[Lemma 2.2.2]{van1996weak}, $\|M \|_{\psi_{\beta}} \leq C u_n \log^{1/\beta}(dm)$. By~\cite[Lemma E.4]{chernozhukov2017},
\begin{align*}
\Pro\left(
S \geq 2\Exp[S] + t
\right) \leq 3 \exp\left(
-
\left(\frac{t}{C \|M\|_{\psi_{\beta}}}
\right)^{\beta}
\right).
\end{align*}
The right hand side is $ 3/n$ if 
$$
t = C \|M\|_{\psi_{\beta}} \log^{1/\beta}(n) \leq 
C  u_n \log^{1/\beta}(n)\log^{1/\beta}(d m).
$$
Further by~\cite[Lemma E.3]{chernozhukov2017},
\begin{align*}
\Exp[S] &\lesssim \max_{1\leq j \leq d} \Exp\left[\sum_{i=1}^{m} Z_{ij} \right]
+ \log(d) \Exp[M] 
\lesssim \max_{1\leq j \leq d} \Exp\left[\sum_{i=1}^{m} Z_{ij} \right]
+ u_n \log^{1/\beta+1}(dm).
\end{align*}
Combining two parts finishes the proof.
\end{proof}
\vspace{0.2cm}

\subsubsection{Proof of Lemma~\ref{tail_general_sum_ind}}

\label{proof:tail_general_sum_ind}
\begin{proof}
We first define 
$$
S :=\max_{1 \leq j \leq d} \left \vert \sum_{i=1}^{m} Z_{ij} \right\vert, \quad M := \max_{1\leq i \leq m} \max_{1\leq j \leq d} \left\vert Z_{ij} \right\vert.
$$ 
Then by the maximal inequality~\cite[Lemma 2.2.2]{van1996weak}, $\|M \|_{\psi_{\beta}} \leq C u_n \log^{1/\beta}(dm)$. By~\cite[Lemma E.2]{chernozhukov2017},
\begin{align*}
\Pro\left( S \geq 2 \Exp[S] + t
\right) \leq \exp(-t^2/(3\sigma^2)) + 3\exp\left( 
-\left(\frac{t}{C \|M\|_{\psi_{\beta}}}
\right)^{\beta}
\right).
\end{align*}
The right hand side is $ 4/n$ if 
\begin{align*}
t &=\sqrt{3} \sigma \log^{1/2}(n) +  C \|M\|_{\psi_{\beta}} \log^{1/\beta}(n) \\
&\leq 
C \left( \sigma \log^{1/2}(n)  + \log^{1/\beta}(dm)\log^{1/\beta}(n) u_n
\right).
\end{align*}
Further by~\cite[Lemma E.1]{chernozhukov2017},
$$
\Exp[S] \lesssim \sigma \log^{1/2}(d)
+ \log(d) \sqrt{\Exp[M^2]}
\lesssim \sigma \log^{1/2}(d)
+ \log^{1/\beta+1}(dm)u_n.
$$
Combining two parts finishes the proof.

\end{proof}

\subsubsection{Proof of Lemma~\ref{tail_Bernoulli}}\label{proof:tail_Bernoulli}
\begin{proof}
We first define 
\begin{align*}
S &:=\max_{1 \leq j \leq d} \left \vert \sum_{i=1}^{m} (Z_{i}-p_n) a_{ij} \right\vert, \quad \tilde{M} := \max_{1\leq i \leq m} \max_{1\leq j \leq d} \left\vert (Z_{ij}-p_n)a_{ij} \right\vert \leq  \max_{1\leq i \leq m} \max_{1\leq j \leq d} \left\vert a_{ij} \right\vert  \\
\tilde{\sigma}^2 &:= \max_{1 \leq j \leq d} \sum_{i=1}^{m} \Exp[(Z_i-p_n)^2a_{ij}^2] \leq p_n(1-p_n) \max_{1 \leq j \leq d}\sum_{i=1}^{m} a_{ij}^2.
\end{align*}
By~\cite[Lemma E.2]{chernozhukov2017},
\begin{align*}
\Pro\left( S \geq 2 \Exp[S] + t
\right) \leq \exp(-t^2/(3\tilde{\sigma}^2)) + 3\exp\left( 
-\frac{t}{C \|\tilde{M}\|_{\psi_{1}}}
\right).
\end{align*}
The right hand side is $ 4/n$ if 
\begin{align*}
t &=\sqrt{3} \tilde{\sigma} \log^{1/2}(n) +  C \|\tilde{M}\|_{\psi_{1}} \log(n) \leq 
C \left( \sqrt{p_n(1-p_n)} \sigma \log^{1/2}(n)  + M \log(n)
\right).
\end{align*}
Further by~\cite[Lemma E.1]{chernozhukov2017},
$$
\Exp[S] \lesssim \tilde{\sigma} \log^{1/2}(d)
+ \log(d) \sqrt{\Exp[\tilde{M}^2]}
\lesssim \sqrt{p_n(1-p_n)} \sigma \log^{1/2}(d)
+ M \log(d).
$$
Combining two parts finishes the proof.
\end{proof}

\subsubsection{Proof of Lemma~\ref{tail U-non-negative}}\label{proof:tail_nonneg}

\begin{proof}
Let $m =\lfloor n/r \rfloor$, and
define the following quantity
$$Z_1:= \max_{1\leq j\leq d} \sum_{i=1}^{m} f_j(X^{ir}_{(i-1)r+1}),\;\;
M_1 := \max_{1\leq i \leq m}\max_{1\leq j\leq d} f_j(X^{ir}_{(i-1)r+1}).
$$
Then by the maximal inequality~\cite[Lemma 2.2.2]{van1996weak}, $\|M_1 \|_{\psi_{\beta}} \leq C u_n \log^{1/\beta}(dn)$. By~\cite[Lemma E.3]{chen2018gaussian},
\begin{align*}
\Pro\left( m \max_{1\leq j\leq d} U_{n,j} \geq 2\Exp[Z_1] + t
\right) \leq 3 \exp\left( -\left(\frac{t}{C \|M_1\|_{\psi_{\beta}}} \right)^{\beta}\right).
\end{align*}
The right hand side is $3/n$ if we set
$$
t = C \|M_1\|_{\psi_{\beta}} \log^{1/\beta}(n) \leq C u_n \log^{1/\beta}(dn)\log^{1/\beta}(n),
$$
Further, by~\cite[Lemma 9]{chernozhukov2015comparison},
\begin{align*}
\Exp[Z_1]&\leq C \left( \max_{1 \leq j \leq d} \Exp\left[
\sum_{i=1}^{m}  f_j(X^{ir}_{(i-1)r+1})\right]
 + \log(d) \Exp[M_1] \right)\\
&\leq C \left( m v_n  + u_n \log^{1/\beta+1}(dn)
\right).
\end{align*}
Putting two parts together, we have
\begin{align*}
\Pro\left( \max_{1\leq j\leq d} U_{n,j} \geq 
C \left(v_n + n^{-1} r u_n \log^{1/\beta+1}(dn)
+n^{-1} r u_n \log^{1/\beta}(dn)\log^{1/\beta}(n)
\right)
\right) \leq \frac{3}{n},
\end{align*}
which completes the proof.
\end{proof}

\subsubsection{Proof of Lemma~\ref{tail general U}}\label{proof:tail_general}

\begin{proof}
Let $m =\lfloor n/r \rfloor$, and define the following quantity
\begin{align*}
&Z_1:= \max_{1\leq j\leq d} \left\vert \sum_{i=1}^{m} f_j(X^{ir}_{(i-1)r+1}) \right\vert,\;\;
M_1 := \max_{1\leq i \leq m}\max_{1\leq j\leq d} \left\vert f_j(X^{ir}_{(i-1)r+1})\right\vert.
\end{align*}
Then by the maximal inequality~\cite[Lemma 2.2.2]{van1996weak}, $\|M_1 \|_{\psi_{\beta}} \leq C u_n \log^{1/\beta}(dn)$. By~\cite[Lemma C.3]{chen2017randomized}, 
\begin{align*}
\Pro\left( m \max_{1\leq j\leq d} \left\vert U_{n,j} 
\right\vert \geq 2 \Exp[Z_1] +t\right)
 \leq \exp\left(
\frac{-t^2}{3m \sigma^2} \right) + 3\exp\left(
-\left( \frac{t}{C \|M_1\|_{\psi_{\beta}}}\right)^{\beta}
\right).
\end{align*}
The right hand side is  $4/n$ if we take
\begin{align*}
t &= \sigma \sqrt{3m} \log^{1/2}(n) + C \|M_1 \|_{\psi_{\beta}} \log^{1/\beta}(n) \\
&\leq C \left( \sigma m^{1/2} \log^{1/2}(n) + u_n \log^{1/\beta}(dn)\log^{1/\beta}(n)
\right).
\end{align*}
Further, by~\cite[Lemma 8]{chernozhukov2015comparison},
\begin{align*}
\Exp[Z_1] \lesssim
\sqrt{\log(d) m \sigma^2} + \sqrt{\Exp[M_1^2]} \log(d)
\lesssim m^{1/2}\log^{1/2}(d) \sigma + u_n \log^{1/\beta+1}(dn).
\end{align*}
Putting two parts together completes the proof.
\end{proof}

\subsubsection{Proof of Lemma~\ref{lemma:nonneg_tail_random_kernel}}
\label{proof:nonneg_tail_random_kernel}
\begin{proof}
First, observe that 
$
\|F_j(x_1^r,W)\|_{\psi_{\beta}} \; \lesssim \;
f_j(x_1^r) + b_j(x_1^r)
$.
Denote 
$$
Z_1 := \max_{1 \leq j \leq d} \frac{1}{|I_{n,r}|}\sum_{\iota \in I_{n,r}}f_j(X_{\iota}),
\quad
M_1 :=  \max_{\iota \in I_{n,r}}\max_{1 \leq j \leq d} \left(f_j(X_{\iota})+ b_j(X_{\iota})\right),
$$
Then conditional on $X_1^n$, by Lemma~\ref{tail_nonneg_sum}, 
$$
\Pro_{\vert X_1^n}\left(Z \geq 
C\left(Z_1
+ |I_{n,r}|^{-1}M_1 r^{2/\beta}\log^{1/\beta+1}(dn)\log^{1/\beta-1}(n)
\right)
\right) \leq \frac{3}{|I_{n,r}|} \leq \frac{3}{n}.
$$
By Lemma~\ref{tail U-non-negative},
$$
\Pro\left(
Z_1 \geq C\left(
\max_{1 \leq j \leq d} \Exp[f_j(X_1^r)] +
n^{-1}r \log^{1/\beta+1}(dn)\log^{1/\beta-1}(n) u_n
\right)
\right) \leq  \frac{3}{n}.
$$
Further, by maximal inequality~\cite[Lemma 2.2.2]{van1996weak}
$$
\| M_1 \|_{\psi_\beta} \leq C r^{1/\beta} \log^{1/\beta}(dn) u_n \;\;
\Rightarrow \;\;
\Pro\left(M_1 \geq C r^{1/\beta} \log^{1/\beta}(n)\log^{1/\beta}(dn) u_n \right) \leq \frac{2}{n}.
$$
Then the proof is complete by combining above results.
\end{proof}
\vspace{0.3cm}

\subsubsection{Proof of Lemma~\ref{lemma:tail_random_kernel}}
\label{proof:tail_random_kernel}
\begin{proof}
First, we define
\begin{align*}
&\sigma^2 := \max_{1 \leq j \leq d} \sum_{\iota \in I_{n,r}} \Exp_{\vert X_1^n}
\left[
\left(F_j(X_{\iota}, W_{\iota}) - f_j(X_{\iota}) \right)^2
\right] 
\lesssim  \max_{1 \leq j \leq d} \sum_{\iota \in I_{n,r}}  b_j^2(X_{\iota}),
\\
&M := \max_{\iota \in I_{n,r}} \max_{1\leq j \leq d} b_j(X_{\iota}).
\end{align*}
Then by first conditional on $X_1^{n}$ and by Lemma~\ref{tail_general_sum_ind},
\begin{align*}
\Pro\left(
|I_{n,r}| Z \geq C(\sigma r^{1/2} \log^{1/2}(dn)  + M r^{2/\beta}\log^{1/\beta+1}(dn)\log^{1/\beta-1}(n)) \right) \leq \frac{4}{|I_{n,r}|} \leq \frac{4}{n}.
\end{align*}

Observe that 
$$\| b_j^2(X_1^{r}) \|_{\psi_{\beta/2}}
= \| b_j(X_1^{r}) \|_{\psi_{\beta}}^2 \leq u_n^2.
$$
Then by Lemma~\ref{tail U-non-negative} with $\psi_{\beta/2}$, 
\begin{align*}
\Pro\left(
\frac{\sigma^2}{|I_{n,r}|} \geq 
C u_n^2\left(
1 + n^{-1} r \log^{2/\beta+1}(dn)
\log^{2/\beta-1}(n)
\right)
\right) \leq \frac{3}{n}.
\end{align*}
Further, by maximal inequality~\cite[Lemma 2.2.2]{van1996weak}
$$
\| M \|_{\psi_\beta} \leq C r^{1/\beta} \log^{1/\beta}(dn) u_n \;\;
\Rightarrow \;\;
\Pro(M \geq C r^{1/\beta} \log^{1/\beta}(dn) \log^{1/\beta}(n)u_n) \leq \frac{2}{n}.
$$
Then the proof is complete by combining above results.
\end{proof}
\vspace{0.3cm}

\subsection{Proofs of additional lemmas}\label{additional_proofs}
The following lemma is similar to \cite[Lemma C.1]{chernozhukov2017}, and is needed in proving Lemma \ref{lemma:GAR_q}.

\begin{lemma}\label{third_moment_q}
Let $q \in (0,3]$, and $\xi$ be a non-negative random variable such that $\|\xi\|_{\psi_q} \leq D$. Then there exists a constant $C$, depending only on $q$, such that
\[
\Exp\left[ \xi^3; \xi > t\right] \leq C (t^3 + D^3) e^{-(t/D)^q}, \quad
 \text{ for } t > 0.
\]
\end{lemma}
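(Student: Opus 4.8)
The plan is to prove the tail bound $\Exp[\xi^3;\xi>t]\le C(t^3+D^3)e^{-(t/D)^q}$ directly from the sub-exponential hypothesis $\|\xi\|_{\psi_q}\le D$. By rescaling, set $\eta:=\xi/D$, so that $\Exp[\psi_q(\eta)]\le 1$, i.e.\ $\Exp[e^{\eta^q}]\le 2$, and it suffices to show $\Exp[\eta^3;\eta>s]\le C(s^3+1)e^{-s^q}$ for all $s>0$, after which we multiply through by $D^3$ and put $s=t/D$. The key device is integration by parts: write
\[
\Exp[\eta^3;\eta>s] = s^3\,\Pro(\eta>s) + \int_s^\infty 3u^2\,\Pro(\eta>u)\,du,
\]
and control $\Pro(\eta>u)$ by the Chernoff/Markov bound $\Pro(\eta>u)=\Pro(e^{\eta^q}>e^{u^q})\le 2e^{-u^q}$.

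Carrying this out, the first term is immediately $\le 2s^3 e^{-s^q}$. For the integral term I would bound $\int_s^\infty 3u^2\cdot 2e^{-u^q}\,du$. This is a routine but slightly fiddly estimate: substituting $v=u^q$ gives $\int_{s^q}^\infty 6 q^{-1} v^{(3-q)/q} e^{-v}\,dv$, which is (up to the constant $6/q$) an upper incomplete Gamma integral $\Gamma\big(3/q, s^q\big)$. One then uses the standard tail bound for the incomplete Gamma function: for any fixed $a>0$ there is $C_a$ with $\Gamma(a,x)\le C_a(1+x^{a-1})e^{-x}$ for all $x>0$ (proved by splitting at $x=1$, using $\Gamma(a,x)\le\Gamma(a)$ for $x\le 1$ and an elementary induction/integration-by-parts argument for $x\ge 1$). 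Taking $a=3/q$ and $x=s^q$ yields $\int_s^\infty 3u^2 e^{-u^q}\,du \le C_q(1+s^{3-q})e^{-s^q}\le C_q(1+s^3)e^{-s^q}$, where the last step uses $s^{3-q}\le 1+s^3$ since $3-q\in[0,3)$ (this is where $q\le 3$ is used). Adding the two contributions gives $\Exp[\eta^3;\eta>s]\le C_q(s^3+1)e^{-s^q}$ with $C_q$ depending only on $q$, and rescaling back finishes the proof.

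An alternative, perhaps cleaner, route avoids the incomplete Gamma function: since $q\le 3$, one has $u^3 e^{-u^q/2}\le C_q$ uniformly in $u\ge 0$ (the map $u\mapsto u^3 e^{-u^q/2}$ is bounded on $[0,\infty)$ because $u^q$ dominates $\log u$ at infinity), hence $u^3 e^{-u^q}\le C_q e^{-u^q/2}$, and for $u\ge s$ we can extract a factor $e^{-s^q/2}$ from $e^{-u^q/2}$ and are left with $\int_s^\infty \ldots\le C_q e^{-s^q/2}\int_0^\infty e^{-u^q/2}u^{-1}\cdots$—this needs a little care near $0$, so in practice I would keep the $s^3\Pro(\eta>s)$ and the integral-by-parts split and just crudely bound $\int_s^\infty 3u^2\cdot 2e^{-u^q}\,du\le 6\,e^{-s^q/2}\int_0^\infty u^2 e^{-u^q/2}\,du = C_q e^{-s^q/2}$, then absorb the discrepancy between $e^{-s^q/2}$ and $e^{-s^q}$ into the $(s^3+D^3)$ prefactor via $e^{-s^q/2}=e^{-s^q/2}\le (1+s^3)\,C_q\,e^{-s^q}$ only for bounded $s$; this forces handling large and small $s$ separately. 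Given these trade-offs, the incomplete-Gamma estimate is the most transparent, and the only mild obstacle is verifying the elementary incomplete-Gamma tail inequality with a constant depending only on $q$—everything else is bookkeeping. Since the lemma is stated as "similar to \cite[Lemma C.1]{chernozhukov2017}," one may alternatively just cite that proof with the obvious modification replacing $\psi_1$ by $\psi_q$.
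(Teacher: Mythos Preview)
Your proposal is correct and follows essentially the same route as the paper: both use the layer-cake identity $\Exp[\xi^3;\xi>t]=t^3\Pro(\xi>t)+3\int_t^\infty u^2\Pro(\xi>u)\,du$, the Markov/Chernoff tail bound $\Pro(\xi>u)\le 2e^{-(u/D)^q}$, and the substitution $v=(u/D)^q$ reducing the integral to an upper incomplete Gamma tail $\int_{(t/D)^q}^\infty v^{3/q-1}e^{-v}\,dv$. The only cosmetic difference is that you invoke the bound $\Gamma(a,x)\le C_a(1+x^{a-1})e^{-x}$ as a known fact, whereas the paper proves it inline via the shift $v\mapsto v+(t/D)^q$ together with $(v+x)^{3/q-1}\lesssim v^{3/q-1}+x^{3/q-1}$ (valid since $3/q\ge 1$); both finish with $t^{3-q}D^q\lesssim t^3+D^3$.
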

\begin{proof}
Since $\|\xi\|_{\psi_q} \leq D$, we have for $x > 0$,
\[
\Pro(\xi > x) \leq e^{-(x/D)^q} \Exp\left[ e^{-(\xi/D)^q}\right] 
\leq 2  e^{-(x/D)^q}.
\]
By change of variable, we have
\begin{align*}
&\Exp\left[ \xi^3; \xi > t\right] \leq t^3 \Pro(\xi > t) + 3 \int_{t}^{\infty} \Pro(\xi > x) x^2 d\,x \\
&\lesssim t^3 e^{-(t/D)^q} + D^3  \int_{(t/D)^q}^{\infty} e^{-u} u^{3/q-1} d\,u \\
& \lesssim t^3 e^{-(t/D)^q} + D^3  e^{-(t/D)^q} \int_{0}^{\infty} e^{-u} \left(u+(t/D)^q \right)^{3/q-1} d\,u\\
& \lesssim t^3 e^{-(t/D)^q} + D^3  e^{-(t/D)^q} \int_{0}^{\infty} e^{-u} \left( u^{3/q-1}+(t/D)^{3-q} \right) d\,u \\
& \lesssim t^3 e^{-(t/D)^q} + D^3  e^{-(t/D)^q} \int_{0}^{\infty} e^{-u} \left( u^{3/q-1}+(t/D)^{3-q} \right) d\,u \\ 
& \lesssim \left( t^3  + D^3  + t^{3-q} D^q\right) e^{-(t/D)^q}
 \lesssim \left( t^3  + D^3 \right) e^{-(t/D)^q}.
\end{align*}

\end{proof}

\begin{proof}[Proof of Lemma \ref{lemma:GAR_q}]\label{proof:GAR_q}
For $q \geq 1$, it has been established by \cite[Proposition 2.1]{chernozhukov2017}. For $q < 1$, the proof is almost identical to that for \cite[Proposition 2.1]{chernozhukov2017}, except that we replace \cite[Lemma C.1]{chernozhukov2017} by Lemma \ref{third_moment_q}.
\end{proof}

\begin{proof}[Proof of Lemma \ref{triangle_max_inequ}]
(i). Without loss of generality, we assume $0 < x := \|X\|_{\psi_\beta} < \infty$, and $0 < y := \|Y\|_{\psi_\beta} < \infty$. Observe that
\begin{align*}
&\Exp\left[\exp\left( \frac{|X+Y|}{2^{1+1/\beta}(x+y)}
\right)^\beta
\right]
\leq 
\Exp\left[\exp\left( \frac{|X|^{\beta}+|Y|^{\beta}}{2(x+y)^{\beta}}
\right)
\right]\\
\leq 
&\Exp\left[\frac{1}{2}\exp\left( \frac{|X|^{\beta}}{(x+y)^{\beta}}
\right)
\right]
+
\Exp\left[\frac{1}{2}\exp\left( \frac{|Y|^{\beta}}{(x+y)^{\beta}}
\right)
\right] \leq 2.
\end{align*}

(ii).
From Lemma \ref{psi_prop}, for $1 \leq i \leq  n$,
\begin{align*}
\Exp\left[\widetilde{\psi}_\beta\left(\frac{|\xi_i|}{D} \right) \right]
\leq \Exp\left[{\psi}_\beta\left(\frac{|\xi_i|}{D} \right) \right]
+1 \leq 2,
\end{align*}
which, by the convexity of $\widetilde{\psi}_{\beta}$ and the fact $\widetilde{\psi}_{\beta}(0) = 0$, implies $\|\xi_i\|_{\widetilde{\psi}_{\beta}} \leq 2D$.
By the standard maximal inequality (e.g., see~\cite[Lemma 2.2.2]{van1996weak}) and Lemma \ref{psi_prop},
$\|\max_{1 \leq i \leq n} \xi_i\|_{\widetilde{\psi}_{\beta}} \leq C \log^{1/\beta}(n) D$. Thus by Lemma \ref{psi_prop},
\begin{align*}
\Exp\left[\exp
\left(\frac{\max_{1 \leq i \leq n} \xi_i}{C \log^{1/\beta}(n) D} \right)^{\beta}
\right] \leq 
\Exp\left[\psi_{\beta}
\left(\frac{\max_{1 \leq i \leq n} \xi_i}{C \log^{1/\beta}(n) D} \right)
\right] + e^{1/\beta}   \leq  1+e^{1/\beta}.
\end{align*}
Now we let $m \geq 1$ such that $\left(1 + e^{1/\beta} \right)^{1/m} \leq 2$. Then by Jensen's inequality ($\Exp[X^{1/m}] \leq \left(\Exp[X]\right)^{1/m}$ for $X > 0$ a.s.),
\begin{align*}
\Exp\left[\exp
\left(\frac{\max_{1 \leq i \leq n} \xi_i}{C m^{1/\beta}\log^{1/\beta}(n) D} \right)^{\beta}
\right] \leq 2,
\end{align*}
which implies that $\|\max_{1 \leq i \leq n} \xi_i\|_{\widetilde{\psi}_{\beta}} \lesssim \log^{1/\beta}(n) D$.
\end{proof}

\section*{Acknowledgements}
X. Chen is supported in part by NSF DMS-1404891, NSF CAREER Award DMS-1752614, and UIUC Research Board Awards (RB17092, RB18099).

\bibliographystyle{plain}
\bibliography{arXiv_v3}

\end{document}